\theoremstyle{plain}
\newtheorem{theorem}{Theorem}[section]
\newtheorem{proposition}[theorem]{Proposition}
\newtheorem{lemma}[theorem]{Lemma}
\newtheorem{corollary}[theorem]{Corollary}
\theoremstyle{definition}
\newtheorem{definition}[theorem]{Definition}
\newtheorem{remark}[theorem]{Remark}
\newcommand{\me}{\mathfrak}
\newcommand{\mf}{\mathfrak}
\newcommand{\C}{\mathbf{C}}
\newcommand{\R}{\mathbf{R}}
\newcommand{\N}{\mathbf{N}}
\newcommand{\Z}{\mathbf{Z}}
\newcommand{\I}{\mathbf{I}}
\newcommand{\E}{\mathcal{E}}
\newcommand{\ERf}{\mathcal{E}^{\R,f}}
\newcommand{\ER}{\mathcal{E}^\R}
\newcommand{\Esa}{\mathcal{E}^{\sha}}
\newcommand{\Eind}{\mathcal{E}^{ind}}
\newcommand{\Eindd}{\mathcal{E}'^{ind}}
\renewcommand{\O}{\mathcal{O}}
\newcommand{\oot}{\mathbb{O}}
\newcommand{\D}{\mathcal{D}}
\newcommand{\shan}{\mathcal{C}^\omega}
\newcommand{\CP}{\mathbf{P}}
\newcommand{\CC}{\mathrm{C}}
\newcommand{\DC}{\mathrm{D}}
\newcommand{\K}{\mathrm{K}}
\newcommand{\BDC}{\mathrm{D}^b}
\newcommand{\Mod}{\mathrm{Mod}}
\newcommand{\Ind}{\mathrm{Ind}}
\newcommand{\newindlim}{\operatornamewithlimits{\text{``}\varinjlim\text{''}}}
\newcommand{\grosfrac}[2]{\frac{\textstyle #1}{\textstyle #2}}
\newcommand{\cl}{\colon}
\renewcommand{\to}{\rightarrow}
\newcommand{\from}{\leftarrow}
\newcommand{\xto}[2][]{\xrightarrow[#1]{#2}}
\newcommand{\xfrom}[2][]{\xleftarrow[#1]{#2}}
\newcommand{\isoto}[1][]{\xrightarrow[#1]{\raisebox{-2pt}[0pt][0pt]{\makebox[0cm]{$\scriptstyle\sim$}}}}
\newcommand{\isofrom}[1][]{\xleftarrow[#1]{\raisebox{-2pt}[0pt][0pt]{\makebox[0cm]{$\scriptstyle\backsim$}}}}
\newcommand{\petittimes}{\makebox{$\times$}}
\newcommand{\sect}{\Gamma}
\newcommand{\rsect}{R\Gamma}
\newcommand{\RHom}{\operatorname{RHom}}
\newcommand{\Hom}{\operatorname{Hom}}
\renewcommand{\hom}{\operatorname{\mathcal{H}om}}
\newcommand{\ihom}{\operatorname{\mathcal{IH}om}}
\newcommand{\Rihom}{\operatorname{R\mathcal{IH}om}}
\newcommand{\Rhom}{\operatorname{R\mathcal{H}om}}
\newcommand{\supp}{\operatorname{supp}}
\newcommand{\tmu}{T\!\!{-}\!\mu}
\newcommand{\For}{\operatorname{For}}
\newcommand{\id}{\mathrm{id}}
\newcommand{\sha}{\mathcal{A}}
\newcommand{\shb}{\mathcal{B}}
\newcommand{\shc}{\mathcal{C}}
\newcommand{\shl}{\mathcal{L}}
\newcommand{\shm}{\mathcal{M}}
\newcommand{\shn}{\mathcal{N}}
\newcommand{\shp}{\mathcal{P}}
\newcommand{\cinf}{\mathcal{C}^\infty}
\newcommand{\cinft}{\mathcal{C}^{\infty, t}}
\newcommand{\Omegat}{\Omega}
\newcommand{\Tnz}{\smash{\overset{{\text{\Large .}}}{T}}}
\newcommand{\TnzNS}{\overset{{\text{\Large .}}}{T^*}}
\begin{document}
\title{dg-methods for microlocalization}

\keywords{microlocalization, ind-sheaves, dg-algebra}
\subjclass[2000]{35A27; 32C38}

\date{}
\author{St\'ephane Guillermou}
\address{Universit\'e de Grenoble I\\
D\'epartement de Math\'ematiques\\
Institut Fourier, UMR 5582 du CNRS\\
38402 Saint-Martin d'H\`eres Cedex, France}
\email{Stephane.Guillermou@ujf-grenoble.fr}

\begin{abstract}
  For a complex manifold $X$ the ring of microdifferential operators
  $\E_X$ acts on the microlocalization $\mu hom(F,\O_X)$, for $F$ in
  the derived category of sheaves on $X$.  Kashiwara, Schapira,
  Ivorra, Waschkies proved, as a byproduct of their new
  microlocalization functor for ind-sheaves, $\mu_X$, that $\mu
  hom(F,\O_X)$ can in fact be defined as an object of $\DC(\E_X)$:
  this follows from the fact that $\mu_X \O_X$ is concentrated in one
  degree.
  
  In this paper we prove that the tempered microlocalization $\tmu
  hom(F,\O_X)$, or $\mu_X \O_X^t$, also are objects of $\DC(\E_X)$.
  Since we don't know whether $\mu_X \O_X^t$ is concentrated in one
  degree, we built resolutions, of $\E_X$ and $\mu_X \O_X^t$, such
  that the action of $\E_X$ is realized in the category of complexes
  (and not only up to homotopy).  To define these resolutions we
  introduce a version of the de Rham algebra on the subanalytic site
  which is quasi-injective.  We prove that some standard operations in
  the derived category of sheaves can be lifted to the (non-derived)
  category of dg-modules over this de Rham algebra. Then we built the
  microlocalization in this framework, together with a convolution
  product.
\end{abstract}

\maketitle

\section{Introduction}
For a complex analytic manifold the sheaf of microlocal differential
operators on its cotangent bundle was introduced by Sato, Kashiwara
and Kawai using Sato's microlocalization functor. Let us recall
briefly the definition, in the framework of~\cite{KS90}.  Let $X$ be a
manifold and let $\BDC(\C_X)$ be the bounded derived category of
sheaves of $\C$-vector spaces on $X$. For objects $F,G \in
\BDC(\C_X)$, a generalization of Sato's microlocalization functor
gives $\mu hom(F,G) \in \BDC(\C_{T^*X})$, and a convolution product is
defined in~\cite{KS90} for this functor $\mu hom$. When $X$ is a
complex analytic manifold of complex dimension $d_X$, one version of
the ring of microlocal operators is defined by $\ER_X = \mu
hom(\C_{\Delta}, \O_{X\times X}^{(0,d_X)})[d_X]$, where $\Delta$ is
the diagonal of $X\times X$ and $\O_{X\times X}^{(0,d_X)}$ denotes the
holomorphic forms of degree $0$ on the first factor and degree $d_X$
on the second factor.  It has support on the conormal bundle of
$\Delta$, which may be identified with $T^*X$.  The product of $\ER_X$
is given by the convolution product of $\mu hom$.

The convolution product also induces an action of $\ER_X$ on $\mu
hom(F,\O_X)$, for any $F \in \BDC(\C_X)$, i.e.  a morphism in
$\BDC(\C_{T^*X})$, $\ER_X \otimes\mu hom(F,\O_X) \to \mu hom(F,\O_X)$,
satisfying commutative diagrams which express the properties of an
action.

A natural question is then whether $\mu hom(F,\O_X)$ has a natural
construction as an object of $\BDC(\ER_X)$.  It was answered
positively in~\cite{KSIW06} as a byproduct of the construction of a
microlocalization functor for ind-sheaves.  The category of
ind-sheaves on $X$, $\I(\C_X)$, is introduced and studied
in~\cite{KS01}.  It comes with an internal $\Hom$ functor, $\ihom$,
and contains $\Mod(\C_X)$ as a full subcategory; the embedding of
$\Mod(\C_X)$ in $\I(\C_X)$ admits a left adjoint (which corresponds to
taking the limit) $\alpha_X \cl \I(\C_X) \to \Mod(\C_X)$ which is
exact. In this framework the construction of~\cite{KSIW06} yields a
new microlocalization functor $\mu_X \cl \BDC(\I(\C_X)) \to
\BDC(\I(\C_{T^*X}))$ such that
\begin{equation}
  \label{eq:muhom_mu}
\mu hom(F,G) \simeq \alpha_{T^*X} \Rihom(\mu_X F, \mu_X G ).
\end{equation}
In particular $\mu_X$ applies to a single object of $\BDC(\I(\C_X))$
and $\mu hom(F,G)$ takes the form of a usual $\hom$ functor between
objects on $T^*X$.

The convolution product is also defined in this context, and now it
gives an action of $\ER_X$ on $\mu_X(\O_X)$.  Through
isomorphism~\eqref{eq:muhom_mu} this action on $\mu_X(\O_X)$ induces
the action on $\mu hom(F,\O_X)$.  Hence it is enough to define
$\mu_X(\O_X)$ as an object of $\BDC(\ER_X)$ to have the answer for all
$\mu hom(F,\O_X)$.  It turns out that, outside the zero section of
$T^*X$, $\mu_X(\O_X)$ is concentrated in degree $-d_X$. Thus
$\mu_X(\O_X) \simeq H^{-d_X}\mu_X(\O_X)[d_X]$ and, since the action of
$\ER_X$ gives an $\ER_X$-module structure on $H^{-d_X}\mu_X(\O_X)$, we
see that $\mu_X(\O_X)$ naturally belongs to $\BDC(\ER_X)$, as
required.

\medskip

However in many situations differential operators of finite order are
more appropriate.  In this paper we solve the same problem in the
tempered situation, i.e. for the sheaf $\ERf_X$ of differential
operators with bounded degree and for the tempered version of $\mu
hom(F,\O_X)$.  This tempered microlocalization $\tmu hom(F,\O_X)$ is
introduced in~\cite{A94} and also has a reformulation in terms of
ind-sheaves. Namely it makes sense to consider the ind-sheaf of
tempered $\cinf$-functions and the corresponding Dolbeault complex
$\O_X^t$ (it is actually a motivation for the theory of ind-sheaves).
Then
$$
\tmu hom(F,\O_X) \simeq \alpha_{T^*X} R\ihom(\mu_X F,\mu_X \O_X^t). 
$$
We have as above a natural action of $\ERf_X$ on $\mu_X(\O_X^t)$.
Unfortunately this last complex is a priori not concentrated in one
degree and we cannot conclude directly that $\mu_X(\O_X^t)$ is an
object of $\BDC(\ERf_X)$.

\medskip

We will in fact find resolutions of $\ERf_X$ and $\mu_X(\O_X^t)$ such
that the action corresponds to a dg-module structure over a
dg-algebra.  More precisely we will define an ind-sheaf of dg-algebras
$\E^\sha_X$ on $T^*X$ (outside the zero section) with cohomology only
in degree $0$ and such that $H^0(\E^\sha_X) =\ERf_X$. We will also
find a dg-$\E^\sha_X$-module, say $M$, such that $M \simeq
\mu_X(\O_X^t)$ in $\BDC(\I(\C_{T^*X}))$ and such that the morphism of
complexes $\E^\sha_X \otimes M \to M$ given by the
dg-$\E^\sha_X$-module structure coincides with the action $\ERf_X
\otimes \mu_X(\O_X^t) \to \mu_X(\O_X^t)$.  Then, as recalled in
section~\ref{sec:dg_alg}, extension and restriction of scalars yield
an object $M' \in \BDC(\ERf_X)$ which represents $\mu_X(\O_X^t)$ with
its $\ERf_X$-action. So we conclude as in the non tempered case.

Now we explain how we construct $\E^\sha_X$ and $M$.  The main step in
the definition of $\ERf_X$, as well as its action on $\mu
hom(F,\O_X)$, is the microlocal convolution product
\begin{equation}
  \label{eq:intro_micro_prod}
\mu_{X\times X} \O_{X\times X}^{t(0,d_X)}
\overset{a}{\circ} \mu_{X\times X}
\O_{X\times X}^{t(0,d_X)}[d_X] 
\to \mu_{X\times X} \O_{X\times X}^{t(0,d_X)},  
\end{equation}
where $\overset{a}{\circ}$ denotes the composition of kernels.  This
is a morphism in the derived category. It is obtained from the
integration morphism for the Dolbeault complex and the commutation of
the functor $\mu_{X\times X}$ with the convolution of sheaves.  In
order to obtain a true dg-algebra at the end, and not a complex with a
product up to homotopy, we will represent the functor $\mu$ by a
functor between categories of complexes, which satisfies enough
functorial properties so that the convolution also corresponds to a
morphism of complexes.

\smallskip

Let us be more precise.  The first step is the construction of
injective resolutions, with some functorial properties.  For this we
introduce a quasi-injective de Rham algebra, $\sha$, below
(quasi-injectivity is a property of ind-sheaves weaker than
injectivity but sufficient to derive the usual functors).  We use the
construction of ind-sheaves from sheaves on the ``subanalytic site''
explained in~\cite{KS01}.  For a real analytic manifold $X$, the
subanalytic site, $X_{sa}$, has for open subsets the subanalytic open
subsets of $X$ and for coverings the locally finite coverings.  On
$X_{sa}$ it makes sense to consider the sheaf of tempered $\cinf$
functions, $\cinft_X$.

We consider the embedding $i_X\cl X = X\times \{0\} \to X\times \R$
and define a sheaf of $i$-forms on $X_{sa}$, $\sha_X^i = i_X^{-1}
\sect_{X\times \R_{>0}}(\mathcal{C}^{\infty, t (i)}_{X\times \R})$.
This gives a de Rham algebra $\sha_X$ and it yields a quasi-injective
resolution of $\C_{X_{sa}}$.  For a morphism of manifolds $f\cl X \to
Y$ we have an inverse image $f^*\cl f^{-1} \sha_Y \to \sha_X$.  If $f$
is smooth, with fibers of dimension $d$, we also have an integration
morphism $\int_f\cl f_{!!} \sha_X \otimes or_{X|Y}[d] \to \sha_Y$,
which represents the integration morphism $Rf_{!!}  or_{X|Y}[d] \to
\C_Y$.

We denote by $\Mod(\sha_X)$ the category of sheaves of
dg-$\sha_X$-modules. We have an obvious forgetful functor $\For'_X\cl
\Mod(\sha_X) \to \DC(\C_{X_{sa}})$. We will prove that the operations
needed in the construction of~\eqref{eq:intro_micro_prod} are defined
in $\Mod(\sha_X)$ and commute with $\For'_X$.  For example, for a
morphism of manifolds $f\cl X \to Y$ we have functors, $f^*$ and $f_*,
f_{!!}$, of inverse and direct images of dg-$\sha$-modules.  In some
cases this gives a way to represent the derived functors $f^{-1}$ and
$Rf_*, Rf_{!!}$. For example, since $\sha_X^0$ is quasi-injective we
can prove, for $F\in \Mod(\sha_X)$, $\For'_Y(f_{!!}(F)) \simeq Rf_{!!}
(\For'_X(F))$.  If $f$ is smooth we also prove, for $G\in
\Mod(\sha_Y)$, $\For'_X (f^*G) \simeq f^{-1}(\For'_Y(G))$.  When $X$
is a complex manifold, we also have a resolution, $\oot_X$, of
$\O_X^t$ by a dg-$\sha_X$-module which is locally free over $\sha^0_X$
(it is deduced from the Dolbeault resolution).

Once we have these operations we define a microlocalization functor
for dg-$\sha$-modules.  Let us recall that the functor $\mu_X$ is
given by composition with a kernel $L_X \in \BDC(\C_{(X \times T^*
  X)_{sa}})$: for $F\in \BDC(\C_{X_{sa}})$ we have $\mu_X(F) = L_X
\circ F = Rp_{2!!} ( L_X \otimes p_1^{-1}F)$.  We define a
corresponding dg-$\sha$-module, $L^\sha_X$, which is quasi-isomorphic
to $L_X$ outside the zero section of $T^*X$, i.e. over $X\times \TnzNS
X$, and we set, for a dg-$\sha_X$-module $F$:
$$
\mu^\sha_X(F) = L^\sha_X \circ F = p_{2!!} ( L^\sha_X
\otimes_{\sha} p_1^*F) .
$$
This functor is defined on the categories of complexes, i.e. it is
a functor from $\Mod(\sha_{X})$ to $\Mod(\sha_{T^* X})$.  If $F$ is
locally free over $\sha^0_X$, we show that $\mu^\sha_X(F)$ is
quasi-injective and represents $\mu_X(F)$ over $\Tnz^*X$: we have
$\For'_{T^* X} (\mu^\sha_X(F)) \simeq \mu_X(\For'_X(F))$. In
particular, when $X$ is a complex manifold we obtain the
dg-$\sha_{T^*X}$-module $\mu^\sha_X(\oot_X)$ which represents
$\mu_X(\O_X^t)$ and can be used to compute $\Rhom(\cdot, \mu_X(\O_X^t)
)$.

With these tools in hand we define the sheaf $\E^\sha_X$ mentioned
above from $\mu^\sha$, the same way $\ERf_X$ was defined from $\mu$.
The definition of the product involves a convolution product for
$\mu^\sha$. The kernel $L^\sha_X$ has indeed the same functorial
behavior as $L_X$, not with respect to all operations, but at least
those needed in the composition of kernels. We end up with a
dg-$\sha_{T^*X}$-module $\E^\sha_X$, which is a ring object in the
category of dg-$\sha_{T^*X}$-modules and which represents $\ERf_X$. In
the same way we obtain a structure of $\E^\sha_X$-module on
$\mu^\sha_X(\oot_X)$, as desired.  As said above this
$\E^\sha_X$-module gives a $\beta_{T^*X} (\ERf_X)$-module by extension
and restriction of scalars (here $\beta$ is the functor from sheaves
to ind-sheaves which is left adjoint to $\alpha$).  Our result is more
precisely stated in Theorem~\ref{thm:tmuhom=Emod}:
\begin{theorem}
  There exists $\O_X^\mu \in \DC(\beta_{T^*X} (\ERf_X)) )$, defined
  over $\Tnz^*X$, which is send to $\mu_X \O^t_X$ in
  $\DC(\I(\C_{\Tnz^*X}))$ by the forgetful functor and satisfies: for
  $F\in \DC^-(\I(\C_X))$ the complex
$$
\alpha_{T^*X} \Rihom(\pi^{-1}F, \O_X^\mu )
$$
which is naturally defined in $\DC(\ERf_X)$, over $\Tnz^*X$, is
isomorphic in $\DC(\C_{\Tnz^*X})$ to $\tmu hom(F, \O_X)$ endowed with
its action of $\ERf_X$.
\end{theorem}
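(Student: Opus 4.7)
The plan is to take as $\O_X^\mu$ (the image in the derived category of) the dg-$\Esa_X$-module $\mu^\sha_X(\oot_X)$, converted to an object of $\DC(\beta_{T^*X}(\ERf_X))$ via the extension--restriction of scalars formalism recalled in Section~\ref{sec:dg_alg}, and then to read off the $\tmu hom$ identification from the quasi-injectivity of $\mu^\sha_X(\oot_X)$. First I equip $\mu^\sha_X(\oot_X)$ with a left dg-$\Esa_X$-module structure by mimicking, at the level of complexes, the action of $\ERf_X$ on $\mu_X(\O_X^t)$. The ring $\Esa_X$ has been constructed as a microlocal convolution algebra from the kernel $\mu^\sha_{X\times X}(\oot_{X\times X}^{(0,d_X)})$, and the same convolution with this kernel furnishes an honest morphism of dg-$\sha_{T^*X}$-modules $\Esa_X \otimes_{\sha_{T^*X}} \mu^\sha_X(\oot_X) \to \mu^\sha_X(\oot_X)$. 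Associativity and unitality are strict at the complex level because the functorial behaviour of the single kernel $L^\sha$ with respect to composition has been established in the preceding sections; hence $\mu^\sha_X(\oot_X)$ is a genuine object of $\Mod(\Esa_X)$, and thus of $\DC(\Esa_X)$.

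Next I transport this structure to a $\beta_{T^*X}(\ERf_X)$-module. Over $\Tnz^*X$ the dg-algebra $\Esa_X$ satisfies $H^0(\Esa_X) = \ERf_X$ and has no higher cohomology, so the natural map $\Esa_X \to \beta_{T^*X}(\ERf_X)$ is a quasi-isomorphism of dg-algebras. The extension--restriction of scalars functors of Section~\ref{sec:dg_alg} then supply an object $\O_X^\mu \in \DC(\beta_{T^*X}(\ERf_X))$, defined over $\Tnz^*X$, whose underlying complex in $\DC(\I(\C_{\Tnz^*X}))$ coincides with the forgetful image of $\mu^\sha_X(\oot_X)$. By the already-established isomorphism $\For'_{T^*X}(\mu^\sha_X(\oot_X)) \simeq \mu_X(\For'_X(\oot_X)) \simeq \mu_X(\O_X^t)$ over $\Tnz^*X$, this realises the first claim of the theorem.

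For the second claim, I evaluate $\alpha_{T^*X} \Rihom(\pi^{-1}F, \O_X^\mu)$ by replacing $\O_X^\mu$ with its representative $\mu^\sha_X(\oot_X)$. Since $\oot_X$ is locally free over $\sha^0_X$, the earlier results imply that $\mu^\sha_X(\oot_X)$ is quasi-injective, so $\Rihom(\pi^{-1}F, \mu^\sha_X(\oot_X))$ is computed by the naive $\ihom(\pi^{-1}F, \mu^\sha_X(\oot_X))$ at the complex level (after a standard resolution of $\pi^{-1}F$, available for $F \in \DC^-(\I(\C_X))$). The dg-$\Esa_X$-module structure on $\mu^\sha_X(\oot_X)$ transfers termwise to $\alpha_{T^*X}\ihom(\pi^{-1}F, \mu^\sha_X(\oot_X))$, and descending along the quasi-isomorphism $\Esa_X \to \beta_{T^*X}(\ERf_X)$ produces the required $\ERf_X$-action.

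The main obstacle is to verify that this action agrees, after forgetting the dg structure, with the classical convolution action of $\ERf_X$ on $\tmu hom(F,\O_X)$: both arise from the microlocal composition morphism~\eqref{eq:intro_micro_prod}, but one has been rigidified at the complex level via $\mu^\sha$ while the other lives only in the derived category via $\mu$. The identification has to be forced by the functoriality of $\mu^\sha_X$ as a complex-level lift of $\mu_X$ compatible with composition of kernels and with the forgetful functor $\For'$, which is precisely the package of properties assembled in the earlier part of the paper; unwinding the constructions, both actions are computed by the same convolution with $\mu^\sha_{X\times X}(\oot_{X\times X}^{(0,d_X)})$, so they coincide.
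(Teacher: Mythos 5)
Your proposal follows essentially the same route as the paper: realize the action at the level of complexes via the dg-algebra $\Esa_X$ and the dg-$\Esa_X$-module $\mu^\sha_X(\oot_X)$, use quasi-injectivity to compute $\Rihom$, compare with the derived-category action through the forgetful functors, and transfer the module structure by the change-of-dg-algebra equivalence of Section~\ref{sec:dg_alg}. The only imprecision is your claim of a ``natural map $\Esa_X \to \beta_{T^*X}(\ERf_X)$'': no such direct morphism exists, and one must pass through the zigzag $\Esa_X \xfrom{} \tau_{\leq 0}\Esa_X \to \ERf_X$ exactly as in Corollary~\ref{cor:chgt_dg-alg}, which is the mechanism you implicitly invoke.
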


\subsubsection*{Acknowledgements}
The starting point of this paper is a discussion with Raphaël Rouquier
and Pierre Schapira. The author also thanks Luca Prelli for his
comments, especially about soft sheaves on the analytic site.

\section{Notations}
If $X$ is a manifold or a site and $R$ a sheaf of rings on $X$, we
denote by $\Mod(R)$ the category of sheaves of $R$-modules on $X$.
The corresponding category of complexes is $\CC(R)$, and the derived
category $\DC(R)$; we use superscripts $b,+,-$ for the categories of
complexes which are bounded, bounded from below, bounded from above.
More generally, if $R$ is a sheaf of dg-algebras on $X$, $\Mod(R)$ is
the category of sheaves of dg-$R$-modules on $X$, $D(R)$ its derived
category (see section~\ref{sec:dg_alg}).  In particular, if $X$ is a
real analytic manifold, this applies to the subanalytic site $X_{sa}$
whose definition is recalled in section~\ref{sec:ind-sheaves_sasite}.
We denote by $\rho_X$ or $\rho$ the natural morphism of sites $X\to
X_{sa}$.  We denote by $\C_X$ and $\C_{X_{sa}}$ the constant sheaves
with coefficients $\C$ on $X$ and $X_{sa}$.

If $X$ is a manifold we denote by $\I(\C_X)$ the category of
ind-sheaves of $\C_X$-vector spaces on $X$ (see
section~\ref{sec:ind-sheaves_sasite}), and $\DC(\I(\C_X))$ its derived
category. This category comes with a natural functor $\alpha_X$, or
$\alpha\cl \I(\C_X) \to \Mod(\C_X)$ which corresponds to taking the
limit. Its left adjoint is denoted $\beta_X$, or $\beta$.

The dimension of a (real) manifold $X$ is denoted $d_X$; if $X$ is a
complex manifold, its complex dimension is $d^c_X$.

For a morphism of manifolds $f\cl X\to Y$, we let $\omega_{X|Y} = f^!
\C_Y$ be the relative dualizing complex.  Hence $\omega_{X|Y}$ is an
object of $\BDC(\C_X)$. If $Y$ is a point we simply write $\omega_X$;
then $\omega_X \simeq or_X[d_X]$, where $or_X$ is the orientation
sheaf of $X$. In fact, for $X$ connected, $\omega_{X|Y}$ is always
concentrated in one degree (since $X$ and $Y$ are manifolds), say $i$,
and we will use the notation $\omega'_{X|Y} = H^i\omega_{X|Y} [-i]$;
hence $\omega'_{X|Y}$ is a well-defined object of $\CC^b(\C_X)$.  For
an embedding of manifolds $i_Z\cl Z \hookrightarrow X$ we will often
abuse notations and write $\omega_{Z|X}$ for $i_{Z*} \omega_{Z|X}$.

For a manifold $X$, we let $TX$ and $T^*X$ be the tangent and
cotangent bundles. For a submanifold $Z\subset X$ we denote by $T_ZX$
and $T^*_ZX$ the normal and conormal bundle to $Z$.  In particular
$T^*_XX \simeq X$ is the zero section of $T^*X$ and we set $\Tnz^*X =
T^*X \setminus T^*_XX$.  We denote by $\tilde X_Z$ the normal
deformation of $Z$ in $X$ (see for example~\cite{KS90}). We recall
that it contains $T_ZX$ and comes with a map $\tau\cl \tilde X_Z \to
\R$ such that $\tau^{-1}(0) = T_ZX$ and $\tau^{-1}(r) \simeq X$ for
$r\not= 0$.  We also have another map $p\cl \tilde X_Z \to X$ such
that $p^{-1}(z) = (T_ZX)_z \cup \{z\} \times \R$ for $z\in Z$ and
$p^{-1}(x) \simeq \R\setminus \{0\}$ for $x\in X \setminus Z$.  We set
$\Omega = \tau^{-1}(\R_{>0})$.

For a morphism of manifolds $f\cl X\to Y$, the derivative of $f$ gives
the morphisms:
$$
T^*X \xfrom{f_d} X\times_Y T^*Y \xto{f_\pi} T^*Y.
$$
For two manifolds $X,Y$, $F\in \DC^+(\C_X)$, $G \in \DC^+(\C_Y)$,
we set $F \boxtimes G = p_1^{-1}F \otimes p_2^{-1}G$, where $p_i$ is
the projection from $X\times Y$ to the $i^{th}$ factor.  For three
manifolds $X,Y,Z$, and ``kernels'' $K \in \DC^+(\C_{X\times Y})$, $L
\in \DC^+(\C_{Y\times Z})$, we denote the ``composition of kernels''
by $K \circ L = Rp_{23!}(p_{12}^{-1}K \otimes p_{23}^{-1}L)$, where
$p_{ij}$ is the projection from $X\times Y \times Z$ to the
$i^{th}\times j^{th}$ factors.  We use the same notations for the
variants on subanalytic sites or using ind-sheaves.

\section{dg-algebras}
\label{sec:dg_alg}
In this section we recall some facts about (sheaves of) dg-algebras
and their derived categories. We refer the reader to~\cite{BL94}.

A dg-algebra $A$ is a $\Z$-graded algebra with a differential $d_A$ of
degree $+1$. A dg-$A$-module $M$ is a graded $A$-module with a
differential $d_M$ such that, for homogeneous elements $a\in A^i$,
$m\in M^j$, $d_M(a\cdot m) = d_A(a) \cdot m + (-1)^{i}a\cdot d_Mm$.

We consider a site $X$ and a sheaf of dg-algebras $A_X$ on $X$.  We
denote by $\Mod(A_X)$ the category of (left) dg-$A_X$-modules.  We let
$\widetilde A_X$ be the graded algebra underlying $A_X$ (i.e.
forgetting the differential). A morphism $f\cl M \to N$ in $\Mod(A_X)$
is said to be null homotopic if there exists an $\widetilde
A_X$-linear morphism $s\cl M \to N[-1]$ such that $f = sd_M + d_Ns$.
The homotopy category, $\K(A_X)$, has for objects those of
$\Mod(A_X)$, and for sets of morphisms those of $\Mod(A_X)$ quotiented
by null homotopic morphisms. A morphism in $\Mod(A_X)$ (or $\K(A_X)$)
is a quasi-isomorphism if it induces isomorphisms on the cohomology
groups.  Finally, the derived category $\DC(A_X)$ is the localization
of $\K(A_X)$ by quasi-isomorphisms.

Derived functors can be defined in this setting, in particular the
tensor product $\cdot \otimes_{A_X}^L \cdot$.  If $\phi\cl A_X \to
B_X$ is a morphism of sheaves of dg-algebras, we obtain the extension
of scalars $\phi^*\cl \DC(A_X) \to \DC(B_X)$, $M \mapsto
B_X\otimes_{A_X}^L M$, which is left adjoint to the natural
restriction of scalars $\phi_*\cl \DC(B_X) \to \DC(A_X)$.
By~\cite{BL94} (Theorem 10.12.5.1), if $\phi$ induces an isomorphism
$H(A) \isoto H(B)$, then these functors of restriction and extension
of scalars are mutually inverse equivalences of categories $\DC(A_X)
\simeq \DC(B_X)$.

\medskip

Some dg-algebras considered in this paper will appear as ring objects
in categories of complexes. We recall briefly what it means.  We let
$\shc$ be a tensor category with unit $\underline\C$ ($\shc$ will be
$\DC(\C_Y)$, $\DC(\I(\C_Y))$ or $\Mod(\sha_Y)$ for some manifold $Y$
and the unit is $\underline\C =\C_Y$).
\begin{definition}
  \label{def:anneau_module}
  A ring in $\shc$ is a triplet $(A,m,\varepsilon)$ where $A \in
  \shc$, $m\cl A \otimes A \to A$ and $\varepsilon \cl \underline\C
  \to A$ are morphisms in $\shc$ such that the following diagrams
  commute:
$$
\xymatrix{
A \otimes \underline\C \ar[r]^{A\otimes \varepsilon} \ar[dr] 
& A\otimes A  \ar[d]^m  \\
&A}
\quad
\xymatrix{
\underline\C \otimes A  \ar[r]^{\varepsilon \otimes A} \ar[dr] 
& A\otimes A  \ar[d]^m  \\
&A}
\quad
\xymatrix{
A\otimes A \otimes A \ar[r]^-{m \otimes A} \ar[d]^{A\otimes m}
 & A\otimes A  \ar[d]^m  \\
A\otimes A  \ar[r]^m  & A }
$$
In the same way, for such a ``ring'' $(A,m,\varepsilon)$, an action
of $A$ on $M\in \shc$ is a morphism, $\alpha\cl A\otimes M \to M$,
compatible with $m$ and $\varepsilon$. The pairs $(M,\alpha)$ of this
type form a category, where the morphisms from $(M,\alpha)$ to
$(M',\alpha')$ are the morphisms from $M$ to $M'$ commuting with the
action.
\end{definition}
If $E_X$ is a sheaf of (usual) algebras on $X$, we may consider $E_X$
as a ring object in $\DC(\C_X)$ and we denote by $\DC_{E_X}(\C_X)$ the
category of ``objects of $\DC(\C_X)$ with $E_X$-action'' as above.

We consider again a sheaf $A_X$ of dg-algebras on $X$. We assume that
its cohomology sheaves are $0$ except in degree $0$ and we set $E_X =
H^0(A_X)$.  Hence, if we forget the structures and view $A_X$, $E_X$
as objects of $\DC(\C_X)$, we have isomorphisms $A_X \isofrom
\tau_{\leq 0} A_X \isoto E_X$ (where $\tau_{\leq 0}$, $\tau_{\geq 0}$
denote the truncation functors).  We note that $\tau_{\leq 0} A_X =
\cdots\to A_X^{-1} \to \ker d_0 \to 0$ is sub-dg-algebra of $A_X$
(whereas $\tau_{\geq 0} A_X$ has no obvious structure of dg-algebra).
The multiplications of $A_X$ and $E_X$ induce morphisms in
$\DC(\C_X)$: $A_X \otimes A_X \to A_X$, $E_X\otimes E_X \to E_X$.
These morphisms coincide under the identification $A_X \simeq E_X$.
Hence $A_X$ and $E_X$ are isomorphic as ring objects in $\DC(\C_X)$.

For $M\in \DC(A_X)$, the structure of $A_X$-module induces a morphism
in $\DC(\C_X)$: $\alpha\cl E_X \otimes M \simeq A_X \otimes M \to M$.
Then $\alpha$ is an action of $E_X$ on $M$.  In this way we obtain a
forgetful functor $F_{A_X} \cl \DC(A_X) \to \DC_{E_X}(\C_X)$.
\begin{lemma}
  Let $A_X$ be a sheaf of dg-algebras, with cohomology sheaves
  concentrated in degree $0$ and $E_X = H^0(A_X)$.  Let $\phi\cl A_X
  \to B_X$ be a morphism of sheaves of dg-algebras such that $\phi$
  induces an isomorphism $H(A) \isoto H(B)$.  Then we have
  isomorphisms of functors $F_{A_X} \circ \phi_* \simeq F_{B_X}$ and
  $F_{B_X} \circ \phi^* \simeq F_{A_X}$.
\end{lemma}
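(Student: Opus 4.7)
The plan is to prove $F_{A_X}\circ \phi_*\simeq F_{B_X}$ first, by unwinding the construction of the forgetful functor, and then deduce $F_{B_X}\circ \phi^*\simeq F_{A_X}$ formally from this together with the Bernstein--Lunts result that $\phi^*$ and $\phi_*$ are mutually inverse equivalences $\DC(A_X)\simeq \DC(B_X)$, recalled above.

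For the first isomorphism, fix $N\in \DC(B_X)$. The underlying complex of $F_{A_X}(\phi_* N)$ and $F_{B_X}(N)$ in $\DC(\C_X)$ is the same, namely $N$, so what must be compared are the two $E_X$-actions. By the construction preceding the lemma, the action on $F_{B_X}(N)$ factors in $\DC(\C_X)$ as
$$E_X\otimes N \isofrom \tau_{\leq 0}B_X\otimes N\to B_X\otimes N\to N,$$
while the action on $F_{A_X}(\phi_* N)$ factors as
$$E_X\otimes N \isofrom \tau_{\leq 0}A_X\otimes N\to A_X\otimes N\to N,$$
and since the $A_X$-module structure on $\phi_* N$ is by definition the $B_X$-module structure precomposed with $\phi$, the last arrow factors as $A_X\otimes N\xto{\phi\otimes \id}B_X\otimes N\to N$. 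Because $\phi$ is a morphism of dg-algebras, it restricts to a morphism $\tau_{\leq 0}\phi\cl \tau_{\leq 0}A_X\to \tau_{\leq 0}B_X$, which is a quasi-isomorphism inducing the identity on $H^0=E_X$ (once the cohomology groups of $A_X$ and $B_X$ are identified via $\phi$). The commutativity of the square formed by the inclusions $\tau_{\leq 0}A_X\hookrightarrow A_X$, $\tau_{\leq 0}B_X\hookrightarrow B_X$, and the maps $\phi$, $\tau_{\leq 0}\phi$ then forces the two compositions above to agree in $\DC(\C_X)$. This identification is manifestly natural in $N$, giving the desired isomorphism of functors.

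The second isomorphism follows formally: for $M\in \DC(A_X)$, the adjunction unit $M\to \phi_*\phi^*M$ is an isomorphism in $\DC(A_X)$ by Bernstein--Lunts, so applying $F_{A_X}$ and using the first part with $N=\phi^*M$ yields
$$F_{A_X}(M)\simeq F_{A_X}(\phi_*\phi^*M)\simeq F_{B_X}(\phi^*M),$$
naturally in $M$.

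The main obstacle is bookkeeping: keeping track of the two distinct identifications $E_X\simeq A_X$ and $E_X\simeq B_X$ in $\DC(\C_X)$ (each obtained via truncation and taking $H^0$) and verifying that they are compatible with $\phi$. This compatibility reduces to the fact that $\phi$ is a morphism of dg-algebras which induces the identity on $H^0$ once the cohomology sheaves are identified, so once the diagrammatic compatibility is checked the equality of actions on $N$ in $\DC(\C_X)$ is immediate.
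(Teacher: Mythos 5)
Your proposal is correct and follows the same route as the paper: the paper simply declares the first isomorphism ``obvious'' (your unwinding of the two $E_X$-actions via the commuting square of truncations is exactly the content behind that word) and deduces the second from the fact that $\phi_*$ and $\phi^*$ are mutually inverse equivalences, precisely as you do via the adjunction unit. No gaps.
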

\begin{proof}
  The first isomorphism is obvious and the second one follows because
  $\phi_*$ and $\phi^*$ are inverse equivalences of categories.
\end{proof}
Applying this lemma to the morphisms $A_X \xfrom{\phi_{\leq 0}}
\tau_{\leq 0} A_X \xto{\phi_0} E_X$, we obtain:
\begin{corollary}
\label{cor:chgt_dg-alg}
  With the hypothesis of the above lemma, we have the commutative
  diagram:
$$
\xymatrix@R=2mm@C=2cm{
\DC(A_X) \ar[dd]_{\phi_0^* \circ \phi_{\leq 0 *}}
\ar[dr]^{F_{A_X}} \\
& \DC_{E_X}(\C_X) . \\
\DC(E_X)  \ar[ur]_{F_{E_X}}  }
$$
\end{corollary}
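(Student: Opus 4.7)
The plan is to deduce the corollary directly from the preceding lemma, applied in turn to each of the two morphisms of sheaves of dg-algebras that make up the chain $A_X \xfrom{\phi_{\leq 0}} \tau_{\leq 0} A_X \xto{\phi_0} E_X$, and then to compose the resulting isomorphisms of forgetful functors.

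First I would verify that the hypothesis of the lemma applies to both $\phi_{\leq 0}$ and $\phi_0$. Since $H^i(A_X) = 0$ for $i \neq 0$, the canonical maps $\tau_{\leq 0} A_X \to A_X$ and $\tau_{\leq 0} A_X \to H^0(A_X) = E_X$ are both quasi-isomorphisms. That $\phi_{\leq 0}$ is a morphism of dg-algebras was already noted in the discussion preceding the statement. For $\phi_0$ the only nontrivial point is compatibility with products: it follows from the Leibniz rule that $\mathrm{im}\, d_{-1}$ is a two-sided ideal in the subalgebra $\ker d_0 \subset A_X^0$ (if $a \in \ker d_0$ and $c \in A_X^{-1}$, then $d_{-1}(a\cdot c) = a \cdot d_{-1}(c)$, and similarly on the other side), so the multiplication descends to $E_X = \ker d_0 / \mathrm{im}\, d_{-1}$, and $\phi_0$ is a morphism of dg-algebras.

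Once these verifications are in place, the lemma can be applied twice. Applied to $\phi_{\leq 0}$ it yields the isomorphism $F_{A_X} \simeq F_{\tau_{\leq 0} A_X} \circ \phi_{\leq 0\, *}$ (the ``first isomorphism'' in the lemma). Applied to $\phi_0$ it yields $F_{\tau_{\leq 0} A_X} \simeq F_{E_X} \circ \phi_0^*$ (the ``second isomorphism''). Composing,
\[
F_{A_X} \;\simeq\; F_{\tau_{\leq 0} A_X} \circ \phi_{\leq 0\, *} \;\simeq\; F_{E_X} \circ \phi_0^* \circ \phi_{\leq 0\, *},
\]
which is precisely the commutativity of the stated triangle.

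There is essentially no obstacle: the argument is a formal consequence of the preceding lemma once one observes that $\tau_{\leq 0} A_X$ provides a common refinement through which both $A_X$ and $E_X$ can be compared, and the only piece deserving any care is the brief Leibniz-rule verification that $\phi_0$ respects products.
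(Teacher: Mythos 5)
Your proposal is correct and follows exactly the paper's (essentially tacit) argument: apply the preceding lemma to each of the two quasi-isomorphisms $\phi_{\leq 0}$ and $\phi_0$ through the intermediate dg-algebra $\tau_{\leq 0}A_X$, and compose the resulting isomorphisms of forgetful functors. The extra verification that $\phi_0$ respects products via the Leibniz rule is a reasonable detail that the paper leaves implicit.
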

In particular, for $M \in \DC_{E_X}(\C_X)$, if there exists $N \in
\DC(A_X)$ such that $F_{A_X}(N) \simeq M$ then there exists $N' \in
\DC(E_X)$ such that $F_{E_X}(N') \simeq M$

\section{Ind-sheaves and subanalytic site}
\label{sec:ind-sheaves_sasite}
We recall briefly some definitions and results of~\cite{KS01} about
ind-sheaves.  To define the ind-sheaves we are interested in we will
use the ``subanalytic site'' as in~\cite{KS01}, where it is introduced
to deal with tempered $\cinf$ functions. It is studied in more details
in~\cite{P07a}.

\subsection{Ind-sheaves}
For a category $\shc$ we denote by $\shc^\wedge$ the category of
functors from $\shc^{op}$ to the category of sets.  It comes with the
``Yoneda embedding'', $h\cl \shc \to \shc^\wedge$, $X \to
\Hom_\shc(\cdot,X)$.  The category $\shc^\wedge$ admits small
inductive limits but, in general, even if $\shc$ also admits such
limits, the functor $h$ may not commute with inductive limits.  We
denote by $\newindlim$ the inductive limit taken in the category
$\shc^\wedge$.

An ind-object in $\shc$ is an object of $\shc^\wedge$ which is
isomorphic to $\newindlim i$ for some functor $i \cl I \to \shc$, with
$I$ a small filtrant category.  We denote by $\Ind(\shc)$ the full
subcategory of $\shc^\wedge$ of ind-objects.

We are interested in two cases. Let $X$ be a real analytic manifold,
$\Mod(\C_X)$ the category of sheaves of $\C$-vector spaces on $X$,
$\Mod_{\R-c}(\C_X)$ the subcategory of $\R$-cons\-tructible sheaves,
$\Mod^c(\C_X)$ and $\Mod^c_{\R-c}(\C_X)$ their respective full
subcategories of objects with compact support.  We define as
in~\cite{KS01}:
$$
\I(\C_X) = \Ind(\Mod^c(\C_X))
\quad \text{ and } \quad
\I_{\R-c}(\C_X) = \Ind(\Mod^c_{\R-c}(\C_X)).
$$
There are natural exact embeddings $I_\tau\cl \I_{\R-c}(\C_X) \to
\I(\C_X)$ and $\iota_X \cl \Mod(\C_X) \to \I(\C_X)$, $F \mapsto
\newindlim F_U$, $U$ running over relatively compact open sets.  Then
$\iota_X$ sends $\Mod_{\R-c}(\C_X)$ into $\I_{\R-c}(\C_X)$.

The functor $\iota_X$ admits an exact left adjoint functor
$\alpha_X\cl \I(\C_X) \to \Mod(\C_X)$, $\newindlim_{i\in
  I}{{F}_i}\mapsto \varinjlim_{i\in I}{{F}_i}$.  Since $\iota_X$ is
fully faithful, we have $\alpha_X\circ\iota_X\simeq \id$.

The functor $\alpha_X$ admits an exact fully faithful left adjoint
$\beta_X \cl \Mod(\C_X)\to \I(\C_X)$.  Since $\beta_X$ is fully
faithful, we have $\alpha_X\circ\beta_X\simeq \id$.  For $Z \subset X$
a closed subset, we have
\begin{equation}
  \label{eq:beta_C_Z}
\beta_X (\C_Z) \simeq \newindlim_{W,\, Z \subset W} \C_{\overline W},
\qquad
\text{$W\subset X$ open subset.} 
\end{equation}
We write $\alpha$, $\beta$ for $\alpha_X$, $\beta_X$ when the context
is clear.  The machinery of Gro\-then\-dieck's six operations also
applies to this context.  There are not enough injectives in
$\I(\C_X)$, but enough ``quasi-injectives'' (see~\cite{KS01}
and~\cite{KS06}): $F \in \I(\C_X)$ is quasi-injective if the functor
$\Hom(\cdot,F)$ is exact on $\Mod^c(\C_X)$. The quasi-injective
objects are sufficient to derive the usual functors. In particular,
for a morphism of manifolds $f\cl X \to Y$ we have the functors:
\begin{align*}
f^{-1},\,f^! &\cl \BDC(\I(\C_Y)) \to \BDC(\I(\C_X)),\\
Rf_*,\, Rf_{!!}  &\cl \BDC(\I(\C_X)) \to \BDC(\I(\C_Y)),\\
\Rihom &\cl  \BDC(\I(\C_X))^{op} \times \BDC(\I(\C_X)) \to
\DC^+(\I(\C_X)), \\
\otimes &\cl  \BDC(\I(\C_X)) \times \BDC(\I(\C_X)) \to
\BDC(\I(\C_X)),
\end{align*}
and also $\Rhom = \alpha \Rihom \cl \BDC(\I(\C_X))^{op} \times
\BDC(\I(\C_X)) \to \DC^+(\C_X)$.

It will be convenient for us to use the equivalence of categories
given in~\cite{KS01} between $\I_{\R-c}(\C_X)$ and sheaves on the
subanalytic site, defined below.

\subsection{Subanalytic site}
In this paragraph $X$ is a real analytic manifold.  The open sets of
the site $X_{sa}$ are the subanalytic open subsets of $X$. A family
$\bigcup_{i\in I} U_i$ of such open sets is a covering of $U$ if and
only if, for any compact subset $K$, there exists a finite subfamily
of $J \subset I$ with $K \cap\bigcup_{i\in J} U_i = K \cap U$.  We
denote by $\Mod(\C_{X_{sa}})$ the category of sheaves of $\C$-vector
on $X_{sa}$.

We have a morphism of sites $\rho_X \cl X \to X_{sa}$ (where $X$ also
denotes the site naturally associated to the topological space $X$).
We just write $\rho$ if there is no risk of confusion.  In particular
we have adjoint functors $\rho_* \cl \Mod(\C_X) \to \Mod(\C_{X_{sa}})$
and $\rho^{-1} \cl \Mod(\C_{X_{sa}}) \to \Mod(\C_X)$.

The functor $\rho^{-1}$ is exact and $\rho_*$ is left exact and fully
faithful (hence $\rho^{-1} \circ \rho_* = \id$).  We denote by
$\rho_{c*}$ the restriction of $\rho_*$ to $\Mod_{\R-c}(\C_X)$.  Then
$\rho_{c*}$ is exact and, for $F\in \Mod_{\R-c}(\C_X)$, we usually
write $F$ instead of $\rho_{c*}F$.  The functor $\rho_{c*}$ induces an
equivalence of categories (see~\cite{KS01}, Theorem~6.3.5):
\begin{align*}
  \lambda \cl \I_{\R-c}(\C_X) &\to \Mod(\C_{X_{sa}}) \\
\newindlim_i F_i &\mapsto \varinjlim_i \rho_{c*}(F_i).
\end{align*}
Through this equivalence, the functor $\rho^{-1}$ corresponds to
$\alpha$ and it also admits an exact left adjoint functor,
corresponding to $\beta$. When dealing with the analytic site we will
use the notation $\rho_! \cl \Mod(\C_X) \to \Mod(\C_{X_{sa}})$ for
this functor.  For example~\eqref{eq:beta_C_Z} becomes $\rho_! \C_Z
\simeq \varinjlim_{Z \subset W} \C_{\overline W}$, where $W$ runs over
the subanalytic open subsets of $X$.  We note the commutative
diagrams:
$$
\xymatrix@C=5mm{
& \Mod(\C_X) \ar[dl]_{\rho_!} \ar[d]^{\beta_X} \\
\I_{\R-c}(\C_X) \simeq \Mod(\C_{X_{sa}}) \ar[r]_-{I_\tau}
& \I(\C_X) }
\!
\xymatrix@C=5mm{
\Mod_{\R-c}(\C_X) \ar[r] \ar[d]_{\rho_{c*}}
&\Mod(\C_X) \ar[d]^{\iota_X} \\
\I_{\R-c}(\C_X) \simeq \Mod(\C_{X_{sa}}) \ar[r]_-{I_\tau}
& \I(\C_X) }
$$
The functors appearing in these diagrams are exact and induce
similar commutative diagrams at the level of derived categories.

The functor $\hom$ is defined on $\Mod(\C_{X_{sa}})$ as on every site
and we set, for $Z\subset X$ a locally closed subanalytic subset:
\begin{equation}
  \label{eq:def_Gamma_Z}
\sect_Z(F) = \hom( \rho_* \C_Z, F),
\qquad
F_Z = F \otimes \rho_* \C_Z.
\end{equation}
The functors $\rho_*$ and $\hom$ commute, hence $\rho_*$ and
$\sect_Z$ also commute.  For subanalytic open subsets $U,V \subset X$
we have $\sect_U(F)(V) = F(U\cap V)$.

By analogy with ind-sheaves, a notion weaker than injective is
introduced in~\cite{P07a}: $F \in \Mod(\C_{X_{sa}})$ is
quasi-injective if $\Hom(\cdot, F)$ is exact on $\rho_*
\Mod^c_{\R-c}(\C_X)$.  In fact, since we consider coefficients in a
field, it is equivalent to ask that for any subanalytic open subsets
$U \subset V$ with compact closure $\sect(V;F) \to \sect(U;F)$ is
surjective.  Quasi-injective sheaves are sufficient to derive usual
left exact functors. In particular we obtain $\Rhom$, $\rsect_Z$, and
they commute with $R\rho_*$.  We note the following identity (which
has no equivalent on the classical site): for $F \in
\mathrm{D^b_{\R-c}}(\C_X)$, $H \in \DC^+(\C_X)$, $G \in
\DC^+(\C_{X_{sa}})$,
\begin{equation}
  \label{eq:formulaire1}
  \Rhom( R\rho_* F, G)\otimes \rho_! H \simeq 
\Rhom( R\rho_* F, G\otimes \rho_! H)
\qquad \text{in $\DC^+(\C_{X_{sa}})$}.
\end{equation}
We also have another related result (see~\cite{P07a},
Proposition~1.1.3): for $\{F_i\}_{i\in I}$ a filtrant inductive system
in $\Mod(\C_{X_{sa}})$ and $U\subset X$ an analytic open subset
\begin{equation}
  \label{eq:section_limit}
\varinjlim_{i} \rsect_U(F_i) \isoto  \rsect_U(\varinjlim_{i} F_i) .
\end{equation}
For a morphism $f\cl X \to Y$ there are the usual direct and inverse
image functors on the analytic sites $f_*$, $f^{-1}$, but also, as in
the case of ind-sheaves, a notion of proper direct image $f_{!!}$,
with a behavior slightly different from the behavior of $f_!$ on the
classical site.  The functor $f^{-1}$ and $f_*$, $f_{!!}$ admit
derived functors.  We quote in particular: for $F\in
\DC^+(\C_{X_{sa}})$, $G\in \BDC_{\R-c}(\C_Y)$ (we identify $G$ with
$\rho_*G$)
\begin{gather}
f_{!!}F = \varinjlim_{U} \, f_* (F_U),
\quad\text{$U\subset X$ relatively compact open subanalytic}, \\
f_{!!}F = \varinjlim_{K} \, f_* (\sect_K F),
\quad\text{$K\subset X$ compact subanalytic}, \\
  \label{eq:adj_im_dir_propre}
  Rf_{!!} \Rhom(f^{-1}G, F) \isoto \Rhom(G, Rf_{!!}F), \\
 \label{eq:adj_im_dir_propre_bis}
  Rf_{!!} \rsect_{f^{-1} U} F \isoto \rsect_U Rf_{!!} F.
\end{gather}
The derived functor $Rf_{!!} \cl \DC^+(\C_{X_{sa}}) \to
\DC^+(\C_{Y_{sa}})$ admits a right adjoint $f^!$. The notation is the
same as in the classical case because of the commutation relation $f^!
\circ R\rho_* \simeq R\rho_* \circ f^!$.  Hence $f^! \C_{Y_{sa}}
\simeq \rho_* \omega_{X|Y}$ and we will usually write $\omega_{X|Y}$
for $\rho_* \omega_{X|Y}$. The adjunction morphism between $f_{!!}$
and $f^!$ induces the integration morphism
\begin{equation}
  \label{eq:int_topologique}
  \operatorname{int}_f \cl Rf_{!!} (\omega_{X|Y}) \to \C_{Y_{sa}}.
\end{equation}

\subsection{``Soft'' sheaves}
\label{sec:soft_sheaves}
In this paragraph $X$ is a real analytic manifold and $X_{sa}$ is the
corresponding subanalytic site.  Though we are not in a framework of
sheaves on a locally compact space, we may introduce a notion of soft
sheaves on the subanalytic site which are acyclic for the direct image
functors.
\begin{definition}
\label{def:soft}
  A sheaf $F \in \Mod(\C_{X_{sa}})$ is soft if for any closed
  subanalytic subset $Z \subset X$ and any open subanalytic subset $U
  \subset X$ the natural morphism $\sect(U; F) \to \sect(U; F_Z)$ is
  surjective.
\end{definition}
We note the following isomorphism, as in the case of sheaves on a
reasonable topological space:
\begin{equation}
  \label{eq:section_F_K}
\sect(U;F_Z) \simeq \varinjlim_{U\cap Z \, \subset \, W \,\subset\, U} 
\sect(W; F),
\qquad
\text{$W\subset X$ subanalytic open set.} 
\end{equation}
From this description of sections it follows that quasi-injective
sheaves are soft.  We also note that if $F$ is soft and $Z \subset X$
is a closed subanalytic subset then $F_Z$ is soft.

Before we prove that soft sheaves are acyclic for functors of direct
image we need a lemma on coverings.
\begin{lemma}
  \label{lem:recouv_ouvert_ferme}
  Let $U = \bigcup_{i\in \N} U_i$ be a locally finite covering by
  subanalytic open subsets of $X$. There exist subanalytic open
  subsets of $X$, $V_i \subset U_i$, $i\in \N$, such that $U =
  \bigcup_{i\in \N} V_i$ and $(U\cap \overline{V_i}) \subset U_i$.
\end{lemma}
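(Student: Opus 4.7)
The plan is to construct the $V_i$ inductively, maintaining at each stage $n$ the invariants that the family $V_0, \ldots, V_{n-1}, U_n, U_{n+1}, \ldots$ still covers $U$ and that $U \cap \overline{V_i} \subset U_i$ holds for every $i < n$. Granting such a construction, the final cover condition $U = \bigcup_i V_i$ follows from local finiteness of $\{U_i\}$: every $x \in U$ belongs to only finitely many $U_i$, so once the largest such index has been processed in the induction, $x$ has been captured by some $V_j$.

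At the inductive step, consider
$$
A_n \;=\; U \setminus \Bigl( \bigcup_{i<n} V_i \;\cup\; \bigcup_{i>n} U_i \Bigr),
$$
which is closed in $U$ and, by the inductive hypothesis, contained in $U_n$. The task is then to find a subanalytic open $V_n$ with $A_n \subset V_n \subset U_n$ and $\overline{V_n} \cap U \subset U_n$. The set $A_n$ itself need not be globally subanalytic because it involves a countable union of the $U_i$, but local finiteness of $\{U_i\}$ ensures that $A_n \cap K$ is subanalytic for every subanalytic compact $K$.

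To produce $V_n$, I would fix a locally finite covering $\{W_k\}_{k \in \N}$ of $X$ by relatively compact subanalytic open sets, with each $\overline{W_k}$ contained in a slightly larger subanalytic relatively compact open $W_k'$. On each $W_k'$ only finitely many $U_i$ contribute, so $A_n \cap \overline{W_k}$ is a subanalytic closed subset of $U_n \cap W_k'$ disjoint from the subanalytic closed set $\overline{W_k} \cap (U \setminus U_n)$ (computed with finitely many $U_i$ present). A subanalytic separation argument (using a subanalytic distance function on a relatively compact set, or equivalently a \L{}ojasiewicz-type shrinking in the subanalytic category) then produces a subanalytic open $V_n^k \subset W_k \cap U_n$ containing $A_n \cap W_k$ and whose closure meets $U$ only inside $U_n$. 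Set $V_n = \bigcup_k V_n^k$: by local finiteness of $\{W_k\}$, this is locally a finite union, hence subanalytic and open; it is contained in $U_n$ and contains $A_n$; any closure point of $V_n$ in $U$ is, again by local finiteness, a closure point of finitely many $V_n^k$ and thus lies in $U_n$.

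The main obstacle is the local step: given two disjoint closed subanalytic subsets of a relatively compact subanalytic open set, produce a subanalytic open neighborhood of the first whose closure avoids the second. This is where one genuinely exploits subanalytic geometry rather than classical point-set topology, since a smooth bump function would break subanalyticity. Once this local separation is granted, the rest is a routine patching argument driven by local finiteness, together with an uneventful bookkeeping of the two inductive invariants.
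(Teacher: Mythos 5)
Your inductive skeleton is exactly the paper's: the same sets $A_n$ (the paper calls them $W_n = U_n \setminus (\bigcup_{i<n} V_i \cup \bigcup_{j>n} U_j)$), the same invariants, and the same appeal to local finiteness to conclude $U = \bigcup_i V_i$ at the end. But the one step that carries all the content — producing a subanalytic open $V_n$ with $A_n \subset V_n \subset U_n$ and $U \cap \overline{V_n} \subset U_n$ — is precisely what you defer as "the main obstacle... once this local separation is granted". That is the gap: you have reduced the lemma to an unproved separation statement rather than proved it. The paper closes this in one line. Fix an analytic distance $d$ on $X$; then $d(\cdot,Z)$ is a continuous subanalytic function for any subanalytic $Z$ (Bierstone--Milman, Remark 3.11), and one sets
$$
V_n = \{x \in U_n;\ d(x,W_n) < d(x,\partial U_n)\}.
$$
This is subanalytic and open, contains $W_n$ (there $d(x,W_n)=0 < d(x,\partial U_n)$), and satisfies $U\cap\overline{V_n}\subset U_n$: a point $x_0 \in U\cap\overline{V_n}\cap\partial U_n$ would have $\delta = d(x_0,W_n)>0$ since $W_n$ is closed in $U$, yet points of $V_n$ within $\delta/2$ of $x_0$ satisfy $d(x,W_n)\geq \delta/2 > d(x,\partial U_n)$, a contradiction.

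Two further remarks. First, your worry that $A_n$ "need not be globally subanalytic" is unfounded: subanalyticity is a local property of the ambient manifold, so a locally finite union of subanalytic sets is subanalytic, and the paper simply asserts this for $W_n$. Consequently your entire patching apparatus with the auxiliary cover $\{W_k\}$ is unnecessary — the distance-function construction works globally in one shot. Second, you do gesture at the right tool ("a subanalytic distance function"), so the idea is within reach; but as written the proposal names the crucial construction without performing it, and performing it is the whole point of the lemma.
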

\begin{proof}
  We choose an analytic distance $d$ on $X$ and we define $V_n$
  inductively as follows.  If $V_i$, $i<n$, is built we set $W_n = U_n
  \setminus ( \bigcup_{i<n} V_i \cup \bigcup_{j>n} U_j)$ and
$$
V_n = \{x\in U_n; \; d(x,W_n) < d(x,\partial U_n)\}.
$$
We note that $W_n$ is subanalytic because the covering is locally
finite.  Since $d$ is analytic the functions $d(\cdot,Z)$, $Z\subset
X$ subanalytic, are continuous subanalytic functions (see~\cite{BM88}
for the notion of subanalytic function). It follows that $V_n$ is a
subanalytic open subset of $X$ and $V_n \subset U_n$.

By construction $W_n \subset V_n$ and we deduce by induction that $U =
\bigcup_{i\leq n} V_i \cup \bigcup_{j>n} U_j$.  Since the covering is
locally finite this gives $U = \bigcup_{i\in \N} V_i$.

It remains to prove that $(U\cap \overline{V_n}) \subset U_n$.  If
this is false there exists $x_0 \in U\cap \overline{V_n} \cap \partial
U_n$.  Since $W_n$ is closed in $U$, we have $\delta = d(x_0,W_n) >0$,
and the ball $B(x_0,\delta/2)$ doesn't meet $V_n$.  In particular $x_0
\not\in \overline{V_n}$ which is a contradiction.
\end{proof}
\begin{proposition}
\label{prop:soft_section_exact}
  Let $0 \to F' \xto{\alpha} F \xto{\beta} F'' \to 0$ be an exact
  sequence in $\Mod(\C_{X_{sa}})$ with $F'$ soft. Then for any open
  subanalytic subset $U \subset X$ the morphisms
$$
\sect(U;F) \to \sect(U;F'')
\quad \text{and}\quad
\varinjlim_K \sect_K(U;F) \to \varinjlim_K \sect_K(U;F''),
$$
where $K$ runs over the compact subanalytic subsets of $X$, are
surjective.
\end{proposition}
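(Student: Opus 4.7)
The plan is to adapt the classical Godement argument for soft sheaves on paracompact spaces to the subanalytic setting, using Lemma~\ref{lem:recouv_ouvert_ferme} as a replacement for ``shrinking coverings'' and Definition~\ref{def:soft} together with~\eqref{eq:section_F_K} in place of the usual ``extend a section from a closed set to a neighbourhood''.

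For the first surjectivity, start from $s'' \in \sect(U;F'')$. Since $\beta\cl F \to F''$ is an epimorphism of sheaves on $X_{sa}$, I get a countable locally finite subanalytic open covering $\{U_i\}_{i\in\N}$ of $U$ with lifts $t_i \in \sect(U_i;F)$ of $s''|_{U_i}$. Apply Lemma~\ref{lem:recouv_ouvert_ferme} (if needed twice, to obtain a nested pair of shrinkings $V'_i \subset V_i \subset U_i$ with $U \cap \overline{V'_i}\subset V_i$ and $U \cap \overline{V_i}\subset U_i$) and build $\sigma_n \in \sect(W_n;F)$ inductively, with $W_n = V'_0 \cup \cdots \cup V'_n$, $\beta(\sigma_n) = s''|_{W_n}$, and $\sigma_n|_{W_{n-1}} = \sigma_{n-1}$. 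At each inductive step the obstruction $\delta_n = (\sigma_n - t_{n+1})|_{W_n \cap U_{n+1}}$ is a section of $F'$; via~\eqref{eq:section_F_K} it defines a class in $\sect(U_{n+1};F'_Z)$ for a suitably chosen closed subanalytic $Z$ squeezed between $\overline{V'_{n+1}} \cap W_n$ and $W_n \cap U_{n+1}$, and softness of $F'$ lifts this class to $\epsilon \in \sect(U_{n+1};F')$ agreeing with $\delta_n$ on an open neighbourhood of $W_n \cap V'_{n+1}$. Then $\sigma_n$ and $t_{n+1} + \epsilon$ glue to $\sigma_{n+1} \in \sect(W_{n+1};F)$. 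By local finiteness of $\{V'_i\}$, the $\sigma_n$ stabilize on compact subsets of $U$ and assemble into the required global $\sigma \in \sect(U;F)$ with $\beta(\sigma) = s''$.

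The compactly supported version follows from the absolute case plus one further application of softness. Given $s'' \in \sect_K(U;F'')$ with $K$ compact subanalytic, lift $s''$ to $\sigma \in \sect(U;F)$ by the previous step, pick a relatively compact subanalytic open $V \supset K$, and observe that $\sigma|_{U \setminus K}$ belongs to $\sect(U \setminus K;F')$ because $\beta(\sigma)$ vanishes there. Via~\eqref{eq:section_F_K} with $Z = X \setminus V$, this restriction defines a class in $\sect(U;F'_Z)$; softness of $F'$ lifts it to $\epsilon \in \sect(U;F')$ agreeing with $\sigma$ on an open neighbourhood of $U \setminus V$. Then $s = \sigma - \epsilon$ has support in the compact subanalytic set $\overline V$ and satisfies $\beta(s) = s''$.

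The main obstacle, as in classical soft-sheaf arguments, is the careful bookkeeping at the inductive gluing step: the closed subanalytic $Z$ has to be simultaneously small enough inside $U_{n+1}$ for $\delta_n$ to represent an element of $\sect(U_{n+1};F'_Z)$ via~\eqref{eq:section_F_K}, and large enough that the extension produced by softness recovers $\delta_n$ on all of $W_n \cap V'_{n+1}$. Because Lemma~\ref{lem:recouv_ouvert_ferme} only controls closures after intersection with $U$, inserting the secondary shrinking $V'_i \subset V_i$ is what lets the ``buffer'' $V_i \setminus \overline{V'_i}$ absorb the potential discrepancies near $\partial W_n$ and makes a workable $Z$ available.
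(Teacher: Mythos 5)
Your overall strategy is the right one, and the compactly supported half of your argument matches the paper's and is fine. But the inductive step in the first half has a genuine gap: the closed subanalytic set $Z$ you need does not exist in general. You require simultaneously that $\delta_n$ (defined only on the open set $W_n\cap U_{n+1}$) represent a class in $\sect(U_{n+1};F'_Z)$, which by~\eqref{eq:section_F_K} forces $U_{n+1}\cap Z\subset W_n\cap U_{n+1}$, and that the lift $\epsilon$ produced by softness agree with $\delta_n$ on all of $W_n\cap V'_{n+1}$. Softness only guarantees agreement on \emph{some} unspecified neighbourhood of $U_{n+1}\cap Z$, so the second requirement forces $W_n\cap V'_{n+1}\subset U_{n+1}\cap Z$. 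Since $Z$ is closed, $Z\cap U_{n+1}$ then contains the closure of $W_n\cap V'_{n+1}$ in $U_{n+1}$, which typically contains points of $\partial W_n\cap V'_{n+1}$; these lie outside $W_n$, contradicting $U_{n+1}\cap Z\subset W_n$. The buffer $V_i\setminus\overline{V'_i}$ does not help, because the problematic points sit on $\partial W_n$ \emph{inside} $V'_{n+1}$, exactly where $\delta_n$ ceases to be defined but where exact agreement would still be needed to glue honest sections on open sets.

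The way out — and it is what the paper does — is to run the induction with germs along closed sets rather than with honest sections on the open sets $W_n$: set $Z_n=\bigcup_{i\le n}\overline{V_i}$ and construct $\tilde s_n\in\sect(U;F_{Z_n})$ with $\beta(\tilde s_n)=s''|_{Z_n}$ and $\tilde s_{n+1}|_{Z_n}=\tilde s_n$. Then the obstruction $(\tilde s_n-t_{n+1})|_{Z_n\cap\overline{V_{n+1}}}$ lives in $\sect(U;F'_{Z_n\cap\overline{V_{n+1}}})$ for the genuinely closed subanalytic set $Z_n\cap\overline{V_{n+1}}$, so Definition~\ref{def:soft} applies verbatim, and ``agreement on some neighbourhood'' is precisely what is needed to glue elements of $\sect(U;F_{Z_n})$ and $\sect(U;F_{\overline{V_{n+1}}})$ into an element of $\sect(U;F_{Z_{n+1}})$; only one shrinking $V_i\subset U_i$ is needed. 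Local finiteness then recovers an honest section of $F$ on $U$ from the compatible system, as in your final step.
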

\begin{proof}
  We first consider a section $s\in \sect(U;F'')$.  We may find a
  locally finite covering $U = \bigcup_{i\in \N} U_i$ and $s_i \in
  \sect(U_i;F)$ such that $\alpha(s_i) = s|_{U_i}$.  By
  Lemma~\ref{lem:recouv_ouvert_ferme} there exists a subcovering $U =
  \bigcup_{i\in \N} V_i$ with $(U\cap \overline{V_i}) \subset U_i$.
  
  We set $Z_n = \bigcup_{i=0}^n \overline{V_i}$ and prove by induction
  on $n$ that there exists a section $\tilde s_n \in \sect(U;F_{Z_n})$
  such that $\beta(\tilde s_n) = s|_{Z_n}$ and $\tilde s_n |_{Z_{n-1}}
  = \tilde s_{n-1}$.
  
  This is clear for $n=0$ and we assume it is proved for $n$.  We set
  $t_n = (\tilde s_n - s_{n+1})|_{Z_n \cap \overline{V_{n+1}}}$.  Then
  $\beta(t_n)=0$ so that $t_n$ belongs to $\sect(U;F'_{Z_n \cap
    \overline{V_{n+1}}})$ and by hypothesis we may extend it to $t \in
  \sect(U;F')$. Now we define $\tilde s_{n+1} \in
  \sect(U;F_{Z_{n+1}})$ by $\tilde s_{n+1}|_{Z_n} = \tilde s_n$ and
  $\tilde s_{n+1}|_{\overline{V_{n+1}}} = s_{n+1} + \alpha(t)$.  The
  $\tilde s_n$ glue together into a section $\tilde s \in \sect(U;F)$
  such that $\beta(\tilde s) = s$, which proves the surjectivity of
  the first morphism.

\medskip  

Now we consider a compact $K$ and $s\in \sect_K(U;F'')$.  We choose an
open subanalytic subset $V$ such that $K \subset V$ and $K' =
\overline{V}$ is compact.  We set $Z = X \setminus V$.  We just have
seen that we may find $\tilde s \in \sect(U;F)$ such that
$\beta(\tilde s) = s$.  Hence $\beta(\tilde s|_Z) = 0$ so that $\tilde
s|_Z \in \sect(U;F'_Z)$ and we may extend $\tilde s|_Z$ to $t\in
\sect(U;F')$. Then $\hat s = \tilde s - \alpha(t)$ satisfies $\supp
\hat s \subset K'$ and $\beta(\hat s) = s$.
\end{proof}
\begin{corollary}
\label{cor:soft_conoyau}
  If $0\to F' \to F \to F'' \to 0$ is an exact sequence in
  $\Mod(\C_{X_{sa}})$ with $F'$ and $F$ soft, then $F''$ also is soft.
\end{corollary}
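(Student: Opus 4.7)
The plan is to show, for any open subanalytic $U \subset X$ and closed subanalytic $Z \subset X$, that the morphism $\sect(U;F'') \to \sect(U;F''_Z)$ is surjective, by combining the two softness hypotheses with the description \eqref{eq:section_F_K} of sections of $F''_Z$. Given $s \in \sect(U;F''_Z)$, I would first use \eqref{eq:section_F_K} to represent $s$ by a section $s_W \in \sect(W;F'')$ for some subanalytic open subset $W$ with $U \cap Z \subset W \subset U$.

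Next, applying Proposition \ref{prop:soft_section_exact} to the exact sequence $0 \to F' \to F \to F'' \to 0$ (whose first term is soft by hypothesis) and to the open subanalytic subset $W$, I lift $s_W$ to some $t_W \in \sect(W;F)$. Through the identification \eqref{eq:section_F_K} applied to $F_Z$, the section $t_W$ represents an element $\tilde t \in \sect(U;F_Z)$ whose image under the map $\sect(U;F_Z) \to \sect(U;F''_Z)$ induced by $\beta$ is precisely $s$, by the naturality of \eqref{eq:section_F_K} in the sheaf variable.

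Now I would invoke softness of $F$ to lift $\tilde t$ to a section $t \in \sect(U;F)$ via the surjection $\sect(U;F) \to \sect(U;F_Z)$. Setting $u = \beta(t) \in \sect(U;F'')$, the commutativity of the square
$$
\xymatrix{
\sect(U;F) \ar[r] \ar[d]_{\beta} & \sect(U;F_Z) \ar[d]^{\beta} \\
\sect(U;F'') \ar[r] & \sect(U;F''_Z)
}
$$
yields that $u$ restricts to $s$ in $\sect(U;F''_Z)$, which proves $F''$ is soft. I do not expect a genuine obstacle: the only mildly delicate point is to check that the natural transformation $\beta \cl F \to F''$ intertwines the two colimit descriptions in \eqref{eq:section_F_K}, but this is immediate from functoriality of $(\cdot)_Z$ and of $\sect(W;\cdot)$ in $W$.
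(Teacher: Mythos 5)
Your proof is correct and follows essentially the same route as the paper: both arguments reduce the softness of $F''$ to Proposition~\ref{prop:soft_section_exact} together with the softness of $F$, via the same commutative square. The only immaterial difference is that you obtain the surjectivity of $\sect(U;F_Z)\to\sect(U;F''_Z)$ from the colimit description~\eqref{eq:section_F_K}, whereas the paper gets it by applying Proposition~\ref{prop:soft_section_exact} to the restricted exact sequence $0\to F'_Z\to F_Z\to F''_Z\to 0$, using that $F'_Z$ is again soft.
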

\begin{proof}
  For $Z\subset X$ a subanalytic closed subset we have the exact
  sequence $0\to F'_Z \to F_Z \to F''_Z \to 0$ and $F'_Z$, $F_Z$ still
  are soft.  Hence Proposition~\ref{prop:soft_section_exact} implies
  that, for any subanalytic open subset $U\subset X$, the morphisms
  $\sect(U;F) \to \sect(U;F'')$ and $\sect(U;F_Z) \to \sect(U;F''_Z)$
  are surjective.  Now it follows from the definition that $F''$ is
  soft.
\end{proof}
\begin{corollary}
\label{cor:soft_acyclique}
Let $f\cl X\to Y$ be a morphism of analytic manifolds, $U\subset X$ an
open subanalytic subset. Then soft sheaves in $\Mod(\C_{X_{sa}})$ are
acyclic for the functors $\sect(U;\cdot)$, $\varinjlim_K
\sect_K(U;\cdot)$, $K$ running over the compact subsets of $X$,
$\sect_U$, $f_*$ and $f_{!!}$.
\end{corollary}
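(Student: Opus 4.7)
The plan is to verify that soft sheaves form a class adapted to each of the listed functors, using the standard criterion: a class $\mathcal{J}\subset\Mod(\C_{X_{sa}})$ provides acyclic resolutions for a left exact functor $T$ whenever (a) every sheaf embeds into an object of $\mathcal{J}$, (b) in a short exact sequence $0\to F'\to F\to F''\to 0$ with $F',F\in\mathcal{J}$ one also has $F''\in\mathcal{J}$, and (c) $T$ sends such exact sequences to exact sequences. Granted these three conditions, every object of $\mathcal{J}$ is $T$-acyclic: resolve $F\in\mathcal{J}$ by an exact sequence $0\to F\to I^0\to I^1\to\cdots$ with $I^i\in\mathcal{J}$, note that by (b) all kernels $K^i=\ker(I^i\to I^{i+1})$ lie in $\mathcal{J}$, and deduce from (c) applied to $0\to K^i\to I^i\to K^{i+1}\to 0$ that $H^i(T(I^\bullet))=0$ for $i>0$.

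Properties (a) and (b) hold globally: every sheaf on $X_{sa}$ embeds in a quasi-injective one, and quasi-injective sheaves are soft by the remark after Definition~\ref{def:soft}, giving~(a); while~(b) is exactly Corollary~\ref{cor:soft_conoyau}. So the real content is to verify~(c) for each of the five functors. For $T=\sect(U;\cdot)$ and $T=\varinjlim_K\sect_K(U;\cdot)$ this is precisely Proposition~\ref{prop:soft_section_exact}.

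The three remaining functors reduce to these cases by computing sections on subanalytic opens. For $T=\sect_U$ and a subanalytic open $V\subset X$, one has $\sect(V;\sect_U F)=F(U\cap V)$, so surjectivity on sections follows from the first half of Proposition~\ref{prop:soft_section_exact} applied to $U\cap V$. For $T=f_*$ and a subanalytic open $V\subset Y$, one has $(f_*F)(V)=F(f^{-1}(V))$ with $f^{-1}(V)$ subanalytic open in $X$, which again reduces to the first half of that proposition. Finally, using $(f_{!!}F)(V)=\varinjlim_K\sect_K(f^{-1}(V);F)$ for $K$ running over compact subanalytic subsets of $X$, together with exactness of filtrant inductive limits, surjectivity on sections for $f_{!!}$ follows from the second half of Proposition~\ref{prop:soft_section_exact}. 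The only point to handle with care is precisely this last reduction for $f_{!!}$: one must commute the filtrant inductive limit with the short exact sequence before invoking the section-surjectivity result. This step is formal, so there is no genuine obstacle; everything has been set up to make the verification mechanical once Proposition~\ref{prop:soft_section_exact} and Corollary~\ref{cor:soft_conoyau} are in hand.
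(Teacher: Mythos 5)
Your proposal is correct and follows essentially the same route as the paper, which simply invokes Proposition~\ref{prop:soft_section_exact} and Corollary~\ref{cor:soft_conoyau} together with ``usual homological algebra arguments'' for the section functors and then reduces the remaining functors to them; you have merely spelled out the standard acyclicity criterion and the section-wise reductions (via $\sect(V;\sect_UF)=F(U\cap V)$, $(f_*F)(V)=F(f^{-1}V)$, and the filtrant-colimit description of $f_{!!}$) that the paper leaves implicit. The only cosmetic point is that in your dimension-shifting step one should take the $I^i$ injective (injectives are quasi-injective, hence soft) so that $H^i(T(I^\bullet))$ is by definition $R^iT(F)$.
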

\begin{proof}
  For the first two functors this follows from
  Proposition~\ref{prop:soft_section_exact} and
  Corollary~\ref{cor:soft_conoyau} by usual homological algebra
  arguments. This implies the result for the other functors.
\end{proof}

\subsection{Tempered functions}
Here we recall the definition of tempered $\cinf$ functions.  We also
state a tempered de Rham lemma on the subanalytic site, which is
actually a reformulation of results of~\cite{K84}.  In this paragraph,
$X$ is a real analytic manifold.
\begin{definition}
  A $\cinf$ function $f$ defined on an open set $U$ has ``polynomial
  growth at $p \in X$'' if there exist a compact neighborhood $K$ of
  $p$ and $C,N >0$ such that $\forall x\in K\cap U$, $|f(x)| < C \,
  d(x, K\setminus U)^{-N}$, for a distance $d$ defined through some
  coordinate system around $p$.
  
  We say that $f$ is tempered if all its derivatives have polynomial
  growth at any point.  In~\cite{KS01} it is proved, using results of
  \L ojasiewicz, that these functions define a subsheaf $\cinft_X$ of
  $\rho_* \cinf_X$ on $X_{sa}$.
  
  We denote by $\Omegat_X^{t,i}$ the sheaf on $X_{sa}$ of forms of degree
  $i$ with tempered coefficients. We obtain as usual a sheaf of
  dg-algebras on $X_{sa}$, the ``tempered de Rham algebra''
  $\Omegat_{X}^t = 0 \to \Omegat_{X}^{t,0} \to \cdots \to
  \Omegat_{X}^{t,n} \to 0$.
\end{definition}
\begin{lemma}
\label{lem:de_Rham_tempere}
The tempered de Rham algebra is a resolution of the constant sheaf on
the subanalytic site, i.e. we have an exact sequence on $X_{sa}$:
$$
0 \to \C_{X_{sa}} \to \Omegat_{X}^{t,0} \to \cdots
\to \Omegat_{X}^{t,n} \to 0.
$$
\end{lemma}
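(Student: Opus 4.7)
The statement is a local statement in the subanalytic Grothendieck topology: exactness at $\Omegat_X^{t,i}$ amounts to saying that a closed tempered $i$-form on a subanalytic open $U$ becomes the differential of a tempered $(i-1)$-form after refining $U$ by a subanalytic open covering. My plan splits this into two points: \textbf{(i)} identifying the kernel of $d\cl \Omegat_X^{t,0} \to \Omegat_X^{t,1}$ with $\C_{X_{sa}}$, and \textbf{(ii)} a local tempered Poincar\'e lemma: every $p \in X$ has a basis of subanalytic open neighborhoods $V$ such that every closed tempered $i$-form on $V$ with $i \geq 1$ is the differential of a tempered $(i-1)$-form on $V$. Point (i) is immediate: a tempered $\cinf$ function $f$ with $df=0$ on a subanalytic open $U$ is locally constant, and the sections of $\C_{X_{sa}}$ on $U$ are precisely the locally constant $\C$-valued functions, since the connected components of a subanalytic open set form a locally finite family of subanalytic open sets.

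For (ii), I would pick real analytic coordinates around $p$ identifying a neighborhood with $\R^n$ sending $p$ to $0$, and take as a fundamental system the star-shaped subanalytic open neighborhoods $V$ of $0$ with relatively compact closure. On such $V$ apply the classical Poincar\'e homotopy operator
$$
K\omega(x) = \int_0^1 t^{i-1}\,\iota_{\vec x}\,\omega(tx)\,dt,
$$
which satisfies $dK + Kd = \id$ on forms of positive degree and therefore produces a smooth primitive $K\omega$ of any closed form $\omega$.

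The main obstacle, which is essentially the content imported from~\cite{K84}, is to verify that $K\omega$ is again tempered. The delicate point is that when $x$ approaches $\partial V$ the entire ray $\{tx : t\in[0,1]\}$ also approaches $\partial V$, so bounding $K\omega(x)$ requires controlling $|\omega(tx)|$ uniformly in $t$ while the only available input is a pointwise bound $|\omega(y)| \leq C\,d(y,\partial V)^{-N}$. This is handled by a \L ojasiewicz inequality applied to the subanalytic function $d(\cdot,\partial V)$, yielding a bound of the form $d(tx,\partial V) \geq c\, t^\alpha\, d(x,\partial V)^\beta$ on $V$; the same estimate, combined with differentiation under the integral sign, controls all derivatives of $K\omega$. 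This shows $K\omega \in \sect(V; \Omegat_X^{t,i-1})$, and gathering such $V$ into a subanalytic open covering of the original $U$ (using that finite subanalytic open coverings refine any open cover of a relatively compact subanalytic set) closes the sheaf-theoretic exactness.
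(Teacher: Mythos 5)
Your proof is correct in outline, but it takes a genuinely different route from the paper. The paper does not prove a tempered Poincar\'e lemma directly: it reduces the statement, by sheafification, to the isomorphism $\RHom(\rho_*F,\C_{X_{sa}})\simeq \RHom(\rho_*F,\Omegat_X^t)$ for $\R$-constructible $F$, writes $\Omegat_X^t \simeq \Rhom_{\rho_!\D_X}(\rho_!\shan_X,\cinft_X)$ via a Koszul resolution, and then invokes Proposition~4.6 of~\cite{K84} together with the comparison theorem of~\cite{KS96} between tempered distributions and tempered $\cinf$ functions; it is a reformulation of existing deep results rather than a new argument. Your route is self-contained modulo basic subanalytic geometry, and the crux is exactly where you place it: for the homotopy operator to preserve temperedness you need $d(tx,\partial V)\geq c\,d(x,\partial V)^N$ uniformly in $t\in[0,1]$, and this is precisely what \L ojasiewicz's inequality (\cite{BM88}, Theorem~6.4) gives on the compact subanalytic set $\overline V\times[0,1]$, applied to $f(x,t)=d(tx,\R^n\setminus V)$ and $g(x,t)=d(x,\R^n\setminus V)$ --- note that the inclusion of zero sets $f^{-1}(0)\subset g^{-1}(0)$ required there is exactly the star-shapedness of $V$, so that hypothesis is doing double duty (homotopy formula and estimate). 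What each approach buys: the paper's argument immediately yields the functorial statement~\eqref{eq:de_Rham_tempere} for arbitrary $\R$-constructible $F$ (though, as the paper itself observes, this is equivalent to the lemma by sheafification), and stays uniform with the distribution-theoretic literature; yours is more elementary and makes the analytic content visible, at the cost of having to be slightly careful that restriction to smaller subanalytic opens preserves temperedness and that the star-shaped neighborhoods can be organized into a locally finite covering admissible in the site $X_{sa}$ --- both routine points.
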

\begin{proof}
  In other words we have to prove that the morphism $\C_{X_{sa}} \to
  \Omegat_{X}^t$ in $\BDC(\C_{X_{sa}})$ is an isomorphism.  For this it
  is enough to see that, for any $F \in \BDC_{\R-c}(\C_X)$ we have
  \begin{equation}
    \label{eq:de_Rham_tempere}
\RHom(\rho_*F, \C_{X_{sa}}) \simeq \RHom(\rho_*F,\Omegat_{X}^t).
  \end{equation}
  Indeed for any $G \in \DC^+(\C_{X_{sa}})$, $H^k(G)$ is the sheaf
  associated to the presheaf $U \mapsto R^k\sect(U;G) = H^k
  \RHom(\rho_*\C_U, G)$; hence~\eqref{eq:de_Rham_tempere} applied to
  $F = \C_U$ gives the result.
  
  Now we prove~\eqref{eq:de_Rham_tempere}.  Actually this is
  Proposition~4.6 of~\cite{K84}, except that it is not stated in this
  language, and that it is given for tempered distributions instead of
  tempered $\cinf$ functions.  We let $\shan_X$ be the sheaf of real
  analytic functions and $\D_X$ the sheaf of linear differential
  operators with coefficients in $\shan_X$. Using a Koszul resolution
  of $\shan_X$ we have the standard isomorphism $\Rhom_{\rho_!
    \D_X}(\rho_!\shan_X, {\cinf_X}^t) \simeq \Omegat_{X}^t$.
  In~\cite{K84} a functor $RTH_X(F)$ is defined (now denoted
  $T\hom(F,\D b_X)$) and Proposition~4.6 reads:
$$
\RHom(F, \C_X) \simeq \RHom_{\D_X}(\shan_X, T\hom(F,\D b_X) ).
$$
To replace distributions by $\cinf$ functions, we have an analog of
$T\hom(F,\D b_X)$ for $\cinf$ functions, introduced in~\cite{KS96}
and~\cite{KS01}.  By~\cite{KS96}, Theorem~10.5, we have the comparison
isomorphism
$$
\Rhom_{\D_X}(\shan_X, T\hom(F,\cinf_X))
\simeq 
\Rhom_{\D_X}(\shan_X,T\hom(F,\D b_X) ).
$$
Actually, in~\cite{KS96} $X$ is a complex manifold and the result
is stated for the sheaf of anti-holomorphic functions instead of
$\shan_X$, but the proof also works in our case.  Following
~\cite{KS01}, Proposition~7.2.6 or~\cite{P07a}, Proposition~3.3.5, we
may express the functor $T\hom$ using the analytic site:
$T\hom(F,\cinf_X) \simeq \rho^{-1} \Rhom(\rho_* F, {\cinf_X}^t)$.

Putting these isomorphisms together we
obtain~\eqref{eq:de_Rham_tempere}:
\begin{align*}
  \RHom(\rho_*F,\Omegat_{X}^t)
&\simeq
\RHom(\rho_*F, \Rhom_{\rho_! \D_X}(\rho_!\shan_X, {\cinf_X}^t) )   \\
\displaybreak[1]
&\simeq
\RHom_{\rho_! \D_X}(\rho_!\shan_X, 
 \Rhom(\rho_*F,  {\cinf_X}^t) )  \\
\displaybreak[1]
&\simeq
\RHom_{\D_X}(\shan_X, T\hom(F,\D b_X) )  \\
\label{eq:de_Rham_tempD}
&\simeq
\RHom(F, \C_X),
\end{align*}
where we have used adjunction morphisms between $\otimes$, $\hom$ and
$\rho_!, \rho^{-1}$.
\end{proof}
The integration of forms also makes sense in the tempered case: we let
$f\cl X\to Y$ be a submersion with fibers of dimension $d$, $V\subset
Y$ a constructible open subset and we consider a form $\omega \in
\sect(f^{-1}(V); \Omegat_X^{t,i+d} \otimes or_{X|Y})$ such that the
closure (in $X$) of $\supp \omega$ is compact. Then $\int_f \omega \in
\sect(V; \Omegat_Y^{t,i})$.  We deduce the morphism of complexes
\begin{equation}
  \label{eq:int_cinf}
\int_f \cl  f_{!!}(\Omegat_X^t \otimes \omega'_{X|Y})
\to \Omegat_Y^t  .
\end{equation}
Its image in $\BDC(\C_{Y_{sa}})$ coincides with the morphism
$\operatorname{int}_f$ of~\eqref{eq:int_topologique}.

\section{Resolution}
\label{sec:resolution}
In this section we consider real analytic manifolds and sheaves on
their associated subanalytic sites.

\begin{definition}
\label{def:resolution}
For a real manifold $X$ we introduce the notations, $\widehat X =
X\times \R$, $i_X\cl X \to \widehat X$, $x\mapsto (x,0)$ and $X^+ =
X\times \R_{>0}$.  We consider the tempered de Rham algebra on the
site ${\widehat X}_{sa}$,
$$
\Omegat_{\widehat X}^t = 0 \to \Omegat_{\widehat X}^{t,0} \to \cdots
\to \Omegat_{\widehat X}^{t,n+1} \to 0,
$$
and we define a sheaf of anti-commutative dg-algebras on $X_{sa}$:
$\sha_X = i_X^{-1} \sect_{X^+}(\Omegat_{\widehat X}^t)$.

We denote by $\tau_{X,1}\cl \widehat X \to X$ and $\tau_{X,2}\cl
\widehat X \to \R$ the projections, and by $t$ the coordinate on $\R$.
This gives a canonical element $dt \in \sha_X^1$. The decomposition
$\widehat X = X\times \R$ induces a decomposition of the differential
$d= d_1 +d_2$ in anti-commuting differentials, where we set
$d_2(\omega) = (\partial \omega / \partial t)dt$.
\end{definition}

The algebra $\sha_X$ comes with natural morphisms related to inverse
image and direct image by a smooth map.  Let $f\cl X \to Y$ be a
morphism of manifolds. It induces $\widehat f = f\times \id$ and
$f^+$ in the following diagram, whose squares are Cartesian:
$$
\xymatrix@R=5mm{
X \ar[r]^{i_X} \ar[d]_f \ar@{}[dr]|{\Box}
& \widehat X \ar[d]^{\widehat f} \ar@{}[dr]|{\Box}
& X^+ \ar@{_{(}->}[l] \ar[d]^{f^+} \\
Y \ar[r]^{i_Y} &  \widehat Y  & Y^+ \ar@{_{(}->}[l] 
}
$$
We note that $X^+ = {\widehat f}^{-1}(Y^+)$ and this gives a
morphism of functors ${\widehat f}^{-1} \sect_{Y^+} \to \sect_{X^+}
{\widehat f}^{-1}$. Thus we obtain a morphism of dg-algebras:
$$
{\widehat f}^{-1} \sect_{Y^+} (\Omegat_{\widehat Y}^t) 
\to \sect_{X^+} {\widehat f}^{-1} (\Omegat_{\widehat Y}^t)
\to \sect_{X^+}  (\Omegat_{\widehat X}^t) .
$$
\begin{definition}
  \label{def:image_inv_sha}
  We denote by $f^\sharp \cl f^{-1} \sha_Y \to \sha_X$ the image of
  the above morphism by the restriction functor $i_X^{-1}$.  It is a
  morphism of dg-algebras.
\end{definition}
Now we assume that $f$ is smooth. Hence $\widehat f$ is also smooth
and we have the integration morphism~\eqref{eq:int_cinf}
$\int_{\widehat f} \cl \widehat f_{!!}(\Omegat_{\widehat X}^t \otimes
\omega'_{X|Y}) \to \Omegat_{\widehat Y}^t$.  We apply the functor
$i_Y^{-1} \sect_{Y^+}$ to this morphism. We note the base change
isomorphism $f_{!!} i_X^{-1} \simeq i_Y^{-1} \widehat f_{!!}$ and the
morphism $\widehat f_{!!}  \sect_{X^+} \to \sect_{Y^+} \widehat
f_{!!}$. They give the sequence of morphisms:
\begin{equation}
\label{eq:integration}
  \begin{split}
   f_{!!} (\sha_X\otimes \omega'_{X|Y})
&= f_{!!} i_X^{-1} \sect_{X^+} 
(\Omegat_{\widehat X}^{t} \otimes \omega'_{\widehat X|\widehat Y})
\simeq  i_Y^{-1} \widehat f_{!!}  \sect_{X^+} 
(\Omegat_{\widehat X}^{t} \otimes \omega'_{\widehat X|\widehat Y} ) \\
&\to i_Y^{-1} \sect_{Y^+} \widehat f_{!!} 
(\Omegat_{\widehat X}^{t} \otimes \omega'_{\widehat X|\widehat Y})
\to i_Y^{-1} \sect_{Y^+} \Omegat_{\widehat Y}^{t}
= \sha_Y
  \end{split}
\end{equation}
\begin{definition}
\label{def:integration_sha}
For a smooth map $f\cl X \to Y$, we call
morphism~\eqref{eq:integration} the integration morphism and denote it
$\int_f\cl f_{!!}  (\sha_X\otimes \omega'_{X|Y}) \to \sha_Y$.
\end{definition}
The main result of this section is the following theorem.  It is
proved in the remaining part of the section: the quasi-injectivity of
the $\sha_X^i$ is proved in Proposition~\ref{prop:qis-injectivite} and
the fact that $\sha_X$ is a resolution is
Corollary~\ref{cor:resolution}.
\begin{theorem}
\label{thm:resolution}
Let $X$ be a real analytic manifold.  The sheaf of dg-algebras
$\sha_X$ is a quasi-injective resolution of $\C_{X_{sa}}$.
\end{theorem}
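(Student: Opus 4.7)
The plan is to prove the two assertions of Theorem~\ref{thm:resolution} separately, along the lines indicated by the author: first the quasi-injectivity of each $\sha_X^i$ (Proposition~\ref{prop:qis-injectivite}), then the exactness of $\C_{X_{sa}}\to\sha_X^\bullet$ (Corollary~\ref{cor:resolution}). For quasi-injectivity I would unwind the definition
\[
\sha_X^i(U) = \varinjlim_{W} \Omegat_{\widehat X}^{t,i}(W\cap X^+),
\]
with $W$ ranging over subanalytic open neighborhoods of $U\times\{0\}$ in $\widehat X$. Given subanalytic $U\subset V$ with $V$ relatively compact, and a class over $U$ represented by a tempered form $\tilde\omega$ on $W\cap X^+$, I would replace $W$ by $W\cap(V\times(-1,1))$, which is still a subanalytic open neighborhood of $U\times\{0\}$ but now sits inside the relatively compact subanalytic open set $V\times(-1,1)\subset\widehat X$. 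The quasi-injectivity of $\Omegat_{\widehat X}^{t,i}$ on $(\widehat X)_{sa}$---a standard consequence of the \L ojasiewicz inequalities built into the definition of $\cinft$, see~\cite{KS01} and~\cite{P07a}---then supplies an extension of $\tilde\omega$ to $(V\times(-1,1))\cap X^+=V\times(0,1)$, yielding a representative of a section in $\sha_X^i(V)$ that restricts to the prescribed class over $U$.

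For the resolution statement I would apply $i_X^{-1}\sect_{X^+}(\cdot)$ to the tempered de Rham resolution $\C_{(\widehat X)_{sa}}\isoto\Omegat_{\widehat X}^t$ of Lemma~\ref{lem:de_Rham_tempere}. Quasi-injective sheaves are soft (immediate from Definition~\ref{def:soft}), and soft sheaves are $\sect_{X^+}$-acyclic by Corollary~\ref{cor:soft_acyclique}, so $\sect_{X^+}(\Omegat_{\widehat X}^t)$ represents $R\sect_{X^+}(\C_{(\widehat X)_{sa}})$. It remains to identify $i_X^{-1}R\sect_{X^+}(\C_{(\widehat X)_{sa}})$ with $\C_{X_{sa}}$. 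Using a cofinal system of product neighborhoods $W=U\times(-\epsilon,\epsilon)$ around a relatively compact contractible subanalytic open $U\subset X$, one has $W\cap X^+=U\times(0,\epsilon)$, which is contractible. Applying Lemma~\ref{lem:de_Rham_tempere} to $U\times(0,\epsilon)$ together with the $\sect$-acyclicity of $\Omegat_{\widehat X}^{t,i}$ gives $\Omegat_{\widehat X}^t(U\times(0,\epsilon))\simeq\C$ in degree~$0$; passing to the colimit, using \eqref{eq:section_limit} to commute cohomology with the filtrant limit, identifies the cohomology sheaves of $\sha_X$ with $\C_{X_{sa}}$ concentrated in degree~$0$.

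The main obstacle I anticipate lies in the first step: one must check that $W\cap(V\times(-1,1))$ really is a subanalytic open neighborhood of $U\times\{0\}$ whose closure in $\widehat X$ is compact, and that the extension provided by the quasi-injectivity of $\Omegat_{\widehat X}^{t,i}$ can be chosen so that the resulting $i_X^{-1}\sect_{X^+}$-class over $V$ genuinely restricts to the given class over $U$. This is presumably where the bulk of Proposition~\ref{prop:qis-injectivite} lies. Once that is settled, the resolution statement reduces to the essentially mechanical computation in the second paragraph.
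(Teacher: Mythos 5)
The central step of your first paragraph --- extending a representative $\tilde\omega$ from $W\cap(V\times(-1,1))\cap X^+$ to all of $V\times(0,1)$ by invoking ``the quasi-injectivity of $\Omegat^{t,i}_{\widehat X}$'' --- rests on a false premise: $\cinft_{\widehat X}$, and hence $\Omegat^{t,i}_{\widehat X}$, is \emph{not} quasi-injective on the subanalytic site. Quasi-injectivity would mean that restriction between nested relatively compact subanalytic open sets is surjective on sections, and this fails whenever part of the boundary of the smaller open set lies in the interior of the larger one. Concretely, take $X=\R$, $U=(0,1)$, $V=(-1,2)$, $W=(0,1)\times(-1,1)$: the function $1/x$ is tempered on $W\cap X^+=(0,1)\times(0,1)$ and represents a genuine section of $\sha^0_X$ over $U$, but it cannot be extended to a $\cinf$ (let alone tempered) function on $V\times(0,1)=(-1,2)\times(0,1)$, since it blows up at the interior point $(0,\tfrac12)$. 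If $\cinft$ were quasi-injective there would be no need for the auxiliary variable $t$ at all; the whole point of defining $\sha_X$ as $i_X^{-1}\sect_{X^+}(\Omegat^t_{\widehat X})$ is to manufacture quasi-injectivity where it is absent.

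What the paper actually does in Proposition~\ref{prop:qis-injectivite} is different: a section over $U$ is first represented on a \emph{cuspidal} neighborhood $U^+_\varphi=\{(x,t):x\in U,\;0<t<\varphi(x)\}$ with $\varphi$ subanalytic and vanishing on $\partial U$ (Lemma~\ref{lem:base_vois}), and is then multiplied by a tempered cutoff $\alpha$ equal to $1$ on a smaller cusp $U^+_{\varphi'}$ and to $0$ outside $U^+_{\varphi/2}$; the product extends by zero to all of $X^+$ without changing the class over $U$, because $U^+_{\varphi'}$ is still a neighborhood of $U\times\{0\}$ in $X^+$. The analytic heart of the theorem is the construction of $\alpha$ (Lemmas~\ref{lem:fonction_exhaustion} and~\ref{lem:test_function}), which uses Tougeron-type estimates and \L ojasiewicz's inequality to keep everything tempered: in the example above, $t<\varphi(x)\le Cx^r$ forces $1/x\le C't^{-1/r}$, so the cut-off section is controlled by the distance to $\partial X^+$. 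Your second paragraph is essentially the paper's Corollary~\ref{cor:resolution}, but note that the $\sect_{X^+}$-acyclicity of $\Omegat^{t,i}_{\widehat X}$ cannot be deduced from its (nonexistent) quasi-injectivity either; the paper obtains it from Corollary~\ref{cor:res_cinf}, i.e.\ from the two-term resolution of $\cinft_{\widehat X}$ by the quasi-injective sheaf $\sha^0_{\widehat X}$, which again depends on Proposition~\ref{prop:qis-injectivite}.
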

\begin{remark}
  By this theorem we have $f_{!!}  (\sha_X\otimes \omega'_{X|Y})
  \simeq Rf_{!!}(\omega_{X|Y})$. Hence the morphism $\int_f$ of
  Definition~\ref{def:integration_sha} induces a morphism in the
  derived category $Rf_{!!}  \omega_{X|Y} \to \C_{Y_{sa}}$.  It
  coincides with the usual integration morphism $\operatorname{int}_f$
  of~\eqref{eq:int_topologique} because this holds for the de Rham
  complex (morphism~\eqref{eq:int_cinf} applied to $\widehat f$), and
  we have the commutative diagram:
$$
\xymatrix{
Rf_{!!} \omega_{X|Y}  \ar@{-}[r]^-{\sim}  \ar[d]_{\operatorname{int}_f}
& Rf_{!!}(i_X^{-1}  \rsect_{X^+} 
\omega_{\widehat X|\widehat Y} ) \ar[r]
& i_Y^{-1} \rsect_{Y^+} R{\widehat f}_{!!} 
\omega_{\widehat X|\widehat Y}  \ar[d]^{\operatorname{int}_{\widehat f}}   \\
\C_{Y_{sa}}  \ar@{-}[rr]^-{\sim}
&& i_Y^{-1} \rsect_{Y^+} \C_{\widehat Y_{sa}} .
}
$$
\end{remark}

For the proof of the theorem we need some lemmas on tempered
functions.  We refer to~\cite{BM88} for results on subanalytic sets.
We recall that a function is subanalytic if its graph is a subanalytic
set.  We introduce the following notation, for $U \subset X$ an open
subset, and $\varphi \cl U \to \R$ a positive continuous function on
$U$:
$$
U_\varphi = \{ (x,t) \in \widehat X; \; x\in U, \, |t| <\varphi(x)\},
\qquad
U_\varphi^+ = U_\varphi \cap X^+.
$$
\begin{lemma}
\label{lem:base_vois}
  Let $U \subset X$ be a subanalytic open subset and $V \subset
  \widehat X$ be a subanalytic open neighborhood of $U$ in $\widehat
  X$.  Then there exists a subanalytic continuous function $\varphi$
  defined on $\overline U$ such that $\varphi = 0$ on the boundary of
  $U$ and $U_\varphi \subset V$.
\end{lemma}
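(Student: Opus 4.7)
The plan is to define $\varphi$ as the minimum of two continuous subanalytic distance functions on $X$, one forcing the vanishing on $\partial U$ and the other forcing the inclusion $U_\varphi \subset V$.

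First I would fix analytic distances. Choose a real analytic embedding $X \hookrightarrow \R^{N-1}$ (such embeddings exist because $X$ is a real analytic manifold) and take the product embedding $\widehat X = X \times \R \hookrightarrow \R^N$, so that the last coordinate of $\R^N$ is the $\R$-factor $t$. Let $d_{\widehat X}$ be the pullback of the Euclidean distance on $\R^N$ and $d_X$ the analogous distance on $X$. This choice guarantees the crucial identity $d_{\widehat X}((x,0),(x,t)) = |t|$ for all $x \in X$, $t\in\R$. Since $d_X$ and $d_{\widehat X}$ are (restrictions of) polynomial functions, the cited fact used in the proof of Lemma~\ref{lem:recouv_ouvert_ferme} (namely, that $y \mapsto d(y,Z)$ is continuous and subanalytic whenever $Z$ is a subanalytic subset and $d$ is an analytic distance, see~\cite{BM88}) applies.

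Next, set
$$
\varphi(x) = \min\bigl(\, d_X(x,\, X \setminus U),\ d_{\widehat X}((x,0),\, \widehat X \setminus V)\,\bigr),
$$
viewed on all of $X$ and then restricted to $\overline U$. Both $X\setminus U$ and $\widehat X \setminus V$ are subanalytic (as complements of subanalytic open subsets), so each of the two terms is a continuous subanalytic function; the map $x\mapsto(x,0)$ is analytic, so the second term is continuous subanalytic on $X$; the minimum of continuous subanalytic functions is again continuous subanalytic. Hence $\varphi$ is continuous and subanalytic on $\overline U$.

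Now I would check the two required properties. For $x \in \partial U$, we have $x \in X \setminus U$ and therefore $d_X(x, X\setminus U)=0$, so $\varphi(x)=0$. For the inclusion $U_\varphi \subset V$, take $(x,t)\in U_\varphi$, so $x\in U$ and $|t|<\varphi(x) \le d_{\widehat X}((x,0),\widehat X\setminus V)$. If, by contradiction, $(x,t)\notin V$, then $(x,t)\in \widehat X\setminus V$, so $d_{\widehat X}((x,0),\widehat X\setminus V)\le d_{\widehat X}((x,0),(x,t))=|t|$, contradicting the strict inequality. The only delicate point here is the compatibility of the chosen distance with the product decomposition $\widehat X = X\times\R$, which is precisely ensured by the embedding in $\R^N$ with $t$ as last coordinate; once that is set up, the rest is routine.
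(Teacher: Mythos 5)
Your proof is correct and takes essentially the same approach as the paper: the paper defines $\varphi$ as the distance (in $\widehat X$) to $Z = \widehat X \setminus (V \cap (U\times\R))$, which is exactly the minimum of your two distance functions since $Z = (\widehat X\setminus V)\cup((X\setminus U)\times\R)$. Your extra care with the product embedding so that $d_{\widehat X}((x,0),(x,t))=|t|$ is a reasonable (implicit in the paper) normalization and does not change the argument.
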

\begin{proof}
  We set $V' = V \cap (U\times \R)$, $Z = \widehat X \setminus V'$ and
  let $\varphi$ be the distance function to $Z$: $\varphi(x) =
  d(x,Z)$.  By~\cite{BM88}, Remark~3.11, this is a subanalytic
  function on $\widehat X$ and its restriction to $\overline U$
  satisfies the required property.
\end{proof}
The following result is similar to a division property for flat
$\cinf$ functions, which can be found for example in~\cite{T72},
Lemma~V.2.4.
\begin{lemma}
\label{lem:fonction_exhaustion}
Let $U \subset X$ be a subanalytic open subset and $\varphi \cl
\overline U \to \R$ a subanalytic continuous function on $U$, such
that $\varphi = 0$ on the boundary of $U$ and $\varphi >0$ on $U$.
Then there exists a $\cinf$ function $\psi \cl U \to \R$ such that
\begin{itemize}
\item [(i)] $\forall x \in U$, $0 < \psi(x) < \varphi(x)$,
\item [(ii)] $\psi$ and $1/\psi$ are tempered.
\end{itemize}
\end{lemma}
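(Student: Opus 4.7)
The plan is to bootstrap a partition-of-unity smoothing of a suitable power of the boundary-distance function. Since temperedness is a local condition at each point of $X$, I first reduce to working in a fixed compact coordinate neighborhood $K$ meeting $\partial U$, equipped with the analytic distance $d$ coming from coordinates. Set $\delta(x) = d(x, \partial U)$; by~\cite{BM88} this is a continuous subanalytic function on $\overline U \cap K$ vanishing exactly on $\partial U \cap K$. The quantitative input is the \L{}ojasiewicz inequality for continuous subanalytic functions on a compact set: applied to $\varphi$ and $\delta$, which have the same zero set in $K$, it furnishes constants $C, N > 0$ with $\delta(x)^N \leq C\,\varphi(x)$ for all $x \in U \cap K$.

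Next I construct a $C^\infty$ function $h \cl U \to \R_{>0}$ satisfying pointwise estimates of the form
$$
c_1\,\delta(x)^{2N} \leq h(x) \leq c_2\,\delta(x)^{2N}, \qquad |\partial^\alpha h(x)| \leq C_\alpha\,\delta(x)^{2N - |\alpha|}
$$
for every multi-index $\alpha$. This is a standard Whitney-type smoothing: cover $U$ by a locally finite family of balls $B_i = B(x_i, \delta(x_i)/3)$, take a subordinate smooth partition of unity $\{\chi_i\}$ with $|\partial^\alpha \chi_i| \leq C_\alpha\,\delta(x_i)^{-|\alpha|}$, and set $h = \sum_i \delta(x_i)^{2N}\,\chi_i$. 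On each $B_i$ one has $\delta(x) \sim \delta(x_i)$ by the triangle inequality, which yields the bounds above.

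Finally I put $\psi = \epsilon h$ for $\epsilon > 0$ small. The \L{}ojasiewicz bound gives $\psi \leq \epsilon c_2 \delta^{2N} \leq \epsilon c_2 C^2 \varphi^2 \leq \epsilon c_2 C^2(\max_K \varphi)\,\varphi$, so $\psi < \varphi$ on $U \cap K$ for $\epsilon$ small enough. Temperedness of $\psi$ is then immediate from the derivative estimates, since $|\partial^\alpha \psi|$ is bounded by a polynomial in $\delta^{-1} = d(\cdot, \partial U)^{-1}$, which is exactly what temperedness at a boundary point of $U$ requires. Temperedness of $1/\psi$ follows from the lower bound $\psi \geq \epsilon c_1 \delta^{2N}$ and the Fa\`a di Bruno formula, which expresses $\partial^\alpha(1/\psi)$ as a sum of products of derivatives of $\psi$ divided by a power of $\psi$, again of polynomial growth in $\delta^{-1}$. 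Passing from the local construction on a chart $K$ to a global $\psi$ on $U$ is done by a further smooth partition of unity on $X$, gluing the local candidates and truncating against a suitable multiple of $\varphi$ so that $\psi < \varphi$ is preserved globally.

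The main obstacle is the second step, producing a smooth function pointwise comparable to a prescribed power of the distance with controlled derivatives at all orders. The distance $\delta$ itself is only Lipschitz, so the regularization must go through an explicit partition of unity at scales shrinking toward the boundary; once this bound on $h$ is in place, all the remaining verifications -- the comparison $\psi < \varphi$, the temperedness of $\psi$ and $1/\psi$, and the globalization -- are routine, and the subanalyticity of $\varphi$ enters only through the \L{}ojasiewicz inequality in the first step.
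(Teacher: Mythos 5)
Your proof is correct and follows essentially the same route as the paper's: a locally finite partition of unity at scales comparable to $d(\cdot,\partial U)$, with cutoff derivatives bounded by $C_\alpha\, d(\cdot,\partial U)^{-|\alpha|}$, combined with \L{}ojasiewicz's inequality to pass between the distance function and $\varphi$, followed by the same globalization by a partition of unity on $X$. The only difference is bookkeeping: the paper works with dyadic shells $K_i=\{2^{-i-1}\le d(x,\partial U)\le 2^{-i}\}$ and weights the cutoffs directly by $\min_{S_i}\varphi$, whereas you smooth a power of the distance on Whitney balls and compare with $\varphi$ afterwards.
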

\begin{proof}
  We first note that it is enough to find a $\psi$ such that $\psi$ is
  tempered, $0 < \psi < \varphi$ and $1/\psi$ has polynomial growth
  along $\partial U$. We may also work locally: assuming the result is
  true on local charts, we choose
  \begin{itemize}
  \item locally finite coverings of $X$ by subanalytic open subsets,
    $(U_i)$, $(V_i)$, together with a partition of unity $\mu_i \cl X
    \to \R$ such that $\overline U_i \subset V_i$, $0\leq \mu_i$,
    $\sum \mu_i =1$, $\mu_i = 1$ on $U_i$ and $\mu_i = 0$ on a
    neighborhood of $X \setminus V_i$,
  \item $\cinf$ functions $\psi_i \cl U\cap V_i \to \R$ such that $0 <
    \psi_i < \varphi$ on $U\cap V_i$, $\psi_i$ is tempered and
    $1/\psi_i$ has polynomial growth along $\partial (U\cap V_i)$
  \end{itemize}
  and we set $\psi = \sum_i \mu_i \psi_i$. Then $\psi$ satisfies the
  conclusion of the lemma.  Indeed, each $\mu_i \psi_i$ is defined and
  tempered on $U$, and so is $\psi$ since the sum is locally finite,
  and, for $x \in \partial U$, $i$ such that $x\in U_i$, $1/ \psi \leq
  1/\psi_i$ has polynomial growth at $x$.

\medskip  

  Hence we assume $X=\R^n$ and $U$ is bounded. By~\cite{T72},
  Lemma~IV.3.3, there exist constants $C_k$, $k\in \N^n$, such that,
  for any compact $K\subset \R^n$ and any $\varepsilon >0$, there
  exists a $\cinf$ function $\alpha$ on $\R^n$ such that 
  \begin{gather*}
0\leq \alpha  \leq 1,
\qquad \alpha(x) = 0 \text{ if } d(x,K) \geq \varepsilon,
\qquad \alpha(x)  = 1 \text{ if } x\in K,  \\
\forall k \in \N^n, \quad |D^k \alpha | \leq C_k \varepsilon^{-|k|}.
  \end{gather*}
  (The function $\alpha$ is the convolution of the characteristic
  function of $\{x; d(x,K) \leq \varepsilon /2\}$ with a suitable test
  function.)
  
  We set $K_i = \{x\in U; 2^{-i-1} \leq d(x,\partial U) \leq 2^{-i}
  \}$ and we let $\alpha_i$ be the function associated to $K=K_i$ and
  $\varepsilon= 2^{-i-2}$ by the above result. In particular
  $\alpha_i=1$ on $K_i$, $\supp \alpha_i \subset S_i$, where we set
  $S_i = K_{i-1} \cup K_i \cup K_{i+1}$, and $|D^k \alpha_i | \leq C'_k
  2^{ik}$, for some $C'_k \in \R$.  This implies: $\forall x\in U$,
  $|D^k \alpha_i(x) | \leq C''_k d(x,\partial U)^{-k}$, for some other
  constants $C''_k \in \R$.
  
  \L ojasiewicz's inequality gives, for $x\in U$, $c\, d(x,\partial
  U)^r \leq \varphi(x) \leq c'd(x,\partial U)^{r'}$, for some
  constants $c,r,c',r' >0$ (see~\cite{BM88}, Theorem~6.4). We set
  $\lambda_i = \min \{ \varphi(x); x\in S_i \}$. We note that for
  $x,x' \in S_i$, we have $1/8\leq d(x,\partial U)/ d(x',\partial U)
  \leq 8$.  Hence, for $x\in S_i$, we have $C d(x,\partial U)^r \leq
  \lambda_i \leq C' d(x,\partial U)^{r'}$, for some $C,C' >0$. Since
  $\supp \alpha_i \subset S_i$, we also have $\forall i$, $\lambda_i
  \alpha_i \leq \varphi$.

  We note that an $x\in U$ belongs to at most three sets $S_i$ and we
  define $\psi = (1/3) \sum_i \lambda_i \alpha_i$.  The above
  inequalities give, for $x\in U$, $0< \psi(x) \leq \varphi(x)$ and
  $$
  |D^k \psi (x)| \leq C''_k \, C' \, d(x,\partial U)^{r'-k},
  \qquad
  \frac{1}{\psi(x)} \leq 3 \, C^{-1} d(x,\partial U)^{-r},
  $$
  so that $\psi$ and $1/\psi$ are tempered.
\end{proof}

\begin{lemma}
  \label{lem:test_function}
  Let $U \subset X$ and $\varphi \cl \overline U \to \R$ be as in
  Lemma~\ref{lem:fonction_exhaustion}.  There exist another
  subanalytic continuous function $\varphi' \cl \overline U \to \R$
  and a tempered fuction $\alpha\in \sect(X^+;\cinft_{\widehat X})$
  such that $\forall x \in U$, $0 < \varphi'(x) < \varphi(x)$ and
$$
\forall (x,t) \in X^+ \quad 0 \leq \alpha(x,t) \leq 1,
\qquad
\alpha(x,t) =
\begin{cases}
1 & \text{for $(x,t) \in U_{\varphi'}^+$}, \\
0 & \text{for $(x,t) \not\in U_{\varphi}^+$}.
\end{cases}
$$
\end{lemma}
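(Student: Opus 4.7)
The plan is to construct $\alpha$ as a smooth cutoff with variable scale $\psi$, where $\psi$ is produced by Lemma~\ref{lem:fonction_exhaustion}, and then to identify a subanalytic $\varphi'$ by extracting a \L ojasiewicz-type lower bound on $\psi$. First, I apply Lemma~\ref{lem:fonction_exhaustion} with $\varphi$ replaced by $\varphi/2$ to obtain $\psi \in \cinf(U,\R)$ with $0 < \psi < \varphi/2$ and such that both $\psi$ and $1/\psi$ are tempered. I fix once and for all $\chi \in \cinf(\R,[0,1])$ with $\chi \equiv 1$ on $(-\infty,1/2]$ and $\chi \equiv 0$ on $[1,+\infty)$, and define
$$
\alpha(x,t) = \chi\bigl(t/\psi(x)\bigr) \text{ if } x \in U,
\qquad
\alpha(x,t) = 0 \text{ if } x \notin U.
$$

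Next I verify that $\alpha \in \sect(X^+;\cinft_{\widehat X})$. On $U \times \R_{>0}$ smoothness is by composition. At a point $(x_0,t_0) \in \partial U \times \R_{>0}$, continuity of $\varphi$ with $\varphi(x_0) = 0$ and the inequality $\psi < \varphi/2$ imply that $\psi(x) < t/2$, hence $\chi(t/\psi(x)) = 0$, on a full neighborhood of $(x_0,t_0)$ inside $U \times \R_{>0}$; so $\alpha$ is identically $0$ near $(x_0,t_0)$, and therefore $\cinf$. A chain-rule computation then shows that every $D^k\alpha$ on $U \times \R_{>0}$ is a universal polynomial in $t$, $1/\psi$ and the derivatives of $\psi$, and vanishes outside the strip where $t/\psi(x) \in (1/2, 1)$; since $\psi$, $1/\psi$ and all their derivatives are tempered, so is $\alpha$. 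Moreover $0 \le \alpha \le 1$, the set $\{\alpha \neq 0\}$ is contained in $\{x \in U,\ t < \psi(x)\} \subset U_{\varphi/2}^+ \subset U_\varphi^+$, so $\alpha$ vanishes outside $U_\varphi^+$, and $\alpha \equiv 1$ on $\{x \in U,\ t \le \psi(x)/2\}$.

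Finally I produce the subanalytic $\varphi'$ with $\varphi'(x) \le \psi(x)/2$ on $U$ --- this is the main obstacle, since $\psi$ itself is not subanalytic, so some subanalytic lower bound must be extracted. Temperedness of $1/\psi$, together with $K \cap \partial U \subset K \setminus U$, yields for each $p \in \overline U$ a compact neighborhood $K_p$ and constants $C_p, N_p > 0$ such that $\psi(x) \ge C_p \, d(x,\partial U)^{N_p}$ for $x \in K_p \cap U$; by \cite{BM88}, Remark~3.11 the function $d(\cdot,\partial U)$ is continuous subanalytic, and its integer powers are subanalytic. Using paracompactness, I choose a locally finite covering of $\overline U$ by relatively compact subanalytic open subsets $(V_i)_{i\in \N}$ with $\overline{V_i} \subset K_{p_i}$, together with a continuous subanalytic partition of unity $(\mu_i)$ subordinate to it (such partitions are obtained by normalising the subanalytic distance functions $x \mapsto d(x, X \setminus V_i)$). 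I then set
$$
\varphi'(x) \;=\; \tfrac{1}{2}\sum_{i} \mu_i(x)\, \min\!\bigl(\varphi(x)/2,\ C_{p_i}\, d(x,\partial U)^{N_{p_i}}\bigr).
$$
As a locally finite sum of continuous subanalytic functions, $\varphi'$ is continuous subanalytic on $\overline U$; it vanishes on $\partial U$, is positive on $U$, and satisfies $\varphi' \le \varphi/4 < \varphi$. For $x \in U$ and $i$ with $\mu_i(x) > 0$ one has $x \in K_{p_i}$, whence $C_{p_i}\, d(x,\partial U)^{N_{p_i}} \le \psi(x)$; summing gives $\varphi'(x) \le \psi(x)/2$, so $U_{\varphi'}^+ \subset \{\alpha = 1\}$, which completes the plan.
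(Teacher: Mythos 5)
Your construction of $\alpha$ is exactly the paper's: $\alpha(x,t)=\chi(t/\psi(x))$ with $\psi$ supplied by Lemma~\ref{lem:fonction_exhaustion}, extended by $0$ outside $U$, and the smoothness argument at boundary points is the same. Two remarks. First, your verification that $\alpha$ is tempered is compressed at the one point where it is not automatic: temperedness of $\psi$ and $1/\psi$ gives bounds by powers of $d(x,\partial U)^{-1}$, whereas temperedness of $\alpha$ on $X^+$ requires bounds by powers of $t^{-1}$ near $X\times\{0\}$. The bridge is that on the support of the derivatives one has $t<\psi(x)<\varphi(x)\leq c\,d(x,\partial U)^{r}$ by \L ojasiewicz's inequality, hence $d(x,\partial U)^{-1}\leq C\,t^{-1/r}$; the paper spells this out, and without it the sentence ``since $\psi$, $1/\psi$ and all their derivatives are tempered, so is $\alpha$'' is not by itself a complete justification, since the two notions of temperedness are measured against different boundaries. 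Second, for the subanalytic minorant $\varphi'$ the paper simply extracts global constants $D,M$ with $\psi^{-1}\leq D\,d(\cdot,\partial U)^{-M}$ and takes $\varphi'=\frac{1}{2D}\,d(\cdot,\partial U)^{M}$; your localization via a subanalytic partition of unity achieves the same end and is in fact more scrupulous when $U$ is not relatively compact (where the global constants are not immediate from the local definition of temperedness), at the price of needing that locally finite sums, quotients and minima of continuous subanalytic functions are again subanalytic. Both routes are sound.
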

\begin{proof}
  We choose a $\cinf$ function $\psi\cl U \to ]0,+\infty[$ satisfying
  the conclusion of Lem\-ma~\ref{lem:fonction_exhaustion} and another
  $\cinf$ function $h\cl \R \to \R$ such that $\forall t \in \R$,
  $0\leq h(t) \leq 1$, $h(t) = 1$ for $t\leq 1/2$ and $h(t) =0$ for
  $t\geq 1$.  We define our function $\alpha$ on $X^+$ by
$$
\alpha(x,t) = 
\begin{cases}
h(\frac{t}{\psi(x)}) & \text{if $x \in U$} \\
0 & \text{if $x \not\in U$}.
\end{cases}
$$
We first see that $\alpha$ is $\cinf$. This is clear except at
points $(x_0,t_0)$ with $x_0\in \partial U$. For such a point, by
continuity of $\varphi$, we may find a neighborhood $V$ of $x_0$ in
$X$ such that $\forall x\in V$, $\varphi(x) < t_0/2$.  Thus, on the
neighborhood $V \times ]t_0/2;+\infty[$ of $(x_0,t_0)$, $\alpha$ is
identically $0$, and certainly $\cinf$.

Let us check that $\alpha$ is tempered.  We only have to check growth
conditions at points $(x,0) \in \partial X^+$.  We note that $d((x,t),
\partial X^+) =t$ so that we have to bound the $D^k \alpha (x,t)$ by
powers of $t$.  Since $D^k \alpha = 0$ outside $U_\varphi^+$, we
assume $(x,t) \in U_\varphi^+$. The $D^k \alpha$ are polynomial
expressions in $t$, the derivatives of $h$ and the derivatives of
$1/\psi$.  The derivatives of $h$ to a given order are bounded, hence
it just remains to bound $D^l(1/\psi) (x)$, with $(x,t) \in
U_\varphi^+$, by a power of $t$.  Since $1/\psi$ is tempered
$D^l(1/\psi) (x)$ has a bound of the type $C\, d(x,\partial U)^{-N}$.
By \L ojasiewicz's inequality we have $\varphi(x) \leq C' d(x,\partial
U)^r$ and, since $(x,t) \in U_\varphi^+$, we have $t\leq \varphi(x)$.
Hence $D^l(1/\psi) (x) \leq C'' t^{-N/r}$, for some $C''>0$, which is
the desired bound.

By definition $\alpha = 1$ on $U_{\psi/2}^+$ and $\alpha = 0$ outside
$U_{\varphi}^+$. Hence we just have to find a subanalytic continuous
function $\varphi'$ such that $\varphi' \leq \psi/2$.  Since $1/\psi$
is tempered, there exist constants $D,M$ such that $\psi^{-1}(x) \leq
D d(x,\partial U)^{-M}$, and we may take $\varphi'(x) = \frac{1}{2 D}
d(x,\partial U)^M$.
\end{proof}

\begin{proposition}
  \label{prop:qis-injectivite}
  Let $F$ be a $\cinft_{\widehat X}$-module and set $G = i_X^{-1}
  \sect_{X^+} F$. Let $U\subset X$ be a subanalytic open subset.
  
  Then the natural map $\sect(X^+; F) \to \sect(U; G)$ is surjective.
  In particular, the sheaf $G$ is quasi-injective.
\end{proposition}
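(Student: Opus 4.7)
The plan is to interpret any section $s\in \sect(U;G)$ as an element of $F(V\cap X^+)$ for some subanalytic open neighborhood $V$ of $i_X(U)$ in $\widehat X$, then multiply by a tempered cutoff function supported close to $V$ in order to extend by zero to all of $X^+$. First I would unwind the definitions: since $\sect_{X^+}F$ has $(\sect_{X^+}F)(W)=F(W\cap X^+)$ on subanalytic open $W\subset \widehat X$, and since $i_X^{-1}$ on the subanalytic site is computed as a filtrant colimit over subanalytic open neighborhoods of $i_X(U)$, one has $\sect(U;G)\simeq \varinjlim_V F(V\cap X^+)$. Thus $s$ is represented by some $s_V\in F(V\cap X^+)$, and the task reduces to producing $\tilde s\in F(X^+)$ whose class in that colimit is $s$.

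Next I would build the cutoff. Lemma~\ref{lem:base_vois} applied to $V$ produces a subanalytic continuous function $\varphi_0\cl \overline U\to \R$, positive on $U$ and vanishing on $\partial U$, with $U_{\varphi_0}\subset V$. Setting $\varphi = \varphi_0/2$ additionally gives $\overline{U_\varphi^+}\cap X^+ \subset V$: any $(x,t)$ in that closure satisfies $t>0$ and $t\leq \varphi(x)$, which forces $x\in U$ (by continuity of $\varphi$ and $\varphi|_{\partial U}=0$) and $t<\varphi_0(x)$, so $(x,t)\in U_{\varphi_0}\subset V$. Lemma~\ref{lem:test_function} applied to this $\varphi$ then yields $\varphi'$ with $0<\varphi'<\varphi$ on $U$ and a tempered function $\alpha\in \sect(X^+;\cinft_{\widehat X})$ equal to $1$ on $U_{\varphi'}^+$ and to $0$ outside $U_\varphi^+$.

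Now I would glue: using the $\cinft_{\widehat X}$-module structure of $F$, the product $\alpha\cdot s_V\in F(V\cap X^+)$ is defined and vanishes on $(V\cap X^+)\setminus\overline{U_\varphi^+}$. The two subanalytic open subsets $W_1 = V\cap X^+$ and $W_2 = X^+\setminus(\overline{U_\varphi^+}\cap X^+)$ form a subanalytic cover of $X^+$ (every point of $\overline{U_\varphi^+}\cap X^+$ lies in $V$), and on $W_1\cap W_2$ both $\alpha\cdot s_V$ and the zero section vanish. The sheaf axiom supplies $\tilde s\in F(X^+)$. Its image in $\sect(U;G)$, represented in the colimit by restriction to the subanalytic open neighborhood $U_{\varphi'}\subset V$ of $i_X(U)$, equals $\tilde s|_{U_{\varphi'}^+} = s_V|_{U_{\varphi'}^+}$ since $\alpha\equiv 1$ there, so it represents the same class as $s_V$, namely $s$. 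This proves surjectivity.

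For the quasi-injectivity of $G$: given subanalytic open $U\subset U'\subset X$ with $\overline{U'}$ compact and $s\in \sect(U;G)$, the map $\sect(X^+;F)\to \sect(U;G)$ factors through $\sect(U';G)$, so the lift $\tilde s$ above maps to a section in $\sect(U';G)$ that restricts to $s$. The step I expect to be the main obstacle is ensuring that the support of $\alpha$ inside $X^+$ actually lies in $V\cap X^+$, not merely in $\overline V$; the preliminary shrinking $\varphi_0\mapsto \varphi_0/2$ handles precisely this issue, and it is powered by the \L ojasiewicz-type bounds packaged in Lemmas~\ref{lem:fonction_exhaustion} and~\ref{lem:test_function}.
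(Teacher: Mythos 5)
Your proposal is correct and follows essentially the same route as the paper: identify $\sect(U;G)$ with the colimit $\varinjlim_V F(V\cap X^+)$, shrink $V$ to a tube $U_\varphi$ via Lemma~\ref{lem:base_vois}, apply Lemma~\ref{lem:test_function} (with the same halving of $\varphi$) to get a tempered cutoff $\alpha$, and extend $\alpha\cdot s$ by zero to $X^+$. The only difference is that you spell out the extension-by-zero via an explicit two-set subanalytic cover, which the paper leaves implicit.
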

\begin{proof}
  We consider $s\in \sect(U; G)$.  As in the case of sheaves on
  manifolds we have, for $H\in \Mod(\C_{\widehat X_{sa}})$ and
  $U\subset X$, $\sect(U;i_X^{-1} H) \simeq \varinjlim_{V} \sect(V;H)$
  where $V$ runs over the subanalytic open subsets of $\widehat X$
  containing $U$. Hence, by Lemma~\ref{lem:base_vois}, we may
  represent $s$ by a section $\tilde s \in \sect(U_\varphi^+; F)$, for
  some subanalytic continuous function $\varphi$ defined on $\overline
  U$ such that $\varphi = 0$ on the boundary of $U$.
  
  We apply Lemma~\ref{lem:test_function} to the function $\varphi/2
  \cl \overline U \to \R$ and obtain $\varphi' \cl \overline U \to \R$
  and $\alpha\in \sect(X^+;\cinft_{\widehat X})$ such that $0<
  \varphi' < \varphi/2$, $\alpha =1$ on $U_{\varphi'}^+$ and $\alpha
  =0$ outside $U_{\varphi/2}^+$.  We set $\hat s = \alpha \tilde s$.
  Then $\hat s \in \sect(U_\varphi^+;F)$ extends by $0$ to a section
  $\hat s \in \sect(X^+;F)$ and we have $\hat s|_{U_{\varphi'}^+} =
  \tilde s|_{U_{\varphi'}^+}$ so that $\hat s$ also represents $s$.
  This shows the surjectivity of $\sect(X^+; F) \to \sect(U; G)$.
\end{proof}
We have the following resolution of $\cinft_X$ as an $\sha_X$-module.
Let $I_X$ be the ideal of $\sha_X$ generated by $\Omegat_X^{t,1} \subset
\sha_X^1$.  In local coordinates $(x_1,\ldots, x_n,t)$, $I_X$ consists
of the forms involving one of the $dx_i$ and we obtain the isomorphism
$\sha_X / I_X \simeq 0 \to \sha_X^0 \xto{\partial/\partial t} \sha_X^0
\to 0$, where the differential is given by $f(x,t) \mapsto
\frac{\partial f}{\partial t} (x,t)$.  The following result implies
that the complex $\sha_X / I_X$ is a resolution of $\cinft_X$.
\begin{corollary}
\label{cor:res_cinf}
  For any subanalytic open set $U \subset X$ we have the exact
  sequence:
$$
0 \to \sect(U;\cinft_X) \to \sect(U;\sha_X^0)
\xto{\partial/\partial t} \sect(U;\sha_X^0) \to 0.
$$
\end{corollary}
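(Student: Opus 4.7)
The plan is to establish each of the three exactness claims separately, the surjectivity of $\partial/\partial t$ being the main obstacle.

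First I would check well-definedness and injectivity of the natural map $\sect(U;\cinft_X) \to \sect(U;\sha_X^0)$, which sends a tempered function $g$ on $U$ to the class of the $t$-independent function $\bar g(x,t) = g(x)$ on $U \times \R_{>0}$. Temperedness of $\bar g$ is a routine consequence of that of $g$, using $|D_x^k \bar g(x,t)| = |D^k g(x)|$ together with the inequality $d((x,t), \partial(U\times \R_{>0})) \leq d(x, \partial U)$ in $\widehat X$; injectivity is clear.

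To identify the kernel of $\partial/\partial t$, I would take $f \in \sect(U;\sha_X^0)$ annihilated by $\partial/\partial t$, represent it by a tempered $\tilde f \in \cinft_{\widehat X}(V \cap X^+)$ for some subanalytic open $V \supset U$ in $\widehat X$, and, after shrinking $V$, assume $\partial \tilde f/\partial t \equiv 0$ on $V \cap X^+$. Lemma~\ref{lem:base_vois} supplies a subanalytic continuous function $\varphi\cl \overline U \to \R_{\geq 0}$ vanishing on $\partial U$ and positive on $U$ with $U_\varphi \subset V$; the fibers of $U_\varphi^+$ over $U$ being connected, $t$-independence gives $\tilde f(x,t) = g(x)$ on $U_\varphi^+$ for some $g \in \cinf(U)$. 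To check $g \in \cinft_X(U)$, I would fix $x_0 \in \partial U$, take a small compact neighborhood $K$ of $(x_0,0)$ in $\widehat X$, and evaluate at $t = \varphi(x)/2$: the temperedness bound $|D_x^k \tilde f(x,t)| \leq C\,t^{-N}$ on $K \cap V \cap X^+$ combined with the {\L}ojasiewicz inequality $\varphi(x) \geq C' d(x,\partial U)^r$ yields the desired polynomial bound on $|D^k g(x)|$ in powers of $d(x,\partial U)^{-1}$.

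The substantive step is the surjectivity of $\partial/\partial t$ on sections. Given $g \in \sect(U;\sha_X^0)$, Proposition~\ref{prop:qis-injectivite} lifts $g$ to a global tempered $\tilde g \in \sect(X^+;\cinft_{\widehat X})$. I would fix a $\cinf$ cutoff $\chi\cl \R \to [0,1]$ with $\chi \equiv 1$ on $(-\infty,1]$ and $\chi \equiv 0$ on $[2,+\infty)$ and set
$$
f(x,t) = -\int_t^{2} \chi(s)\,\tilde g(x,s)\,ds, \qquad (x,t) \in X^+.
$$
Then $\partial f/\partial t = \chi(t)\,\tilde g$ agrees with $\tilde g$ on $X \times (0,1) = (X \times (-1,1)) \cap X^+$, and therefore represents $g$ in $\sect(U;\sha_X^0)$. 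The reason for introducing the cutoff, and the main point to verify, is that $f$ itself is tempered on $X^+$: for any compact $K \subset \widehat X$, the bound $|D_x^k \tilde g(x,s)| \leq C_K\,s^{-N_K}$ on $K\cap X^+$ integrates to a polynomial bound in $1/t$ for $D_x^k f$, while the higher $t$-derivatives of $f$ are obtained by Leibniz from the derivatives of $\chi$ and $\tilde g$ and are manifestly tempered. Once this temperedness is confirmed, $f$ lies in $\sect(X^+;\cinft_{\widehat X})$ and its class in $\sect(U;\sha_X^0)$ is the desired antiderivative of $g$.
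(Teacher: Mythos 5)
Your surjectivity argument is the paper's: lift the section to a globally defined tempered function on $X^+$ via Proposition~\ref{prop:qis-injectivite}, then integrate in $t$ from a basepoint away from $t=0$ (the paper integrates from $t=1$, without a cutoff), the point in both cases being that $\int_t^2 s^{-N}\,ds$ grows only polynomially in $t^{-1}$. The paper treats injectivity and the middle exactness as obvious and omits them; your write-up of these is fine except for one slip in the kernel step: for $\tilde f$ tempered only on $V\cap X^+$, temperedness bounds $|D^k_x\tilde f(x,t)|$ by a power of $d\bigl((x,t),K\setminus(V\cap X^+)\bigr)^{-1}$, and since that distance is at most $t$, this is weaker than the bound $C\,t^{-N}$ you assert (the complement of $V\cap X^+$ has points other than $\{t=0\}$, possibly much closer than $t$). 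The fix is the same device you use for surjectivity: first represent $f$ by a section of $\cinft_{\widehat X}$ on all of $X^+$ (Proposition~\ref{prop:qis-injectivite} again), so that near $(x_0,0)$ the boundary of the domain is exactly $\{t=0\}$ and $C\,t^{-N}$ is the genuine temperedness bound; your evaluation at $t=\varphi(x)/2$ and \L ojasiewicz then go through.
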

\begin{proof}
  The less obvious point is the surjectivity. We have the restriction
maps
$$
\xymatrix@R=5mm@C=2cm{
\sect(U\times \R; \cinft_{\widehat X} ) 
\ar[r]^{\partial/\partial t} \ar[d]
& \sect(U\times \R; \cinft_{\widehat X} )  \ar[d]  \\
 \sect(U;\sha_X^0)  \ar[r]^{\partial/\partial t}
& \sect(U;\sha_X^0) .
}
$$
The vertical arrows are surjective by
Proposition~\ref{prop:qis-injectivite}, and so is the top horizontal
arrow: we integrate with respect to $t$ with starting points on
$X\times \{1\}$, which insures that the resulting function is
tempered.
\end{proof}
\begin{corollary}
  For any subanalytic open set $U \subset X$, the sheaf $\cinft_X$ is
  acyclic with respect to the functor $\sect_U$.
\end{corollary}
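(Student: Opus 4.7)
The plan is to exploit the short resolution of $\cinft_X$ provided by the two preceding results and show it consists of $\sect_U$-acyclic sheaves, then read off the vanishing of higher derived functors directly from Corollary~\ref{cor:res_cinf}.

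First, I would observe that the complex $\sha_X/I_X = (0 \to \sha_X^0 \xto{\partial/\partial t} \sha_X^0 \to 0)$ (concentrated in degrees $0$ and $1$) is a two-term resolution of $\cinft_X$. Indeed, by the remark preceding Corollary~\ref{cor:res_cinf} one has a morphism of complexes $\cinft_X \to \sha_X/I_X$, and it is a quasi-isomorphism: the kernel of $\partial/\partial t$ on $\sha_X^0$ is $\cinft_X$, while the surjectivity statement in Corollary~\ref{cor:res_cinf} says precisely that $H^1(\sha_X/I_X)(U) = 0$ for every subanalytic open $U$, which already implies $H^1(\sha_X/I_X) = 0$ as a sheaf.

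Next I would verify that $\sha_X^0$ is $\sect_U$-acyclic. By Proposition~\ref{prop:qis-injectivite}, $\sha_X^0 = i_X^{-1}\sect_{X^+}(\cinft_{\widehat X})$ is quasi-injective. As noted right after equation~\eqref{eq:section_F_K}, quasi-injective sheaves on $X_{sa}$ are soft, and by Corollary~\ref{cor:soft_acyclique} soft sheaves are acyclic for $\sect_U$ whenever $U$ is a subanalytic open subset.

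Having a resolution by $\sect_U$-acyclic objects, I may compute $R\sect_U(\cinft_X)$ as the complex $\sect(U;\sha_X^0) \xto{\partial/\partial t} \sect(U;\sha_X^0)$ sitting in degrees $0$ and $1$. But this is exactly the complex whose exactness is asserted by Corollary~\ref{cor:res_cinf}: the map $\partial/\partial t$ is surjective on sections over $U$. Consequently $R^i\sect_U(\cinft_X) = 0$ for all $i \geq 1$, which is the claim.

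There is essentially no obstacle here: the only non-formal input is the surjectivity of $\partial/\partial t$ on $\sect(U;\sha_X^0)$, which was already dispatched in Corollary~\ref{cor:res_cinf} by integration against the base point $X\times\{1\}$. All the rest is an application of the standard principle that derived functors can be computed from resolutions by acyclic objects.
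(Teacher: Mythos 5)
Your proof is correct and follows essentially the same route as the paper: use the two-term resolution $\cinft_X \to \sha_X^0 \xto{\partial/\partial t} \sha_X^0$ by quasi-injective (hence $\sect_U$-acyclic) sheaves and invoke the surjectivity from Corollary~\ref{cor:res_cinf}. The only nuance is that $\sect_U$ is the sheaf-valued functor $\hom(\rho_*\C_U,\cdot)$ rather than global sections over $U$, so the last step should be phrased as surjectivity of $\partial/\partial t$ on the sheaf $\sect_U(\sha_X^0)$, which follows from Corollary~\ref{cor:res_cinf} applied to $U\cap V$ for each $V$ — exactly as the paper does.
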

\begin{proof}
  We have to prove that $R^i\sect_U(\cinft_X) =0$ for $i>0$.  By
  Proposition~\ref{prop:qis-injectivite} $\sha_X^0$ is quasi-injective
  and we may use the resolution $\cinft_X \to \sha_X^0
  \xto{\partial/\partial t} \sha_X^0$ to compute
  $R^i\sect_U(\cinft_X)$. We are thus reduced to proving the
  surjectivity of the morphism $\partial/\partial t \cl
  \sect_U(\sha_X^0) \to \sect_U(\sha_X^0)$.  This follows from
  Corollary~\ref{cor:res_cinf} since $\sect_U(\sha_X^0) (V) =
  \sect(U\cap V;\sha_X^0)$.
\end{proof}
\begin{corollary}
\label{cor:resolution}
The sheaf of dg-algebras $\sha_X$ is quasi-isomorphic to
$\C_{X_{sa}}$, i.e. we have the exact sequence:
\begin{equation*}
0 \to \C_{X_{sa}} \to \sha_X^0 \to \sha_X^1 \to \cdots \to \sha_X^{n+1}
\to 0.
\end{equation*}
\end{corollary}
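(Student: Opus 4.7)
The plan is to deduce the resolution from the tempered de Rham Lemma~\ref{lem:de_Rham_tempere} applied to $\widehat X$ by analyzing the effect of the functors $\sect_{X^+}$ and $i_X^{-1}$ on sections. By Proposition~\ref{prop:qis-injectivite}, each $\sha_X^i$ is quasi-injective, hence $\sect_U$-acyclic by Corollary~\ref{cor:soft_acyclique}. It therefore suffices to show, for every subanalytic open $U\subset X$, that $\sect(U;\sha_X^\bullet)\simeq R\sect(U;\C_{X_{sa}})$; this identifies $\sha_X^\bullet$ with $\C_{X_{sa}}$ in $\DC^+(\C_{X_{sa}})$ and gives the claimed exact sequence.

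Using the adjunction description of $i_X^{-1}$ and Lemma~\ref{lem:base_vois}, one rewrites
$$\sect(U;\sha_X^i) \simeq \varinjlim_\varphi \sect\bigl(U_\varphi^+;\Omegat_{\widehat X}^{t,i}\bigr),$$
with $\varphi$ running over continuous subanalytic functions on $\overline U$ that are positive on $U$ and vanish on $\partial U$. For each fixed $\varphi$, since $\Omegat_{\widehat X}^{t,i}$ is locally a finite direct sum of copies of $\cinft_{\widehat X}$ and $\cinft_X$ is $\sect_U$-acyclic (Corollary~\ref{cor:res_cinf} combined with the subsequent unnamed corollary), the complex $\sect(U_\varphi^+;\Omegat_{\widehat X}^{t,\bullet})$ computes, via Lemma~\ref{lem:de_Rham_tempere}, the derived sections $R\sect(U_\varphi^+;\C_{\widehat X_{sa}})\simeq H^\bullet(U_\varphi^+;\C)$.

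Filtered colimits of complexes of $\C$-vector spaces commute with cohomology, and the projection $(x,t)\mapsto x$ identifies $U_\varphi^+$ as homotopy equivalent to $U$ with contractible interval fibers; passing to the colimit therefore yields
$$H^k\bigl(\sect(U;\sha_X^\bullet)\bigr) \simeq \varinjlim_\varphi H^k(U_\varphi^+;\C) \simeq H^k(U;\C) \simeq R^k\sect(U;\C_{X_{sa}}).$$
Since every point of $X$ admits a fundamental system of contractible subanalytic open neighborhoods (e.g.~subanalytic balls in coordinate charts), the presheaf $V\mapsto H^k(V;\C)$ sheafifies to $0$ for $k>0$ and to $\C_{X_{sa}}$ for $k=0$, giving $H^k(\sha_X^\bullet)=0$ for $k>0$ and $H^0(\sha_X^\bullet)=\C_{X_{sa}}$, which is exactly the claimed resolution.

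The main technical hurdle I anticipate is the clean identification $R\sect(U_\varphi^+;\C_{\widehat X_{sa}})\simeq H^\bullet(U_\varphi^+;\C)$ on the subanalytic site together with the compatibility of the projection homotopy equivalences $U_\varphi^+\to U$ with the transition maps in the colimit over $\varphi$; both are standard consequences of the framework of~\cite{KS01} and~\cite{P07a}, but require careful bookkeeping.
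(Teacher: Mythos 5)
Your argument is correct in outline, but it takes a genuinely different route from the paper's. The paper stays entirely at the sheaf level: it applies the left exact functor $\sect_{X^+}$ to the tempered de Rham resolution of $\C_{\widehat X_{sa}}$ on $\widehat X$, observes that the terms $\Omegat_{\widehat X}^{t,i}$ are $\sect_{X^+}$-acyclic (via the corollary following Corollary~\ref{cor:res_cinf}) and that $\C_{\widehat X_{sa}}$ is too because $\rsect_{X^+}(\C_{\widehat X_{sa}}) \simeq \C_{\overline{X^+}}$, and then applies the exact functor $i_X^{-1}$ — four lines, no global sections, no topology. You instead unwind $i_X^{-1}\sect_{X^+}$ into a filtered colimit of global sections over the collars $U_\varphi^+$, identify each $R\sect(U_\varphi^+;\C)$ with singular cohomology, and replace the paper's sheaf-level acyclicity of the constant sheaf by the homotopy equivalence $U_\varphi^+ \simeq U$ together with commutation of filtered colimits with cohomology; the conclusion is then extracted by sheafifying the cohomology presheaves. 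What your approach buys is a concrete topological picture of why the collar contributes nothing; what it costs is exactly the bookkeeping you flag (cofinality of the $U_\varphi$ via Lemma~\ref{lem:base_vois}, compatibility of the homotopy equivalences with the transition maps, naturality in $U$ so that the unit $\C_{X_{sa}}\to\sha_X^0$ induces the identification in degree $0$). One imprecision worth fixing: you invoke $\sect_U$-acyclicity (the sheaf-valued functor $\hom(\rho_*\C_U,\cdot)$) where you actually need acyclicity for the global-sections functor $\sect(U_\varphi^+;\cdot)$; the latter does hold — Corollary~\ref{cor:res_cinf} plus softness of $\sha^0$ gives it for $\cinft_{\widehat X}$, and one passes to $\Omegat_{\widehat X}^{t,i}$ by realizing it as a direct summand of a finite free $\cinft_{\widehat X}$-module — but the two functors should not be conflated.
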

\begin{proof}
  By Lemma~\ref{lem:de_Rham_tempere}, we have the exact sequence
on $\widehat X$:
\begin{equation}
  \label{eq:resol_Xchapeau}
0 \to \C_{\widehat X_{sa}} \to \Omegat_{\widehat X}^{t,0} \to \cdots \to
\Omegat_{\widehat X}^{t,n+1} \to 0.
\end{equation}
By the previous corollary the sheaves $\Omegat_{\widehat X}^{t,i}$ are
$\sect_{X^+}$-acyclic.  The constant sheaf $\C_{\widehat X_{sa}}$ also
is $\sect_{X^+}$-acyclic because $\rsect_{X^+}(\C_{\widehat X_{sa}})
\simeq \C_{\overline{X^+_{sa}}}$ (recall that $\rho_*$ commutes with
$\rsect_{X^+}$).  Hence we still have an exact sequence when we apply
$\sect_{X^+}$ to~\eqref{eq:resol_Xchapeau}, and applying the exact
functor $i_X^{-1}$ gives the corollary.
\end{proof}

\section{$\sha$-modules}
For a real analytic manifold $X$, we denote by $\Mod(\sha_X)$ the
category of sheaves of {\em bounded below} dg-$\sha_X$-modules on
$X_{sa}$. We have an obvious forgetful functor and its composition
with the localization:
\begin{equation}
  \label{eq:notation_for}
  \For_X \cl \Mod(\sha_X) \to \CC^+(\C_{X_{sa}}),
\qquad
\For'_X \cl \Mod(\sha_X) \to \DC^+(\C_{X_{sa}}).
\end{equation}
We will usually write $F$ instead of $\For_X(F)$ or $\For'_X(F)$ when
the context is clear.  We still write $\For_X$, $\For'_X$ for the
compositions of these forgetful functors with the exact functor
$I_\tau\cl \CC(\C_{X_{sa}}) \to \CC(\I(\C_X))$.

In this section we define operations on $\Mod(\sha_X)$ and check usual
formulas in this framework, as well as some compatibility with the
corresponding operations in $\CC(\C_{X_{sa}})$ or $\DC(\C_{X_{sa}})$
(hence also in $\CC(\I(\C_X))$ or $\DC(\I(\C_X))$, because $I_\tau$
commutes with the standard operations).

\subsection{Tensor product}
For $\shm, \shn \in \Mod(\sha_X)$, the tensor product $\shm
\otimes_{\sha_X} \shn \in \Mod(\sha_X)$ is defined as usual by taking
the tensor product of the underlying sheaves of graded modules over
the underlying sheaf of graded algebras and defining the differential
by $d(m\otimes n) = dm \otimes n + (-1)^{\deg m} m \otimes dn$ (for
$m$ homogeneous).  We have an exact sequence in $\CC^+(\C_{X_{sa}})$:
\begin{equation}
  \label{eq:tensor_prod}
  \shm \otimes \sha_X \otimes \shn
\xto{\delta} \shm \otimes \shn
\to \shm \otimes_{\sha_X} \shn \to 0,
\end{equation}
where $\delta(m\otimes a \otimes n) = (-1)^{\deg a \deg m} am \otimes
n - m \otimes an$, for homogeneous $a,m,n$.

For two real analytic manifolds $X,Y$ and $\shm \in \Mod(\sha_X)$,
$\shn \in \Mod(\sha_Y)$, we denote by $\underline{\boxtimes}$ the
external tensor product in the category of $\sha$-modules:
$$
\shm \underline{\boxtimes} \shn
= \sha_{X\times Y} \otimes_{(\sha_X \boxtimes \sha_Y)}
(\shm \boxtimes \shn).
$$

\subsection{Inverse image and direct image}
Let $f\cl X\to Y$ be a morphism of real analytic manifolds.  Recall
the morphism of dg-algebras $f^\sharp \cl f^{-1} \sha_Y \to \sha_X$
introduced in Definition~\ref{def:image_inv_sha}.
\begin{definition}
  For $\shn \in \Mod(\sha_Y)$ we define its inverse image in
  $\Mod(\sha_X)$:
$$
f^* \shn =  \sha_X \otimes_{f^{-1} \sha_Y} f^{-1}\shn .
$$
By adjunction $f^\sharp$ gives a morphism $\sha_Y \to f_* \sha_X$.
Hence, for $\shm \in \Mod(\sha_X)$, $f_* \shm$ has a natural structure
of dg-$\sha_Y$-module, as well as $f_{!!}\shm$, through the natural
morphism $f_* \sha_X \otimes f_{!!}\shm \to f_{!!}(\sha_X \otimes
\shm) \to f_{!!}\shm$.
\end{definition}
We have a natural morphism $f^{-1}\shn \to f^*\shn$ in
$\CC(\C_{X_{sa}})$ (with the notations of
Remark~\ref{eq:notation_for}, it could be written more exactly
$f^{-1}(\For_Y \shn) \to \For_X f^*\shn$).  We show in
Proposition~\ref{prop:im_inv_sha_subm} that it is a quasi-isomorphism
when $f$ is smooth. We first consider a particular case.
\begin{lemma}
  \label{lem:im_inv_sha_proj}
  We set $X = \R^{m+1}$, $Y = \R^m$ and we let $f\cl X \to Y$ be the
  projection. We consider coordinates $(y_1,\ldots,y_m,u)$ on $X$.
  For $\shn\in \Mod(\sha^0_Y)$ we have an exact sequence in
  $\Mod(\C_{X_{sa}})$:
$$
0 \xto{\quad} f^{-1}\shn 
\xto{\quad}   \sha^0_X \otimes_{f^{-1} \sha^0_Y } f^{-1}\shn
\xto{\quad d \quad} \sha^0_X \otimes_{f^{-1} \sha^0_Y } f^{-1}\shn
\xto{\quad} 0,
$$
where $d$ is defined by $d(a\otimes n) = \grosfrac{\partial
  a}{\partial u} \otimes n$, for $a\in \sha^0_X$, $n\in \shn$.
\end{lemma}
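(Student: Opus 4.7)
The plan is to first establish the auxiliary short exact sequence of sheaves of $f^{-1}\sha^0_Y$-modules on $X_{sa}$,
$$0 \to f^{-1}\sha^0_Y \to \sha^0_X \xto{\partial/\partial u} \sha^0_X \to 0, \quad (\star)$$
then exhibit local splittings of $(\star)$ given by integration along $u$, and finally deduce the lemma by applying $\cdot \otimes_{f^{-1}\sha^0_Y} f^{-1}\shn$ to $(\star)$ over the basis opens on which it splits.

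For the exactness of $(\star)$, I would identify germs of $\sha^0_X$ at $x_0 \in X$ with germs of tempered $\cinf$ functions $\tilde a$ on $W \cap X^+$ for subanalytic opens $W \subset \widehat X$ containing $x_0$. The equation $\partial \tilde a/\partial u = 0$ forces $\tilde a$ to be independent of $u$ on $u$-connected $W$, so $\tilde a$ is the pullback by $\widehat f$ of a tempered function on a nbhd of $f(x_0)$ in $Y^+$; this gives both the injectivity of $f^{-1}\sha^0_Y \hookto \sha^0_X$ and the identification $\ker(\partial/\partial u) = f^{-1}\sha^0_Y$. For local surjectivity of $\partial/\partial u$, I would represent $a \in \sect(U; \sha^0_X)$ by some $\tilde a$ on a product neighborhood $W$ of $U$ and set $\tilde b(y,u,t) = \int_{u_0}^u \tilde a(y,v,t)\, dv$ with respect to a chosen base point $u_0$. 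On any compact neighborhood of a boundary point $(y_1, u_1, 0)$ the $v$-range of integration is bounded, so polynomial growth of $\tilde a$ (and its derivatives) in $1/t$ is preserved, yielding temperedness of $\tilde b$ and $\partial \tilde b/\partial u = \tilde a$.

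For the local splitting, I would take as a basis of $X_{sa}$ the subanalytic opens of the form $U = U_Y \times I$ with $U_Y \subset Y$ subanalytic and $I \subset \R$ a bounded open interval. On such $U$, fixing $u_0 \in I$ and product neighborhoods $W = U_Y \times I \times (-\delta, \delta) \subset \widehat X$, the operator $\sigma_U(a)(y, u, t) = \int_{u_0}^u \tilde a(y, v, t)\, dv$ (for a representative $\tilde a$ on $W \cap X^+$) produces a section in $\sect(U; \sha^0_X)$ by the previous paragraph. Moreover, $\sigma_U$ is $f^{-1}\sha^0_Y|_U$-linear since elements of $f^{-1}\sha^0_Y$ are constant in $u$, and $\partial/\partial u \circ \sigma_U = \id$ by the fundamental theorem of calculus. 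Hence $(\star)|_U$ splits as a sequence of $f^{-1}\sha^0_Y|_U$-modules.

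Since sheaf restriction commutes with tensor product and tensoring a split short exact sequence of modules with any module preserves exactness, applying $\cdot \otimes_{f^{-1}\sha^0_Y|_U} f^{-1}\shn|_U$ to $(\star)|_U$ yields an exact sequence over each basis open $U$; exactness of sheaf sequences being a local property, the desired global exact sequence follows. The main obstacle I anticipate is the temperedness verification for $\tilde b$: polynomial growth of $\tilde a$ near boundary points must be controlled as $v$ ranges over a bounded interval, which is why working with opens having bounded $u$-range is essential — a naive global integration on $X = \R^{m+1}$ would fail because the $v$-integration would traverse an unbounded family of boundary points, with no uniform control on the constants and exponents of the polynomial bounds.
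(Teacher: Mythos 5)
Your strategy --- split the auxiliary sequence $(\star)$ locally by integration from a base point in the fiber, then tensor --- is genuinely different from the paper's. The paper also starts from $(\star)$ and notes that right-exactness of $\otimes$ settles everything except injectivity at the first spot, but it proves that injectivity directly rather than via a splitting: a relation $1\otimes n_i=0$ in the presheaf tensor product over an arbitrary member $U_i$ of a covering is multiplied by a compactly supported cutoff $\alpha_i$ (from Lemma~\ref{lem:test_function}) so that it extends to all of $X^+$, then integrated over the whole fiber of $f$ to get $\beta_i=\int\alpha_i\,du$ with $\beta_i n_i=0$, and finally $\beta_i$ is shown to be invertible near $V_i$ by a \L ojasiewicz lower bound. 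Your base-point integration, if carried out, spares you that invertibility argument; the paper's cutoff spares it the restriction to product opens.

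There is, however, a concrete gap where you set up $\sigma_U$. A section $a\in\sect(U;\sha_X^0)$, $U=U_Y\times I$, is by definition a germ along $U$ of tempered functions on neighborhoods $V$ of $U$ in $\widehat X$, and by Lemma~\ref{lem:base_vois} the cofinal such $V$ are tubes $U_\varphi$ with $\varphi\to 0$ at $\partial U$ --- in particular as $u\to\partial I$. So there is in general \emph{no} representative $\tilde a$ on a product $W=U_Y\times I\times(-\delta,\delta)$ with uniform $\delta>0$, and $\int_{u_0}^{u}\tilde a(y,v,t)\,dv$ is not defined as written: the segment $\{y\}\times[u_0,u]\times\{t\}$ may leave $U_\varphi^+$ (whose $u$-fibers need not contain $u_0$, nor be connected --- which also undercuts your kernel identification as stated). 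The repair is to invoke Proposition~\ref{prop:qis-injectivite} to extend the representative to all of $X^+$, integrate there (your temperedness estimate is then correct, the $v$-range being bounded on compacta), and check that the germ of the result along $U$ is independent of the extension: two extensions agree on some $U_\varphi^+$, hence their integrals agree on $\{(y,u,t):(y,u)\in U,\ t<\min_{v}\varphi(y,v)\}$ (minimum over the segment from $u_0$ to $u$), which is again a neighborhood of $U$ precisely because $U$ is a product. Note also that the resulting $\sigma_U$ splits the sequence of \emph{section modules} over $U$ but is not a morphism of sheaves on $U$ (it does not restrict to smaller opens), so the final step should be phrased as: the presheaf tensor product sequence is split exact over every basis open, and sheafification is exact and annihilates presheaves vanishing on a basis. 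With these repairs your argument goes through.
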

\begin{proof}
  We have the exact sequence $0 \to f^{-1} \sha^0_Y \to \sha^0_X
  \xto{d} \sha^0_X \to 0$, where $d(a) = \frac{\partial a}{\partial
    u}$.  The tensor product with $f^{-1}\shn$ gives the exactness of
  the sequence of the lemma except at the first term.  It just remains
  to check that $\iota \cl f^{-1}\shn \to \sha^0_X \otimes_{f^{-1}
    \sha^0_Y } f^{-1}\shn$, $n\mapsto 1\otimes n$, is injective.
  
  We consider a section $n\in \sect(U;f^{-1}\shn)$ such that $\iota(n)
  = 0$. This means that there exist a locally finite covering $U =
  \bigcup_{i\in I} U_i$ and sections, setting $V_i = f(U_i)$,
$$
n_i, n_{ij} \in \sect(V_i; \shn),
\quad
a_{ij} \in \sect(U_i; \sha_X^0),
\quad
b_{ij} \in \sect(V_i; \sha_Y^0),
$$
such that for each $i\in I$, $n|_{U_i} = f^* n_i$, $j$ runs over a
finite set $J_i$, and we have the identity in $\sect(U_i;\sha_X^0)
\otimes \sect(V_i; \shn)$:
\begin{equation}
\label{eq:im_inv_sha_proj1}
1\otimes n_i = \sum_{j\in J_i} (a_{ij} (b_{ij}\circ f) \otimes
n_{ij} - a_{ij} \otimes b_{ij} n_{ij} ).
\end{equation}
We may as well assume that the $\overline{U_i}$ are compact.  We
show in this case that $n_i=0$, which will prove $n=0$, hence the
injectivity of $\iota$.

By Proposition~\ref{prop:qis-injectivite} we may represent the
$a_{ij}$, $b_{ij}$ by tempered $\cinf$ functions defined on $X^+$,
$Y^+$. We choose continuous subanalytic functions $\varphi_i \cl
\overline{U_i} \to \R$, $\varphi_i >0$ on $U_i$, such that the
identities~\eqref{eq:im_inv_sha_proj1} hold in $\sect(U_{\varphi_i}^+;
\cinft_{\widehat X}) \otimes \sect(V_i; \shn)$.

We apply Lemma~\ref{lem:test_function} to the function $\varphi_i/2
\cl \overline{U_i} \to \R$ and obtain $\varphi'_i \cl \overline{U_i}
\to \R$ and $\alpha_i\in \sect(X^+;\cinft_{\widehat X})$ such that $0<
\varphi'_i < \varphi_i/2$, $0\leq \alpha_i \leq 1$, $\alpha_i =1$ in
$U_{\varphi'_i}^+$ and $\alpha_i =0$ outside $U_{\varphi_i/2}^+$.
Multiplying both sides of~\eqref{eq:im_inv_sha_proj1} by $\alpha_i$ we
obtain identities which now hold on $\sect(X^+; \cinft_{\widehat X})
\otimes \sect(V_i; \shn)$. These identities imply:
$$
\alpha_i \otimes n_i = 0
\qquad \text{in} \qquad
\sect(X^+; \cinft_{\widehat X})
\otimes_{\sect(Y^+; \cinft_{\widehat Y})}  \sect(V_i; \shn).
$$
We note that $\alpha_i$ has compact support and we set $\beta_i =
\int \alpha_i du$.  We have $\beta_i \in \sect(Y^+; \cinft_{\widehat
  Y})$ and the last identity gives $\beta_i n_i=0$.  Now $\sect(V_i;
\shn)$ is a $\sect(V_i;\sha_Y^0)$-module and to conclude that $n_i=0$
it just remains to prove that $\beta_i|_{V_i}$ is invertible in
$\sect(V_i;\sha_Y^0)$.

Since $\beta_i$ is a tempered $\cinf$ function on $Y^+$ it is enough
to check that $\beta_i^{-1}$ has polynomial growth along the boundary
of $W_i = f(U_{\varphi'_i}^+)$.  We set $Z_i = X^+ \setminus
U_{\varphi'_i}^+$ and for $(x,t) \in X^+$, $d_i(x,t) = d((x,t),
\partial Z_i)$. We obtain the bound, for $(y,t)\in
W_i$:
$$
\beta_i(y,t) \; \geq\;
\int_{U_{\varphi'_i}^+ \cap (\{(y,t)\} \times \R)} 1 \cdot du
\;\geq\; 2 \max_{u\in \R} d_i(y,u,t)
$$ The function $m_i(y,t) = \max_{u\in \R} d_i(y,u,t)$ is subanalytic
since the $\max$ can be taken for $u$ running on a compact set. We
have $m_i(y,t) >0$ for $(y,t) \in W_i$.  Hence, by \L ojasiewicz's
inequality we have $m_i(y,t) > C' d((y,t),\partial W_i)^{-N'}$ for
some $C', N' \in \R$ and it follows that $\beta_i^{-1}$ has polynomial
growth along $\partial W_i$.
\end{proof}

\begin{proposition}
  \label{prop:im_inv_sha_subm}
  Let $f\cl X \to Y$ be a smooth morphism and $\shn\in \Mod(\sha_Y)$.
  
  (i) The morphism in $\CC(\C_{X_{sa}})$, $f^{-1}\shn \to f^* \shn$,
  is a quasi-isomorphism.
  
  (ii) If $\shn$ is locally free as an $\sha^0_Y$-module, then $f^*
  \shn$ is locally free as an $\sha^0_X$-module.
  
  (iii) If $\shn$ is flat over $\sha^0_Y$ and we have an exact
  sequence in $\Mod(\sha_Y)$, $0 \to \shn'' \to \shn' \to \shn \to 0$,
  then the sequence $0 \to f^*\shn'' \to f^*\shn' \to f^*\shn \to 0$
  is exact.
\end{proposition}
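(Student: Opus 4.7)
The three statements are local on $X$, so by the local form of a smooth morphism I may assume that $f\cl X = Y\times \R^d \to Y$ is a coordinate projection. Writing such an $f$ as a composition of $d$ one-dimensional projections, each assertion reduces to the case $d=1$, that is $f\cl X = Y\times \R \to Y$ with extra coordinate $u$ on the $\R$-factor, where Lemma~\ref{lem:im_inv_sha_proj} is available.

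For (i), I filter $\shn$ by its stupid filtration $\sigma^{\geq k}\shn = \bigoplus_{i\geq k}\shn^i$. This is a sub-dg-$\sha_Y$-module since the differential and the action of $\sha_Y$ both preserve or raise degree, and the hypothesis that $\shn$ is bounded below makes the filtration eventually stationary in each cohomological degree. By dévissage applied to the short exact sequences $0 \to \sigma^{\geq k+1}\shn \to \sigma^{\geq k}\shn \to \shn^k[-k] \to 0$, it suffices to show that $f^{-1}\shn^k[-k] \to f^*\shn^k[-k]$ is a quasi-isomorphism, where $\shn^k$ is an $\sha^0_Y$-module viewed as a dg-$\sha_Y$-module concentrated in degree $k$ with zero differential and trivial action of $\sha_Y^{>0}$. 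Since $\sha_Y^{>0}$ then acts as zero, we have
$$
\sha_X \otimes_{f^{-1}\sha_Y} f^{-1}\shn^k \;\cong\; \bigl(\sha_X\, /\, \sha_X \cdot f^{-1}\sha_Y^{>0}\bigr) \otimes_{f^{-1}\sha^0_Y} f^{-1}\shn^k .
$$
In local coordinates the positive-degree generators of $f^{-1}\sha_Y$ inside $\sha_X$ are the $dy_i$'s and $dt$, so the above quotient is freely generated over $\sha^0_X$ by $1$ and $du$, with induced differential $\alpha\mapsto (\partial\alpha/\partial u)\,du$. Tensoring with $f^{-1}\shn^k$ over $f^{-1}\sha^0_Y$ gives exactly the complex of Lemma~\ref{lem:im_inv_sha_proj} (shifted by $[-k]$), which identifies the kernel of $\partial/\partial u$ with $f^{-1}\shn^k$ and makes the remaining part acyclic. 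This establishes the quasi-isomorphism for each graded piece, and hence (i).

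For (ii), if $\shn$ is locally free over $\sha^0_Y$, I may reduce by direct-sum considerations to $\shn = \sha^0_Y[k]$, and the explicit description above then gives $f^*(\sha^0_Y[k]) \cong (\sha^0_X \oplus \sha^0_X\cdot du)[k]$, which is $\sha^0_X$-free of rank $2$. Iterating for a $d$-dimensional projection produces freeness of rank $2^d$, and (ii) follows.

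For (iii), exactness of $f^{-1}$ gives $0 \to f^{-1}\shn'' \to f^{-1}\shn' \to f^{-1}\shn \to 0$. Applying the stupid filtration to all terms and passing to associated graded yields short exact sequences of $\sha^0_Y$-modules $0 \to (\shn'')^k \to (\shn')^k \to \shn^k \to 0$ in each degree. By hypothesis each $\shn^k$ is flat over $\sha^0_Y$, so $f^{-1}\shn^k$ is flat over $f^{-1}\sha^0_Y$ and tensoring with $\sha^0_X$ over $f^{-1}\sha^0_Y$ preserves exactness. By the computation of (i), this lifts to the exactness of $0 \to f^*\shn'' \to f^*\shn' \to f^*\shn \to 0$.

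The main technical obstacle is the explicit identification in (i) of $\sha_X / \sha_X\cdot f^{-1}\sha_Y^{>0}$ as the two-term complex $\sha^0_X \to \sha^0_X\cdot du$ with differential $\partial/\partial u$: one must check that the $dy_i$'s and $dt$ really do generate the full positive-degree part of $f^{-1}\sha_Y$ inside $\sha_X$, and that the differential inherited from $\sha_X$ reduces to what is claimed modulo forms involving $dy_i$ or $dt$. Once this local computation is settled, (ii) and (iii) follow by the routine manipulations sketched above.
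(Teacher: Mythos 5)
Your localization to a one-dimensional projection, the identification of $\sha_X/\sha_X\cdot f^{-1}\sha_Y^{>0}$ with $\sha^0_X\oplus\sha^0_X\,du$ carrying the differential $\partial/\partial u$, and the appeal to Lemma~\ref{lem:im_inv_sha_proj} are correct and form exactly the computational core of the paper's proof. The gap lies in the reduction steps that precede this computation. In (i), the d\'evissage through the stupid filtration requires that the sequences $0\to f^*\sigma^{\geq k+1}\shn\to f^*\sigma^{\geq k}\shn\to f^*(\shn^k[-k])\to 0$ remain exact; but $f^*=\sha_X\otimes_{f^{-1}\sha_Y}f^{-1}(\cdot)$ is only right exact, and the filtration sequences are not split over $\sha_Y$ (the obvious splitting $\shn^k\to\sigma^{\geq k}\shn$ is not $\sha_Y$-linear, since $\sha_Y^{>0}$ acts by zero on the quotient but not on $\shn^k$ inside $\sigma^{\geq k}\shn$). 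The same issue underlies the ``direct-sum considerations'' in (ii) --- a decomposition of $\shn$ as an $\sha^0_Y$-module is not a decomposition as a dg-$\sha_Y$-module, so you cannot compute $f^*$ summand by summand --- and the passage from the associated graded back to the original sequence in (iii). Note also that your identity $\sha_X\otimes_{f^{-1}\sha_Y}f^{-1}\shn^k\cong(\sha_X/\sha_X\cdot f^{-1}\sha_Y^{>0})\otimes_{f^{-1}\sha^0_Y}f^{-1}\shn^k$ is valid only because $\sha_Y^{>0}$ kills $\shn^k$; it says nothing about $f^*\shn'$ for a general $\shn'$ such as $\sigma^{\geq k}\shn$.

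All of these points are repaired by a single observation, which is the actual key step of the paper's proof: setting $\sha_{vert}=\sha^0_X\oplus\sha^0_X\,du\subset\sha_X$ (a sub-$\sha^0_X$-algebra, not a sub-dg-algebra), the multiplication map $\sha_{vert}\otimes_{f^{-1}\sha^0_Y}f^{-1}\sha_Y\to\sha_X$ is an isomorphism, whence a functorial isomorphism of graded $\sha^0_X$-modules $\sha_{vert}\otimes_{f^{-1}\sha^0_Y}f^{-1}\shn'\isoto f^*\shn'$ for \emph{every} $\shn'$. This shows that, after forgetting the differential, $f^*\shn'$ is computed degreewise from the underlying $\sha^0_Y$-module of $\shn'$; the exactness of the filtration sequences (which are degreewise split over $\sha^0_Y$), the freeness in (ii), and the flatness argument in (iii) all follow at once. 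With this isomorphism in hand your d\'evissage for (i) does go through, and it is then in substance the same as the paper's column-by-column analysis of the resulting two-row double complex.
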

\begin{proof}
  The statements are local on $X$, so that, up to restriction to open
  subsets, we may assume $X = Y \times \R^n$ and $f$ is the
  projection. Then we factorize $f$ as a composition of projections
  with fiber dimension $1$, so that we may even assume $X = Y\times
  \R$ (and $\widehat X = Y\times \R \times \R$).  We take coordinates
  $(y_1,\ldots,y_m,u,t)$ on $\widehat X$ ($u$ is the coordinate in the
  fiber of $f$).
  
  With this decomposition of $X$ we define the $\sha^0_X$-module
  $\sha_{vert} = \sha^0_X \oplus \sha^0_X du$. This is a
  sub-$\sha^0_X$-algebra of $\sha_X$ (not a sub-dg-algebra);
  $f^{-1}\sha_Y$ is another sub-algebra and the multiplication,
  $\sha_{vert} \otimes_{f^{-1} \sha^0_Y } f^{-1}\sha_Y \to \sha_X$, is an
  isomorphism of $\sha^0_X$-algebras.  This shows that we have an
  isomorphism of $\sha^0_X$-modules, for any dg-$\sha_Y$-module
  $\shn'$:
\begin{equation}
  \label{eq:prop:im_inv_sha_subm_1}
  \sha_{vert} \otimes_{f^{-1} \sha^0_Y } f^{-1}\shn' \isoto f^* \shn'.
\end{equation}
Since $\sha_{vert}$ is free over $\sha_X^0$, this implies (ii).  To
check that the sequence in (iii) is exact, we consider it as a
sequence of $\sha_X^0$-modules. Since $\shn$ is flat over $\sha^0_Y$,
isomorphism~\eqref{eq:prop:im_inv_sha_subm_1} gives the exactness.

\smallskip

Now we prove (i). By~\eqref{eq:prop:im_inv_sha_subm_1} again,
$f^*\shn$ is identified with the total complex of the double complex
with two rows:
\begin{equation}
  \label{eq:dble_cplx}
\vcenter{
\def\objectstyle{\scriptstyle}
\xymatrix{
\sha^0_X \otimes_{f^{-1} \sha^0_Y } f^{-1}\shn^{i-1} \ar[d] \ar[r]
& \sha^0_X \otimes_{f^{-1} \sha^0_Y } f^{-1}\shn^i
\ar[d]^{d_v^i} \ar[r]^{d_h^{1,i}}
&\sha^0_X \otimes_{f^{-1} \sha^0_Y } f^{-1}\shn^{i+1} \ar[d]
\\
\sha^0_X \otimes_{f^{-1} \sha^0_Y } f^{-1}\shn^{i-1}  \ar[r]
& \sha^0_X \otimes_{f^{-1} \sha^0_Y } f^{-1}\shn^i
\ar[r]^{d_h^{2,i}}
&\sha^0_X \otimes_{f^{-1} \sha^0_Y } f^{-1}\shn^{i+1}
}}
\end{equation}
where $d_v^i(a\otimes n) = \grosfrac{\partial a}{\partial u} \otimes
n, \quad d_h^{1,i}(a\otimes n) = \sum_k \grosfrac{\partial a}{\partial
  y_k}\otimes dy_k \cdot n + \grosfrac{\partial a}{\partial t} \otimes
dt \cdot n $ and $d_h^{2,i} = -d_h^{1,i}$.  By
Lemma~\ref{lem:im_inv_sha_proj} the $i^{th}$ column is a resolution of
$f^{-1}\shn^i$. The induced differential on the cohomology of the
columns is easily seen to be the differential of $f^{-1}\shn$ and (i)
follows.
\end{proof}

\begin{lemma}
\label{lem:sha_mod_soft}
Any sheaf of $\sha_X^0$-module is soft in the sense of
Definition~\ref{def:soft}
\end{lemma}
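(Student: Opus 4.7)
The goal is to show, for $F \in \Mod(\sha_X^0)$, closed subanalytic $Z \subset X$ and open subanalytic $U \subset X$, that $\sect(U;F) \to \sect(U;F_Z)$ is surjective. By~\eqref{eq:section_F_K}, every section of $F_Z$ over $U$ is represented by some $s \in \sect(W;F)$ with $W$ subanalytic open and $U \cap Z \subset W \subset U$. My plan is to construct a tempered cutoff $\bar\alpha \in \sect(X;\sha_X^0)$ that equals $1$ on a subanalytic open neighborhood $W'$ of $U\cap Z$ whose closure in $U$ stays inside $W$, and vanishes on $X \setminus \overline{W'}$; then $\bar\alpha \cdot s$ extends by zero to the global section we need. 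The $\sha_X^0$-module structure of $F$ is used precisely to form this product.

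First I would produce the intermediate open. Using a subanalytic continuous distance $d$ on $X$ (as invoked in Lemma~\ref{lem:recouv_ouvert_ferme}), the set
\begin{equation*}
W' = \{x \in U : d(x, U\cap Z) < d(x, U\setminus W)\}
\end{equation*}
is a subanalytic open subset of $X$ containing $U\cap Z$, with $\overline{W'}\cap U$ disjoint from $U\setminus W$: at any $x \in U\setminus W$ one has $d(x,U\setminus W)=0$ while $d(x,U\cap Z)>0$. (The cases $U\cap Z = \emptyset$ and $U \setminus W = \emptyset$ are trivial.) Next I would apply Lemma~\ref{lem:test_function} to $W'$ together with a subanalytic continuous $\varphi\cl \overline{W'}\to \R$ vanishing on $\partial W'$ and positive on $W'$, obtaining $\varphi' < \varphi$ and $\alpha \in \sect(X^+;\cinft_{\widehat X})$ equal to $1$ on $(W')^+_{\varphi'}$ and $0$ outside $(W')^+_\varphi$. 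Let $\bar\alpha \in \sect(X;\sha_X^0)$ be its image under $i_X^{-1}\sect_{X^+}$; its germ along $W'$ is $1$, and its germ along any $x_0 \notin \overline{W'}$ is $0$ because $\alpha$ vanishes on an open tubular neighborhood of $(x_0,0)$ in $X^+$.

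Finally, $U$ is covered by the two subanalytic open sets $W$ and $V := U \setminus \overline{W'}$, thanks to $\overline{W'}\cap U \subset W$. Setting $\tilde s|_W = \bar\alpha \cdot s$ and $\tilde s|_V = 0$ defines a section in $\sect(U;F)$, the two definitions agreeing on $W \cap V \subset V$ because $\bar\alpha|_V = 0$. Since $\bar\alpha|_{W'}=1$ and $U\cap Z \subset W'$, $\tilde s$ coincides with $s$ on $W'$, and therefore represents the given element of $\sect(U;F_Z)$ through the direct limit~\eqref{eq:section_F_K}. The only substantive step is the construction of $\bar\alpha$ with its prescribed plateau and support, which is exactly the content of Lemma~\ref{lem:test_function}; once this is available the rest of the argument is a formal gluing on an open cover, with the $\sha_X^0$-linearity of $F$ invoked only to form $\bar\alpha\cdot s$.
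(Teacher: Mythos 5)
Your proof is correct and follows essentially the same route as the paper's: represent the section of $F_Z$ by a section of $F$ on a subanalytic neighborhood $W$ of $U\cap Z$ in $U$, multiply by a tempered cutoff in $\sect(X;\sha_X^0)$ equal to $1$ near $U\cap Z$ and vanishing outside a smaller neighborhood whose closure in $U$ lies in $W$, then extend by zero. The only (harmless) difference is that you construct the intermediate open set and the cutoff explicitly via the distance function and Lemma~\ref{lem:test_function}, whereas the paper obtains the cutoff abstractly from the quasi-injectivity of $\sha_X^0$ (itself proved with that same lemma).
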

\begin{proof}
  Let $U$ and $Z$ be respectively open and closed subanalytic subsets
  of $X$. Let $F$ be an $\sha_X^0$-module and $s\in \sect(U; F_Z)$. We
  may assume $s\in \sect(W;F)$ for a subanalytic open set $W$ with
  $(U\cap Z) \subset W \subset U$. We choose two subanalytic open sets
  $W_1, W_2$ such that $(U\cap Z) \subset W_1 \subset \overline{W_1}
  \subset W_2 \subset \overline{W_2} \subset W$.  Since $\sha_X^0$ is
  quasi-injective we may find $\alpha \in \sect(X;\sha_X^0)$ such that
  $\alpha=1$ on $W_1$ and $\alpha =0$ on $X\setminus \overline{W_2}$.
  Then $\alpha s \in \sect(W;F)$ extends by $0$ on $U$ and $\alpha s =
  s$ in $\sect(U; F_Z)$. It follows that $\sect(U;F) \to \sect(U;
  F_Z)$ is surjective, as required.
\end{proof}
\begin{proposition}
  \label{prop:Amod_f_*acyclique}
  Let $f\cl X\to Y$ be a morphism of real analytic manifolds.  For any
  $\shm \in \Mod(\sha_X)$, $\For(\shm)\in \CC^+(\C_{X_{sa}})$ is
  acyclic with respect to $f_*$ and $f_{!!}$.  In particular we have
  isomorphisms in $\DC^+(\C_{Y_{sa}})$, $\For'( f_* (\shm)) \simeq
  Rf_* (\For'(\shm))$ and $\For'( f_{!!} (\shm)) \simeq Rf_{!!}
  (\For'(\shm))$.
\end{proposition}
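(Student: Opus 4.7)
The plan is to reduce the statement directly to the previously established softness of $\sha_X^0$-modules. For $\shm \in \Mod(\sha_X)$, each component $\shm^i$ of the underlying graded object is a sheaf of $\sha_X^0$-modules, because the action of $\sha_X$ restricts to an $\sha_X^0$-action on each graded piece (the differential $d_\shm$ has degree $+1$ and plays no role in the module structure of the individual terms). Thus Lemma~\ref{lem:sha_mod_soft} applies to every $\shm^i$ and shows that each $\shm^i$ is soft in the sense of Definition~\ref{def:soft}.

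Next I would invoke Corollary~\ref{cor:soft_acyclique}, which tells us that soft sheaves on $X_{sa}$ are acyclic for the direct image functors $f_*$ and $f_{!!}$. Therefore each term of the bounded below complex $\For_X(\shm) \in \CC^+(\C_{X_{sa}})$ is acyclic for both functors. This is exactly the acyclicity statement claimed.

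For the ``in particular'' clause I would apply the standard homological algebra principle: if $K^\bullet$ is a bounded below complex in an abelian category whose terms are acyclic for a given left exact functor $F$, then the naive application $F(K^\bullet)$ represents the derived functor $RF(K^\bullet)$ in the derived category (one can see this by taking a quasi-isomorphism from $K^\bullet$ to a complex of injectives and using that the terms of $K^\bullet$ are $F$-acyclic to conclude the map $F(K^\bullet) \to F(I^\bullet)$ is a quasi-isomorphism). Applied with $F = f_*$ and $F = f_{!!}$ to the complex $\For_X(\shm)$, this gives the desired isomorphisms $\For'_Y(f_*\shm) \simeq Rf_* \For'_X(\shm)$ and $\For'_Y(f_{!!}\shm) \simeq Rf_{!!} \For'_X(\shm)$ in $\DC^+(\C_{Y_{sa}})$.

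There is no real obstacle in this proof; the content is entirely in Lemma~\ref{lem:sha_mod_soft} and Corollary~\ref{cor:soft_acyclique}. The only point worth checking carefully is that the forgetful functor $\For_X$ really does produce a complex whose terms are $\sha_X^0$-modules (as opposed to some weaker structure), but this is immediate from the definition of a dg-$\sha_X$-module.
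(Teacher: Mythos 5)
Your proof is correct and follows exactly the paper's own route: the paper's proof is the one-line observation that the statement follows from Lemma~\ref{lem:sha_mod_soft} (each graded piece of $\shm$ is an $\sha_X^0$-module, hence soft) together with Corollary~\ref{cor:soft_acyclique}, and you have simply spelled out the same reduction together with the standard acyclic-resolution argument for the derived-functor identification.
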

\begin{proof}
  This follows from Lemma~\ref{lem:sha_mod_soft} and
  Corollary~\ref{cor:soft_acyclique}.
\end{proof}

\subsection{Projection formula}
\begin{lemma}
  \label{lem:proj_form}
  Let $f\cl X \to Y$ be a morphism of analytic manifolds, $\shm \in
  \Mod(\sha_X)$, $\shn \in\Mod(\sha_Y)$. There exists a natural
  isomorphism in $\Mod(\sha_Y)$:
$$
\shn \otimes_{\sha_Y} f_{!!}\shm
\isoto
f_{!!} (f^*\shn \otimes_{\sha_X} \shm),
$$
whose image in $\CC^+(Y_{sa})$ gives a commutative diagram:
$$
\xymatrix@R=5mm{
\shn \otimes_{\sha_Y} f_{!!}\shm  \ar[r]^-\sim
& f_{!!} (f^*\shn \otimes_{\sha_X} \shm)  \\
\shn \otimes f_{!!}\shm   \ar[r]^-\sim \ar[u]
& f_{!!} (f^{-1}\shn \otimes \shm) ,  \ar[u] 
}
$$
where the bottom arrow is the usual projection formula.
\end{lemma}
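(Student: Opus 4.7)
The plan is to reduce this to the classical projection formula for sheaves on the subanalytic site, using the softness of $\sha_X^0$-modules in a decisive way. First, from the definition $f^*\shn = \sha_X \otimes_{f^{-1}\sha_Y} f^{-1}\shn$ together with associativity of the tensor product and $\sha_X \otimes_{\sha_X} \shm \simeq \shm$, I obtain a natural isomorphism in $\Mod(\sha_X)$,
$$f^*\shn \otimes_{\sha_X} \shm \;\simeq\; f^{-1}\shn \otimes_{f^{-1}\sha_Y} \shm,$$
the right-hand side using the $f^{-1}\sha_Y$-action on $\shm$ induced by $f^\sharp$. It then suffices to construct a natural isomorphism $\shn \otimes_{\sha_Y} f_{!!}\shm \isoto f_{!!}(f^{-1}\shn \otimes_{f^{-1}\sha_Y} \shm)$ in $\Mod(\sha_Y)$.

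To do this, I would compare the two right-exact sequences of type~\eqref{eq:tensor_prod} defining $\shn \otimes_{\sha_Y} f_{!!}\shm$ and $f^{-1}\shn \otimes_{f^{-1}\sha_Y} \shm$, linking them via the classical projection formulas $\shn \otimes f_{!!}\shm \isoto f_{!!}(f^{-1}\shn \otimes \shm)$ and $\shn \otimes \sha_Y \otimes f_{!!}\shm \isoto f_{!!}(f^{-1}\shn \otimes f^{-1}\sha_Y \otimes \shm)$. Naturality in $\shn$ and $\sha_Y$ ensures that these isomorphisms intertwine the leftmost maps $\delta_Y$ and $f_{!!}(\delta_X)$ of the two sequences; the universal property of cokernels then yields the desired isomorphism.

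The decisive step is verifying that $f_{!!}$ is right-exact on
$$f^{-1}\shn \otimes f^{-1}\sha_Y \otimes \shm \xto{\delta_X} f^{-1}\shn \otimes \shm \to f^{-1}\shn \otimes_{f^{-1}\sha_Y} \shm \to 0,$$
so that its cokernel really is $f_{!!}(f^{-1}\shn \otimes_{f^{-1}\sha_Y} \shm)$. Each of the three terms, together with the image of $\delta_X$, is naturally a sheaf of $\sha_X^0$-modules (with $\sha_X^0$ acting through the factor $\shm$); by Lemma~\ref{lem:sha_mod_soft} these sheaves are soft, and by Corollary~\ref{cor:soft_acyclique} they are $f_{!!}$-acyclic. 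Splitting the right-exact sequence into two short exact sequences and applying $f_{!!}$ term by term therefore preserves exactness.

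It only remains to observe that the resulting morphism is $\sha_Y$-linear and commutes with differentials, which is automatic since every step is natural and built from morphisms of dg-$\sha$-modules. The compatibility with the usual projection formula displayed in the commutative diagram is then built in by construction. The main obstacle in this scheme is precisely the right-exactness of $f_{!!}$ on the defining sequence of the tensor product over $f^{-1}\sha_Y$; this is where the softness machinery of Section~\ref{sec:soft_sheaves}, rather than the weaker statement that $\sha_X^0$-modules are merely $f_{!!}$-acyclic, plays its full role.
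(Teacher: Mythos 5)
Your proposal is correct and follows essentially the same route as the paper: identify $f^*\shn \otimes_{\sha_X}\shm$ with $f^{-1}\shn\otimes_{f^{-1}\sha_Y}\shm$, compare the two presentations of type~\eqref{eq:tensor_prod} via the classical projection formula, and use that all terms (and the kernel and image of $\delta_X$) are $\sha_X^0$-modules, hence soft and $f_{!!}$-acyclic by Lemma~\ref{lem:sha_mod_soft} and Corollary~\ref{cor:soft_acyclique}, so that $f_{!!}$ preserves the exactness of the bottom presentation. This is exactly the argument given in the paper.
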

\begin{proof}
  Using~\eqref{eq:tensor_prod} and $f^*\shn \otimes_{\sha_X} \shm
  \simeq f^{-1}\shn \otimes_{f^{-1}\sha_Y} \shm$ we have the
  commutative diagram (extending the diagram of the lemma):
$$
\def\objectstyle{\scriptstyle}
\xymatrix{
\shn \otimes \sha_Y \otimes f_{!!}\shm  \ar[d]^a  \ar[r] 
& \shn \otimes f_{!!}\shm  \ar[d]^b  \ar[r] 
& \shn \otimes_{\sha_Y} f_{!!}\shm  \ar[d] \ar[r]   
& 0  \\
f_{!!}(f^{-1}\shn \otimes f^{-1}\sha_Y \otimes \shm)   \ar[r]  
& f_{!!}(f^{-1}\shn \otimes \shm )   \ar[r]  
& f_{!!}(f^{-1}\shn \otimes_{f^{-1}\sha_Y} \shm )  \ar[r]  
& 0}
$$ The top row of this diagram is exact by definition of the tensor
product, as well as the bottom row, before we take the image by
$f_{!!}$.  But any complex of the type $\shp\otimes \shm$ is an
$\sha_X^0$-module, because $\shm$ is; hence it is $f_{!!}$-acyclic by
Lemma~\ref{lem:sha_mod_soft} and Corollary~\ref{cor:soft_acyclique}.
It follows that the bottom row is exact.  Now, the vertical arrows $a$
and $b$ are isomorphisms in view of the classical projection formula.
Hence so is the morphism of the lemma.
\end{proof}

\subsection{Base change}
We consider a Cartesian square of real analytic manifolds
$$
\xymatrix{ X' \ar[r]^{f'}\ar[d]^{g'}  & Y' \ar[d]^g  \\
X \ar[r]^{f}  & Y. }
$$
We have the usual base change formula in $\Mod(\C_{Y'_{sa}})$ or
$\CC^+(\C_{Y'_{sa}})$, $f^{-1} g_{!!}  \simeq g'_{!!} f'^{-1}$ (and
its derived version in $\DC^+(\C_{Y'_{sa}})$, $f^{-1} Rg_{!!} \simeq
Rg'_{!!} f'^{-1}$).
\begin{lemma}
  \label{lem:base_change}
  Let $\shn$ a dg-$\sha_{Y'}$-module. There exists a natural morphism
\begin{equation}
  \label{eq:base_change_sha}
f^* g_{!!} \shn \to g'_{!!} f'^* \shn
\end{equation}
of dg-$\sha_{X}$-modules, whose image in the category of complexes
$\CC^+(X_{sa})$ gives a commutative diagram:
$$
\xymatrix@R=5mm{
f^* g_{!!} \shn  \ar[r]  &  g'_{!!} f'^* \shn\\
f^{-1} g_{!!} \shn  \ar[u]  \ar@{-}[r]^\sim
& g'_{!!} f'^{-1} \shn ,\ar[u] 
}
$$
where the bottom arrow is the usual base change isomorphism.

Moreover, if $f$ is an immersion and $g$ is smooth,
then~\eqref{eq:base_change_sha} is an isomorphism.
\end{lemma}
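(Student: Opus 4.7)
Plan: The morphism is built from the classical base change isomorphism in $\CC^+(\C_{X_{sa}})$,
$$
f^{-1} g_{!!} \shn \isoto g'_{!!} f'^{-1} \shn,
$$
post-composed with $g'_{!!}$ of the canonical $f'^{-1}\shn \to f'^*\shn$, $n\mapsto 1\otimes n$. I would check the composite $f^{-1}g_{!!}\shn \to g'_{!!}f'^*\shn$ is linear over $f^{-1}\sha_Y$: this follows because the two paths $g'^{-1}f^{-1}\sha_Y = f'^{-1}g^{-1}\sha_Y \to f'^{-1}\sha_{Y'} \to \sha_{X'}$ (via $f'^{-1}g^\sharp$ then $f'^\sharp$) and $g'^{-1}f^{-1}\sha_Y \to g'^{-1}\sha_X \to \sha_{X'}$ (via $g'^{-1}f^\sharp$ then $g'^\sharp$) coincide, by Definition~\ref{def:image_inv_sha} and $g\circ f' = f\circ g'$. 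Extending scalars from $f^{-1}\sha_Y$ to $\sha_X$ then yields the dg-$\sha_X$-module morphism $f^*g_{!!}\shn \to g'_{!!}f'^*\shn$, and the commutative square with the forgetful functor is tautological from the construction.

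For the isomorphism, I would use the classical projection formula to move $\sha_X\otimes_{f^{-1}\sha_Y}(\cdot)$ inside $g'_{!!}$. Every $\sha^0$-module is soft (Lemma~\ref{lem:sha_mod_soft}) and hence $g'_{!!}$-acyclic (Corollary~\ref{cor:soft_acyclique}), so $g'_{!!}$ commutes with the cokernel presentation~\eqref{eq:tensor_prod} of $\otimes_\sha$; together with the classical base change this gives
$$
f^*g_{!!}\shn \;\isoto\; g'_{!!}\bigl(g'^{-1}\sha_X \otimes_{f'^{-1}g^{-1}\sha_Y} f'^{-1}\shn\bigr).
$$
Under this identification, the morphism of the lemma is $g'_{!!}$ applied to the natural map
$$
\gamma \cl g'^{-1}\sha_X \otimes_{f'^{-1}g^{-1}\sha_Y} f'^{-1}\shn \to \sha_{X'}\otimes_{f'^{-1}\sha_{Y'}} f'^{-1}\shn,
$$
and since $\gamma$ factors as $\gamma_0 \otimes_{f'^{-1}\sha_{Y'}} \id$ with $\gamma_0 \cl g'^{-1}\sha_X \otimes_{f'^{-1}g^{-1}\sha_Y} f'^{-1}\sha_{Y'} \to \sha_{X'}$, it suffices to show $\gamma_0$ is an isomorphism.

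The main obstacle is this last step, which is local on $X'$. Using the hypotheses I would reduce to the model in which $f\cl X \hookrightarrow X\times \R^k$ is the origin inclusion and $g\cl Y\times U \to Y$ is the projection, so $f'=f\times \id_U$ and $g' = \mathrm{pr}_X$. Unwinding $\sha_X = i_X^{-1}\sect_{X^+}\Omegat_{\widehat X}^t$ and similarly for $X', Y, Y'$, and observing that the differential-form generators $dx, du, dt$ appear on both sides, $\gamma_0$ reduces to the statement that any tempered $\cinf$-germ on $X\times U\times \R_t$ near $X'\times\{0\}$ can be expressed, modulo the tensor relations, from tempered germs pulled back via $\widehat{g'}$ out of $X\times \R_t$ and tempered germs restricted from $X\times \R^k\times U\times \R_t$. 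I would prove this by the tempered extension-with-cut-off constructions of Lemmas~\ref{lem:fonction_exhaustion} and~\ref{lem:test_function} along the pattern of Lemma~\ref{lem:im_inv_sha_proj}, exploiting the inclusion form of $f'$ in the $\R^k$-direction and the projection form of $g'$ in the $U$-direction.
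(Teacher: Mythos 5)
Your construction of the morphism and your reduction of the isomorphism claim essentially coincide with the paper's: the paper also obtains \eqref{eq:base_change_sha} by combining the classical base change and projection formulas with the map induced by $g'^\sharp$, and reduces the "isomorphism" assertion to showing that
$$
g'^{-1} \sha_X \otimes_{g'^{-1}f^{-1}\sha_Y} f'^{-1} \shn
\;\longrightarrow\;
\sha_{X'} \otimes_{f'^{-1} \sha_{Y'}} f'^{-1} \shn
$$
is an isomorphism, which is exactly your $\gamma=\gamma_0\otimes_{f'^{-1}\sha_{Y'}}\id$. Up to the reduction to $\gamma_0$ your argument is sound.

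The gap is in the last step, the proof that $\gamma_0$ is an isomorphism. What you propose to verify --- that every tempered germ on $X'$ can be \emph{expressed} from germs pulled back by $g'$ and germs restricted by $f'$, via the cut-off constructions of Lemmas~\ref{lem:fonction_exhaustion} and~\ref{lem:test_function} --- is only surjectivity, and surjectivity is the easy half: since $\gamma_0(1\otimes b)=f'^\sharp(b)$, it reduces to surjectivity of the restriction $f'^{-1}\sha_{Y'}\to\sha_{X'}$. The content is injectivity, i.e.\ that the kernel of $f'^{-1}\sha_{Y'}\to\sha_{X'}$ is generated, modulo the tensor relations over $f'^{-1}g^{-1}\sha_Y$, by the kernel of $f^{-1}\sha_Y\to\sha_X$; you give no argument for this, and the "pattern of Lemma~\ref{lem:im_inv_sha_proj}" does not transfer, because that lemma's device (integrating a cut-off over the fibers of a smooth projection to manufacture an invertible element killing a relation $1\otimes n=0$) lives in the projection direction, whereas here the issue lies in the immersion direction and amounts to a tempered division (Hadamard-type) statement. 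The paper sidesteps all analysis at this point: since $f$ is an immersion, locally $X=\{y_1=\cdots=y_d=0\}$ in $Y$ and one has the presentation $f^{-1}(\sha_Y)^{2d}\xto{(y_1,\ldots,dy_d)} f^{-1}(\sha_Y)\to\sha_X\to 0$, with the \emph{same} generators and relations presenting $\sha_{X'}$ over $f'^{-1}(\sha_{Y'})$ (smoothness of $g$ is used to write $X'=X\times Z$, $Y'=Y\times Z$ locally); applying the two right-exact tensor functors to these presentations yields the identical cokernel presentation $(f'^{-1}\shn)^{2d}\xto{(y_1,\ldots,dy_d)} f'^{-1}\shn \to \cdot \to 0$ of both sides of~\eqref{eq:egalite_ch_bs}. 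You should either adopt this presentation argument or actually supply the injectivity proof your sketch omits.
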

\begin{proof}
  The morphism is defined by the following composition:
  \begin{multline*}
  f^* g_{!!} \shn 
\; \simeq  \;
\sha_X \otimes_{f^{-1}\sha_Y} g'_{!!} f'^{-1} \shn 
\; \simeq \;
g'_{!!} ( g'^{-1} \sha_X \otimes_{g'^{-1}f^{-1}\sha_Y} f'^{-1} \shn) \\
\xto{\varphi}
g'_{!!} (  \sha_{X'} \otimes_{f'^{-1} \sha_{Y'}} f'^{-1} \shn) 
\; = \;
g'_{!!} f'^* \shn,
  \end{multline*}
  where the first isomorphism uses the classical base change formula
  (for complexes), and the second one the classical projection
  formula. Morphism $\varphi$ is induced by $g'^\sharp$.
  
\medskip

Now we show that $\varphi$ is an isomorphism when $f$ is an immersion
and $g$ is smooth. It is enough to show that
\begin{equation}
  \label{eq:egalite_ch_bs}
g'^{-1} \sha_X \otimes_{g'^{-1}f^{-1}\sha_Y} f'^{-1} \shn
\simeq \sha_{X'} \otimes_{f'^{-1} \sha_{Y'}} f'^{-1} \shn.
\end{equation}
This is a local statement on $X'$ so that we may as well assume that
$f$ is an embedding and $X'= X\times Z$, $Y'=Y\times Z$ for some
manifold $Z$.  We may also assume that $X$ is given by equations
$y_i=0$, $i=1\ldots,d$ in $Y$.  Then $\sha_X$ is the quotient of
$f^{-1}\sha_Y$ by the ideal generated by $y_i, dy_i$, $i=1\ldots,d$.
The same holds for $X'$ and we have the presentations:
\begin{gather*}
  f^{-1}(\sha_Y)^{2d} \xto{(y_1,\ldots,dy_d)}
 f^{-1}(\sha_Y) \to \sha_X \to 0,  \\
f'^{-1}(\sha_{Y'})^{2d} \xto{(y_1,\ldots,dy_d)}
f'^{-1}(\sha_{Y'}) \to \sha_{X'} \to 0.
\end{gather*}
Since the tensor product is right exact, the images of these exact
sequences by $g'^{-1} (\cdot) \otimes_{g'^{-1}f^{-1}\sha_Y} f'^{-1}
\shn$ and $(\cdot) \otimes_{f'^{-1} \sha_{Y'}} f'^{-1} \shn$ give the
same presentations of both sides of~\eqref{eq:egalite_ch_bs}, which
shows that they are isomorphic.
\end{proof}

\subsection{Complex manifolds}
Now we assume that $X$ is a complex analytic manifold, of dimension
$d_X^c$ over $\C$; we denote by $\overline X$ the complex conjugate
manifold and $X_\R$ the underlying real analytic manifold.  We recall
that $t$ is the coordinate on $\widehat X_\R$ given by the projection
$\tau_{X_\R,2} \cl \widehat X_\R \to \R$, and that we have the
decomposition $d = d_1 +d_2$ of the differential of $\sha_{X_\R}$
($d_2(\omega) = \partial \omega / \partial t \, dt$).  We consider the
complex of ``tempered holomorphic functions'', $\O_X^t \in
\BDC(\C_{X_{sa}})$, defined as the Dolbeault complex with tempered
coefficients:
\begin{equation}
  \label{eq:def_Ot}
\O_X^t = 
\Rhom_{\rho_! \D_{\overline X}}(\rho_! \O_{\overline X},\cinft_{X_\R})
\quad \simeq \quad
0 \to \Omegat^{t,0,0}_{X_\R} \xto{\bar \partial}  \Omegat^{t,0,1}_{X_\R} 
\xto{\bar \partial}  \cdots 
\xto{\bar \partial} \Omegat^{t,0,d_X^c}_{X_\R},
\end{equation}
where $\Omegat_{X_\R}^{t,i,j}$ denotes as usual the forms of type
$(i,j)$. The product of forms induces a morphism $\O_X^t \otimes
\O_X^t \to \O_X^t$ in $\BDC(\C_{X_{sa}})$.  In degree $0$,
$H^0(\O_X^t)$ is a subalgebra of $\rho_* \O_X$.

\begin{definition}
  \label{def:resolution_Ot}
  We let $\Omegat_{\widehat{X_\R}}^{t,i,j} = \cinft_{\widehat{X_\R}}
  \tau_{X_\R,1}^* ( \Omegat^{t,i,j}_{X_\R} )$ be the
  sub-$\cinft_{\widehat{X_\R}}$-module of
  $\Omegat^{t,i+j}_{\widehat{X_\R}}$ generated by the forms of type
  $(i,j)$ coming from $X_\R$.
  
  We define $\sha_{X_\R}^{i,j} = i_{X_\R}^{-1} \sect_{X_\R^+}
  \Omegat_{\widehat{X_\R}}^{t,i,j}$. This is a
  sub-$\sha_{X_\R}^0$-module of $\sha_{X_\R}^{i+j}$ and we have the
  decomposition $\sha_{X_\R}^k = \bigoplus_{i+j=k} \sha_{X_\R}^{i,j}
  \oplus \bigoplus_{i+j=k-1} \sha_{X_\R}^{i,j} dt$.  The operators
  $\partial, \bar \partial$ on $\Omegat_{X_\R}^t$ induce a
  decomposition of the differential of $\sha_{X_\R}$, $d = \partial +
  \bar \partial +d_2$.
  
  We let $J_X < \sha_{X_\R}$ be the differential ideal generated by
  $\sha_{X_\R}^{1,0}$ and introduce the dg-$\sha_{X_\R}$-module
  $\oot_X = \sha_{X_\R} / J_X$.  As a quotient by a differential
  ideal, $\oot_X$ inherits a structure of dg-algebra. We note the
  obvious inclusions $\rho_! \O_X \subset \rho_!  \cinft_X \subset
  \sha_{X_\R}^0$ and we define, for two complex analytic manifolds,
  $X$, $Y$:
$$
\oot^{(i)}_X = \oot_X \otimes_{\rho_! \O_X} \rho_! \O^{(i)}_X,
\qquad
\oot^{(p,q)}_{X\times Y} = \oot_{X\times Y} \otimes_{\rho_!
  \O_{X\times Y}} \rho_! \O^{(p,q)}_{X\times Y},
$$
where $\O^{(i)}_X$ denotes the holomorphic $i$-forms on $X$ and
$\O^{(p,q)}_{X\times Y} = \O_{X\times Y} \otimes_{(\O_X \boxtimes
  \O_Y)} (\O^{(p)}_X \boxtimes \O^{(q)}_Y)$.
\end{definition}
\begin{proposition}
  \label{prop:resolution_Ot}
  (i) We have an isomorphism of complexes between $\oot_X$ and
  $$
  \sha_{X_\R}^{0,0} \to \big( \sha_{X_\R}^{0,1} \oplus
  \sha_{X_\R}^{0,0} dt \big) \to \big( \sha_{X_\R}^{0,2} \oplus
  \sha_{X_\R}^{0,1} dt \big) \to \cdots \to \sha_{X_\R}^{0,d^c_X} dt,
  $$
  with differential $\bar \partial + d_2$.
  
  (ii) $\oot^{(d^c_X)}_X [-d^c_X]$ is isomorphic to the differential
  ideal of $\sha_{X_\R}$:
  $$
  \sha_{X_\R}^{d^c_X,0} \to \big( \sha_{X_\R}^{d^c_X,1} \oplus
  \sha_{X_\R}^{d^c_X,0} dt \big) \to \cdots \to
  \sha_{X_\R}^{d^c_X,d^c_X} dt.
  $$
  Moreover we have a decomposition $\sha_{X_\R} \simeq
  \oot_X^{(d^c_X)}[-d^c_X] \oplus M_X$ in free $\sha^0_{X_\R}$-modules.
  
  (iii) There exist a natural isomorphism $\O_X^t \simeq \oot_X$, in
  $\BDC(\C_{(X_\R)_{sa}})$, which commutes with the products $\O_X^t
  \otimes \O_X^t \to \O_X^t$ and $\oot_X \otimes \oot_X \to \oot_X$.
  We also have $\O_{X\times Y}^{t(p,q)} \simeq \oot^{(p,q)}_{X\times
    Y}$, in $\BDC(\C_{(X_\R\times Y_\R)_{sa}})$.  
\end{proposition}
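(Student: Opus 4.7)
My plan is to handle the three parts in order, exploiting the bigrading $\sha_{X_\R}^{i,j}$ and the decomposition $d = \partial + \bar\partial + d_2$ introduced just before the statement.

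For (i), I would first identify the differential ideal $J_X$ concretely. Since $d(dz_i)=0$, differentiating an element $a\,dz_i$ of $\sha_{X_\R}^{1,0}$ gives $da\wedge dz_i$, which still contains the factor $dz_i$. So the differential ideal generated by $\sha_{X_\R}^{1,0}$ coincides with the algebraic graded ideal it generates, namely the span of all monomials having at least one $dz$-factor. Using the decomposition $\sha_{X_\R}^k = \bigoplus_{i+j=k}\sha_{X_\R}^{i,j} \oplus \bigoplus_{i+j=k-1}\sha_{X_\R}^{i,j}\,dt$, this reads off as $J_X^k = \bigoplus_{i\geq 1,\, i+j=k}\sha_{X_\R}^{i,j} \oplus \bigoplus_{i\geq 1,\, i+j=k-1}\sha_{X_\R}^{i,j}\,dt$, so the quotient $\oot_X^k$ is exactly $\sha_{X_\R}^{0,k}\oplus \sha_{X_\R}^{0,k-1}dt$. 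Since $\partial$ raises the $dz$-degree, it vanishes in the quotient and the induced differential is $\bar\partial + d_2$.

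For (ii), working locally around a point where $\O_X^{(d_X^c)}$ is trivialized by $dz_1\wedge\cdots\wedge dz_{d_X^c}$, I would identify $\oot_X^{(d_X^c)}$ with the image of $\oot_X$ under wedging by this generator; the map $\eta\otimes dz_1\wedge\cdots\wedge dz_{d_X^c}\mapsto \eta\wedge dz_1\wedge\cdots\wedge dz_{d_X^c}$ sends $\oot_X^k$ isomorphically to $\sha_{X_\R}^{d_X^c,k}\oplus \sha_{X_\R}^{d_X^c,k-1}dt$, with the shift $[-d_X^c]$ putting this in total degree $k$. Globalness follows from the transformation law for $\O_X^{(d_X^c)}$, and closure under $d$ is clear since we are at top holomorphic degree, so $\partial$ lands in zero. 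Finally, the decomposition $\sha_{X_\R}\simeq \oot_X^{(d_X^c)}[-d_X^c]\oplus M_X$ in the category of $\sha_{X_\R}^0$-modules is obtained by taking for $M_X$ the sum of all bigraded pieces with $i<d_X^c$ (with or without $dt$); this is a free $\sha_{X_\R}^0$-module complement, only its algebra/differential structure is not preserved.

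For (iii), I would view the complex of (i) as the total complex of the double complex with rows $\epsilon\in\{0,1\}$ (the $dt$-part) and columns $j\in\{0,\dots,d_X^c\}$, horizontal differential $\bar\partial$ and vertical differential $d_2$. The key step is the $(0,j)$-form analogue of Corollary~\ref{cor:res_cinf}: for any subanalytic open $U\subset X$,
\[
0 \to \sect(U;\Omegat_{X_\R}^{t,0,j}) \to \sect(U;\sha_{X_\R}^{0,j}) \xrightarrow{\partial/\partial t} \sect(U;\sha_{X_\R}^{0,j}) \to 0
\]
is exact. The argument is verbatim that of Corollary~\ref{cor:res_cinf}: integrate each scalar coefficient of $d\bar z_I$ in $t$ from $1$, which preserves temperedness and does not mix $(0,j)$-types. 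Hence each column of the double complex is a resolution of $\Omegat_{X_\R}^{t,0,j}$ concentrated in vertical degree $0$, and the natural inclusion of the tempered Dolbeault complex $\O_X^t$ from~\eqref{eq:def_Ot} into the total complex $\oot_X$ is a quasi-isomorphism. Compatibility with products is immediate because both are induced by the wedge of forms and the inclusion $\Omegat_{X_\R}^{t,0,\bullet}\hookrightarrow \sha_{X_\R}^{0,\bullet}$ is multiplicative. The variant $\O_{X\times Y}^{t(p,q)}\simeq \oot_{X\times Y}^{(p,q)}$ follows by tensoring this quasi-isomorphism over $\rho_!\O_{X\times Y}$ with $\rho_!\O_{X\times Y}^{(p,q)}$, which is locally free over $\rho_!\O_{X\times Y}$ and therefore preserves quasi-isomorphisms.

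The main obstacle I expect is verifying carefully the $(0,j)$-form analogue of Corollary~\ref{cor:res_cinf}, in particular confirming that the argument passing from $\cinft_X$ to coefficients of forms is compatible with the sheaf-theoretic identifications $\sha_{X_\R}^{0,j}=i_{X_\R}^{-1}\sect_{X_\R^+}\Omegat_{\widehat{X_\R}}^{t,0,j}$; the remaining steps are then essentially bookkeeping of the bigrading.
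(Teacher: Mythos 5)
Your proposal is correct and follows essentially the same route as the paper: identify $J_X$ via the bigraded decomposition of $\sha_{X_\R}^k$ to get (i) and (ii), and for (iii) use the columnwise exact sequences $0 \to \Omegat^{t,0,j}_{X_\R} \to \sha_{X_\R}^{0,j} \xrightarrow{\partial/\partial t} \sha_{X_\R}^{0,j}dt \to 0$ (the $(0,j)$-analogue of Corollary~\ref{cor:res_cinf}) to produce the quasi-isomorphism $\O_X^t \simeq \oot_X$. The paper's proof is just a terser version of the same argument; your extra details on the local trivialization in (ii), the multiplicativity of the inclusion, and the locally free tensoring for the $(p,q)$ variant are all consistent with what the paper leaves implicit.
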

\begin{proof}
  (i), (ii) The decomposition of $\sha_{X_\R}^k$ given in
  Definition~\ref{def:resolution_Ot} yields projections $\sha_{X_\R}^k
  \to \sha_{X_\R}^{0,k} \oplus \sha_{X_\R}^{0,k-1} dt$.  The sum of
  these projections is a surjective morphism from $\sha_{X_\R}$ to the
  complex of the proposition and we see that its kernel is $J_X$.
  Assertion (ii) follows from (i).

(iii) We use the isomorphism $\O_X^t \simeq 0 \to \Omegat^{t,0,0}_{X_\R}
\xto{\bar \partial} \cdots \xto{\bar \partial} \Omegat^{t,0,d^c_X}_{X_\R}
\to 0$. The exact sequences
$$
0 \to \Omegat^{t,0,j}_{X_\R}
\to \sha_{X_\R}^{0,j}  
\xto{\alpha \mapsto (\partial \alpha / \partial t) dt}
\sha_{X_\R}^{0,j} dt \to 0,
$$
combine into an isomorphism between $\O_X^t$ and the complex given
in (i). This proves the first isomorphism.  The second one follows
from the first and the definitions.
\end{proof}
For a morphism of complex analytic manifolds $f\cl X \to Y$, we have
an integration morphism in the derived category $Rf_!
\O_X^{(d^c_X)}[d^c_X] \to \O_Y^{(d^c_Y)}[d^c_Y]$ and its tempered
version $Rf_{!!}  \O_X^{t(d^c_X)}[d^c_X] \to \O_Y^{t(d^c_Y)}[d^c_Y]$.
By adjunction between $Rf_{!!}$ and $f^!$ we obtain
$\O_X^{t(d^c_X)}[d^c_X] \to f^!\O_Y^{t(d^c_Y)}[d^c_Y]$.

When $f$ is a submersion we have $f^! \simeq f^{-1}[2(d^c_X - d^c_Y)]$
(note that the manifolds are complex, hence oriented) and our last
morphism becomes:
\begin{equation}
  \label{eq:integration_Ot_adj}
  \O_X^{t(d^c_X)}[-d^c_X] \to f^{-1}\O_Y^{t(d^c_Y)}[-d^c_Y].
\end{equation}

\begin{proposition}
  \label{prop:integration_Ot}
  For a submersion of complex analytic manifolds $f\cl X \to Y$, the
  embeddings, for $Z = X,Y$, $\oot^{(d^c_Z)}_Z[-d^c_Z] \subset
  \sha_{Z_\R}$ of Proposition~\ref{prop:resolution_Ot} (ii) induce a
  morphism of dg-$\sha_{X_\R}$-modules
  \begin{equation}
    \label{eq:integration_Ct_adj}
\oot_X^{(d^c_X)}[-d^c_X] \to f^* \oot_Y^{(d^c_Y)}[-d^c_Y],
  \end{equation}
which represents~\eqref{eq:integration_Ot_adj} through the
isomorphism of Proposition~\ref{prop:resolution_Ot} (iii).
\end{proposition}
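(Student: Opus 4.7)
The plan is to construct the morphism from the two embeddings $\iota_Z\cl \oot_Z^{(d^c_Z)}[-d^c_Z] \hookrightarrow \sha_{Z_\R}$ of Proposition~\ref{prop:resolution_Ot}(ii) (for $Z = X, Y$), then verify that it represents~\eqref{eq:integration_Ot_adj} through the quasi-isomorphism of Proposition~\ref{prop:resolution_Ot}(iii). The construction side is mostly formal; the real work lies in the compatibility check.

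Since $\oot_Y^{(d^c_Y)}[-d^c_Y]$ is a direct summand of $\sha_{Y_\R}$ as an $\sha^0_{Y_\R}$-module (Proposition~\ref{prop:resolution_Ot}(ii)), applying $f^*$ preserves the splitting, so $f^*\iota_Y\cl f^*\oot_Y^{(d^c_Y)}[-d^c_Y] \hookrightarrow f^*\sha_{Y_\R}$ is injective. Composing with the canonical isomorphism $f^*\sha_{Y_\R} \isoto \sha_{X_\R}$ induced by $f^\sharp$, we obtain an injection $\widetilde\iota_Y\cl f^*\oot_Y^{(d^c_Y)}[-d^c_Y] \hookrightarrow \sha_{X_\R}$, whose image $\shi$ is the differential ideal of $\sha_{X_\R}$ generated by $f^\sharp(\sha_{Y_\R}^{d^c_Y,0})$. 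In local holomorphic coordinates $(y_j)_{j\le d^c_Y}$ on $Y$ and $(y_j, z_k)$ on $X$ with $f$ the projection, the generator $\omega_X = f^\sharp(dy_1\wedge\cdots\wedge dy_{d^c_Y}) \wedge dz_1 \wedge \cdots \wedge dz_r$ of $\oot_X^{(d^c_X)}[-d^c_X] \subset \sha_{X_\R}$ lies in $\shi$; since $\omega_X$ generates $\oot_X^{(d^c_X)}[-d^c_X]$ as a sub-dg-$\sha_{X_\R}$-module, the embedding $\iota_X$ factors through $\widetilde\iota_Y$. We then define the candidate
\[
\oot_X^{(d^c_X)}[-d^c_X] \hookrightarrow \shi \xleftarrow{\widetilde\iota_Y} f^*\oot_Y^{(d^c_Y)}[-d^c_Y],
\]
which is automatically a dg-$\sha_{X_\R}$-linear morphism.

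For the compatibility with~\eqref{eq:integration_Ot_adj}, Proposition~\ref{prop:resolution_Ot}(iii) identifies the source up to quasi-isomorphism with $\O_X^{t(d^c_X)}[-d^c_X]$, and Proposition~\ref{prop:im_inv_sha_subm}(i) combined with Proposition~\ref{prop:resolution_Ot}(iii) identifies the target with $f^{-1}\O_Y^{t(d^c_Y)}[-d^c_Y]$ in $\DC(\C_{(X_\R)_{sa}})$. By definition, \eqref{eq:integration_Ot_adj} is the $(Rf_{!!}, f^!)$-adjoint (using $f^! \simeq f^{-1}[2(d^c_X - d^c_Y)]$ for complex submersions) of the tempered integration $Rf_{!!}\O_X^{t(d^c_X)}[d^c_X] \to \O_Y^{t(d^c_Y)}[d^c_Y]$. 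By the remark following Theorem~\ref{thm:resolution} and Proposition~\ref{prop:Amod_f_*acyclique}, this integration is realized at the dg-level as the restriction of the map $\int_f \cl f_{!!}(\sha_{X_\R} \otimes \omega'_{X_\R|Y_\R}) \to \sha_{Y_\R}$ of Definition~\ref{def:integration_sha} to the top-form sub-ideal $\oot_X^{(d^c_X)}[-d^c_X] \otimes \omega'_{X_\R|Y_\R}$, with image landing in $\oot_Y^{(d^c_Y)}[-d^c_Y] \subset \sha_{Y_\R}$.

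The main obstacle is verifying that this restricted integration is indeed the adjoint of our $\rho$. Concretely, this reduces to the local identity that integration of a holomorphic top form along the fibers of $f$, against the volume form $dz_1 \wedge \cdots \wedge dz_r$, is inverse to wedging with $dz_1 \wedge \cdots \wedge dz_r$; the identity itself is elementary, but transporting it through the chain of quasi-isomorphisms, the projection formula of Lemma~\ref{lem:proj_form}, and the $(Rf_{!!}, f^!)$-adjunction, while keeping track of signs and of the interaction between the $\bar\partial$ and $d_2$ components of the differential on $\sha_{X_\R}$, is the bookkeeping step where all the care is required.
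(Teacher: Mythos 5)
Your construction matches the paper's: injectivity of $f^*\oot_Y^{(d^c_Y)}[-d^c_Y]\to\sha_{X_\R}$ via the free direct-summand decomposition of Proposition~\ref{prop:resolution_Ot}(ii) (the paper invokes Proposition~\ref{prop:im_inv_sha_subm} for the same purpose), followed by the local verification that $\oot_X^{(d^c_X)}[-d^c_X]$ lies in the ideal generated by pullbacks of holomorphic top forms from $Y$. The compatibility with~\eqref{eq:integration_Ot_adj}, which you flag as remaining bookkeeping, is treated as immediate in the paper as well (it is effectively absorbed into the corollary that follows, where the restricted integration of forms is identified with the Dolbeault integration), so your proposal is essentially the paper's proof.
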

\begin{proof}
  By Proposition~\ref{prop:resolution_Ot} we have a decomposition
  $\sha_{Y_\R} \simeq \oot_Y^{(d^c_Y)}[-d^c_Y] \oplus M_Y$ in free
  $\sha^0_{Y_\R}$-modules; hence the quotient $\sha_{Y_\R} /
  \oot_Y^{(d^c_Y)}[-d^c_Y]$ is free over $\sha^0_{Y_\R}$ and
  Proposition~\ref{prop:im_inv_sha_subm} implies that the morphism
  $f^* \oot_Y^{(d^c_Y)}[-d^c_Y] \to f^* \sha_{Y_\R} \simeq
  \sha_{X_\R}$ is injective. Hence we just have to check the inclusion
  of ideals of $\sha_{X_\R}$: $\oot_X^{(d^c_X)}[-d^c_X] \subset f^*
  \oot_Y^{(d^c_Y)}[-d^c_Y]$.
  
  This is a local problem on $X$ and we may assume $X = Y \times Z$.
  As an $\sha_{X_\R}^0$-module, $\oot_X^{(d^c_X)}[-d^c_X]$ decomposes
  into summands $\sha_{X_\R}^{d^c_X,i}$ and $\sha_{X_\R}^{d^c_X,i}
  dt$. Now any form of type $(d^c_X,i)$ on $X = Y \times Z$ is a sum
  of products of forms of types $(d^c_Y,j)$ and $(d^c_Z,k)$, with
  $j+k=i$. In particular $\oot_X^{(d^c_X)}[-d^c_X]$ is in the image of
  $(\sha_{X_\R} \otimes f^{-1}\oot_Y^{(d^c_Y)}[-d^c_Y]) \to
  \sha_{X_\R}$.
\end{proof}
\begin{corollary}
  With the hypothesis of Proposition~\ref{prop:integration_Ot}, the
  integration morphism of Definition~\ref{def:integration_sha} induces
  a morphism of dg-$\sha_{Y_\R}$-modules
\begin{equation}
  \label{eq:integration_Ot}
f_{!!} \oot_X^{(d^c_X)}[d^c_X] \to \oot_Y^{(d^c_Y)}[d^c_Y],
\end{equation}
which represents the integration morphism $Rf_{!!}
\O_X^{t(d^c_X)}[d^c_X] \to \O_Y^{t(d^c_Y)}[d^c_Y]$.
\end{corollary}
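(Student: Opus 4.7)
The plan is to build the morphism as a composition of three ingredients that are already in place: the dg-$\sha$-module morphism $\oot_X^{(d^c_X)}[-d^c_X] \to f^* \oot_Y^{(d^c_Y)}[-d^c_Y]$ of Proposition~\ref{prop:integration_Ot}, the projection formula of Lemma~\ref{lem:proj_form}, and the $\sha$-level integration morphism $\int_f \cl f_{!!}(\sha_{X_\R} \otimes \omega'_{X_\R|Y_\R}) \to \sha_{Y_\R}$ of Definition~\ref{def:integration_sha}. Since $f$ is a submersion of complex manifolds with complex fibre dimension $d^c_X - d^c_Y$, the canonical orientation of the fibres gives $\omega'_{X_\R|Y_\R} \simeq \C_{X_\R}[2(d^c_X - d^c_Y)]$.

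Concretely, the first step is to tensor the morphism of Proposition~\ref{prop:integration_Ot} by $\omega'_{X_\R|Y_\R}$ and apply $f_{!!}$, landing in $f_{!!}(f^*\oot_Y^{(d^c_Y)}[-d^c_Y] \otimes \omega'_{X_\R|Y_\R})$. The projection formula identifies this with $\oot_Y^{(d^c_Y)}[-d^c_Y] \otimes_{\sha_{Y_\R}} f_{!!}(\sha_{X_\R} \otimes \omega'_{X_\R|Y_\R})$, and composing with $\id \otimes \int_f$ produces
$$
f_{!!}\bigl(\oot_X^{(d^c_X)}[-d^c_X] \otimes \omega'_{X_\R|Y_\R}\bigr) \to \oot_Y^{(d^c_Y)}[-d^c_Y].
$$
Substituting $\omega'_{X_\R|Y_\R} \simeq \C_{X_\R}[2(d^c_X-d^c_Y)]$ and shifting both sides by $2d^c_Y$ yields the required morphism $f_{!!}\oot_X^{(d^c_X)}[d^c_X] \to \oot_Y^{(d^c_Y)}[d^c_Y]$ of dg-$\sha_{Y_\R}$-modules.

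For the comparison with the classical integration morphism $Rf_{!!}\O_X^{t(d^c_X)}[d^c_X] \to \O_Y^{t(d^c_Y)}[d^c_Y]$, the idea is to pass to the derived category. By Proposition~\ref{prop:Amod_f_*acyclique} and Proposition~\ref{prop:resolution_Ot}(iii), the source and target of our morphism represent $Rf_{!!}\O_X^{t(d^c_X)}[d^c_X]$ and $\O_Y^{t(d^c_Y)}[d^c_Y]$ respectively. Using the $(Rf_{!!}, f^!)$-adjunction and the identification $f^! \C_Y \simeq \omega'_{X_\R|Y_\R}$, it suffices to see that the adjoint of our morphism agrees, up to the shift by $2d^c_X$, with~\eqref{eq:integration_Ot_adj}. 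This is built into the construction: the starting arrow of Proposition~\ref{prop:integration_Ot} represents~\eqref{eq:integration_Ot_adj} by construction, while the Remark after Definition~\ref{def:integration_sha} identifies $\int_f$ with the topological integration morphism, i.e.\ with the counit of the $(Rf_{!!}, f^!)$-adjunction applied to $\omega_{X|Y}$.

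The main obstacle is bookkeeping: one must check that the projection formula for dg-$\sha$-modules (Lemma~\ref{lem:proj_form}) is compatible, under the forgetful functor to the derived category, with the standard projection formula used in the adjunction $\Hom(Rf_{!!}A,B) \simeq \Hom(A,f^!B)$. Thanks to the $f_{!!}$-acyclicity of Proposition~\ref{prop:Amod_f_*acyclique}, this reduces to checking that the commutative square of Lemma~\ref{lem:proj_form} lifts the classical one; provided the shifts and the complex orientation of $\omega'_{X_\R|Y_\R}$ are tracked carefully throughout, the verification is routine.
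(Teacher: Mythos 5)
Your construction of the morphism is exactly the paper's: apply $f_{!!}$ to the arrow of Proposition~\ref{prop:integration_Ot} (suitably shifted/tensored with $\omega'_{X_\R|Y_\R}\simeq\C_{X_\R}[2(d^c_X-d^c_Y)]$), identify the target via Lemma~\ref{lem:proj_form}, and finish with $\id\otimes\int_f$. For the comparison with $Rf_{!!}\O_X^{t(d^c_X)}[d^c_X]\to\O_Y^{t(d^c_Y)}[d^c_Y]$ the paper argues more directly — the Dolbeault integration is literally the restriction of the de Rham integration for $\sha_{X_\R}$ to a subcomplex — whereas you route through the $(Rf_{!!},f^!)$-adjunction; this is equivalent (your composite is precisely the counit construction of the adjoint of~\eqref{eq:integration_Ot_adj}) but the paper's observation spares you the compatibility bookkeeping you flag at the end.
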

\begin{proof}
  Morphism~\eqref{eq:integration_Ct_adj}, the projection formula and
  the integration morphism give:
  \begin{align*}
f_{!!} \oot_X^{(d^c_X)}[d^c_X]
&\to
f_{!!} f^*\oot_Y^{(d^c_Y)}[2d^c_X -d^c_Y]  \\
&\simeq
\oot_Y^{(d^c_Y)}[d^c_Y] \otimes_{ \sha_{Y_\R}}
f_{!!} \sha_{X_\R}[2(d^c_X -d^c_Y)]  \\
&\to
\oot_Y^{(d^c_Y)}[d^c_Y].
  \end{align*}
  We define \eqref{eq:integration_Ot} as the composition of these
  arrows. The integration morphism for $\O_X^t$ is also defined by
  integration of forms using the Dolbeault complex. It is nothing but
  the restriction of the integration morphism for $\sha_{X_\R}$ to a
  subcomplex, so that it coincides with~\eqref{eq:integration_Ot}.
\end{proof}
In section~\ref{sec:con_kern} we need the following composition of
kernels. Let $X,Y,Z$ be three complex analytic manifolds and $q_{ij}$
the projection from their product to the $i^{th} \times j^{th}$
factors. The product of $\O_Y$ and the integration morphism give a
convolution product: $Rq_{13!} ( q_{12}^{-1}\O_{X \times
  Y}^{(0,d^c_Y)}[d^c_Y] \otimes q_{23}^{-1}\O_{Y \times
  Z}^{(0,d^c_Z)}[d^c_Z] ) \to \O_{X \times Z}^{(0,d^c_Z)}[d^c_Z]$.  We
can also define a tempered version of this convolution, and in fact we
can even realize this tempered convolution product at the level of
complexes, using the above sheaf $\oot_{X \times Y}^{(0,d^c_Y)}$.  As
in Proposition~\ref{prop:integration_Ot}, we rather define its
``adjoint'' morphism as the following composition:
\begin{equation}
\label{eq:convol_shc}
\begin{split}
  q_{12}^*\oot_{X \times Y}^{(0,d^c_Y)}[-d^c_Y] 
\otimes_{\sha} q_{23}^*\oot_{Y \times Z}^{(0,d^c_Z)}[-d^c_Z]  
& \to 
\oot_{X \times Y \times Z}^{(0,d^c_Y,d^c_Z)}[-d^c_Y-d^c_Z]  \\
& \to 
q_{13}^*\oot_{X \times Z}^{(0,d^c_Z)}[-d^c_Z],
\end{split}
\end{equation}
where the first morphism is induced by the product $\oot_Y \otimes
\oot_Y \otimes_{\rho_! \O_Y} \rho_! \O_Y^{(d^c_Y)} \to \oot_Y
\otimes_{\rho_! \O_Y} \rho_! \O_Y^{(d^c_Y)}$ and the second morphism
is induced by morphism~\eqref{eq:integration_Ct_adj}.

\section{Microlocalization functor}
\label{sec:microl_funct}
In this section we recall the definition of the microlocalization
functor $\mu$ introduced in~\cite{KSIW06}. For a manifold $X$ this is
a functor, $\mu_X$, from $\BDC(\I(\C_{X}))$ to $\BDC(\I(\C_{T^*X}))$
given by a kernel $L_X \in \BDC(\I(\C_{X\times T^*X}))$.

We define analogs of this kernel and of the microlocalization functor
in the framework of $\sha$-modules. We check that, in the case we are
interested in, this gives a resolution of $\mu_X F$, and that it has a
functorial behavior with respect to the usual operations.

In fact, with the definition of~\cite{KSIW06}, the construction of the
external tensor product is not so straightforward. For this reason we
define another kernel for which the tensor product is easy and which
coincides with the kernel of~\cite{KSIW06} outside the zero section.

\subsection{Microlocalization functor in the derived category}
\label{sec:microl_funct_catder}
In~\cite{KSIW06} the authors define a kernel associated to the
following data: let $X$ be a manifold, $Z\subset X$ a closed
submanifold and $\sigma$ a $1$-form defined on $Z$, i.e. $\sigma$ is a
section of the bundle $Z \times_X T^*X \to Z$. To simplify the
exposition, we make the following assumption which will be satisfied
in our case:
\begin{equation}
   \label{eq:hyp_non_nul}
     \text{$\forall z\in Z$, $\sigma_z$ vanishes on $T_zZ$.}
\end{equation}
Hence $\sigma$ induces a section of $Z \times_X T^*_ZX \to Z$ and we
may define:
$$
P_\sigma = \{ (x,v) \in T_ZX;\; \langle v, \sigma(x) \rangle \geq 0
\}.
$$
Hence $P_\sigma$ is a subset of $T_ZX$, viewed itself as a subset
of the normal deformation of $Z$ in $X$, $\tilde X_Z$.  We recall that
$\tilde X_Z$ and the projection $p\cl \tilde X_Z \to X$ are given in
local coordinates as follows.  We choose coordinates $(x_1,\ldots,
x_n)$ on $X$ such that $Z$ is given by $x_i=0$, $i=1,\ldots,d$.  This
gives coordinates $(x_i,\tau)$ on $\tilde X_Z$ and $p(x_i,\tau) =
(\tau x_1,\ldots, \tau x_d,x_{d+1}, \ldots, x_n)$.  The normal bundle
$T_ZX$ is embedded in $\tilde X_Z$ as the submanifold $\{\tau = 0\}$
and we define $\Omega = \{\tau>0\}$.
$$
\xymatrix{
P_\sigma \ar@{^{(}->}[r]   &  T_ZX  \ar[d] \ar@{^{(}->}[r]
& \tilde X_Z \ar[d]^p &  \Omega \ar@{_{(}->}[l] \ar[dl] \\
& Z \ar@{^{(}->}[r] & X }
$$
We will often restrict ourself outside the zero set of $\sigma$ and
we set $T_\sigma = \{z\in Z$; $\sigma_z$ vanishes on $T_zX\}$.
\begin{definition}
  \label{def:kernel}
  Under hypothesis~\eqref{eq:hyp_non_nul}, the kernel associated to
  these data is the object of $\BDC(\I(\C_{X}))$ (recall that, for
  $i\cl Z \hookrightarrow X$, we write $\omega_{Z|X}$ instead of $i_*
  \omega_{Z|X}$):
$$
\shl_\sigma = \shl_\sigma (Z,X)
= Rp_{!!} (\beta_{\tilde X_Z}(\C_{P_\sigma})\otimes
\C_{\overline\Omega} )
\otimes \beta_{X}( \omega_{Z|X}^{\otimes -1}).
$$
\end{definition}
We recall that $\beta_{\tilde X_Z} (\C_{P_\sigma}) = \newindlim_W
\C_{\overline W}$, $W$ running over the open neighborhoods of
$P_\sigma$ in $\tilde X_Z$. Since $\newindlim$ commutes with $\otimes$
we obtain
$$
\shl_\sigma = Rp_{!!} 
( \newindlim_W \C_{\overline W\cap \overline \Omega} )
\otimes \beta_{X}( \omega_{Z|X}^{\otimes -1}),
\qquad \text{$W$ open in $\tilde X_Z$, $P_\sigma \subset W$}.
$$
We also notice that $Rp_{!!} (\beta_{\tilde
  X_Z}(\C_{P_\sigma})\otimes \C_{\overline\Omega} )$ is supported on
$Z$ (i.e. its restriction outside $Z$ is $0$).  Hence taking the
tensor product with $\beta_{X}( \omega_{Z|X}^{\otimes -1})$ reduces
locally to a shift by the codimension of $Z$.

In Proposition~1.2.11 of~\cite{KSIW06} we also have a description of
$\shl_\sigma$ outside the zero set of $\sigma$, $T_\sigma$:
\begin{equation}
  \label{eq:kern=lim_C_U}
  Rp_{!!}(\beta_{\tilde X_Z}(\C_{P_\sigma})\otimes
\C_{\overline\Omega} ) |_{X\setminus T_\sigma}
\simeq
  Rp_{!!}(\beta_{\tilde X_Z}(\C_{P_\sigma})\otimes
\C_{\Omega} )  |_{X\setminus T_\sigma}
\simeq
\newindlim_U \C_U,
\end{equation}
where $U$ runs over the open subsets of $X \setminus T_\sigma$ such
that the cone of $U$ along $Z \setminus T_\sigma$ doesn't intersect
$P_\sigma$ outside the zero section.  In particular the complexes
in~\eqref{eq:kern=lim_C_U} are concentrated in degree $0$:
\begin{equation}
  \label{eq:kern=concent_deg}
Rp_{!!}(\beta_{\tilde X_Z}(\C_{P_\sigma})\otimes
\C_{\overline\Omega} ) |_{X\setminus T_\sigma}
\simeq
p_{!!}(\beta_{\tilde X_Z}(\C_{P_\sigma})\otimes
\C_{\overline\Omega} ) |_{X\setminus T_\sigma}.
\end{equation}
When considering resolutions of $\shl_\sigma$ by $\sha$-modules, it
will be convenient to use the following different formulation, which
is equivalent outside the zero set of $\sigma$. First, using the
embedding of categories $\I_\tau \cl \Mod(\C_{X_{sa}}) \simeq
\I_{\R-c}(\C_X) \to \I(\C_X)$ we have $\shl_\sigma \simeq
\I_\tau(\shl_\sigma^{sa})$, where $\shl_\sigma^{sa} \in
\BDC(\C_{X_{sa}})$ is given by
\begin{align}
  \shl_\sigma^{sa} 
&=  Rp_{!!} (\rho_{\tilde X_Z !}(\C_{P_\sigma})\otimes
\C_{\overline\Omega} )
\otimes \rho_{X !}(  \omega_{Z|X}^{\otimes -1})  \\
\label{eq:kernel}
&\simeq  Rp_{!!} 
(\rsect_\Omega(\rho_{\tilde X_Z !}(\C_{P_\sigma})))
\otimes \rho_{X !}(  \omega_{Z|X}^{\otimes -1})  ,
\end{align}
where the second isomorphism follows from~\eqref{eq:formulaire1} and
$\C_{\overline\Omega} \simeq \Rhom(\C_\Omega,\C_{\tilde X_Z})$.
\begin{definition}
\label{def:kernel0}
  For a real analytic manifold $Y$ and $T\subset Y$ a locally closed
  analytic subset we introduce the notation $K_T = \rho_{Y!}
  \C_{\overline T} \otimes \varinjlim_W \C_{Y \setminus \overline W}$,
  where $W$ runs over the open neighborhoods of $T$ in $Y$.  We note
  that $K_T$ has support in the boundary $\overline T \setminus T$.
  
  We let $P_\sigma^0$ be the relative interior of $P_\sigma$, i.e.
  $P_\sigma^0 = \{ (x,v) \in T_ZX;\; \langle v, \sigma(x) \rangle > 0
  \}$ and we define $\shl_\sigma^0 \in \BDC(\C_{X_{sa}})$ by:
  $$
  \shl_\sigma^0 = Rp_{!!} (\rsect_\Omega K_{P_\sigma^0}) \otimes
  \rho_{X !}( \omega_{Z|X}^{\otimes -1}).
  $$
\end{definition}
\begin{lemma}
\label{lem:kernsa_kern0}
We let $(X,Z,\sigma)$ be a kernel data satisfying
hypothesis~\eqref{eq:hyp_non_nul} and we assume that $\sigma$ doesn't
vanish.

  (i) We have $\rsect_\Omega K_{P_\sigma^0} \simeq \C_{\overline
    \Omega} \otimes K_{P_\sigma^0}$.
  
  (ii) The natural morphism $K_{P_\sigma^0} \to \rho_{\tilde X_Z
    !}(\C_{P_\sigma})$ induces an isomorphism $\shl_\sigma^0 \to
  \shl_\sigma^{sa}$ in $\BDC(\C_{X_{sa}})$.
\end{lemma}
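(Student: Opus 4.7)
I plan to prove (i) by invoking formula~\eqref{eq:formulaire1} with $F = \C_\Omega$, and (ii) by forming the cofibre of the map $K_{P_\sigma^0}\to\rho_!\C_{P_\sigma}$ and showing it vanishes after the functor $Rp_{!!}\rsect_\Omega$.

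For (i), I rewrite $K_{P_\sigma^0}$ as the filtered colimit $\varinjlim_{W\supset P_\sigma^0}\rho_!\C_{P_\sigma}\otimes\C_{\tilde X_Z\setminus\overline W}$ of sheaves of the form $\rho_! H$ with $H\in\Mod_{\R-c}(\C_{\tilde X_Z})$. Formula~\eqref{eq:formulaire1} applied with $F=\C_\Omega$ and $G=\C_{\tilde X_{Z,sa}}$ yields $\rsect_\Omega(\rho_! H)\simeq\rsect_\Omega(\C_{\tilde X_{Z,sa}})\otimes\rho_! H$. The identification $\rsect_\Omega(\C_{\tilde X_{Z,sa}})\simeq \C_{\overline\Omega}$ follows from $\R$-constructibility of $\C_\Omega$ and standard properties of sheaves on the subanalytic site. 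Commuting $\rsect_\Omega$ past the filtered colimit via~\eqref{eq:section_limit} then produces the claimed isomorphism.

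For (ii), the tautological short exact sequence
\[
0\to\varinjlim_W\C_{\tilde X_Z\setminus\overline W}\to \C_{\tilde X_{Z,sa}}\to \varinjlim_W\C_{\overline W}\to 0,
\]
after tensoring with $\rho_!\C_{P_\sigma}$, gives a distinguished triangle $K_{P_\sigma^0}\to \rho_!\C_{P_\sigma}\to C$. Applying part~(i) both to $K_{P_\sigma^0}$ and (by exactly the same argument) to $\rho_!\C_{P_\sigma}$, the conclusion reduces to showing $Rp_{!!}(\C_{\overline\Omega}\otimes C)=0$. The cofibre $C \simeq \rho_!\C_{P_\sigma}\otimes\varinjlim_W\C_{\overline W}$ is concentrated on arbitrarily small closed neighborhoods of $P_\sigma^0$; since $P_\sigma^0\subset T_ZX\subset\partial\Omega$, their intersections with $\overline\Omega$ are thin slabs collapsing onto $P_\sigma^0$. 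Working locally in coordinates where $\sigma$ is constant and non-zero, the desired vanishing reduces to an explicit fibre computation along $p|_{T_ZX}\colon T_ZX\to Z$, in the spirit of the proof of~\eqref{eq:kern=lim_C_U} in~\cite{KSIW06}.

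I expect the main obstacle to be this last vanishing: the computation of $Rp_{!!}(\C_{\overline\Omega}\otimes C)$ requires a careful analysis of the filtered colimits of $\rho_!$-sheaves of subanalytic neighborhoods of $P_\sigma^0$ that cross $\partial\Omega$, and it is precisely at this point that the hypothesis that $\sigma$ does not vanish enters.
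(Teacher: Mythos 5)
The core gap is in part (i). You treat the terms $\rho_!\C_{P_\sigma}\otimes\C_{\tilde X_Z\setminus\overline W}$ of the colimit defining $K_{P_\sigma^0}$ as being ``of the form $\rho_! H$'' so that \eqref{eq:formulaire1} computes $\rsect_\Omega$ of them outright. They are not: the second factor is $\rho_{c*}$ of an $\R$-constructible sheaf, not a $\rho_!$-sheaf, and \eqref{eq:formulaire1} only lets you pull the genuine $\rho_!$-factor $\rho_!\C_{P_\sigma}$ outside of $\rsect_\Omega$. After that, and after commuting the filtered colimit through $\rsect_\Omega$ via \eqref{eq:section_limit} (which you correctly invoke), you are still left with $\varinjlim_{W,W^0}\rsect_\Omega(\C_{\overline W\setminus\overline{W^0}})$, and for an arbitrary subanalytic neighborhood this is \emph{not} $\C_{\overline\Omega}\otimes\C_{\overline W\setminus\overline{W^0}}$: $\Rhom(\rho_*\C_\Omega,\C_U)$ acquires higher cohomology as soon as $U$ meets $\partial\Omega$ in an unfavourable way. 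The paper's entire proof of (i) consists of supplying exactly this missing step: it passes to a cofinal family of explicitly shaped $W$, $W^0$ in the local model $\Omega\simeq\R^{n-1}\times\R_{>0}$, $P_\sigma\simeq\R^{n-2}\times\R_{\geq 0}\times\{0\}$ and checks $\rsect_\Omega(\C_{\overline W})\simeq\C_{\overline W\cap\overline\Omega}$ by hand for those; note that the non-vanishing of $\sigma$ already enters here through the local model, not only in (ii) as you suggest.

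For (ii) your skeleton agrees with the paper's (which takes the cokernel $F=\varinjlim_{W^0}\C_{\overline{W^0}}$ of $K_{P_\sigma^0}\to\rho_!\C_{P_\sigma}$ and shows $Rp_{!!}\rsect_\Omega F=0$), but the vanishing you defer as ``the main obstacle'' is the actual content of the statement, and your geometric picture for it points in the wrong direction. The computation is not along $p|_{T_ZX}\cl T_ZX\to Z$ but fiberwise for $p\cl\tilde X_Z\to X$ over \emph{all} points of $X$ near $Z$: after writing $Rp_{!!}\rsect_\Omega F\simeq\varinjlim_{W^0,U}Rp_*\C_{\overline{W^0}\cap\overline\Omega\cap U}$ and reducing to classical sheaves, one finds that over $x\in X\setminus Z$ the only term surviving the double colimit is a half-open interval $[0,1[$ in the $\tau$-direction, and $\rsect(\R;\C_{[0,1[})=0$, while over $x\in Z$ the fiber is a half-ball $\{|x|<1,\ x_1\geq 0\}$ with vanishing cohomology. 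So the proposal has the right reduction for (ii) but leaves unproved both the (i)-type identification of $\rsect_\Omega F$ and the key fiberwise vanishing.
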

\begin{proof}
  (i) By definition $K_{P_\sigma^0} \simeq\varinjlim_{W,W^0}
  \C_{\overline W \setminus \overline{W^0}}$, where $W$ and $W^0$ run
  over the open neighborhoods of $P_\sigma$ and $P_\sigma^0$ in
  $\tilde X_Z$. By formula~\eqref{eq:section_limit} we may commute the
  limit with $\rsect_\Omega$ so that $\rsect_\Omega K_{P_\sigma^0}
  \simeq\varinjlim_{W,W^0} \rsect_\Omega\C_{\overline W \setminus
    \overline W^0}$.  Our situation is locally isomorphic to $\tilde
  X_Z \simeq \R^n$, $\Omega \simeq \R^{n-1} \times \R_{>0}$ and
  $P_\sigma \simeq \R^{n-2} \times \R_{\geq 0} \times \{0\}$.  Hence,
  choosing for example 
$$
W = \{|x_n| < \varepsilon,\; x_{n-1} >   -\varepsilon\},
\qquad
W^0 = \{|x_n| < \varphi(x_1,\ldots,x_{n-1})\},
$$
for $\varepsilon>0$ and subanalytic continuous functions $\varphi$
on $\R^{n-2} \times \R_{>0}$, we may assume that our $W, W^0$ satisfy
$\rsect_\Omega(\C_{\overline W}) \simeq \C_{\overline W \cap \overline
  \Omega}$ (and the same with $W^0$ instead of $W$) and this gives the
desired isomorphism.

\medskip

(ii) We define $F = \varinjlim_{W^0} \C_{\overline{W^0}}$, where $W^0$
runs over the open neighborhoods of $P_\sigma^0$ in $\tilde X_Z$.
Hence we have an exact sequence $0 \to K_{P_\sigma} \to \rho_{\tilde
  X_Z !}(\C_{P_\sigma}) \to F \to 0 $ and it is enough to show that
$Rp_{!!}  (\rsect_\Omega F ) = 0$.
  
As in (i) we have $\rsect_\Omega F \simeq \varinjlim_{W^0}
\C_{\overline{W^0}\cap \overline \Omega}$.  We deduce that $Rp_{!!}
(\rsect_\Omega F ) \simeq \varinjlim_{W^0,U} Rp_* \C_{\overline{W^0}
  \cap \overline \Omega \cap U}$, where $W^0$ runs on the same set as
above and $U$ runs over the open subsets of $\tilde X_Z$ with compact
closure. Since $p_*$ commutes with $\rho_*$ we are reduced to a
computation with sheaves on topological spaces.

For $x \in X \setminus Z$, $x$ near $Z$, and $U$ big enough,
$p^{-1}(x) \cap \overline{W^0} \cap \overline \Omega \cap U$ is a
union of intervals of the line, all of them compact except at most one
which is homeomorphic to $[0,1[$. When we take the limit over $W^0$
and $U$ only the last one has a non-zero contribution in the morphisms
$\C_{p^{-1}(x) \cap \overline{W^0} \cap \overline \Omega \cap U} \to
\C_{p^{-1}(x) \cap \overline{W'^0} \cap \overline \Omega \cap U'}$.
In the same way, for $x \in Z$, since $P_\sigma \subset T_ZX$ is
locally homeomorphic to a closed half plane, we may assume that
$p^{-1}(x) \cap \overline{W^0} \cap \overline \Omega \cap U$ is
homeomorphic to an half ball $\{|x| < 1, x_1\geq 0\}$.

Since $\rsect(\R;\C_{[0,1[}) = 0$ and $\rsect(\R^{n-1}; \C_{\{|x| < 1,
  x_1\geq 0\}}) = 0$, we deduce that our direct image vanishes.
\end{proof}

Now for any manifold $X$, the cotangent bundle $T^*X$ is endowed with
a canonical $1$-form, say $\omega_X$. We set $\mf{X} = X\times T^* X$
and $\mf{Z} = X \times_X T^* X \simeq T^* X$ and consider the section
$\sigma_X\cl X \times_X T^* X \to T^*X \times T^*(T^* X)$ defined by
$\sigma_X=(-\id,\omega_X)$, i.e. in local coordinates
$$
\sigma_X(x,x,\xi) = ((x;-\xi), \omega_X(x,\xi))
= ((x;-\xi),(x,\xi;\xi,0)).
$$
Hence hypothesis~\eqref{eq:hyp_non_nul} is satisfied for the data
$(\mf{X}, \mf{Z}, \sigma_X)$.
\begin{definition}
  \label{def:mu}
  With the above notations, we set $L_X =
  \shl_{\sigma_X}(\mf{Z},\mf{X})$ so that $L_X \in \BDC(\I(\C_{X
    \times T^* X}))$. We denote by $p_1 \cl X\times T^* X \to X$, $p_2
  \cl X\times T^* X \to T^*X$ the projections.  The microlocalization
  is the functor
$$
\mu_X\cl \BDC(\I(\C_X)) \to \BDC(\I(\C_{T^*X})),
\qquad
F \mapsto  L_X \circ F = Rp_{2!!} ( L_X \otimes p_1^{-1}F).
$$
\end{definition}
We note that $\sigma_X$ doesn't vanish outside the zero section of
$T^*X$ so that we can use $\shl_{\sigma_X}^0(\mf{Z},\mf{X})$ instead
of $L_X$ when we consider $\mu_XF|_{\Tnz^*X}$.

\subsection{Microlocalization functor for $\sha$-modules}
\label{sec:microl_funct_sha}
\begin{definition}
  \label{def:kernel_sha}
  For a real analytic manifold $Y$ and $T\subset Y$ a locally closed
  subset we introduce the notation $\shb_T = \sha_Y \otimes K_T$,
  where $K_T$ is given in Definition~\ref{def:kernel0}.  Let
  $(X,Z,\sigma)$ be a kernel data satisfying
  hypothesis~\eqref{eq:hyp_non_nul}. We define $\shl^\sha_\sigma \in
  \Mod(\sha_X)$ by
$$
\shl^\sha_\sigma  =  \shl^\sha_\sigma(Z,X)
= p_{!!} ( \sect_\Omega (\shb_{P_\sigma^0} ) )
\otimes \rho_{X!}( \omega_{Z|X}^{\otimes -1}) .
$$
\end{definition}
\begin{remark}
\label{rem:section_B}
For $U\subset \tilde X_Z$ a subanalytic open subset, a section of
$\sect_\Omega \shb_{P_\sigma^0}^i$ on $U$ is given by the following
data: open neighborhoods $W$ of $P_\sigma$ and $W^0$ of $P_\sigma^0$
in $\tilde X_Z$, and a section $s \in \sha_{\tilde X_Z}^i(\Omega \cap
W \cap U)$ such that $s|_{\Omega \cap W^0 \cap U} = 0$.  Actually the
definition would require that $s$ be defined on a neighborhood of
$\overline W$ and that $(\supp s) \cap \smash{\overline{W^0}} =
\emptyset$.  But, up to shrinking $W$ and $W^0$, this amounts to the
above statement.
\end{remark}
\begin{lemma}
\label{lem:kern_kern_sha}
The complex $\shl^\sha_\sigma$ consists of quasi-injective sheaves of
$\C_{X_{sa}}$-vector spaces.  We have a natural isomorphism
$\shl_\sigma^0 \simeq \shl^\sha_\sigma$ in $\DC^+(\C_{X_{sa}})$.
Hence, if $\sigma$ doesn't vanish, $\shl_\sigma \simeq
\shl^\sha_\sigma$ in $\DC^+(\I(\C_X))$.
\end{lemma}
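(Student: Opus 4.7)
The lemma has three assertions: quasi-injectivity of each $(\shl^\sha_\sigma)^i$, the identification $\shl^\sha_\sigma \simeq \shl_\sigma^0$ in $\DC^+(\C_{X_{sa}})$, and the corresponding identification with $\shl_\sigma$ in $\DC^+(\I(\C_X))$. My plan is to derive the second and third from the resolution-theoretic machinery of Sections 4 and 5 plus Lemma~\ref{lem:kernsa_kern0}, and to treat the quasi-injectivity separately via a cut-off argument based on Proposition~\ref{prop:qis-injectivite}.

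For $\shl^\sha_\sigma \simeq \shl_\sigma^0$, the key observation is that $\shb_{P_\sigma^0} = \sha_{\tilde X_Z}\otimes K_{P_\sigma^0}$ is a resolution of $K_{P_\sigma^0}$ by sheaves acyclic for both $\sect_\Omega$ and $p_{!!}$. Indeed, by Theorem~\ref{thm:resolution}, $\sha_{\tilde X_Z}$ resolves $\C_{\tilde X_{Z,sa}}$, and tensoring with $K_{P_\sigma^0}$ over the field $\C$ preserves exactness. Each $\shb_{P_\sigma^0}^i$ is an $\sha^0_{\tilde X_Z}$-module, hence soft by Lemma~\ref{lem:sha_mod_soft}, hence $\sect_\Omega$-acyclic by Corollary~\ref{cor:soft_acyclique}; so $\sect_\Omega\shb_{P_\sigma^0}\simeq R\sect_\Omega K_{P_\sigma^0}$. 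The components of $\sect_\Omega\shb_{P_\sigma^0}$ are again $\sha^0_{\tilde X_Z}$-modules, hence soft and $p_{!!}$-acyclic by the same corollary, yielding $p_{!!}\sect_\Omega\shb_{P_\sigma^0}\simeq Rp_{!!}R\sect_\Omega K_{P_\sigma^0}$. Tensoring by the flat sheaf $\rho_{X!}(\omega_{Z|X}^{\otimes -1})$ gives the asserted isomorphism. The third assertion then follows by chaining with Lemma~\ref{lem:kernsa_kern0}(ii) and applying the exact embedding $I_\tau\cl \Mod(\C_{X_{sa}})\to\I(\C_X)$ via $\shl_\sigma\simeq I_\tau(\shl_\sigma^{sa})$.

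For quasi-injectivity of $(\shl^\sha_\sigma)^i$, fix subanalytic opens $U\subset U'\subset X$ with $\overline{U'}$ compact and a section over $U$. Since $\rho_{X!}(\omega_{Z|X}^{\otimes -1})$ is locally constant (hence trivial on small opens), we reduce to the $p_{!!}\sect_\Omega\shb_{P_\sigma^0}^i$-factor. By Remark~\ref{rem:section_B} together with the definition of $p_{!!}$, the section is represented by a triple $(W,W^0,s)$ with $W\supset P_\sigma$, $W^0\supset P_\sigma^0$ open in $\tilde X_Z$, a relatively compact subanalytic $V\subset\tilde X_Z$, and $s\in\sha^i_{\tilde X_Z}(\Omega\cap W\cap p^{-1}(U)\cap V)$ satisfying $s|_{\Omega\cap W^0\cap p^{-1}(U)\cap V}=0$. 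Extend $s$ to $\Omega\cap W\cap V$ using the quasi-injectivity of $\sha^i_{\tilde X_Z}$ from Proposition~\ref{prop:qis-injectivite}. Construct (again by Proposition~\ref{prop:qis-injectivite}) a cut-off $\alpha\in\sha^0_{\tilde X_Z}$ equal to $1$ on a neighborhood of $p^{-1}(\overline U)$ and vanishing outside $p^{-1}(U')$; replace the extension by $\alpha\cdot s$. After shrinking $W^0$ so that it still contains $P_\sigma^0$ but the new vanishing locus encompasses all of $\Omega\cap W^0\cap p^{-1}(U')\cap V$ (possible because the vanishing on the old region is preserved and the cut-off allows enlargement), the modified triple represents the required extension to $U'$.

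The main obstacle is the quasi-injectivity step: the two-sided support condition encoded by the pair $(W,W^0)$ must be respected by the extension, and the geometry of $\tilde X_Z$ near $T_ZX$ means the cut-off has to be designed in coordination with the shrinking of $W^0$. Parts (b) and (c), by contrast, amount to formal bookkeeping once the acyclicity consequences of Lemma~\ref{lem:sha_mod_soft} and Corollary~\ref{cor:soft_acyclique} are in hand.
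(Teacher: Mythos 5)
Your treatment of the two isomorphism statements is correct and follows essentially the paper's route: $\shb_{P_\sigma^0}$ is a complex of $\sha^0_{\tilde X_Z}$-modules quasi-isomorphic to $K_{P_\sigma^0}$, softness (Lemma~\ref{lem:sha_mod_soft} with Corollary~\ref{cor:soft_acyclique}) gives acyclicity first for $\sect_\Omega$ and then for $p_{!!}$, and the last assertion is Lemma~\ref{lem:kernsa_kern0} combined with the exactness of $I_\tau$.

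The quasi-injectivity argument, however, has a genuine gap, exactly at the point you yourself flag as the main obstacle. You extend the representative $s$ from $\Omega\cap W\cap p^{-1}(U)\cap V$ by Proposition~\ref{prop:qis-injectivite} and then try to restore the vanishing condition near $P_\sigma^0$ \emph{afterwards}, by a cut-off in the base direction and by ``shrinking $W^0$''. This cannot work: the extension produced by Proposition~\ref{prop:qis-injectivite} is arbitrary outside the original domain, so over $p^{-1}(U')\setminus p^{-1}(U)$ it has no reason to vanish on any neighborhood of $P_\sigma^0$; shrinking $W^0$ cannot create vanishing that is not there, and the cut-off $\alpha$ (which is nonzero on part of $p^{-1}(U')\setminus p^{-1}(U)$ and constant in the directions in which $W^0$ shrinks) does not impose it either. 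The missing idea is to exploit the vanishing hypothesis \emph{before} extending: since $s|_{\Omega\cap W^0\cap U}=0$, the section $s$ glues with the zero section to a section $s'$ of $\sha^i_{\tilde X_Z}$ on the larger open set $(\Omega\cap W\cap U)\cup W^0$ satisfying $s'|_{W^0}=0$. Applying Proposition~\ref{prop:qis-injectivite} to $s'$ gives an extension to all of $\tilde X_Z$ which, because it restricts to $s'$ on $W^0$, automatically still vanishes there; hence it defines a global section of $\sect_\Omega(\shb_{P_\sigma^0}^i)$ extending the given one. This proves that $\sect_\Omega(\shb_{P_\sigma^0}^i)$ itself is quasi-injective, and the first assertion of the lemma then follows simply because $p_{!!}$ sends quasi-injective sheaves to quasi-injective sheaves --- there is no need to redo the extension at the level of $p_{!!}$ with support control, which is what forced you into the cut-off construction in the first place.
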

\begin{proof}
  We recall the definition $\shl_\sigma^0 = Rp_{!!} (\rsect_\Omega
  K_{P_\sigma^0}) \otimes \rho_{X !}(  \omega_{Z|X}^{\otimes -1})$.
  
  Since $\sha_{\tilde X_Z}$ is a quasi-injective resolution of
  $\C_{(\tilde X_Z)_{sa}}$, we have $\shb_{P_\sigma^0} \simeq
  K_{P_\sigma^0}$ in $\DC^+(\C_{X_{sa}})$.  The complex
  $\shb_{P_\sigma^0}$ consists of $\sha^0$-modules, hence soft
  sheaves. It follows from Corollary~\ref{cor:soft_acyclique} that
  $\rsect_\Omega \shb_{P_\sigma^0} \simeq \sect_\Omega
  \shb_{P_\sigma^0}$.  This last complex also is formed by
  $\sha^0$-modules, hence $p_{!!}$-acyclic sheaves, and we deduce the
  isomorphism of the lemma.
  
  Let us now check that $\sect_\Omega \shb_{P_\sigma^0}$ consists of
  quasi-injective sheaves. Let $U\subset \tilde X_Z$ be a subanalytic
  open subset; a section of $\sect_\Omega \shb_{P_\sigma^0}^i$ on $U$
  is given by $W$, $W^0$, $s \in \sha_{\tilde X_Z}^i(\Omega \cap W
  \cap U)$ as in Remark~\ref{rem:section_B}.  The condition on $s$
  says that we may extend $s$ to a section $s'$ of $\sha_{\tilde
    X_Z}^i$ on $(\Omega \cap W \cap U) \cup W^0$, with $s'|_{W^0} =0$.
  By Proposition~\ref{prop:qis-injectivite} we may extend $s'$ to
  $\tilde X_Z$, and this gives the quasi-injectivity of $\sect_\Omega
  \shb_{P_\sigma^0}^i$. Since $p_{!!}$ sends quasi-injective sheaves
  to quasi-injective sheaves, we obtain the first assertion.

The last assertion follows from Lemma~\ref{lem:kernsa_kern0}.
\end{proof}
Now we can define the microlocalization functor for $\sha$-modules. We
keep the notations introduced before Definition~\ref{def:mu}: for a
manifold $X$ we have the kernel data $(\mf{X}, \mf{Z}, \sigma_X)$.
\begin{definition}
  \label{def:mu_sha}
  With the above notations, we set $L^\sha_X = \shl^\sha_{\sigma_X}
  (\mf{Z},\mf{X})$ so that $L^\sha_X \in \Mod(\sha_{X \times T^* X})$.
  The microlocalization is the functor
$$
\mu^\sha_X\cl \Mod(\sha_{X}) \to \Mod(\sha_{T^* X}),
\qquad
F \mapsto  L^\sha_X \circ F = p_{2!!} ( L^\sha_X 
\otimes_{\sha_\mf{X}} p_1^*F).
$$
\end{definition}
For $F \in \Mod(\sha_{X})$ we have a natural morphism in
$\DC^+(\I(\C_{\Tnz^*X}))$:
\begin{equation}
  \label{eq:mu_vers_mu_sha}
  \mu_X (\For'_X(F)) \to   \For'_{T^*X} (\mu^\sha_X (F)),
\end{equation}
defined by the composition of morphisms in the derived category (we
don't write the functors $\For$) on $\Tnz^*X$:
\begin{equation}
\label{eq:mu_vers_mu_sha_detail}
  \mu_X F \simeq Rp_{2!!} ( L^\sha_X \otimes p_1^{-1}F) 
\to Rp_{2!!} ( L^\sha_X  \otimes_{\sha_\mf{X}} p_1^*F)  
\isofrom
p_{2!!} ( L^\sha_X  \otimes_{\sha_\mf{X}} p_1^*F),
\end{equation}
where the first isomorphism is given by Lemma~\ref{lem:kern_kern_sha},
the second morphism is given by the morphisms $p_1^{-1}F \to p_1^*F$
and $\otimes \to \otimes_{\sha_\mf{X}}$, and the third arrow is an
isomorphism by Proposition~\ref{prop:Amod_f_*acyclique}.
\begin{lemma}
  \label{lem:kern_prod_tens}
  Let $(X,Z,\sigma)$ be a kernel data satisfying
  hypothesis~\eqref{eq:hyp_non_nul} and consider $F \in \Mod(\sha_X)$.
  We assume that $F$ is locally free as an $\sha^0_X$-module. Then:
  
  (i) $\shl^\sha_\sigma \otimes_{\sha_X} F$ is a complex of
  quasi-injective sheaves on $X_{sa}$.
  
  (ii) Let $T_\sigma \subset Z$ be the zero set of $\sigma$. The
  natural morphism, in $\CC^+(\C_{X_{sa} \setminus T_\sigma})$,
$$
\shl^\sha_\sigma \otimes F \to \shl^\sha_\sigma \otimes_{\sha_X} F
$$
is a quasi-isomorphism.
\end{lemma}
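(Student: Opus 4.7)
The plan is to work locally on $X$, where we may assume $F$ admits a finite homogeneous basis $\{e_j\}$ as an $\sha^0_X$-module. Both statements should then follow from an explicit description of the tensor product combined with the quasi-injectivity results of Lemma~\ref{lem:kern_kern_sha}.

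For part (i), I would first apply the projection formula of Lemma~\ref{lem:proj_form} (with $f=p\cl\tilde X_Z\to X$) to rewrite
$$
\shl^\sha_\sigma \otimes_{\sha_X} F
\;\simeq\;
p_{!!}\bigl(p^*F \otimes_{\sha_{\tilde X_Z}} \sect_\Omega(\shb_{P_\sigma^0})\bigr)
\otimes \rho_{X!}(\omega_{Z|X}^{\otimes -1}).
$$
Since the $p^{-1}\sha_X$-action on $\sect_\Omega(\shb_{P_\sigma^0})$ factors through $p^\sharp$ and the $\sha_{\tilde X_Z}$-module structure, the basis $\{e_j\}$ lets one present the inner tensor product locally as a finite direct sum of components of $\sect_\Omega(\shb_{P_\sigma^0})$ (with appropriate degree shifts and sign twists coming from the relations $dx_i\cdot e_j$). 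The components of $\sect_\Omega(\shb_{P_\sigma^0})$ were shown quasi-injective in the proof of Lemma~\ref{lem:kern_kern_sha}. Since quasi-injectivity is preserved by direct sums (sections on the subanalytic site commute with direct sums) and by $p_{!!}$ (also noted in Lemma~\ref{lem:kern_kern_sha}), we conclude that every term of $\shl^\sha_\sigma\otimes_{\sha_X} F$ is quasi-injective; the twist by $\rho_{X!}\omega_{Z|X}^{\otimes -1}$ only shifts degrees locally and does not affect this.

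For part (ii), the morphism in question is the canonical surjection appearing in~\eqref{eq:tensor_prod}, so it suffices to check that the image of
$$
\delta\cl \shl^\sha_\sigma \otimes \sha_X \otimes F
\longrightarrow \shl^\sha_\sigma \otimes F
$$
is acyclic when restricted to $X_{sa}\setminus T_\sigma$. The key observation is that, by Lemma~\ref{lem:kernsa_kern0} and Lemma~\ref{lem:kern_kern_sha}, $\shl^\sha_\sigma|_{X\setminus T_\sigma}$ is a quasi-injective resolution of $\shl^0_\sigma|_{X\setminus T_\sigma}$, itself an object of $\BDC(\C_{X_{sa}})$ built from the constant-type sheaves $K_{P_\sigma^0}$; in particular the sub-dg-algebra $\sha_X^{\geq 1}$ acts on $\shl^\sha_\sigma|_{X\setminus T_\sigma}$ trivially up to homotopy. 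Using the $\sha^0_X$-basis $\{e_j\}$ of $F$ one writes every generator $\alpha a \otimes f - (-1)^{|a||\alpha|}\alpha\otimes af$ of $\mathrm{Im}(\delta)$ (for $a\in\sha_X^{\geq 1}$) as the boundary of an explicit primitive built from this null-homotopy and the coefficients of $f$ in the basis, showing that $\mathrm{Im}(\delta)$ is acyclic on $X\setminus T_\sigma$.

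The main obstacle is the second paragraph: producing the null-homotopy for the $\sha_X^{\geq 1}$-action on $\shl^\sha_\sigma|_{X\setminus T_\sigma}$ in a functorial way, and verifying that the homotopies assemble into an acyclic resolution of $\mathrm{Im}(\delta)$. I expect this to reduce to a Koszul-type computation using the factorization provided by the non-vanishing $1$-form $\sigma$ outside $T_\sigma$, combined with Corollary~\ref{cor:resolution} which exhibits $\sha_X$ as a resolution of $\C_{X_{sa}}$.
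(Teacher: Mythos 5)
Part (i) of your proposal is essentially the paper's argument: localize, use the $\sha^0_X$-basis of $F$ to exhibit $\shl^\sha_\sigma\otimes_{\sha_X}F$ (termwise) as a finite direct sum of copies of the quasi-injective sheaves $p_{!!}\sect_\Omega(\shb^0_{P_\sigma^0})$, and use that $p_{!!}$ preserves quasi-injectivity. That part is fine.

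Part (ii) has a genuine gap. First, a factual slip: $\mathrm{Im}(\delta)$ is not generated only by relations with $a\in\sha_X^{\geq 1}$; the degree-zero relations $a\,m\otimes n-m\otimes an$, $a\in\sha^0_X$, are there too, and these are certainly not individually null-homotopic — killing them is exactly what produces the tensor product over $\sha^0_X$, and it is here (and only here) that local freeness of $F$ over $\sha^0_X$ must enter. Your sketch never uses that hypothesis in (ii) except to ``write coefficients in a basis,'' so it cannot be complete: for a non-flat $\sha^0_X$-module the statement fails. Second, the proposed mechanism (a null-homotopy for the $\sha_X^{\geq 1}$-action, applied generator by generator) does not yield acyclicity of the subcomplex $\mathrm{Im}(\delta)$: exhibiting each generator as a boundary in the ambient complex says nothing about the cohomology of $\mathrm{Im}(\delta)$ itself, and homotopies attached to individual elements $a$ do not assemble into a contraction. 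The actual content of (ii), which your sketch does not reach, is the following: in local coordinates on $\tilde X_Z$ the relative structure of $\sha_{\tilde X_Z}$ over $p^{-1}\sha_X$ is free on $1,d\tau$, so $\shl^\sha_\sigma\otimes_{\sha_X}F$ is the total complex of a two-row double complex whose columns are
$$
p_{!!}\sect_\Omega(\shb^0_{P_\sigma^0})\otimes_{\sha^0_X}F^i
\xrightarrow{\ \partial/\partial\tau\ }
p_{!!}\sect_\Omega(\shb^0_{P_\sigma^0})\otimes_{\sha^0_X}F^i ,
$$
and one must prove the exactness of
$$
0 \to p_{!!}\bigl(\sect_\Omega(K_{P_\sigma^0})\bigr)\otimes \sha_X^0
\to p_{!!}\sect_\Omega(\shb^0_{P_\sigma^0})
\xrightarrow{\ \partial/\partial\tau\ }
p_{!!}\sect_\Omega(\shb^0_{P_\sigma^0}) \to 0 ,
$$
i.e. surjectivity of $\partial/\partial\tau$ and the identification of its kernel with $K_{P_\sigma^0}\otimes p^{-1}\sha^0_X$ after $p_{!!}$ (this last point needs a genuine argument, comparing sections via the slices $i_\varepsilon\cl X\to\Omega$). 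This is the analogue of Lemma~\ref{lem:im_inv_sha_proj} and is the analytic heart of the lemma; no Koszul factorization of $\sigma$ outside $T_\sigma$ is involved. Once this is done, the columns compute $\sect_\Omega(K_{P_\sigma^0})\otimes F$, which gives $\shl^0_\sigma\otimes F\simeq\shl^\sha_\sigma\otimes_{\sha_X}F$ and hence (ii) via Lemma~\ref{lem:kern_kern_sha}.
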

\begin{proof}
  The proof is similar to the proof of
  Proposition~\ref{prop:im_inv_sha_subm}.  Both statements are local
  on $X$.  We choose coordinates $(x_1,\ldots, x_d,z_1,\ldots,z_m)$ on
  $X$ such that $Z$ is given by $x_i=0$, $i=1,\ldots,d$. This gives
  coordinates $(x,z,\tau)$ on $\tilde X_Z$ such that $p(x,z,\tau) =
  (\tau x,z)$. On $\Omega$ we take the coordinates $(x',z,\tau)$,
  where $x' = \tau x$, so that $p(x',z,\tau) = (x',z)$.  With these
  coordinates we argue as in the proof of
  Proposition~\ref{prop:im_inv_sha_subm} to see that $p_{!!}
  \sect_\Omega (\shb_{P_\sigma^0} ) \otimes_{\sha_X} F$ is isomorphic
  to a complex
$$
G = p_{!!}  (\sect_\Omega(\shb^0_{P_\sigma^0} ) \oplus
\sect_\Omega(\shb^0_{P_\sigma^0} )d\tau) \otimes_{\sha^0_X} F,
$$
with a differential defined as in~\eqref{eq:dble_cplx}.

(i) Since $F$ is locally free over $\sha^0_X$ and $p_{!!}
\sect_\Omega(\shb^0_{P_\sigma} )$ is quasi-injective, by
Lem\-ma~\ref{lem:kern_kern_sha}, $G$ also is quasi-injective.

(ii) We will see the exactness of the sequence:
\begin{equation}
\label{eq:kern_prod_tens1}
0 \to 
p_{!!} ( \sect_\Omega(K_{P_\sigma^0}) )  \otimes \sha_X^0 
\to
p_{!!}  \sect_\Omega(\shb^0_{P_\sigma^0} )
\xto{\frac{\partial}{\partial \tau}}
p_{!!}  \sect_\Omega(\shb^0_{P_\sigma^0} )
\to 0 .
\end{equation}
Thus $G$ is quasi-isomorphic to $p_{!!} ( \sect_\Omega(K_{P_\sigma^0})
) \otimes F$ and this implies (ii) because we already know that
$\shl^\sha_\sigma$ is quasi-isomorphic to $\shl^0_\sigma$.

Now we prove~\eqref{eq:kern_prod_tens1}.  We have the exact sequence
on $\Omega$: $0 \to K_{P_\sigma^0} \otimes p^{-1} \sha_X^0 \to
\shb^0_{P_\sigma^0} \xto{\frac{\partial}{\partial \tau}}
\shb^0_{P_\sigma^0} \to 0$. Since $\sha^0$-modules are soft this
gives~\eqref{eq:kern_prod_tens1} if we prove that
$$
u \cl
p_{!!} ( \sect_\Omega(K_{P_\sigma^0}) )  \otimes  \sha_X^0 
\to
p_{!!}\sect_\Omega (  K_{P_\sigma^0} \otimes p^{-1} \sha_X^0 )
$$ is an isomorphism.  Let $s$ be a section of $p_{!!} (
\sect_\Omega(K_{P_\sigma^0}) ) \otimes \sha_X^0$ over some open set
$U$. Up to shrinking $U$ we may assume that $s$ is of the form
$1\otimes a$ where $a \in \sha^0_X(U)$ and $1 \in \C_{\overline W
  \setminus \overline{W^0}}(\Omega \cap p^{-1}(U))$, for some open
neighborhoods $W$ and $W^0$ of $P_\sigma$ and $P_\sigma^0$ in $\tilde
X_Z$.  In the same way a section $s'$ of $p_{!!}\sect_\Omega (
K_{P_\sigma^0} \otimes p^{-1} \sha_X^0 )$ over $U$ is given by
$1\otimes b$ with $b\in \sect((\overline{W'} \setminus
\overline{W'^0}) \cap \Omega \cap p^{-1}(U); p^{-1}(\sha^0_X))$ for
some other neighborhoods of $P_\sigma$ and $P_\sigma^0$ in $\tilde
X_Z$.

In the coordinates $(x',z,\tau)$ on $\Omega$ we define $i_\varepsilon
\cl X \to \Omega$, $(x,z) \mapsto (x,z,\varepsilon)$.  Then
$i_\varepsilon^{-1}((\overline{W'} \setminus \overline{W'^0}) \cap
\Omega \cap p^{-1}(U))$ is a neighborhood of $Z$ in $U$ for
$\varepsilon$ small enough.  The inverse to morphism $u$ is then given
by $a = i_\varepsilon^*(b)$.
\end{proof}

\begin{proposition}
\label{prop:mu_sha_qinj}
  We consider $F \in \Mod(\sha_X)$ and we assume that it is locally
  free as an $\sha^0_X$-module. Then:
  
  (i) $\mu^\sha_X(F)$ is a complex of quasi-injective sheaves on
  $T^*X_{sa}$.
  
  (ii) The natural morphism~\eqref{eq:mu_vers_mu_sha} in
  $\DC^+(\I(\C_{\Tnz^* X}))$, $\mu_X(F) \to \mu^\sha_X(F)$, is an
  isomorphism.
\end{proposition}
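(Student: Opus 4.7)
For (i), I unpack the definition $\mu^\sha_X(F) = p_{2!!}(L^\sha_X \otimes_{\sha_\mf{X}} p_1^*F)$ and invoke the machinery already developed. Since $p_1 \colon \mf{X} \to X$ is smooth and $F$ is locally free over $\sha^0_X$, Proposition~\ref{prop:im_inv_sha_subm}(ii) gives that $p_1^*F$ is locally free over $\sha^0_\mf{X}$. Applying Lemma~\ref{lem:kern_prod_tens}(i) to the kernel data $(\mf{X}, \mf{Z}, \sigma_X)$ with $p_1^*F$ in place of $F$, we get that $L^\sha_X \otimes_{\sha_\mf{X}} p_1^*F$ is a complex of quasi-injective sheaves on $\mf{X}_{sa}$. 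Since $p_{2!!}$ sends quasi-injective sheaves to quasi-injective sheaves (as recorded in the proof of Lemma~\ref{lem:kern_kern_sha}), this yields (i).

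For (ii), I verify that each of the three arrows in the chain~\eqref{eq:mu_vers_mu_sha_detail} is an isomorphism after restriction to $\Tnz^*X$. The first isomorphism, $\mu_X F \simeq Rp_{2!!}(L^\sha_X \otimes p_1^{-1}F)$, follows from Lemma~\ref{lem:kern_kern_sha}: since $\sigma_X = (-\id, \omega_X)$ vanishes precisely on the zero section of $T^*X$, one has $L^\sha_X \simeq L_X$ over $X \times \Tnz^*X$. The final isomorphism $Rp_{2!!} \isofrom p_{2!!}$ is immediate from Proposition~\ref{prop:Amod_f_*acyclique} applied to the $\sha_{\mf{X}}$-module $L^\sha_X \otimes_{\sha_\mf{X}} p_1^*F$.

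The substantive step is the middle arrow, which I plan to factor as
$$
L^\sha_X \otimes p_1^{-1}F \;\longrightarrow\; L^\sha_X \otimes p_1^*F \;\longrightarrow\; L^\sha_X \otimes_{\sha_\mf{X}} p_1^*F.
$$
The first arrow is induced by the quasi-isomorphism $p_1^{-1}F \to p_1^*F$ of Proposition~\ref{prop:im_inv_sha_subm}(i); tensoring with $L^\sha_X$ over $\C_{\mf{X}_{sa}}$ preserves this, since the tensor product of sheaves of $\C$-vector spaces is exact (presheaf tensor over a field is exact, and sheafification is exact). The second arrow is a quasi-isomorphism outside $T_{\sigma_X}$ by Lemma~\ref{lem:kern_prod_tens}(ii), applied to the locally free $\sha^0_\mf{X}$-module $p_1^*F$.

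The main delicate point I expect is controlling exactly where these local quasi-isomorphisms hold. The key identification is that $T_{\sigma_X} \subset \mf{Z} \simeq T^*X$ is precisely the zero section $T^*_X X$, and $L^\sha_X$ is supported on $\mf{Z}$; so the whole tensor complex has support in $\mf{Z}$, and over $p_2^{-1}(\Tnz^*X) = X \times \Tnz^*X$ this support avoids $T_{\sigma_X}$ entirely. Therefore, after restriction to $\Tnz^*X$, applying $Rp_{2!!} \simeq p_{2!!}$ to the composition gives an isomorphism, completing (ii).
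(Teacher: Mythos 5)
Your proof is correct and follows essentially the same route as the paper: for (i), locally freeness of $p_1^*F$ via Proposition~\ref{prop:im_inv_sha_subm} plus Lemma~\ref{lem:kern_prod_tens}~(i) and preservation of quasi-injectivity under $p_{2!!}$; for (ii), reduction to the middle arrow of~\eqref{eq:mu_vers_mu_sha_detail}, factored through $p_1^{-1}F \to p_1^*F$ and Lemma~\ref{lem:kern_prod_tens}~(ii), with the correct identification of $T_{\sigma_X}$ with the zero section. You merely spell out details (exactness of $\otimes$ over a field, the support argument) that the paper leaves implicit.
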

\begin{proof}
  (i) Since $p_{2!!}$ sends quasi-injective sheaves to quasi-injective
  sheaves, it is enough to prove that $L^\sha_X \otimes_{\sha_\mf{X}}
  p_1^*F$ is quasi-injective. By
  Proposition~\ref{prop:im_inv_sha_subm} $p_1^*F$ is locally free over
  $\sha^0_\mf{X}$ and we conclude by Lemma~\ref{lem:kern_prod_tens}
  (i).
  
  (ii) We have to prove that the second arrow
  in~\eqref{eq:mu_vers_mu_sha_detail} is an isomorphism over $\mf{X}
  \setminus (X\times T^*_XX)$.  By
  Proposition~\ref{prop:im_inv_sha_subm} again, $p_1^{-1}F \isoto
  p_1^*F$ in $\DC^+(\I(\C_{\mf{X}}))$, and we conclude by
  Lemma~\ref{lem:kern_prod_tens} (ii).
\end{proof}

\section{Functorial behavior of the kernel}
We will use the functorial properties of $\shl_\sigma$ given in
Propositions~1.3.1, 1.3.3 and~1.3.4 of~\cite{KSIW06}, and recalled in
Proposition~\ref{prop:prop_fonct_L} below.  In fact we state these
properties on the site $X_{sa}$, using the kernel $\shl_\sigma^0 \in
\Mod(\C_{X_{sa}})$, and our formulas are equivalent to those
of~\cite{KSIW06} when $\sigma$ doesn't vanish, by
Lemma~\ref{lem:kernsa_kern0}. We give slightly different proofs than
in~\cite{KSIW06} so that we can translate them easily in the framework
of $\sha$-modules in Proposition~\ref{prop:prop_fonct_L_sha}.  In this
section $(X_1, Z_1, \sigma_1)$ and $(X_2, Z_2, \sigma_2)$ are two sets
of data as above, satisfying hypothesis~\eqref{eq:hyp_non_nul}.  We
set for short $\tilde X_i = \tilde {(X_i)}_{Z_i}$.

\subsection{Direct and inverse images}
We assume to be given a morphism $f\cl X_1 \to X_2$ is a morphism such
that $f(Z_1) \subset Z_2$ and $\sigma_1 = f^*\sigma_2$.  The morphism
$f$ induces $\tilde f\cl \tilde X_1 \to \tilde X_2$, decomposed as
$\tilde f = h \circ g$ in the following diagram, where the square is
Cartesian:
\begin{equation}
  \label{eq:diag_prop_fonct_L}
\vcenter{\xymatrix{
 \Omega_1 \ar@{^{(}->}[r]
& \tilde X_1  \ar[r]^-{g} \ar[dr]_{p_1}
& X_1 \times_{X_2} \tilde X_2  \ar[d]^q \ar[r]^-h \ar@{}[dr]|\Box
& \tilde X_2 \ar[d]_{p_2}
& \Omega_2 \ar@{_{(}->}[l] \\
P_{\sigma_1} \ar@{^{(}->}[ur] && X_1 \ar[r]^f & X_2
& P_{\sigma_2} \ar@{_{(}->}[ul]
}}
\end{equation}
We have $\Omega_1 = \tilde{f}^{-1} \Omega_2$, $T_{Z_1}X_1 =
\tilde{f}^{-1} T_{Z_2}X_2$, $P_{\sigma_1} = \tilde{f}^{-1}
P_{\sigma_2}$.  We note that $X_1 \times_{X_2} \tilde X_2$ is in
general not a manifold and may have components of different
dimensions.  When $f$ is clean with respect to $Z_2$ and $Z_1 =
f^{-1}(Z_2)$ (clean then means that $g'\cl T_{Z_1}X_1 \to X_1
\times_{X_2} T_{Z_2}X_2$ is injective), $g$ is a closed embedding.
When $f$ is transversal to $Z_2$ and $Z_1 = f^{-1}(Z_2)$, $g$ is an
isomorphism.
\begin{lemma}
  \label{lem:fonctorialite_K_T}
  Let $f\cl X \to Y$ be a morphism of real analytic manifolds,
  $T\subset Y$ a locally closed subset and $Z=f^{-1}T$.

(i) There exists a natural isomorphism $f^{-1}K_T \simeq K_Z$.

(ii) Let $V\subset Y$ be an open subset and $U = f^{-1}(V)$ and let
$G\in \CC^+(\C_{Y_{sa}})$.  We assume that the restriction $f|_U\cl
U\to V$ is smooth.  Then the integration of forms induces a morphism
of complexes:
\begin{equation}
  \label{eq:int_morph_compl}
f_{!!} \sect_U (\sha_X \otimes f^{-1}G \otimes \omega'_{X|Y})
\to \sect_V (\sha_Y \otimes G),
\end{equation}
whose image in $\BDC(\C_{X_{sa}})$ is the natural morphism $Rf_{!!}
\rsect_U (f^{-1}G \otimes\omega_{X|Y}) \to \rsect_V G$.
\end{lemma}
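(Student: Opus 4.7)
The plan treats the two parts separately. For (i), I would exploit that $f^{-1}$ is exact and commutes with both tensor products and filtrant inductive limits, applying it termwise to the presentation $K_T = \rho_{Y!}\C_{\overline T}\otimes \varinjlim_W \C_{Y\setminus \overline W}$. The identity $f^{-1}\rho_{Y!}\simeq \rho_{X!}f^{-1}$ follows from the left-adjoint character of $\rho_!$, so that $f^{-1}\rho_{Y!}\C_{\overline T}\simeq \rho_{X!}\C_{f^{-1}\overline T}$ and $f^{-1}\C_{Y\setminus \overline W}\simeq \C_{X\setminus f^{-1}\overline W}$. Since the sets $f^{-1}W$ for $W\supset T$ open are open neighborhoods of $Z=f^{-1}T$ in $X$, the tensor product with the boundary-supported factor $\rho_{X!}\C_{f^{-1}\overline T}$ makes the identification with $K_Z$ immediate upon passing to the inductive limit.

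For (ii), the morphism of complexes is produced by chaining three ingredients that all live at the level of $\sha$-modules. Using $\rho_*\C_U\simeq f^{-1}\rho_*\C_V$, the classical projection formula, and the identity $f_{!!}\sect_U\simeq \sect_V f_{!!}$ (which is just~\eqref{eq:adj_im_dir_propre_bis} at the non-derived level, valid because $\sha^0$-modules are soft), one rewrites the source as
\[
f_{!!}\sect_U(\sha_X\otimes f^{-1}G\otimes \omega'_{X|Y})\simeq G\otimes \sect_V f_{!!}(\sha_X\otimes \omega'_{X|Y}).
\]
Producing a morphism of the above into $\sect_V(\sha_Y\otimes G)=G\otimes \sect_V\sha_Y$ then reduces to a morphism on $V$ alone, namely $(f_{!!}(\sha_X\otimes \omega'_{X|Y}))|_V\to \sha_Y|_V$. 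Since $U=f^{-1}V$ the left-hand side equals $(f|_U)_{!!}(\sha_U\otimes \omega'_{U|V})$, and since $f|_U\cl U\to V$ is smooth by hypothesis, the integration morphism $\int_{f|_U}$ of Definition~\ref{def:integration_sha} supplies exactly such a morphism.

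For the derived claim, I would combine three facts: $\sha_X\otimes \omega'_{X|Y}$ is a soft (hence $f_{!!}$-acyclic, by Lemma~\ref{lem:sha_mod_soft} and Corollary~\ref{cor:soft_acyclique}) resolution of $\omega_{X|Y}$, so that $f_{!!}\sect_U$ applied here computes $Rf_{!!}\rsect_U$; the projection formula and the commutation $Rf_{!!}\rsect_U\simeq \rsect_V Rf_{!!}$ hold at the derived level; and by the Remark after Theorem~\ref{thm:resolution}, the image of $\int_{f|_U}$ in $\BDC(\C_{V_{sa}})$ coincides with the standard integration morphism $R(f|_U)_{!!}\omega_{U|V}\to \C_{V_{sa}}$. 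Pasting these together identifies our morphism of complexes with the natural morphism $Rf_{!!}\rsect_U(f^{-1}G\otimes \omega_{X|Y})\to \rsect_V G$. I expect the main delicacy to be in (ii): even though each individual step is formal, one must carefully juggle the tensor-vs-$\sect_U$ descriptions and use the hypothesis that $f$ is smooth only over $U$ to localize the construction to $V$, where Definition~\ref{def:integration_sha} applies. Part (i) is routine once $f^{-1}$ is pushed through the various colimits.
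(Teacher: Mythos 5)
Part (i) of your proposal follows the same route as the paper (apply the exact functor $f^{-1}$ to the defining colimit presentation of $K_T$ and identify the pulled-back neighborhoods), and your reduction of (ii) to the case $G=\C_Y$ is also essentially the paper's argument via $\sect_U$, the projection formula and the non-derived version of~\eqref{eq:adj_im_dir_propre_bis} --- although your identity $\sect_V(\sha_Y\otimes G)=G\otimes\sect_V\sha_Y$ is not literally correct for general $G$; the paper avoids it by mapping into $\sect_V(\sect_V(F')\otimes G)\isofrom\sect_V(F'\otimes G)$.

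The genuine gap is in the integration step. You identify $(f_{!!}(\sha_X\otimes\omega'_{X|Y}))|_V$ with $(f|_U)_{!!}(\sha_U\otimes\omega'_{U|V})$ and assert that $\int_{f|_U}$ from Definition~\ref{def:integration_sha} supplies the required arrow. Neither identification holds. First, $\sha_U\neq\sha_X|_U$: temperedness relative to the manifold $U$ imposes no growth condition along $\partial U\subset X$, so $\sha_X|_U$ is a proper subalgebra of $\sha_U$, and likewise $\sha_Y|_V\subsetneq\sha_V$. This inclusion goes the wrong way for you: the target $\sect_V(\sha_Y\otimes G)$ consists of forms tempered with respect to $Y$, whereas $\int_{f|_U}$ only produces forms tempered with respect to $V$. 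Second, a section of $(f_{!!}(\cdot))|_V$ may have support whose closure is compact in $X$ but not contained in $U$, so it does not lie in the domain of $(f|_U)_{!!}$. Hence the morphism you invoke is defined on a strictly smaller source and lands in a strictly larger target than required. The actual content of (ii) is the assertion that fiber integration along $f|_U$ of an $X$-tempered form whose support has compact closure in $X$ converges and yields a $Y$-tempered form; the paper establishes~\eqref{eq:int_morph_compl} by performing this integration directly on sections of $f_{!!}\sect_U(\sha_X\otimes\omega'_{X|Y})$ over $W\subset Y$, and this analytic step cannot be replaced by a formal appeal to Definition~\ref{def:integration_sha} applied to $f|_U\cl U\to V$. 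Your outline of the compatibility with the derived-category morphism is fine once the morphism of complexes is correctly constructed.
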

\begin{proof}
  (i) By definition $K_T = \varinjlim_{W_1,W_2} \C_{\overline{W_2}
    \setminus \overline{W_1}}$, where $W_1$, $W_2$ run over the open
  neighborhoods of $T$, $\overline T$ in $Y$.  For any compact
  $M\subset X$, the $f^{-1}(W_i) \cap M$ give fundamental systems of
  neighborhoods of $Z\cap M$ and $\overline Z \cap M$ in $M$.  Since
  the inductive limit commutes with $f^{-1}$ we deduce the
  isomorphism.
  
  (ii) We first reduce the statement to $G=\C_Y$.  Indeed, for $F \in
  \CC^+(\C_{X_{sa}})$ and $F' \in \CC^+(\C_{Y_{sa}})$ with a morphism
  $f_{!!}\sect_U(F) \to \sect_V F'$, we have the sequence of morphisms
  \begin{align*}
    f_{!!}\sect_U(F \otimes   f^{-1}G) 
&\isoto f_{!!}\sect_U(\sect_U(F) \otimes f^{-1}G) \\
&\isoto \sect_V (f_{!!} (\sect_U(F)\otimes f^{-1}G) )  \\
&\to \sect_V (\sect_V(F') \otimes G) \\
&\isofrom \sect_V (F' \otimes G),
  \end{align*}
  where the first one and the last one are induced by $F \to
  \sect_U(F)$ and $F' \to \sect_V(F')$ (they are isomorphisms because
  $F|_U \simeq \sect_U(F)|_U$ and $F'|_V \simeq \sect_V(F')|_V$), the
  second one is morphism~\eqref{eq:adj_im_dir_propre_bis} and the
  third one is given by the projection formula and the given morphism
  $f_{!!}\sect_U(F) \to \sect_V F'$.
  
  Hence it is enough to define $f_{!!} \sect_U (\sha_X \otimes
  \omega'_{X|Y}) \to \sect_V \sha_V$.  By definition a section of
  $f_{!!}( \sect_U \sha_X \otimes \omega'_{X|Y})$ over $W \subset Y$
  is represented by a section $\omega \in\sect( U\cap f^{-1} W; \sha_X
  \otimes \omega'_{X|Y})$ whose support has compact closure in $X$.
  Since $f$ is smooth on $U$ we may define $\int_f \omega$, and it is
  tempered on $V$, i.e. it gives an element of $\sect(V\cap W;
  \sha_Y)$.  This gives morphism~\eqref{eq:int_morph_compl}.
\end{proof}

\begin{proposition}
  \label{prop:prop_fonct_L}
  (i) There exists a natural morphism in $\BDC(\C_{(X_2)_{sa}})$:
\begin{equation}
  \label{eq:ima_dir_L}
  Rf_{!!} ( \shl_{\sigma_1}^0  \otimes 
\rho_{X_1 !}(  \omega_{Z_1 |Z_2} )) \to \shl_{\sigma_2}^0.
\end{equation}

(ii) We assume moreover that $Z_1 = f^{-1}(Z_2)$ and $f$ is clean with
respect to $Z_2$. Then there exists a natural morphism in
$\BDC(\C_{(X_1)_{sa}})$:
  \begin{equation}
    \label{eq:restr_L}
    f^{-1} \shl_{\sigma_2}^0 \to \shl_{\sigma_1}^0
\otimes \rho_{X_1 !}(  \omega_{Z_1 |Z_2} )
\otimes \omega_{X_1 |X_2}^{-1}.
  \end{equation}
  If $f$ is transversal to $Z_2$ it reduces to:
  \begin{equation}
    \label{eq:restr_L_iso}
    f^{-1} \shl_{\sigma_2}^0 \to \shl_{\sigma_1}^0.
  \end{equation}
\end{proposition}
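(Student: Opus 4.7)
The proof factors $\tilde f = h\circ g$ via diagram~\eqref{eq:diag_prop_fonct_L} and combines three ingredients: the functoriality $\tilde f^{-1}K_T \simeq K_{\tilde f^{-1}T}$ from Lemma~\ref{lem:fonctorialite_K_T}(i); the commutation $\tilde f^{-1}\rsect_U \simeq \rsect_{\tilde f^{-1}U}\tilde f^{-1}$ for open $U$; and the adjunctions between $\tilde f^{-1}$, $R\tilde f_{!!}$, and $\tilde f^!$.

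For part (i), I would first build a trace morphism at the level of the normal deformations. Since $\Omega_1 = \tilde f^{-1}\Omega_2$ and $P_{\sigma_1}^0 = \tilde f^{-1}P_{\sigma_2}^0$, the first two ingredients give an isomorphism $\tilde f^{-1}\bigl(\rsect_{\Omega_2}K_{P_{\sigma_2}^0}\bigr) \simeq \rsect_{\Omega_1}K_{P_{\sigma_1}^0}$. Combining with the adjunction $R\tilde f_{!!}\tilde f^! \to \id$ and $\tilde f^!(\cdot) \simeq \tilde f^{-1}(\cdot)\otimes \omega_{\tilde X_1|\tilde X_2}$ yields
\[
R\tilde f_{!!}\bigl(\rsect_{\Omega_1}K_{P_{\sigma_1}^0} \otimes \omega_{\tilde X_1|\tilde X_2}\bigr)\;\to\;\rsect_{\Omega_2}K_{P_{\sigma_2}^0}.
\]
Applying $Rp_{2!!}$ and using $p_2\tilde f = fp_1$, the projection formula, and $\omega_{\tilde X_1|\tilde X_2}|_{\Omega_1}\simeq p_1^{-1}\omega_{X_1|X_2}$, brings the extra orientation factor down to $X_1$. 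A standard computation with the exact sequence of normal bundles attached to the composite $Z_1\hookrightarrow X_1 \to X_2$ landing in $Z_2$ identifies $\omega_{X_1|X_2}|_{Z_1}\otimes \omega_{Z_1|X_1}^{-1}\otimes f^{-1}\omega_{Z_2|X_2}^{-1}$ with $\omega_{Z_1|X_1}^{-1}\otimes \omega_{Z_1|Z_2}$, yielding~\eqref{eq:ima_dir_L}.

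For part (ii) under the cleanness assumption, I would first apply base change $f^{-1}Rp_{2!!}\simeq Rq_{!!}h^{-1}$ and commute $h^{-1}$ past $\rsect_{\Omega_2}$ and $K_{P_{\sigma_2}^0}$, obtaining
\[
f^{-1}\shl_{\sigma_2}^0\;\simeq\;Rq_{!!}\bigl(\rsect_{h^{-1}\Omega_2}K_{h^{-1}P_{\sigma_2}^0}\bigr)\otimes \rho_{X_1!}f^{-1}\omega_{Z_2|X_2}^{-1}.
\]
Cleanness makes $g$ a closed embedding with $g^{-1}(h^{-1}\Omega_2)=\Omega_1$ and $g^{-1}(h^{-1}P_{\sigma_2}^0)=P_{\sigma_1}^0$, and the adjunction $\id\to Rg_{!!}g^{-1}$ combined with the same commutations produces a morphism $\rsect_{h^{-1}\Omega_2}K_{h^{-1}P_{\sigma_2}^0}\to Rg_{!!}\bigl(\rsect_{\Omega_1}K_{P_{\sigma_1}^0}\bigr)$. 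Pushing by $Rq_{!!}$, using $qg=p_1$, and repackaging $f^{-1}\omega_{Z_2|X_2}^{-1}$ via the dual normal-bundle identity into $\omega_{Z_1|X_1}^{-1}\otimes \omega_{Z_1|Z_2}\otimes \omega_{X_1|X_2}^{-1}$ produces~\eqref{eq:restr_L}. In the transversal case, $g$ is an isomorphism, so the auxiliary morphism is itself an isomorphism; transversality further gives $\omega_{Z_1|Z_2}\simeq \omega_{X_1|X_2}|_{Z_1}$, which cancels the correction factor and yields~\eqref{eq:restr_L_iso}.

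The main obstacle I anticipate is the systematic orientation/dualizing-sheaf bookkeeping: verifying that the $\omega$-factors from $\tilde f^!$, from the projection formula, and from the several $\omega_{A|B}$ at various levels recombine to produce exactly $\omega_{Z_1|Z_2}$ on the direct-image side and $\omega_{Z_1|Z_2}\otimes \omega_{X_1|X_2}^{-1}$ on the inverse-image side. A secondary subtlety in (ii) is that $X_1\times_{X_2}\tilde X_2$ need not be a manifold, so one must justify the base-change and adjunction isomorphisms on the subanalytic site in this possibly singular setting; cleanness is precisely what ensures $g$ is a sufficiently well-behaved closed embedding for the adjunction $\id\to Rg_{!!}g^{-1}$ to operate as required.
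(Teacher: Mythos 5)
Your outline follows the paper's proof almost verbatim. Part (ii) is exactly the paper's argument: base change through $h$, the adjunction $\id\to Rg_{!!}g^{-1}$ made available by cleanness of $g$, the identity $qg=p_1$, and the normal-bundle identity $\omega_{Z_1|X_1}\otimes f^{-1}\omega_{Z_2|X_2}^{-1}\simeq\omega_{Z_1|Z_2}\otimes\omega_{X_1|X_2}^{-1}$ (with the cancellation $\omega_{Z_1|X_1}\simeq f^{-1}\omega_{Z_2|X_2}$ in the transversal case). Part (i) likewise uses the same ingredients: Lemma~\ref{lem:fonctorialite_K_T}(i), commutation of $\rsect$ with $\tilde f$, and the trace morphism, which is the paper's ``projection formula plus integration morphism'' repackaged as the counit $R\tilde f_{!!}\tilde f^{!}\to\id$.

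One step in (i) needs care as written. The commutation of $\rsect$ with inverse image is only a natural morphism $\tilde f^{-1}\rsect_{\Omega_2}(\cdot)\to\rsect_{\Omega_1}\tilde f^{-1}(\cdot)$, not an isomorphism, and your composite uses it in the opposite direction: to apply the counit you need an arrow $\rsect_{\Omega_1}K_{P_{\sigma_1}^0}\otimes\omega_{\tilde X_1|\tilde X_2}\to\tilde f^{!}\bigl(\rsect_{\Omega_2}K_{P_{\sigma_2}^0}\bigr)$. Similarly, $\tilde f^{!}(\cdot)\simeq\tilde f^{-1}(\cdot)\otimes\omega_{\tilde X_1|\tilde X_2}$ is not available since $f$ is not assumed smooth in (i); only the canonical morphism $\tilde f^{-1}(\cdot)\otimes\omega_{\tilde X_1|\tilde X_2}\to\tilde f^{!}(\cdot)$ exists, which fortunately points the right way. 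The paper sidesteps the first problem by commuting $\rsect$ with the \emph{proper direct image} instead: it rewrites $K_{P_{\sigma_1}^0}\simeq\tilde f^{-1}K_{P_{\sigma_2}^0}$ inside $\rsect_{\Omega_1}$ and then invokes the isomorphism $R\tilde f_{!!}\,\rsect_{\tilde f^{-1}\Omega_2}\isoto\rsect_{\Omega_2}R\tilde f_{!!}$ of~\eqref{eq:adj_im_dir_propre_bis}, finishing with the projection formula for $\tilde f$ and the integration morphism. (Alternatively, your route can be repaired using the genuine isomorphism $\tilde f^{!}\rsect_{\Omega_2}\simeq\rsect_{\Omega_1}\tilde f^{!}$.) With that substitution your argument is the paper's; the $\omega$-bookkeeping you worry about is exactly the identity recorded above.
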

\begin{proof} 
  (i) We note that $p_1^{-1} \omega_{X_1|X_2} \simeq \omega_{\tilde
    X_1| \tilde X_2}$. We have the morphisms:
  \begin{align*}
    Rf_{!!} ( Rp_{1!!} 
\rsect_{\Omega_1}(K_{P_{\sigma_1}^0} ) \otimes \omega_{X_1|X_2})
&\simeq
Rp_{2!!} R \tilde f_{!!} 
\rsect_{\Omega_1}(K_{P_{\sigma_1}^0} 
\otimes \omega_{\tilde X_1| \tilde X_2} )  \\
&\isoto
Rp_{2!!} \rsect_{\Omega_2} R \tilde f_{!!} 
( \tilde f^{-1} K_{P_{\sigma_2}^0} 
\otimes \omega_{\tilde X_1| \tilde X_2} )  \\
&\to 
Rp_{2!!} \rsect_{\Omega_2} K_{P_{\sigma_2}^0} ,
\end{align*}
where in the first line we use the projection formula for $p_1$ and $f
\, p_1 = p_2 \, \tilde f$ (we note that $\omega_{\tilde X_1| \tilde
  X_2}$ enters the parenthesis because it is locally constant). In the
second line we use formula~\eqref{eq:adj_im_dir_propre_bis} and
Lemma~\ref{lem:fonctorialite_K_T}, (i).  In the third line we use the
projection formula for $\tilde f$ and the integration morphism.

Now we take the tensor product with $\omega_{Z_2 |X_2}^{\otimes-1}$
and we obtain~\eqref{eq:ima_dir_L}.

(ii) Since $f$ is clean with respect to $Z_2$ and $Z_1 = f^{-1} Z_2$,
the morphism $g$ in diagram~\eqref{eq:diag_prop_fonct_L} is an
embedding.  Hence $g_* = g_{!!}$ and we have the adjunction morphism
$\id \to g_{!!} \, g^{-1}$.
We deduce a morphism of functors
\begin{equation}
  \label{eq:presque_chgt_base}
  f^{-1} Rp_{2!!} \to Rp_{1!!} \tilde f^{-1}
\end{equation}
as the composition of the base change $f^{-1} Rp_{2!!} \to Rq_{!!}
h^{-1}$ and the adjunction morphism $Rq_{!!}  h^{-1} \to Rq_{!!}
g_{!!} \, g^{-1} h^{-1} = Rp_{1!!} \tilde f^{-1}$.  Now we define
\eqref{eq:restr_L} by the sequence of morphisms:
\begin{align*}
   f^{-1} \shl_{\sigma_2}^0
&= 
f^{-1} ( Rp_{2!!} 
( \rsect_{\Omega_2} K_{P_{\sigma_2}^0} )
\otimes \rho_{X_2 !} \omega_{Z_2|X_2}^{\otimes -1} ) \\
& \to
(Rp_{1!!} \tilde f^{-1}
( \rsect_{\Omega_2} K_{P_{\sigma_2}^0} )) 
\otimes  f^{-1}\rho_{X_2 !}  \omega_{Z_2|X_2}^{\otimes -1}  \\
& \to
Rp_{1!!} ( \rsect_{\Omega_1} K_{P_{\sigma_1}^0} )
\otimes  f^{-1}\rho_{X_2 !}  \omega_{Z_2|X_2}^{\otimes -1}  \\
&=
\shl_{\sigma_1}^0 \otimes  \rho_{X_1 !}(  \omega_{Z_1|X_1} )
\otimes  f^{-1}\rho_{X_2 !} \omega_{Z_2|X_2}^{\otimes -1} ,
\end{align*}
where the second line is given by~\eqref{eq:presque_chgt_base} and in
the third line we use the morphism $\tilde f^{-1} \rsect_{\Omega_2}
K_{P_{\sigma_2}^0} \to \rsect_{\Omega_1} K_{P_{\sigma_1}^0}$, obtained
from the morphism of functor $\smash{\tilde f^{-1}} \rsect_{\Omega_2}
\to \rsect_{\Omega_1} \smash{\tilde f^{-1}}$ and
Lemma~\ref{lem:fonctorialite_K_T}, (i).

If $f$ is transversal to $Z_2$ we have $\omega_{Z_1|X_1} \simeq f^{-1}
\omega_{Z_2|X_2}$.
\end{proof}
Now we have the following analog of
Proposition~\ref{prop:prop_fonct_L} for $\sha$-modules, with the
additional hypothesis that $f$ is smooth, for the case of direct
image.
\begin{proposition}
  \label{prop:prop_fonct_L_sha}
  (i) Assume that $f$ is smooth. Then there exists a natural morphism
  of dg-$\sha_{X_2}$-modules:
\begin{equation}
  \label{eq:ima_dir_L_sha}
  f_{!!} ( \shl^\sha_{\sigma_1}  \otimes 
\rho_{X_1!}(  \omega_{Z_1 |Z_2} )) \to \shl^\sha_{\sigma_2},
\end{equation}
whose image in $\BDC(\C_{(X_2)_{sa}})$ is
morphism~\eqref{eq:ima_dir_L}.

(ii) Assume that $Z_1 = f^{-1}(Z_2)$ and $f$ is clean with respect to
$Z_2$. Then there exists a natural morphism of
dg-$\sha_{X_1}$-modules:
  \begin{equation}
    \label{eq:restr_L_sha}
    f^* \shl^\sha_{\sigma_2} \to \shl^\sha_{\sigma_1}
\otimes \rho_{X_1!}(  \omega_{Z_1 |Z_2} )
\otimes \omega_{X_1 |X_2}^{-1},
  \end{equation}
  whose image in $\BDC(\C_{(X_1)_{sa}})$ is
  morphism~\eqref{eq:restr_L}.  If $f$ is transversal to $Z_2$ it
  becomes:
  \begin{equation}
    \label{eq:restr_L_iso_sha}
    f^* \shl^\sha_{\sigma_2} \to \shl^\sha_{\sigma_1} .
  \end{equation}
\end{proposition}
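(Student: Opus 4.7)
The plan is to reproduce each step of the proof of Proposition~\ref{prop:prop_fonct_L} in the category of dg-$\sha$-modules, replacing each derived-category operation with its underived counterpart from Sections~\ref{sec:resolution} and~\ref{sec:microl_funct}: the derived projection formula by Lemma~\ref{lem:proj_form}, derived base change by Lemma~\ref{lem:base_change}, the topological integration by the $\sha$-integration of Definition~\ref{def:integration_sha}, and the identity $\tilde f^{-1}K_{P_{\sigma_2}^0} \simeq K_{P_{\sigma_1}^0}$ by Lemma~\ref{lem:fonctorialite_K_T}(i). That each such underived morphism descends to the corresponding derived morphism under $\For'$ will be automatic, because Proposition~\ref{prop:Amod_f_*acyclique} ensures that $f_{!!}$ computes $Rf_{!!}$ on complexes of $\sha^0$-modules, and similarly for $\sect$ by Corollary~\ref{cor:soft_acyclique}.

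For (i), the smoothness of $f$ implies the smoothness of $\tilde f\colon \tilde X_1 \to \tilde X_2$ (which is $f\times \id_\R$ in adapted local coordinates), so that the integration morphism $\int_{\tilde f}$ of Definition~\ref{def:integration_sha} is available. The morphism~\eqref{eq:ima_dir_L_sha} is then obtained by assembling, in order: the identity $f\,p_1 = p_2\,\tilde f$ combined with Lemma~\ref{lem:proj_form} (to push the $\omega'$-factors on $X_1$ up to $\tilde X_1$); the underived commutation $\tilde f_{!!}\sect_{\Omega_1} \simeq \sect_{\Omega_2}\tilde f_{!!}$ (valid on $\sha^0$-modules since $\Omega_1 = \tilde f^{-1}\Omega_2$); the morphism $\tilde f^{-1}\shb_{P_{\sigma_2}^0} \to \shb_{P_{\sigma_1}^0}$ built from $\tilde f^\sharp$ of Definition~\ref{def:image_inv_sha} and Lemma~\ref{lem:fonctorialite_K_T}(i); and finally $\int_{\tilde f}$, combined with $\sect_{\Omega_2}K_{P_{\sigma_2}^0}$ through one further application of Lemma~\ref{lem:proj_form}.

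For (ii), I follow the argument of the proof of Proposition~\ref{prop:prop_fonct_L}(ii): cleanness of $f$ with $Z_1 = f^{-1}Z_2$ forces $g\colon \tilde X_1 \hookrightarrow X_1\times_{X_2}\tilde X_2$ to be a closed embedding, and combining the $\sha$-module base change of Lemma~\ref{lem:base_change} with the adjunction morphism $\id \to g_{!!}g^*$ produces a morphism of functors $f^*p_{2!!} \to p_{1!!}\tilde f^*$ on $\Mod(\sha_{\tilde X_2})$, the exact analogue of~\eqref{eq:presque_chgt_base}. Applied to $\sect_{\Omega_2}\shb_{P_{\sigma_2}^0}$ and followed by the kernel-level morphism built from $\tilde f^\sharp$ and Lemma~\ref{lem:fonctorialite_K_T}(i), this yields~\eqref{eq:restr_L_sha}. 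Transversality of $f$ with $Z_2$ trivialises $\omega'_{Z_1|Z_2}\otimes \omega_{X_1|X_2}^{-1}$, whence the simplification~\eqref{eq:restr_L_iso_sha}.

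The main obstacle, I expect, will be verifying the underived commutation $\tilde f_{!!}\sect_{\Omega_1} \simeq \sect_{\Omega_2}\tilde f_{!!}$ at the complex level. Its derived version is the familiar~\eqref{eq:adj_im_dir_propre_bis}, but the underived lift rests on carefully invoking Lemma~\ref{lem:sha_mod_soft} together with the acyclicity of soft sheaves for both $\sect_U$ and $f_{!!}$ furnished by Corollary~\ref{cor:soft_acyclique}, plus the fact that all tensor products appearing in the construction involve $\sha^0$-flat factors (via Proposition~\ref{prop:im_inv_sha_subm}). Once this commutation is in place the compatibility of~\eqref{eq:ima_dir_L_sha} and~\eqref{eq:restr_L_sha} with their derived counterparts in Proposition~\ref{prop:prop_fonct_L} follows step by step.
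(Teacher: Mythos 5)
Your proposal is correct and follows essentially the same route as the paper, which likewise builds each arrow by replacing the derived operations in the proof of Proposition~\ref{prop:prop_fonct_L} with their underived $\sha$-module counterparts (Lemma~\ref{lem:fonctorialite_K_T}, Lemma~\ref{lem:proj_form}, and the non-derived version of~\eqref{eq:presque_chgt_base} extended by $\sha_{X_1}\otimes_{f^{-1}\sha_{X_2}}\cdot\,$). The commutation you flag as the main obstacle is already packaged, together with the integration of forms, into the single complex-level morphism of Lemma~\ref{lem:fonctorialite_K_T}(ii), so the construction only requires the standard morphism $\tilde f_{!!}\sect_{\Omega_1}\to\sect_{\Omega_2}\tilde f_{!!}$ rather than an isomorphism, softness entering only when identifying the image in $\BDC(\C_{(X_2)_{sa}})$ with~\eqref{eq:ima_dir_L}.
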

\begin{proof}
  The proof is similar to the proof of
  Proposition~\ref{prop:prop_fonct_L}. We keep the same notations and
  we just point out the changes.
  
  (i) We note that $\tilde f$ is smooth on $\Omega_1$ and apply
  Lemma~\ref{lem:fonctorialite_K_T}. This gives the morphisms:
  \begin{align*}
    f_{!!} (p_{1!!} 
( \sect_{\Omega_1} \shb_{P_{\sigma_1}^0} )
\otimes \omega_{X_1|X_2})
&\simeq
p_{2!!} \tilde f_{!!} 
\sect_{\Omega_1} ( \sha_{\tilde X_1} 
\otimes \tilde f^{-1} K_{P_{\sigma_2}^0} 
\otimes \omega'_{\tilde X_1| \tilde X_2} )  \\
&\to 
p_{2!!} ( \sect_{\Omega_2} \shb_{P_{\sigma_2}^0} ),
\end{align*}
and the tensor product with $\omega_{Z_2 |X_2}^{\otimes-1}$
gives~\eqref{eq:ima_dir_L_sha}.

(ii) Morphism~\eqref{eq:presque_chgt_base} has a non derived version
$f^{-1} p_{2!!} \to p_{1!!} \tilde f^{-1}$.  Taking the tensor product
$\sha_{X_1} \otimes_{f^{-1}\sha_{X_2}} \cdot$ and using the projection
formula we obtain:
\begin{equation}
  \label{eq:presque_chgt_base_sha}
  f^* p_{2!!} \to p_{1!!} \tilde f^*
\end{equation}
Now we define \eqref{eq:restr_L_sha} by the sequence of morphisms:
\begin{align*}
  f^* \shl^\sha_{\sigma_2} &= f^* ( p_{2!!}  ( \sect_{\Omega_2}
  \shb_{P_{\sigma_2}^0} )
  \otimes \rho_{X_2!}(  \omega_{Z_2|X_2}^{\otimes -1}) ) \\
  & \to (  p_{1!!} \tilde f^* ( \sect_{\Omega_2}
  \shb_{P_{\sigma_2}^0} ) )
  \otimes  f^{-1}\rho_{X_2!}  \omega_{Z_2|X_2}^{\otimes -1}  \\
  & \to p_{1!!}  ( \sect_{\Omega_1} \shb_{P_{\sigma_1}^0} )
  \otimes  f^{-1}\rho_{X_2!}  \omega_{Z_2|X_2}^{\otimes -1}  \\
  &= \shl^\sha_{\sigma_1} \otimes \rho_{X_1!}( \omega_{Z_1|X_1} )
  \otimes f^{-1}\rho_{X_2!}  \omega_{Z_2|X_2}^{\otimes -1} ,
\end{align*}
where the second line is given by~\eqref{eq:presque_chgt_base_sha} and
the third line is the composition
\begin{multline*}
\tilde f^* ( \sect_{\Omega_2} \shb_{P_{\sigma_2}^0} ) 
=
\sha_{\tilde X_1} \otimes_{f^{-1}\sha_{\tilde X_2}}
f^{-1}\sect_{\Omega_2}(\sha_{\tilde X_2} 
\otimes K_{P_{\sigma_2}^0}) \\
\to
\sha_{\tilde X_1} \otimes_{f^{-1}\sha_{\tilde X_2}}
\sect_{\Omega_1}(f^{-1}\sha_{\tilde X_2} \otimes K_{P_{\sigma_1}^0})
\to \sect_{\Omega_1} \shb_{P_{\sigma_1}}
\end{multline*}
of standard morphisms of sheaves and the isomorphism of
Lemma~\ref{lem:fonctorialite_K_T}, (i).
\end{proof}

\subsection{External tensor product}
The external tensor product is a consequence of Proposition~1.3.8
of~\cite{KSIW06}. We give a different proof here, using the kernel
$\shl_\sigma^0$ (hence our morphism coincides with the one
in~\cite{KSIW06} for a non-vanishing $\sigma$) and check that it works
for $\sha$-modules.  We still consider $(X_1, Z_1, \sigma_1)$ and
$(X_2, Z_2, \sigma_2)$ as in the beginning of this section.  We set $X
= X_1 \times X_2$, $Z= Z_1 \times Z_2$, $\sigma = \sigma_1+\sigma_2$.
Then $(X,Z,\sigma)$ also is a kernel data
satisfying~\eqref{eq:hyp_non_nul}.  We keep the notations of
diagram~\eqref{eq:diag_prop_fonct_L} and let $p\cl \tilde X_Z \to X$
be the projection.  We also have a natural embedding $k\cl \tilde X_Z
\to \tilde X_1 \times \tilde X_2$.  We set $p'= p_1\times p_2 \cl
\tilde X_1 \times \tilde X_2 \to X$.
\begin{proposition}
  \label{prop:prod_tens_ext}
  There exists a morphism $\shl_{\sigma_1}^0 \boxtimes
  \shl_{\sigma_2}^0 \to \shl_\sigma^0$ in $\BDC(\C_{(X_1 \times
    X_2)_{sa}})$.
\end{proposition}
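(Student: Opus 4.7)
The plan is to combine the K\"unneth formula for proper direct image with the adjunction attached to the closed immersion $k \cl \tilde X_Z \hookrightarrow \tilde X_1 \times \tilde X_2$ appearing in the set-up. Recall that $p' \circ k = p$, that $k$ has codimension one (it is the locus where the two $\tau$-coordinates coincide), that $k^{-1}(\Omega_1 \times \Omega_2) = \Omega$, and that $k(T_ZX) = T_{Z_1}X_1 \times T_{Z_2}X_2$ contains $P_{\sigma_1} \times P_{\sigma_2}$ as a subset of $P_\sigma$.

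First I would use the decomposition $\omega_{Z|X} \simeq \omega_{Z_1|X_1} \boxtimes \omega_{Z_2|X_2}$ together with the K\"unneth isomorphism $Rp_{1!!}(F_1) \boxtimes Rp_{2!!}(F_2) \simeq Rp'_{!!}(F_1 \boxtimes F_2)$ to rewrite
$$
\shl^0_{\sigma_1} \boxtimes \shl^0_{\sigma_2}
\;\simeq\; Rp'_{!!}\bigl( (\rsect_{\Omega_1} K_{P^0_{\sigma_1}}) \boxtimes (\rsect_{\Omega_2} K_{P^0_{\sigma_2}}) \bigr) \otimes \rho_{X!}(\omega_{Z|X}^{\otimes -1}).
$$
Next, from the adjunction morphism $\id \to Rk_* k^{-1}$ combined with $Rk_* = Rk_{!!}$ for the closed immersion $k$ and the identity $p' \circ k = p$, I get a natural transformation $Rp'_{!!}(G) \to Rp_{!!}(k^{-1} G)$ for any $G \in \BDC(\C_{(\tilde X_1 \times \tilde X_2)_{sa}})$. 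It thus suffices to construct a morphism on $\tilde X_Z$
$$
k^{-1}\bigl( (\rsect_{\Omega_1} K_{P^0_{\sigma_1}}) \boxtimes (\rsect_{\Omega_2} K_{P^0_{\sigma_2}}) \bigr) \longrightarrow \rsect_\Omega K_{P^0_\sigma}.
$$

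For this third step I would invoke Lemma~\ref{lem:kernsa_kern0}(i) to rewrite each factor as $\rsect_{\Omega_i} K_{P^0_{\sigma_i}} \simeq \C_{\overline{\Omega_i}} \otimes K_{P^0_{\sigma_i}}$, then use Lemma~\ref{lem:fonctorialite_K_T}(i) to commute $k^{-1}$ with the sheaves $K$, together with the set equality $k^{-1}(\overline{\Omega_1} \times \overline{\Omega_2}) = \overline{\Omega}$. After these identifications one is reduced to providing a canonical morphism of $K$-sheaves on $\tilde X_Z$ relating the restriction along $k$ of $K_{P^0_{\sigma_1}} \boxtimes K_{P^0_{\sigma_2}}$ with $K_{P^0_\sigma}$. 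Such a morphism reflects the set-theoretic inclusion $P^0_{\sigma_1} \times P^0_{\sigma_2} \subset P^0_\sigma$ inside $T_ZX$ and the naturality of the construction $T \mapsto K_T$ with respect to the inductive systems of open neighborhoods of $T$ and of $\overline T$.

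The main obstacle will be this third step. The sheaves $K_T$ are defined by two coupled colimits (over open neighborhoods of $T$ and of $\overline T$), and under inclusion of subsets the induced morphisms on $K_T$ do not go in the naively expected direction; reconciling this with the target $K_{P^0_\sigma}$ requires a delicate cofinality argument for product neighborhoods in $\tilde X_1 \times \tilde X_2$ pulled back along $k$, compared with arbitrary neighborhoods of $P^0_\sigma$ in $\tilde X_Z$. Once this comparison is set up correctly, the composition of the three steps produces the desired morphism $\shl^0_{\sigma_1} \boxtimes \shl^0_{\sigma_2} \to \shl^0_\sigma$ in $\BDC(\C_{(X_1 \times X_2)_{sa}})$, and a parallel verification would then yield the corresponding morphism for $\sha$-modules.
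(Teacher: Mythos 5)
Your skeleton is exactly the paper's: split off the $\omega_{Z|X}^{\otimes -1}$ factor, apply the K\"unneth-type morphism to pass to $Rp'_{!!}$, use the adjunction $\id \to k_{!!}k^{-1}$ for the closed embedding $k\cl \tilde X_Z \hookrightarrow \tilde X_1\times\tilde X_2$ together with $p'\circ k = p$, and reduce to a morphism $k^{-1}(K_{P^0_{\sigma_1}}\boxtimes K_{P^0_{\sigma_2}})\to K_{P^0_\sigma}$. But the step you yourself flag as ``the main obstacle'' is precisely the content of the proposition, and you do not supply it; ``naturality of $T\mapsto K_T$'' plus the inclusion $P^0_{\sigma_1}\times P^0_{\sigma_2}\subset P^0_\sigma$ is not enough, exactly for the reason you sense: $K_T$ couples a colimit over neighborhoods of $\overline T$ with one over neighborhoods of $T$, and inclusions of subsets do not induce morphisms in a single direction. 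The paper's resolution is not a cofinality argument but an explicit identity: for $W_i\supset P_{\sigma_i}$ and $W_i^0\supset P^0_{\sigma_i}$ open, one has
$$
(\overline{W_1}\setminus\overline{W_1^0})\times(\overline{W_2}\setminus\overline{W_2^0})
=\overline{W}\setminus\overline{W^0},
\qquad
W^0 = W_1^0\times\tilde X_2\,\cup\,\tilde X_1\times W_2^0,
\quad
W=(W_1\times W_2)\cup W^0 ,
$$
and these particular $W$, $W^0$ are open neighborhoods of $P_\sigma$ and $P^0_\sigma$ in $\tilde X_1\times\tilde X_2$ (note that $W^0$ is a neighborhood of the set $P^0_{\sigma_1}\times\tilde X_2\cup\tilde X_1\times P^0_{\sigma_2}$, which is strictly larger than $P^0_\sigma$ --- the correct neighborhoods are not products). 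This gives a map from $K_{P^0_{\sigma_1}}\boxtimes K_{P^0_{\sigma_2}}$ into the colimit of $\C_{\overline W\setminus\overline{W^0}}$ over \emph{all} neighborhoods of $P_\sigma$, $P^0_\sigma$ in the ambient product, and only then does $k^{-1}$ (via Lemma~\ref{lem:fonctorialite_K_T}(i)) produce the target $K_{P^0_\sigma}$ on $\tilde X_Z$. Without this construction your argument is incomplete.

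A secondary issue: you invoke Lemma~\ref{lem:kernsa_kern0}(i) to replace $\rsect_{\Omega_i}K_{P^0_{\sigma_i}}$ by $\C_{\overline{\Omega_i}}\otimes K_{P^0_{\sigma_i}}$, but that lemma assumes $\sigma_i$ nowhere vanishing, a hypothesis absent from the proposition. The paper avoids it entirely, working with the standard morphisms $\rsect_{\Omega_1}(\cdot)\boxtimes\rsect_{\Omega_2}(\cdot)\to\rsect_{\Omega_1\times\Omega_2}(\cdot\boxtimes\cdot)$ and $k^{-1}\rsect_{\Omega_1\times\Omega_2}\to\rsect_\Omega k^{-1}$, which also transcribe verbatim to the $\sha$-module version in Proposition~\ref{prop:prod_tens_ext_sha}.
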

\begin{proof}
  The kernel $\shl_{\sigma_i}^0$ is the tensor product of $Rp_{i!!} (
  \rsect_{\Omega_i} K_{P_{\sigma_i}^0} )$ and $\rho_{X_i !}( 
  \omega_{Z_i|X_i}^{\otimes -1})$.  The external product for the
  second term is straightforward:
$$
\rho_{X_1 !}(  \omega_{Z_1|X_1}^{\otimes -1})
 \boxtimes \rho_{X_2 !}(  \omega_{Z_2|X_2}^{\otimes -1})
\simeq \rho_{X!}(  \omega_{Z|X}^{\otimes -1})
$$
and now we only take care of the first term.  We have the sequence
of morphisms:
\begin{align*}
(Rp_{1!!} \rsect_{\Omega_1} K_{P_{\sigma_1}^0} )
\boxtimes 
(Rp_{2!!} \rsect_{\Omega_2} K_{P_{\sigma_2}^0})
&\to  
Rp'_{!!} \rsect_{\Omega_1 \times \Omega_2}
(K_{P_{\sigma_1}^0} \boxtimes K_{P_{\sigma_2}^0} )  \\
&\to
Rp'_{!!} k_{!!} k^{-1}  \rsect_{\Omega_1 \times \Omega_2}
(K_{P_{\sigma_1}^0} \boxtimes K_{P_{\sigma_2}^0} )  \\
&\to
Rp_{!!} \rsect_\Omega ( k^{-1}
(K_{P_{\sigma_1}^0} \boxtimes K_{P_{\sigma_2}^0} ))  \\
&\to
Rp_{!!} \rsect_\Omega ( K_{P_\sigma^0} ),
\end{align*}
where the first three arrows are standard morphisms of sheaves and the
last one is defined as follows.  We recall that $K_{P_{\sigma_i}^0}
\simeq \varinjlim_{W_i,W_i^0} \C_{\overline{W_i} \setminus
  \overline{W_i^0}}$, where $W_i$, $W_i^0$ run over the open
neighborhoods of $P_{\sigma_i}$, $P_{\sigma_i}^0$ in $\tilde X_i$.
For such $W_i$, $W_i^0$ we have
\begin{gather*}
  (\overline{W_1} \setminus \overline{W_1^0} )\times (
\overline{W_2} \setminus \overline{W_2^0})
=
(\overline{W_1}\times \overline{W_2} ) \setminus 
((\overline{W_1}\times \overline{W_2} ) \cap \overline{W^0})
= 
\overline{W}\setminus \overline{W^0},  \\
\text{where}\quad
W^0 = W_1^0 \times \tilde X_2 \cup
\tilde X_1 \times W_2^0
\quad\text{and}\quad
W = (W_1 \times W_2) \cup W^0 .
\end{gather*}
Now $W$ and $W^0$ are open neighborhoods of $P_\sigma$ and
$P_\sigma^0$ in $\tilde X_1 \times \tilde X_2$ (note that $P_\sigma
\subset T_ZX$ and $T_ZX$ can be viewed as a subset of $\tilde X_1
\times \tilde X_2$).  This defines a natural morphism
$K_{P_{\sigma_1}^0} \boxtimes K_{P_{\sigma_2}^0} \to
\varinjlim_{W,W^0} \C_{\overline{W} \setminus \overline{W^0}}$, where
$W$ and $W^0$ run over the open neighborhoods of $P_\sigma$ and
$P_\sigma^0$ in $\tilde X_1 \times \tilde X_2$.  The inverse image by
$k$ gives the required morphism $k^{-1} (K_{P_{\sigma_1}^0} \boxtimes
K_{P_{\sigma_2}^0} ) \to K_{P_\sigma^0}$.
\end{proof}
\begin{proposition}
   \label{prop:prod_tens_ext_sha}
   There exists a morphism $\shl_{\sigma_1}^\sha \underline{\boxtimes}
   \shl_{\sigma_2}^\sha \to \shl_\sigma^\sha$ in $\Mod(\sha_{X_1
     \times X_2})$, whose image in $\BDC(\C_{(X_1 \times X_2)_{sa}})$
   is the morphism of Proposition~\ref{prop:prod_tens_ext}.
\end{proposition}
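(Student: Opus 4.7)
My plan is to mimic step by step the proof of Proposition~\ref{prop:prod_tens_ext}, replacing the kernel $K_{P^0}$ by its $\sha$-enrichment $\shb_{P^0}=\sha\otimes K_{P^0}$, replacing derived functors by their non-derived counterparts (licit by the softness of $\sha^0$-modules, Lemma~\ref{lem:sha_mod_soft} and Proposition~\ref{prop:Amod_f_*acyclique}), and replacing $\otimes$ by $\otimes_\sha$ at the appropriate places. The orientation factor $\rho_{X_1!}(\omega_{Z_1|X_1}^{\otimes-1})\boxtimes\rho_{X_2!}(\omega_{Z_2|X_2}^{\otimes-1})\simeq\rho_{X!}(\omega_{Z|X}^{\otimes-1})$ is treated exactly as in the sheaf case; the bulk of the work concerns the principal factor.

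Concretely, I would build a chain of morphisms of dg-$\sha_X$-modules
\begin{align*}
(p_{1!!}\sect_{\Omega_1}\shb_{P_{\sigma_1}^0})\underline{\boxtimes}
(p_{2!!}\sect_{\Omega_2}\shb_{P_{\sigma_2}^0})
&\to p'_{!!}\sect_{\Omega_1\times\Omega_2}
(\shb_{P_{\sigma_1}^0}\underline{\boxtimes}\shb_{P_{\sigma_2}^0})\\
&\to p'_{!!}\,k_{!!}\,k^*\sect_{\Omega_1\times\Omega_2}
(\shb_{P_{\sigma_1}^0}\underline{\boxtimes}\shb_{P_{\sigma_2}^0})\\
&\to p_{!!}\sect_\Omega\shb_{P_\sigma^0}.
\end{align*}
The first arrow combines the Künneth isomorphism $(p_{1!!}F_1)\boxtimes(p_{2!!}F_2)\simeq p'_{!!}(F_1\boxtimes F_2)$ (which is $\sha_{X_1}\boxtimes\sha_{X_2}$-linear and valid at the level of complexes since $\sha^0$-modules are $p_{i!!}$-acyclic) with the $\sha$-projection formula of Lemma~\ref{lem:proj_form} applied along $p'$; the dg-algebra morphism $(p')^\sharp\cl p'^{-1}\sha_X\to\sha_{\tilde X_1\times\tilde X_2}$ of Definition~\ref{def:image_inv_sha} is used to feed $p'^{-1}\sha_X$ into the ambient algebra $\sha_{\tilde X_1\times\tilde X_2}$ for $\underline{\boxtimes}$. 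The second arrow is the $\sha$-module version of the adjunction $\id\to k_{!!}k^*$: since $k\cl\tilde X_Z\hookrightarrow\tilde X_1\times\tilde X_2$ is a closed embedding (the locus $\tau_1=\tau_2$ in the natural local coordinates on the two normal deformations), $k_{!!}=k_*$ and the adjunction unit is lifted from the sheaf unit via $k^{-1}\to k^*$.

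For the third arrow, the equalities $\Omega=k^{-1}(\Omega_1\times\Omega_2)$ and $p=p'\circ k$ allow me to commute $k^*$ past $\sect_\Omega$, so everything reduces to producing a morphism of dg-$\sha_{\tilde X_Z}$-modules
$$
k^*\bigl(\shb_{P_{\sigma_1}^0}\underline{\boxtimes}\shb_{P_{\sigma_2}^0}\bigr)
\to \shb_{P_\sigma^0}.
$$
Unwinding the definitions gives $k^*(\shb_{P_{\sigma_1}^0}\underline{\boxtimes}\shb_{P_{\sigma_2}^0})\simeq\sha_{\tilde X_Z}\otimes k^{-1}(K_{P_{\sigma_1}^0}\boxtimes K_{P_{\sigma_2}^0})$, where the identification of the $\sha$-part uses $k^\sharp$ and the external-product structure on $\sha$. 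Tensoring with the sheaf-level arrow $k^{-1}(K_{P_{\sigma_1}^0}\boxtimes K_{P_{\sigma_2}^0})\to K_{P_\sigma^0}$ already constructed in the proof of Proposition~\ref{prop:prod_tens_ext} then yields the desired morphism. Compatibility of the whole composition with $\For'_X$ follows stepwise, because each ingredient either is a standard $\sha$-module operation that commutes with $\For'$ (Proposition~\ref{prop:Amod_f_*acyclique} and the formulae of Section~\ref{sec:microl_funct_sha}) or is induced by the very sheaf morphism it refines.

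The main obstacle I anticipate is the bookkeeping at the first arrow: checking that the composition of the Künneth isomorphism with the $\sha$-projection formula indeed lands in $p'_{!!}$ applied to the $\underline{\boxtimes}$ (not merely the $\boxtimes$) of the two $\sha$-modules, with compatible differentials. This requires carefully tracking the dg-algebra morphisms $\sha_{X_1}\boxtimes\sha_{X_2}\to\sha_X$, $\sha_{\tilde X_1}\boxtimes\sha_{\tilde X_2}\to\sha_{\tilde X_1\times\tilde X_2}$ and $(p')^\sharp$, all coming from the multiplicative structure of the de Rham algebra $\sha$ implicit in Section~\ref{sec:resolution}. Once this is pinned down, the remaining arrows and the $\For'_X$-compatibility are routine transcriptions of the sheaf-theoretic argument.
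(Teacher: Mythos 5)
Your proposal is correct and follows essentially the same route as the paper: the paper's proof is precisely the transcription of the chain of morphisms from Proposition~\ref{prop:prod_tens_ext} with $K_{P^0}$ replaced by $\shb_{P^0}=\sha\otimes K_{P^0}$, the adjunction $\id\to k_{!!}k^{-1}$ combined with $k^\sharp$, and the final arrow induced by the already-constructed sheaf morphism $k^{-1}(K_{P_{\sigma_1}^0}\boxtimes K_{P_{\sigma_2}^0})\to K_{P_\sigma^0}$. Your extra care with the dg-algebra morphisms $\sha_{\tilde X_1}\boxtimes\sha_{\tilde X_2}\to\sha_{\tilde X_1\times\tilde X_2}$ and the factorization through $\underline{\boxtimes}$ is exactly the bookkeeping the paper leaves implicit.
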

\begin{proof}
  The proof of the previous proposition adapts immediately, with the
  following modifications in the sequence of morphisms:
\begin{align*}
(p_{1!!} \sect_{\Omega_1} &(\sha_{\tilde X_1} \otimes K_{P_{\sigma_1}^0}))
\boxtimes 
(p_{2!!} \sect_{\Omega_2} (\sha_{\tilde X_2} \otimes K_{P_{\sigma_2}^0}))
\\
&\to  
p'_{!!} \sect_{\Omega_1 \times \Omega_2}
(\sha_{\tilde X_1 \times \tilde X_2} \otimes
(K_{P_{\sigma_1}^0} \boxtimes K_{P_{\sigma_2}^0} ))  \\
&\to  
p'_{!!}  k_{!!} k^{-1} \sect_{\Omega_1 \times \Omega_2}
(\sha_{\tilde X_1 \times \tilde X_2} \otimes
(K_{P_{\sigma_1}^0} \boxtimes K_{P_{\sigma_2}^0} ))  \\
&\to
p_{!!} \sect_\Omega ( \sha_{\tilde X_Z} \otimes  k^{-1}
(K_{P_{\sigma_1}^0} \boxtimes K_{P_{\sigma_2}^0} ))  \\
&\to
p_{!!} \sect_\Omega (\sha_{\tilde X_Z} \otimes  K_{P_\sigma^0} ).
\end{align*}
\end{proof}

\section{Functorial properties of microlocalization}
\label{sec:functorial_properties}
In this section $f\cl X \to Y$ is a morphism of real analytic
manifolds. We recall the functorial behavior of microlocalization
with respect to inverse image, in case $f$ is an embedding, and to
direct image. We check that the constructions make sense for
dg-$\sha$-modules (restricting to the case of a smooth map for the
direct image).

We define the submanifold $Z = X\times_Y T^*Y$ diagonally embedded in
$X \times (X\times_Y T^*Y)$. We have the morphisms of kernel data
$$
\xymatrix@C=2cm@R=5mm{
X \petittimes T^*X  
& X \petittimes (X\petittimes_Y T^*Y) \ar[l]_-{\id \times f_d} 
 \ar[r]^-{f\times f_\pi}  
& Y \petittimes T^*Y   \\
X \petittimes_X T^*X  \ar@{}[u]|\bigcup
& Z \ar@{}[u]|\bigcup  \ar[l] \ar[r]
& Y \petittimes_Y T^*Y  \ar@{}[u]|\bigcup  \\
T^*X \ar@{=}[u] 
&X\times_Y T^*Y  \ar[l]_{f_d}  \ar[r]^{f_\pi} \ar@{=}[u] 
& T^*Y,  \ar@{=}[u]
}
$$
where the $1$-form for the kernel corresponding to the middle
column is 
$$
\sigma_{Y\from X} = (\id\times f_d)^*(\sigma_X)
= (f\times f_\pi)^*(\sigma_{Y}).
$$
This equality follows from $f_d^*(\omega_X)= f_\pi^*(\omega_Y)$.
We note that $Z = (\id\times f_d)^{-1}(X\times_X T^*X)$ and $ Z
\subset (f\times f_\pi)^{-1} (Y\times_Y T^*Y)$, with equality if $f$
is an embedding. This implies that hypothesis~\eqref{eq:hyp_non_nul}
is satisfied for $(X \times (X\times_Y T^*Y), Z, \sigma_{Y\from X} )$.
We denote the corresponding kernel by $L_{Y\from X} =
\shl_{\sigma_{Y\from X}}$.

\subsection{Microlocalization and inverse image}
For the next two propositions we assume that $f\cl X \to Y$ is an
embedding. For $G \in \DC^+(\I(\C_Y))$ we have a morphism $R f_{d!!}
f_\pi^{-1} \mu_Y(G) \to \mu_X(f^{-1}G)$, defined in Theorem~2.4.4
of~\cite{KSIW06}.  We recall its construction below. The notations are
introduced in the diagram:
\begin{equation}
  \label{eq:diag_kern_data}
\vcenter{
\xymatrix@C=20mm{
X \ar@{=}[r]& X \ar[r]_f & Y  \\
X \petittimes T^*X  \ar[d]_{p_2} \ar[u]^{p_1}
& X \petittimes (X\petittimes_Y T^*Y) \ar[l]_-{\id \times f_d} 
\ar[r]^-{f\times f_\pi}  \ar[u]^{p} \ar[d]_r
& Y \petittimes T^*Y  \ar[d]^{q_2} \ar[u]_{q_1} \\
T^*X & X\times_Y T^*Y  \ar[l]_{f_d}  \ar[r]^{f_\pi}
& T^*Y }}
\end{equation}
\begin{proposition}[\cite{KSIW06}, Theorem~2.4.4]
\label{prop:micro_im_inv}
We have a natural morphism, for an embedding $f\cl X \to Y$ and $G \in
\DC^+(\I(\C_Y))$:
\begin{equation}
  \label{eq:micro_im_inv}
  R f_{d!!} f_\pi^{-1} \mu_Y(G) \to \mu_X(f^{-1}G).
\end{equation}
\end{proposition}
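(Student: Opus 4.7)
The plan is to construct the morphism by introducing an intermediate microlocalization defined by the kernel $L_{Y\from X}$ on $X\times(X\times_Y T^*Y)$, and then to invoke the functorial properties of Proposition~\ref{prop:prop_fonct_L} applied to the maps $\id\times f_d$ and $f\times f_\pi$ from the middle row of diagram~\eqref{eq:diag_kern_data}. Throughout I will set $c = d_Y - d_X$.

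First I would introduce, for $F\in \BDC(\I(\C_X))$, the intermediate convolution
$$\mu_{Y\from X}(F) = Rr_{!!}(L_{Y\from X}\otimes p^{-1}F) \in \BDC(\I(\C_{X\times_Y T^*Y})).$$
Since $\sigma_{Y\from X} = (\id\times f_d)^*\sigma_X$ by definition and $f_d$ is a submersion (because $f$ is an embedding), Proposition~\ref{prop:prop_fonct_L}(i) applied to $\id\times f_d$ yields a kernel morphism $R(\id\times f_d)_{!!}(L_{Y\from X}\otimes \rho_!\omega_{X\times_Y T^*Y|T^*X}) \to L_X$, which, convolved with $F$ and combined with the projection formula, gives
$$Rf_{d!!}(\mu_{Y\from X}(F))\otimes \rho_!\omega_{X\times_Y T^*Y|T^*X} \to \mu_X(F). \qquad (\alpha)$$

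Next I would apply Proposition~\ref{prop:prop_fonct_L}(ii) to the embedding $f\times f_\pi$. Since $f$ is an embedding, $Z = (f\times f_\pi)^{-1}(Y\times_Y T^*Y)$ and the intersection is clean, producing
$$(f\times f_\pi)^{-1}L_Y \to L_{Y\from X}\otimes \rho_!\omega_{X\times_Y T^*Y|T^*Y}\otimes \omega_{X\times(X\times_Y T^*Y)|Y\times T^*Y}^{-1}. \qquad (\beta)$$
To turn $(\beta)$ into a comparison of convolutions, note that the natural square with vertices $X\times(X\times_Y T^*Y)$, $Y\times T^*Y$, $X\times_Y T^*Y$, $T^*Y$ (using maps $f\times f_\pi$, $r$, $q_2$, $f_\pi$) is \emph{not} Cartesian --- its Cartesian completion places $Y\times(X\times_Y T^*Y)$ in the top-left corner. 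However, $L_Y$ is supported on $Y\times_Y T^*Y$, so $(\id_Y\times f_\pi)^{-1}L_Y$ is supported on the image of the closed embedding $(f,\id)\cl X\times(X\times_Y T^*Y)\to Y\times(X\times_Y T^*Y)$; the projection formula for this embedding then replaces $R\mathrm{pr}_{2!!}$ by $Rr_{!!}$, and tensoring with $p^{-1}f^{-1}G$ yields
$$f_\pi^{-1}\mu_Y(G) \to \mu_{Y\from X}(f^{-1}G)\otimes \rho_!\omega_{X\times_Y T^*Y|T^*Y}\otimes \omega_{X\times(X\times_Y T^*Y)|Y\times T^*Y}^{-1}. \qquad (\gamma)$$

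To finish, I would apply $Rf_{d!!}$ to $(\gamma)$ and compose with $(\alpha)$ at $F = f^{-1}G$. The normal bundle of the embedding $f\times f_\pi$ is isomorphic to two copies of the pullback of $N_XY$, so $\omega_{X\times(X\times_Y T^*Y)|Y\times T^*Y}^{-1}$ contributes a shift $[2c]$ with trivial orientation; combined with $\omega_{X\times_Y T^*Y|T^*Y}$ (shift $[-c]$ twisted by $\det N_XY$) this matches the shift $[c]$ of $\omega_{X\times_Y T^*Y|T^*X}$ coming from the submersion $f_d$ of relative dimension $c$, and the remaining orientation factors cancel by standard identifications on the cotangent bundle. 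After this cancellation the composition gives the desired morphism $Rf_{d!!}f_\pi^{-1}\mu_Y(G) \to \mu_X(f^{-1}G)$. The main obstacle will be the non-Cartesian square in step $(\gamma)$, resolved via the support of $L_Y$, together with the careful bookkeeping of orientation and dualizing-complex twists in the last step to ensure their exact cancellation without leaving spurious shifts.
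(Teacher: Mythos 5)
Your construction is essentially the paper's proof: both factor the comparison through the intermediate kernel $L_{Y\from X}$ on $X\times(X\times_Y T^*Y)$, use parts (ii) and (i) of Proposition~\ref{prop:prop_fonct_L} applied to $f\times f_\pi$ and $\id\times f_d$ respectively, and cancel the dualizing twists in the same way. The one genuine divergence is how you produce the morphism $f_\pi^{-1}Rq_{2!!}\to Rr_{!!}(f\times f_\pi)^{-1}$: the paper obtains it purely formally from the unit $\id\to(f\times f_\pi)_*(f\times f_\pi)^{-1}$, the identity $Rq_{2!!}(f\times f_\pi)_*\simeq Rf_{\pi*}Rr_{!!}$ and the counit $f_\pi^{-1}Rf_{\pi*}\to\id$ (morphism~\eqref{eq:mii_f1}), whereas you base-change across the honest Cartesian square with corner $Y\times(X\times_Y T^*Y)$ and then contract to the closed subspace $X\times(X\times_Y T^*Y)$ using that $L_Y$ is supported on $Y\times_Y T^*Y$. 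Your variant is correct — since an ind-sheaf vanishing on the open complement of a closed set $S$ satisfies $K\simeq K_S$, it even yields an isomorphism at that step for this particular kernel — but it only applies to kernels supported on the diagonal, while the paper's adjunction argument is kernel-independent (a point that matters when the same pattern is reused for the $\sha$-module analogue in Proposition~\ref{prop:micro_im_inv_sha}). A minor notational slip: in your morphism $(\alpha)$ the factor $\rho_!\omega_{X\times_Y T^*Y|T^*X}$ lives on $X\times_Y T^*Y$ and should sit inside $Rf_{d!!}$, as in~\eqref{eq:ima_dir_L}; this does not affect the argument.
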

\begin{proof}
  We first note the morphism of functors $f_\pi^{-1} Rq_{2!!} \to
  Rr_{!!} (f\times f_\pi)^{-1}$. It is obtained by the following
  composition of adjunction morphisms, where we use the fact that $f$,
  hence $f_\pi$ and $f\times f_\pi$, are embeddings, so that direct
  and proper direct images coincide:
\begin{equation}
  \label{eq:mii_f1}
  \begin{split}
f_\pi^{-1} Rq_{2!!}
\to
f_\pi^{-1} Rq_{2!!} 
& (f\times f_\pi)_* (f\times f_\pi)^{-1}  \\
&\simeq
f_\pi^{-1} R f_{\pi *} Rr_{!!} (f\times f_\pi)^{-1}
\to
Rr_{!!} (f\times f_\pi)^{-1}.
\end{split}
\end{equation}
We also note the morphisms of kernels:
\begin{equation}
  \label{eq:mii_mk}
  (f \times f_\pi)^{-1} L_Y  \to L_{Y\from X} \otimes \omega_{X|Y}^{-1}
\qquad
R(\id \times f_d)_{!!} (L_{Y\from X}\otimes \omega_{X|Y}^{-1}) \to L_X.
\end{equation}
The first one is morphism~\eqref{eq:restr_L} of
Proposition~\ref{prop:prop_fonct_L} (for $\shl_\sigma$ instead of
$\shl_\sigma^0$), applied to $f\times f_\pi$: we note that $f\times
f_\pi$ is clean with respect to $Y\times_Y T^*Y$ and $X\times_Y T^*Y =
(f\times f_\pi)^{-1} (Y\times_Y T^*Y)$.  The second one is
morphism~\eqref{eq:ima_dir_L} (for $\shl_\sigma$ instead of
$\shl_\sigma^0$), applied to $\id \times f_d$.

Now the morphism of the lemma is defined by the succession of
morphisms:
\begin{align}
  Rf_{d!!} f_\pi^{-1} \mu_Y(G)
&=
Rf_{d!!} f_\pi^{-1} Rq_{2!!} ( L_Y \otimes q_1^{-1} G)  \\ 
\label{eq:mii_l3}
& \to
Rf_{d!!} Rr_{!!} ( (f \times f_\pi)^{-1}L_Y \otimes p^{-1} f^{-1}G) \\
\label{eq:mii_l4}
& \isofrom
Rp_{2!!}  ( R(\id \times f_d)_{!!}
 (f \times f_\pi)^{-1} L_Y \otimes p_1^{-1} f^{-1} G) \\
\label{eq:mii_l5}
& \to
Rp_{2!!} ( L_X \otimes p_1^{-1} f^{-1} G),
\end{align}
where in line~\eqref{eq:mii_l3} we used morphism~\eqref{eq:mii_f1} and
the commutativity of inverse image and tensor product, and in
line~\eqref{eq:mii_l4} the identities $f_d r = p_2(\id\times f_d)$, $p
= p_1 (\id\times f_d)$ and the projection formula for $(\id\times
f_d)$.  The last morphism is the composition of the morphisms
in~\eqref{eq:mii_mk}.
\end{proof}

\begin{proposition}
\label{prop:micro_im_inv_sha}
For an embedding $f\cl X \to Y$ and $G \in \Mod(\sha_{Y})$, we have a
morphism of $\sha_{T^*X}$-modules:
\begin{equation}
  \label{eq:micro_im_inv_sha}
f_{d!!} f_\pi^* \mu^\sha_Y(G) \to \mu^\sha_X(f^*G),
\end{equation}
which makes a commutative diagram in $\DC^+(\I(\C_{\Tnz^*X}))$ with
morphism~\eqref{eq:micro_im_inv}:
$$
\xymatrix@C=15mm{
R f_{d!!} f_\pi^{-1} \mu_Y(G) \ar[d] \ar[r]
& \mu_X(f^{-1}G) \ar[d]  \\
R f_{d!!} f_\pi^{*} \mu^\sha_Y(G)
\isofrom f_{d!!} f_\pi^{*} \mu^\sha_Y(G) \ar[r]
& \mu^\sha_X(f^*G).
}
$$
\end{proposition}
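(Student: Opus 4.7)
The plan is to mimic, in the category $\Mod(\sha)$, the construction of morphism~\eqref{eq:micro_im_inv} step by step, and then to verify compatibility with the derived construction under $\For'$ on $\Tnz^*X$.

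The construction of~\eqref{eq:micro_im_inv_sha} replaces the three ingredients of the derived construction by their $\sha$-module analogs. First, for the base-change step~\eqref{eq:mii_f1}: in diagram~\eqref{eq:diag_kern_data} the square with vertices $X\times(X\times_Y T^*Y)$, $Y\times T^*Y$, $X\times_Y T^*Y$, $T^*Y$ is Cartesian; $f_\pi$ is an immersion (since $f$ is an embedding) and $q_2$ is a smooth projection, so Lemma~\ref{lem:base_change} gives an isomorphism $f_\pi^* q_{2!!}\isoto r_{!!}(f\times f_\pi)^*$ of functors on $\Mod(\sha)$. Second, the two kernel morphisms~\eqref{eq:mii_mk} are replaced by~\eqref{eq:restr_L_sha} of Proposition~\ref{prop:prop_fonct_L_sha}(ii) applied to $f\times f_\pi$ (an embedding, hence clean, and satisfying $(f\times f_\pi)^{-1}(Y\times_Y T^*Y)=Z$), giving $(f\times f_\pi)^*L_Y^\sha \to L^\sha_{Y\from X}\otimes \omega_{X|Y}^{-1}$, and by~\eqref{eq:ima_dir_L_sha} of Proposition~\ref{prop:prop_fonct_L_sha}(i) applied to $\id\times f_d$ (smooth, because $f_d$ is a surjective submersion when $f$ is an embedding), giving $(\id\times f_d)_{!!}(L^\sha_{Y\from X}\otimes \omega_{X|Y}^{-1}) \to L_X^\sha$. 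Morphism~\eqref{eq:micro_im_inv_sha} is then the composition
\begin{align*}
f_{d!!}f_\pi^*\mu^\sha_Y(G)
&= f_{d!!}f_\pi^* q_{2!!}(L_Y^\sha\otimes_{\sha} q_1^*G) \\
&\simeq f_{d!!}r_{!!}\bigl((f\times f_\pi)^*L_Y^\sha \otimes_{\sha} p^*f^*G\bigr) \\
&\simeq p_{2!!}(\id\times f_d)_{!!}\bigl((f\times f_\pi)^*L_Y^\sha \otimes_{\sha} p_1^*f^*G\bigr) \\
&\to p_{2!!}(L_X^\sha\otimes_{\sha} p_1^*f^*G) = \mu^\sha_X(f^*G),
\end{align*}
built using the base-change isomorphism, the identities $f_d\,r = p_2(\id\times f_d)$ and $p = p_1(\id\times f_d)$, Lemma~\ref{lem:proj_form} for the smooth morphism $\id\times f_d$, and the two kernel morphisms above.

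For the compatibility diagram, the bottom-left isomorphism $Rf_{d!!}f_\pi^*\mu^\sha_Y(G) \isofrom f_{d!!}f_\pi^*\mu^\sha_Y(G)$ is Proposition~\ref{prop:Amod_f_*acyclique}, and the vertical morphisms come from the natural transformations $f_\pi^{-1}\to f_\pi^*$, $f^{-1}\to f^*$, $\otimes\to \otimes_{\sha}$, together with~\eqref{eq:mu_vers_mu_sha} for $\mu_Y(G)\to \mu^\sha_Y(G)$. Each step in the construction above is explicitly matched to the corresponding step in~\eqref{eq:mii_f1}--\eqref{eq:mii_l5}: Lemma~\ref{lem:base_change} states that its base change recovers the classical one under $\For'$; Proposition~\ref{prop:prop_fonct_L_sha} states that~\eqref{eq:restr_L_sha} and~\eqref{eq:ima_dir_L_sha} induce~\eqref{eq:restr_L} and~\eqref{eq:ima_dir_L}; Lemma~\ref{lem:proj_form} records the compatibility of the projection formulas. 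The main obstacle will be the ensuing diagram chase: no individual square is deep, but the chain involves several natural transformations (direct image, tensor product, kernel morphisms, $\mu\to \mu^\sha$) and one must verify carefully that they assemble into a single commutative diagram, essentially by pasting the square-by-square compatibilities stated in the earlier lemmas.
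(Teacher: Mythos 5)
Your overall architecture is the same as the paper's: replace each ingredient of the derived construction~\eqref{eq:mii_l3}--\eqref{eq:mii_l5} by its $\sha$-module analog, using Proposition~\ref{prop:prop_fonct_L_sha} for the two kernel morphisms, Lemma~\ref{lem:proj_form} for the projection formula, and Proposition~\ref{prop:Amod_f_*acyclique} for the non-derived direct image. Your treatment of the kernel morphisms and of the compatibility diagram matches the paper's.

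There is, however, a genuine error in your first step. The square in~\eqref{eq:diag_kern_data} with vertices $X\times(X\times_Y T^*Y)$, $Y\times T^*Y$, $X\times_Y T^*Y$, $T^*Y$ is \emph{not} Cartesian: the fiber product $(X\times_Y T^*Y)\times_{T^*Y}(Y\times T^*Y)$ is $Y\times(X\times_Y T^*Y)$, into which $X\times(X\times_Y T^*Y)$ embeds via $f\times\id$ but with which it does not coincide unless $f$ is an isomorphism. So Lemma~\ref{lem:base_change} does not apply and does not produce the asserted isomorphism $f_\pi^* q_{2!!}\isoto r_{!!}(f\times f_\pi)^*$. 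What the paper does instead (morphisms~\eqref{eq:mii_f1} and~\eqref{eq:mii_sha_f1}) is an adjunction argument: the unit $\id\to (f\times f_\pi)_*(f\times f_\pi)^*$, the identity $q_2\circ(f\times f_\pi)=f_\pi\circ r$ together with $(f\times f_\pi)_*=(f\times f_\pi)_{!!}$ (because $f\times f_\pi$ is an embedding), and the counit $f_\pi^* f_{\pi*}\to\id$. This yields only a \emph{morphism} of functors $f_\pi^* q_{2!!}\to r_{!!}(f\times f_\pi)^*$, not an isomorphism --- which is all that is needed, since the statement only claims a morphism; but your "$\simeq$" at that line is unjustified and should be replaced by this adjunction construction. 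The rest of your argument (smoothness of $\id\times f_d$ via surjectivity of $f_d$ for an embedding, cleanness of $f\times f_\pi$, and the pasting of naturality squares for the compatibility diagram) is correct and agrees with the paper.
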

\begin{proof}
  We follow the construction of morphism~\eqref{eq:micro_im_inv},
  replacing each morphism by its analog for $\sha$-modules. We have
  the analogs of morphisms~\eqref{eq:mii_f1} and~\eqref{eq:mii_mk}:
\begin{gather}
  \label{eq:mii_sha_f1}
  \begin{split}
f_\pi^* q_{2!!}
\to
f_\pi^* q_{2!!} 
& (f\times f_\pi)_* (f\times f_\pi)^*  \\
&\simeq
f_\pi^*  f_{\pi *} r_{!!} (f\times f_\pi)^*
\to
r_{!!} (f\times f_\pi)^*,
\end{split}  \\
  \label{eq:mii_sha_mk}
  (f \times f_\pi)^* L^\sha_Y  
\to L^\sha_{Y\from X} \otimes \omega_{X|Y}^{-1},
\qquad
(\id \times f_d)_{!!} (L^\sha_{Y\from X}\otimes \omega_{X|Y}^{-1}) 
\to L^\sha_X.
\end{gather}
Morphism~\eqref{eq:mii_sha_f1} is defined with the same adjunction
properties as morphism~\eqref{eq:mii_f1}. The morphisms in
line~\eqref{eq:mii_sha_mk} are defined the same way
as~\eqref{eq:mii_mk}, using Proposition~\ref{prop:prop_fonct_L_sha}
instead of Proposition~\ref{prop:prop_fonct_L}. We deduce the
succession of morphisms:
\begin{align}
  f_{d!!} f_\pi^* \mu_Y^\sha(G)
&=
f_{d!!} f_\pi^* q_{2!!} ( L^\sha_Y \otimes_\sha q_1^* G)  \\ 
\label{eq:mii_sha_l3}
& \to
f_{d!!} r_{!!} ( (f \times f_\pi)^* L^\sha_Y \otimes_\sha p^* f^*G) \\
\label{eq:mii_sha_l4}
& \isofrom
p_{2!!}  ( (\id \times f_d)_{!!}
 (f \times f_\pi)^* L^\sha_Y \otimes_\sha p_1^* f^* G) \\
\label{eq:mii_sha_l5}
& \to
p_{2!!} ( L^\sha_X \otimes_\sha p_1^* f^* G),
\end{align}
where in line~\eqref{eq:mii_sha_l3} we used
morphism~\eqref{eq:mii_sha_f1} and the commutativity of inverse image
and tensor product, and in line~\eqref{eq:mii_sha_l4} the identities
$f_d r = p_2(\id\times f_d)$, $p = p_1 (\id\times f_d)$ and the
projection formula for $(\id\times f_d)$ (Lemma~\ref{lem:proj_form}).
The last morphism is the composition of the morphisms
in~\eqref{eq:mii_sha_mk}.

The vertical arrows in the diagram are the compositions of $g^{-1} \to
g^*$ respectively for $g= f_\pi$, $g = f$, and $\mu_Z \to \mu_Z^\sha$,
respectively for $Z=Y$, $Z=X$.  This last morphism is defined only on
$\Tnz^*Z$. The diagram commutes because it is obtained by morphisms of
functors.  The isomorphism between the direct image and the derived
direct image by $f_d$ follows from the softness of $\sha$-modules
(Proposition~\ref{prop:Amod_f_*acyclique}).
\end{proof}

\subsection{Microlocalization and direct image}
In Proposition~\ref{prop:micro_im_dir} below we recall a weak version
of the direct image morphism, defined in Theorem~2.4.2
of~\cite{KSIW06}. This theorem gives a morphism, for $F \in
\BDC(\I(\C_X))$, $R f_{\pi !!}  f_d^{-1} \mu_X(F) \to \mu_Y(f_{!!}F)$.
We consider the case where $F = f^{-1} G \otimes \omega_{X|Y}$ which
is sufficient for our purpose, and we give an easier proof in this
case.  This proof also works for the resolutions by $\sha$-modules,
assuming moreover that $f$ is smooth (see
Proposition~\ref{prop:micro_im_dir_sha}).  We use the notations of
diagram~\eqref{eq:diag_kern_data}.
\begin{proposition}[special case of \cite{KSIW06}, Theorem~2.4.2]
\label{prop:micro_im_dir}
There exists a natural morphism, for $f\cl X \to Y$ and $G \in
\DC^+(\I(\C_Y))$:
\begin{equation}
  \label{eq:micro_im_dir}
  R f_{\pi !!} f_d^{-1} \mu_X(f^{-1}G \otimes \omega_{X|Y})
\to \mu_Y(G).
\end{equation}
\end{proposition}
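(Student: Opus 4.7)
\medskip
\noindent\textbf{Proof plan.}
The plan is to mimic the construction of the inverse image morphism of Proposition~\ref{prop:micro_im_inv}, but with the two kernel morphisms in reversed roles: the inverse image along $\id\times f_d$ and the direct image along $f\times f_\pi$. We work entirely in $\BDC(\I(\C_Y))$, using $\shl_\sigma$ (not its $\sha$-version), so that neither of the two kernel morphisms requires smoothness of $f$.

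First I would unfold the left hand side as
$Rf_{\pi!!}f_d^{-1}Rp_{2!!}(L_X \otimes p_1^{-1}(f^{-1}G \otimes \omega_{X|Y}))$,
and use base change on the Cartesian square $p_2\circ(\id\times f_d) = f_d\circ r$ to replace $f_d^{-1}Rp_{2!!}$ by $Rr_{!!}(\id\times f_d)^{-1}$. Combined with the identity $f_\pi\circ r = q_2\circ(f\times f_\pi)$, this turns $Rf_{\pi!!}\circ Rr_{!!}$ into $Rq_{2!!}\circ R(f\times f_\pi)_{!!}$. Using $p_1\circ(\id\times f_d) = p$ and $f\circ p = q_1\circ(f\times f_\pi)$, the inner pullback rewrites as $(f\times f_\pi)^{-1}q_1^{-1}G \otimes p^{-1}\omega_{X|Y}$, and the projection formula for $f\times f_\pi$ pulls $q_1^{-1}G$ out of the direct image. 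What remains to produce is a morphism
$$
R(f\times f_\pi)_{!!}\bigl((\id\times f_d)^{-1}L_X \otimes p^{-1}\omega_{X|Y}\bigr) \to L_Y,
$$
and then $Rq_{2!!}$ applied to its tensor with $q_1^{-1}G$ gives exactly $\mu_Y(G)$.

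To build this morphism I would use the two kernel transformations of Proposition~\ref{prop:prop_fonct_L}. First, $\id\times f_d$ satisfies $Z = (\id\times f_d)^{-1}(X\times_X T^*X)$ and is clean with respect to $X\times_X T^*X$ (checked directly on tangent spaces, as in \cite{KSIW06}); Proposition~\ref{prop:prop_fonct_L}(ii) then provides
$$
(\id\times f_d)^{-1}L_X \to L_{Y\from X}\otimes \rho_!\omega_{Z|T^*X}\otimes \omega^{-1}_{X\times(X\times_Y T^*Y)\mid X\times T^*X}.
$$
Second, $(f\times f_\pi)(Z)\subset Y\times_Y T^*Y$ by construction, so Proposition~\ref{prop:prop_fonct_L}(i) applied to $f\times f_\pi$ gives
$$
R(f\times f_\pi)_{!!}\bigl(L_{Y\from X}\otimes \rho_!\omega_{Z|T^*Y}\bigr) \to L_Y.
$$
Composing these inside $R(f\times f_\pi)_{!!}$, the desired morphism reduces to the canonical identification
$$
\omega_{Z|T^*X}\otimes \omega^{-1}_{X\times(X\times_Y T^*Y)\mid X\times T^*X}\otimes p^{-1}\omega_{X|Y} \;\simeq\; \omega_{Z|T^*Y},
$$
which is the cocycle/multiplicativity relation for relative dualizing complexes across the fiber diagram of $f_d$, $f_\pi$, $p$, $f$.

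The main obstacle I expect is precisely this bookkeeping of orientation and dualizing sheaves in the last step: tracing through the four maps $f$, $p$, $f_d$, $f_\pi$ of complex dimensions $d_X$, $d_X-d_Y$ (as bundle projections) one must verify the $\omega_{X|Y}$ appearing in the left hand side of the proposition is exactly what is produced by the combination of the inverse image morphism for $L_X$ (which introduces the $\omega_{\cdot|\cdot}^{-1}$ factor from the cleanness Proposition~\ref{prop:prop_fonct_L}(ii)) and the direct image morphism for $L_{Y\from X}$ (which introduces $\omega_{Z|T^*Y}$ from Proposition~\ref{prop:prop_fonct_L}(i)). Everything else is an assembly of adjunctions, base change, and projection formula that is formally dual to the inverse image proof already carried out.
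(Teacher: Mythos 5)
Your proposal follows the paper's proof essentially step for step: the same base-change/projection-formula reduction to a morphism $R(f\times f_\pi)_{!!}\bigl((\id\times f_d)^{-1}L_X\otimes p^{-1}\omega_{X|Y}\bigr)\to L_Y$, built from the two kernel transformations of Proposition~\ref{prop:prop_fonct_L}. The one point where you make extra work for yourself is the ``main obstacle'' at the end: $\id\times f_d$ is in fact \emph{transversal} (not merely clean) to $X\times_X T^*X$, so morphism~\eqref{eq:restr_L_iso} applies and introduces no dualizing factors at all; the only factor left to match is $p^{-1}\omega_{X|Y}$, whose restriction to $X\times_Y T^*Y$ is $\omega_{X\times_Y T^*Y\,|\,Y\times_Y T^*Y}$, which is exactly what \eqref{eq:ima_dir_L} consumes --- so the cocycle identity you defer collapses to this single observation.
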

\begin{proof}
  We set $F = f^{-1}G \otimes \omega_{X|Y}$ and obtain the sequence of
  morphisms:
\begin{align}
R f_{\pi !!} &f_d^{-1} \mu_X(F)  \\
&=
R f_{\pi !!} f_d^{-1} Rp_{2!!} ( L_X \otimes p_1^{-1}  F) \\
\label{eq:mid_l2}
&\isoto
R f_{\pi !!} Rr_{!!} ( (\id \times f_d)^{-1} L_X
\otimes p^{-1}  F) \\
\label{eq:mid_l3}
&\simeq
Rq_{2!!} R(f\times f_\pi)_{!!} ( (\id \times f_d)^{-1} L_X
\otimes p^{-1}\omega_{X|Y}
\otimes (f\times f_\pi)^{-1} q_1^{-1}  G  ) \\
\label{eq:mid_l4}
&\isofrom
Rq_{2!!}  ( R(f\times f_\pi)_{!!} ((\id \times f_d)^{-1} L_X
\otimes p^{-1}\omega_{X|Y} )
\otimes  q_1^{-1}  G  ) \\
\label{eq:mid_l5}
& \to
Rq_{2!!}( L_Y \otimes q_1^{-1}  G),
\end{align}
where in line~\eqref{eq:mid_l2} we used the base change formula
$f_d^{-1} Rp_{2!!} \isoto Rr_{!!} (\id \times f_d)^{-1}$ and the
identity $p = p_1 (\id \times f_d)$, in line~\eqref{eq:mid_l3} the
identities $f_{\pi} r = q_2 (f\times f_\pi)$ and $f p = q_1 (f\times
f_\pi)$, and in line~\eqref{eq:mid_l4} the projection formula for
$(f\times f_\pi)$.  The last line is given by the composition of
$$
(\id \times f_d)^{-1} L_X \to L_{Y\from X}
\quad \text{and} \quad
R(f\times f_\pi)_{!!} ( L_{Y\from X}\otimes p^{-1}\omega_{X|Y} )
\to L_Y,
$$
which are respectively given by (ii) and (i) of
Proposition~\ref{prop:prop_fonct_L} (for the first morphism we note
that $(\id \times f_d)$ is transversal to $X\times_X T^*X$ and for the
second one we note that the restriction of $p^{-1}\omega_{X|Y}$ to
$X\times_Y T^*Y$ is isomorphic to $\omega_{X\times_Y T^*Y | Y\times_Y
  T^*Y}$).
\end{proof}

The following proposition gives a realization of
morphism~\eqref{eq:micro_im_dir} by $\sha$-modules. We restrict to the
case where $f$ is a submersion because we only have an integration
morphism in this case.
\begin{proposition}
\label{prop:micro_im_dir_sha}
There exists a natural morphism of $\sha_{T^*Y}$-modules, for a
submersion $f\cl X \to Y$ and for $G \in \Mod(\sha_{Y})$:
\begin{equation}
  \label{eq:micro_im_dir_sha}
  f_{\pi !!} f_d^* \mu^\sha_X(f^* G \otimes \omega'_{X|Y})
\to \mu^\sha_Y(G),
\end{equation}
which makes a commutative diagram in $\DC^+(\I(\C_{\Tnz^*Y}))$ with
morphism~\eqref{eq:micro_im_dir}:
$$
\xymatrix@C=15mm{
R f_{\pi !!} f_d^{-1} \mu_X(f^{-1}G \otimes \omega_{X|Y})
\ar[r] \ar[d]
& \mu_Y(G) \ar[d]  \\
R f_{\pi !!} f_d^{*} \mu^\sha_X(f^{*}G \otimes \omega'_{X|Y}) 
\isofrom
f_{\pi !!} f_d^{*} \mu^\sha_X(f^{*}G \otimes \omega'_{X|Y}) \ar[r]
& \mu^\sha_Y(G) .
}
$$
\end{proposition}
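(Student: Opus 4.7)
The plan is to mirror, step by step, the proof of Proposition~\ref{prop:micro_im_dir} but replacing each derived operation by its underived $\sha$-module counterpart defined in the previous sections. Set $F = f^* G \otimes \omega'_{X|Y} \in \Mod(\sha_X)$. The goal is to produce the sequence of morphisms in $\Mod(\sha_{T^*Y})$:
\begin{align*}
f_{\pi!!} f_d^* \mu^\sha_X(F)
&= f_{\pi!!} f_d^* p_{2!!}(L^\sha_X \otimes_\sha p_1^* F)\\
&\isoto f_{\pi!!} r_{!!} ((\id\times f_d)^* L^\sha_X \otimes_\sha p^* F) \\
&\simeq q_{2!!}(f\times f_\pi)_{!!}((\id\times f_d)^* L^\sha_X \otimes_\sha p^*\omega'_{X|Y} \otimes_\sha (f\times f_\pi)^* q_1^* G) \\
&\isofrom q_{2!!}\bigl((f\times f_\pi)_{!!}((\id\times f_d)^* L^\sha_X \otimes_\sha p^*\omega'_{X|Y}) \otimes_\sha q_1^* G\bigr) \\
&\to q_{2!!}(L^\sha_Y \otimes_\sha q_1^* G) = \mu^\sha_Y(G).
\end{align*}

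The first isomorphism is the base change of Lemma~\ref{lem:base_change} applied to the Cartesian square with legs $f_d$ and $p_2$: since $f$ is a submersion, $f_d\cl X\times_Y T^*Y \to T^*X$ is a closed embedding and $p_2$ is smooth, so the lemma yields an honest isomorphism $f_d^* p_{2!!} \isoto r_{!!}(\id\times f_d)^*$ (and I also use $p = p_1(\id\times f_d)$ together with the compatibility of $f_d^*$ with $\otimes_\sha$). The second line uses the factorizations $f_\pi r = q_2(f\times f_\pi)$ and $fp = q_1(f\times f_\pi)$, and the third line is the projection formula of Lemma~\ref{lem:proj_form} applied to $(f\times f_\pi)$. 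The last arrow is the composition of the two kernel morphisms
\[
(\id\times f_d)^* L^\sha_X \to L^\sha_{Y\from X}, \qquad
(f\times f_\pi)_{!!}(L^\sha_{Y\from X}\otimes p^*\omega'_{X|Y}) \to L^\sha_Y,
\]
which are respectively morphism~\eqref{eq:restr_L_iso_sha} (here $\id\times f_d$ is transversal to $X\times_X T^*X$, whose preimage is exactly $Z = X\times_Y T^*Y$) and morphism~\eqref{eq:ima_dir_L_sha} (here $f\times f_\pi$ is smooth, because $f_\pi$ is a base change of the submersion $f$; the relative orientation bundle $\omega_{Z_1|Z_2}$ for $Z_1 = X\times_Y T^*Y$, $Z_2 = T^*Y$ is canonically $p^*\omega'_{X|Y}$ restricted to $Z_1$). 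Both statements in Proposition~\ref{prop:prop_fonct_L_sha} are available precisely under these hypotheses.

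For the commutativity of the diagram I proceed step by step. Each of the five morphisms above has a derived analog in the proof of Proposition~\ref{prop:micro_im_dir}, and the vertical arrows from $R$-functors to $\sha$-module functors are induced by the natural transformations $g^{-1} \to g^*$, $\otimes \to \otimes_\sha$, and by $\For'\circ f_{!!} \simeq Rf_{!!}\circ \For'$ from Proposition~\ref{prop:Amod_f_*acyclique} (which also supplies the leftmost indicated isomorphism $Rf_{\pi!!} \simeq f_{\pi!!}$ in the diagram, since $\sha$-modules are soft by Lemma~\ref{lem:sha_mod_soft}). Each small square either commutes by naturality of the base change / projection formula or is precisely the compatibility statement in Proposition~\ref{prop:prop_fonct_L_sha} asserting that the kernel morphisms~\eqref{eq:restr_L_iso_sha} and~\eqref{eq:ima_dir_L_sha} lift~\eqref{eq:restr_L_iso} and~\eqref{eq:ima_dir_L}.

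The main obstacle, as always in this style of argument, is bookkeeping: making sure that the comparison of orientation bundles on $p^{-1}\omega'_{X|Y}$ with $\omega'_{Z_1|Z_2}$ is canonical and compatible with the morphism of kernel data, and that the projection formula morphism in Lemma~\ref{lem:proj_form} is genuinely natural enough to commute with the preceding base change. These are routine but need to be verified carefully so that the whole diagram chasing closes; once they are in hand, the rest is a direct transcription of the classical proof.
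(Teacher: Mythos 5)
Your proposal is correct and follows essentially the same route as the paper: transcribe the derived-category proof of Proposition~\ref{prop:micro_im_dir} step by step, using the $\sha$-module base change (Lemma~\ref{lem:base_change}, an isomorphism here since $f_d$ is an embedding and $p_2$ is smooth), the projection formula (Lemma~\ref{lem:proj_form}), and the kernel morphisms of Proposition~\ref{prop:prop_fonct_L_sha}, with commutativity of the comparison diagram following from naturality of these constructions. The only cosmetic difference is that you write $p^*\omega'_{X|Y}$ with $\otimes_\sha$ where the paper keeps $p^{-1}\omega'_{X|Y}$ with a plain $\otimes$ (harmless, as $\omega'_{X|Y}$ is locally constant).
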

\begin{proof}
  We follow the proof of Proposition~\ref{prop:micro_im_dir}, but now
  we consider morphisms of $\sha$-modules.  We set $F = f^*G \otimes
  \omega'_{X|Y}$ and obtain the sequence of morphisms:
\begin{align}
f_{\pi !!} &f_d^* \mu_X^\sha(F)  \\
&=
f_{\pi !!} f_d^* p_{2!!} ( L^\sha_X \otimes_{\sha} p_1^*  F) \\
\label{eq:mid_sha_l2}
&\to
f_{\pi !!} r_{!!} ( (\id \times f_d)^* L^\sha_X
\otimes p^*  F) \\
\label{eq:mid_sha_l3}
&\simeq
q_{2!!} (f\times f_\pi)_{!!} ( (\id \times f_d)^* L^\sha_X
\otimes p^{-1}\omega'_{X|Y}
\otimes_{\sha} (f\times f_\pi)^* q_1^*  G  ) \\
\label{eq:mid_sha_l4}
&\isofrom
q_{2!!}  ( (f\times f_\pi)_{!!} ((\id \times f_d)^* L^\sha_X
\otimes p^{-1}\omega'_{X|Y} )
\otimes_{\sha}  q_1^*  G  ) \\
\label{eq:mid_sha_l5}
& \to
q_{2!!}( L^\sha_Y \otimes_\sha q_1^*  G),
\end{align}
where in line~\eqref{eq:mid_sha_l2} we used the base change formula
$f_d^* p_{2!!} \to r_{!!} (\id \times f_d)^*$ and the identity $p =
p_1 (\id \times f_d)$, in line~\eqref{eq:mid_sha_l3} the identities
$f_{\pi} r = q_2 (f\times f_\pi)$ and $f p = q_1 (f\times f_\pi)$, and
in line~\eqref{eq:mid_sha_l4} the projection formula for $(f\times
f_\pi)$.  The last line is given by the composition of
$$
(\id \times f_d)^* L^\sha_X \to L^\sha_{Y\from X}
\quad \text{and} \quad
(f\times f_\pi)_{!!} ( L^\sha_{Y\from X}\otimes p^{-1}\omega'_{X|Y} )
\to L^\sha_Y,
$$
which are given by (ii) and (i) of
Proposition~\ref{prop:prop_fonct_L_sha}.

The diagram is defined as in Proposition~\ref{prop:micro_im_inv_sha}.
\end{proof}

\subsection{External tensor product}
We consider $X,Y$ as above and $F \in \DC^+(\I(\C_X))$, $G \in
\DC^+(\I(\C_Y))$. Proposition~2.1.14 of~\cite{KSIW06} implies the
existence of a natural morphism:
\begin{equation}
  \label{eq:prod_ext_mu}
  \mu_X F \boxtimes \mu_Y G \to \mu_{X\times Y}(F \boxtimes G).
\end{equation}
\begin{proposition}
\label{prop:prod_ext_mu_sha}
  For $F \in \Mod(\sha_{X})$ and $G \in \Mod(\sha_{Y})$
there exists a natural morphism
$$
\mu^\sha_X F \underline{\boxtimes} \mu^\sha_Y G
\to \mu^\sha_{X\times Y}(F \boxtimes G).
$$
Its restriction to $\Tnz^*X \times \Tnz^*Y$ makes a commutative
diagram with morphism~\eqref{eq:prod_ext_mu} in $\DC^+(\I(\C_{\Tnz^*X
  \times \Tnz^*Y}))$:
$$
\xymatrix{
\mu_X F \boxtimes \mu_Y G  \ar[r]\ar[d]
& \mu_{X\times Y}(F \boxtimes G)\ar[d] \\
\mu^\sha_X F \boxtimes \mu^\sha_Y G  \ar[r]
& \mu^\sha_{X\times Y}(F \boxtimes G) .
}
$$
\end{proposition}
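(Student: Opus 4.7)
The plan is to follow the same scheme as Propositions~\ref{prop:micro_im_inv_sha} and~\ref{prop:micro_im_dir_sha}: each step in the derived-category construction of~\eqref{eq:prod_ext_mu} will be lifted to the level of $\sha$-modules by replacing $f^{-1}$, $Rf_{!!}$ and $\boxtimes$ with $f^*$, $f_{!!}$ and $\underline{\boxtimes}$. The key ingredient will be the kernel-level morphism provided by Proposition~\ref{prop:prod_tens_ext_sha}. First I would observe that, after the obvious permutation of factors, the product of the kernel data $(X\times T^*X, X\times_X T^*X, \sigma_X)$ and $(Y\times T^*Y, Y\times_Y T^*Y, \sigma_Y)$ coincides with the kernel data $((X\times Y)\times T^*(X\times Y), (X\times Y)\times_{X\times Y} T^*(X\times Y), \sigma_{X\times Y})$ defining $L^\sha_{X\times Y}$, so that Proposition~\ref{prop:prod_tens_ext_sha} directly yields a morphism $L^\sha_X \,\underline{\boxtimes}\, L^\sha_Y \to L^\sha_{X\times Y}$ whose image in the derived category is the corresponding morphism of Proposition~\ref{prop:prod_tens_ext}.

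Denoting by $p_1,p_2$ and $q_1,q_2$ the projections from $X\times T^*X$ and $Y\times T^*Y$, I would then define the desired morphism as the composition
\begin{align*}
\mu^\sha_X F \,\underline{\boxtimes}\, \mu^\sha_Y G
&= p_{2!!}(L^\sha_X \otimes_\sha p_1^* F) \,\underline{\boxtimes}\, q_{2!!}(L^\sha_Y \otimes_\sha q_1^* G) \\
&\isoto (p_2\times q_2)_{!!}\bigl((L^\sha_X \otimes_\sha p_1^* F) \,\underline{\boxtimes}\, (L^\sha_Y \otimes_\sha q_1^* G)\bigr) \\
&\simeq (p_2\times q_2)_{!!}\bigl((L^\sha_X \,\underline{\boxtimes}\, L^\sha_Y) \otimes_\sha (p_1\times q_1)^*(F\,\underline{\boxtimes}\, G)\bigr) \\
&\to (p_2\times q_2)_{!!}\bigl(L^\sha_{X\times Y} \otimes_\sha (p_1\times q_1)^*(F\,\underline{\boxtimes}\, G)\bigr)
= \mu^\sha_{X\times Y}(F\,\underline{\boxtimes}\, G),
\end{align*}
where the first arrow combines the $\sha$-module base change (Lemma~\ref{lem:base_change}) and projection formula (Lemma~\ref{lem:proj_form}) applied to the product of the Cartesian squares defining $p_{2!!}$ and $q_{2!!}$, the second is the standard reshuffling of $\otimes_\sha$, $\underline{\boxtimes}$ and the pullbacks $p_1^*,q_1^*$, and the last is induced by the kernel morphism above.

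For the commutative square on $\Tnz^*X\times\Tnz^*Y$, I expect each intermediate arrow to admit a derived-category analog obtained by the same formal manipulations applied to $L_X$ and $L_Y$, and these should reassemble into~\eqref{eq:prod_ext_mu}. The passage from $\otimes_\sha$ to $\otimes$ on the kernel factor is harmless outside the zero section by Lemma~\ref{lem:kern_prod_tens}, $f_{!!}$ on $\sha$-modules agrees with $Rf_{!!}$ by Proposition~\ref{prop:Amod_f_*acyclique}, $f^*$ agrees with $f^{-1}$ on smooth projections by Proposition~\ref{prop:im_inv_sha_subm}, and the kernel-level compatibility is exactly the second assertion of Proposition~\ref{prop:prod_tens_ext_sha}; the diagram should then commute because every vertical arrow is a morphism of functors.

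The main point to verify carefully, rather than a substantial obstacle, is the bookkeeping of the factor permutation identifying $(X\times T^*X)\times(Y\times T^*Y)$ with $(X\times Y)\times T^*(X\times Y)$ in a way compatible with $\sigma_X + \sigma_Y = \sigma_{X\times Y}$, together with the natural isomorphism $p_1^* F \,\underline{\boxtimes}\, q_1^* G \simeq (p_1\times q_1)^*(F\,\underline{\boxtimes}\, G)$, so that the source and target of the constructed morphism really are those appearing in the statement.
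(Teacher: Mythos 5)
Your proposal is correct and follows essentially the same route as the paper: the paper's own proof is just the one-line observation that the morphism follows from the K\"unneth formula together with the kernel-level morphism of Proposition~\ref{prop:prod_tens_ext_sha}, and that the compatibility outside the zero section follows from Proposition~\ref{prop:mu_sha_qinj}. You have simply spelled out the same construction (K\"unneth for $(p_2\times q_2)_{!!}$, reshuffling of tensor products, and the kernel morphism $L^\sha_X\,\underline{\boxtimes}\,L^\sha_Y \to L^\sha_{X\times Y}$ after identifying the product kernel data with that of $X\times Y$) in more detail.
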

\begin{proof}
  The existence of the morphisms follows from the Künneth formula and
  Proposition~\ref{prop:prod_tens_ext_sha}.  It coincides with the
  already known construction outside the zero section by
  Proposition~\ref{prop:mu_sha_qinj}.
\end{proof}

\section{Composition of kernels}
\label{sec:con_kern}
We recall the microlocal composition of kernels defined
in~\cite{KSIW06}, Theorem~2.5.1, and we check that a similar
construction also works for $\sha$-modules. This construction is a
composition of the operations recalled in
section~\ref{sec:functorial_properties}, and we just have to check
that the restrictive hypothesis assumed in the case of $\sha$-modules
are satisfied.

We first recall some standard notations and definitions.  We consider
three analytic manifolds $X$, $Y$, $Z$ and we let $q_{ij}$ be the
$(i,j)$-th projection from $X \times Y \times Z$ and $p_{ij}$ the
$(i,j)$-th projection from $T^*X\times T^*Y\times T^*Z$.  We also
denote by $a\cl T^*Y \to T^*Y$ the antipodal map and we set $p_{12}^a
= (\id \times a) \circ p_{12}$.  For $F \in \DC^+(\I(\C_{X\times
  Y}))$, $G \in \DC^+(\I(\C_{Y\times Z}))$ and
$\me{F}\in\DC^+(\I(\C_{T^*X\times T^*Y}))$,
$\me{G}\in\DC^+(\I(\C_{T^*Y\times T^*Z}))$ we define:
\begin{equation}
  \label{eq:def_convol}
F \circ G =    Rq_{13!!} (q_{12}^{-1} F
\otimes q_{23}^{-1} G ),
\qquad
\me{F}\overset{a}{\circ} \me{G}= Rp_{13!!} (p_{12}^{a-1}\me{F}
\otimes p_{23}^{-1}\me{G} ).
\end{equation}
We set for short $M = X\times Y\times Y\times Z$,
$N = X\times Y\times Z$ and let $j\cl N \to M$ be the diagonal embedding.
We define the maps:
\begin{alignat*}{2}
  k  &\cl T^*N \hookrightarrow N \times_M  T^*M,
&\quad
(x,y,z;\xi,\eta,\zeta) &\mapsto (x,y,y,z;\xi,-\eta,\eta,\zeta)  \\
\tau  &\cl T^*N \to N \times_{X\times Z} T^*(X\times Z),
&\quad
(x,y,z;\xi,\eta,\zeta) &\mapsto (x,y,z;\xi,\zeta)  \\
p &= j_\pi \circ k
\end{alignat*}
and obtain the following commutative diagram, with a Cartesian square:
\begin{equation}
  \label{eq:diag_noyaux}
\vcenter{
\xymatrix@C=1.5cm{
    {T^*N} \ar[d]^{\tau} \ar@<-1pt>@{^{(}->}[r]^{k}
\ar@/^25pt/[rr]_{p}  \ar@/_60pt/[dd]_{p_{13}}    \ar@{}[rd]|{\square} 
& {N\times_M T^*M}
     \ar[d]_(.6){j_d}  \ar@<-1pt>@{^{(}->}[r]^(.6){j_\pi} 
&  {T^*M} \\
    {N \times_{X\times Z} T^*(X\times Z)} 
\ar@<-1pt>@{^{(}->}[r]_(.6){q_{13}{}_d}
 \ar[d]^(.6){q_{13\pi}} & 
    {T^*N} & \\
  {T^*(X\times Z)} & &
     } }
\end{equation}
We note that $\me{F}\overset{a}{\circ} \me{G} \simeq Rp_{13!!} \,
p^{-1} (\me{F} \boxtimes \me{G})$. Theorem~2.5.1 of~\cite{KSIW06}
gives a natural morphism, the composition of kernels:
\begin{equation}
  \label{eq:comp_kernels_original}
\mu_{X\times Y} K_1 \overset{a}{\circ}
\mu_{Y\times Z} K_2 \to \mu_{X\times Z} (K_1 \circ K_2),
\end{equation}
for $K_1 \in \DC^+(\I(\C_{X\times Y}))$, $K_2 \in \DC^+(\I(\C_{Y\times
  Z}))$.  Since the commutation of microlocalization and direct image
has a weaker statement in the case of $\sha$-modules than in the case
of ind-sheaves of vector spaces, we also give a weaker statement
than~\eqref{eq:comp_kernels_original} for the composition of kernels.

In fact, for ind-sheaves, morphism~\eqref{eq:micro_convol} below is
equivalent to~\eqref{eq:comp_kernels_original}: indeed using the
adjunction between $Rq_{13 !!}$ and $q_{13}^!$ we may
apply~\eqref{eq:micro_convol} to $K_3 = K_1 \circ K_2$ and
recover~\eqref{eq:comp_kernels_original}.  But for $\sha$-modules we
don't have this adjunction and the statement of
Proposition~\ref{prop:convol_sha} is actually weaker than an
$\sha$-module analog of~\eqref{eq:comp_kernels_original}.
\begin{proposition}
\label{prop:convol}
For complexes $K_1 \in \DC^+(\I(\C_{X\times Y}))$, $K_2 \in
\DC^+(\I(\C_{Y\times Z}))$ and $K_3 \in \DC^+(\I(\C_{X\times Z}))$,
with a morphism $q_{12}^{-1} K_1 \otimes q_{23}^{-1} K_2 \to
q_{13}^{-1} K_3 \otimes \omega_Y$, there exists a natural morphism
  \begin{equation}
    \label{eq:micro_convol}
    \mu_{X\times Y} K_1 \overset{a}{\circ} \mu_{Y\times Z} K_2
\to \mu_{X\times Z} K_3.
  \end{equation}
\end{proposition}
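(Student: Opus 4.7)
The plan is to realize \eqref{eq:micro_convol} as a composition of three of the morphisms constructed in Section~\ref{sec:functorial_properties}: the external tensor product for $\mu$, the inverse image along the diagonal embedding $j\cl N\hookrightarrow M$, and the direct image along the projection $q_{13}\cl N\to X\times Z$. A direct check on coordinates (using $p=j_\pi\circ k$ from diagram \eqref{eq:diag_noyaux}) shows that the two projections of $p$ onto $T^*(X\times Y)$ and $T^*(Y\times Z)$ coincide with $p_{12}^a$ and $p_{23}$, so
\[
\mu_{X\times Y}K_1\overset{a}{\circ}\mu_{Y\times Z}K_2 \;\simeq\; Rp_{13!!}\,p^{-1}\bigl(\mu_{X\times Y}K_1 \boxtimes \mu_{Y\times Z}K_2\bigr).
\]

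First I apply the external tensor product morphism \eqref{eq:prod_ext_mu}, pulled back by $p$, to reach $Rp_{13!!}\,p^{-1}\mu_M(K_1\boxtimes K_2)$. Writing $p^{-1}=k^{-1}j_\pi^{-1}$ and $p_{13}=q_{13\pi}\circ\tau$, and invoking proper base change along the Cartesian square of \eqref{eq:diag_noyaux}, this complex is naturally isomorphic to
\[
Rq_{13\pi!!}\,q_{13d}^{-1}\bigl(Rj_{d!!}\,j_\pi^{-1}\mu_M(K_1\boxtimes K_2)\bigr).
\]
The inner term is then mapped, via Proposition~\ref{prop:micro_im_inv} applied to the embedding $j$, to $\mu_N\bigl(j^{-1}(K_1\boxtimes K_2)\bigr) \simeq \mu_N\bigl(q_{12}^{-1}K_1\otimes q_{23}^{-1}K_2\bigr)$.

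Next, I use the hypothesised morphism $q_{12}^{-1}K_1\otimes q_{23}^{-1}K_2 \to q_{13}^{-1}K_3\otimes\omega_Y$ together with the identification $\omega_Y\simeq\omega_{N|X\times Z}$, valid since $q_{13}$ is the projection of fibre $Y$. Finally I apply the direct image morphism \eqref{eq:micro_im_dir} of Proposition~\ref{prop:micro_im_dir} to the submersion $q_{13}$, which yields $Rq_{13\pi!!}\,q_{13d}^{-1}\mu_N(q_{13}^{-1}K_3\otimes\omega_{N|X\times Z})\to\mu_{X\times Z}(K_3)$. Composing everything produces \eqref{eq:micro_convol}.

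The only genuinely substantive step is the proper base change along the Cartesian square of \eqref{eq:diag_noyaux}; the rest is a direct assembly of previously established morphisms, so I do not anticipate any hidden obstacle beyond the bookkeeping needed to match the antipodal convention in $\overset{a}{\circ}$ with the factorization $p=j_\pi\circ k$ and to identify $j^{-1}(K_1\boxtimes K_2)$ with $q_{12}^{-1}K_1\otimes q_{23}^{-1}K_2$ and $\omega_Y$ with $\omega_{N|X\times Z}$.
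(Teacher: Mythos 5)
Your proposal is correct and follows essentially the same route as the paper: the definition $\mu K_1 \overset{a}{\circ} \mu K_2 \simeq Rp_{13!!}\,p^{-1}(\mu K_1 \boxtimes \mu K_2)$, the external tensor product morphism~\eqref{eq:prod_ext_mu}, the base change $Rp_{13!!}\,p^{-1} \simeq Rq_{13\pi!!}\,q_{13d}^{-1}\,Rj_{d!!}\,j_\pi^{-1}$ from diagram~\eqref{eq:diag_noyaux}, then Proposition~\ref{prop:micro_im_inv} for the diagonal embedding $j$, the hypothesised morphism with $\omega_Y \simeq \omega_{N|X\times Z}$, and finally Proposition~\ref{prop:micro_im_dir} for $q_{13}$. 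This is exactly the composition used in the paper's proof.
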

\begin{proof}
  By definition $\mu_{X\times Y} K_1 \overset{a}{\circ} \mu_{Y\times
    Z} K_2 = Rp_{13!!} p^{-1} ( \mu_{X\times Y} K_1 \boxtimes
  \mu_{Y\times Z} K_2)$. The external tensor
  product~\eqref{eq:prod_ext_mu} gives $\mu_{X\times Y} K_1 \boxtimes
  \mu_{Y\times Z} K_2 \to \mu_M(K_1 \boxtimes K_2)$ and the base
  change formula gives $Rp_{13!!} p^{-1} = Rq_{13\pi !!}  R\tau_{!!}
  k^{-1} \, j_\pi^{-1} \simeq Rq_{13\pi !!} q_{13d}^{-1} \, Rj_{d!!}
  j_\pi^{-1}$. We obtain the morphisms
\begin{align*}
  \mu_{X\times Y} K_1 \overset{a}{\circ} \mu_{Y\times Z} K_2
& \to
Rq_{13\pi !!} q_{13d}^{-1} \, Rj_{d!!} j_\pi^{-1}
(\mu_M(K_1 \boxtimes K_2))  \\
& \to
Rq_{13\pi !!}q_{13d}^{-1} \,
 \, \mu_N  \, j^{-1} (K_1 \boxtimes K_2) \\
&\to
Rq_{13\pi !!}q_{13d}^{-1} \,
\mu_N (q_{13}^{-1} K_3 \otimes \omega_Y)   \\
&\to
\mu_{X\times Z} K_3,
\end{align*}
where in the second line we have applied
Proposition~\ref{prop:micro_im_inv}, in the third the hypothesis and
in the fourth Proposition~\ref{prop:micro_im_dir}.
\end{proof}
Now we give the $\sha$-module analog of the above result. For
$\me{F}\in\Mod(\sha_{T^*X\times T^*Y})$ and
$\me{G}\in\Mod(\sha_{T^*Y\times T^*Z})$ we set
$$
\me{F}\overset{a \sha}{\circ} \me{G}
= p_{13!!} (p_{12}^{a*}\me{F} \otimes_{\sha_N} p_{23}^*\me{G} )
\simeq p_{13!!} p^* ( \me{F} \underline{\boxtimes} \me{G} ).
$$
We note the morphisms in $\DC^+(\I(\C_{T^*X\times T^*Z}))$:
\begin{equation}
\label{eq:circa_circasha}
  \me{F}\overset{a}{\circ} \me{G}
\to Rp_{13!!} (p_{12}^{a*}\me{F} \otimes_{\sha_N} p_{23}^*\me{G} )
\isofrom \me{F}\overset{a \sha}{\circ} \me{G},
\end{equation}
where the second arrow is an isomorphism by
Proposition~\ref{prop:Amod_f_*acyclique}.
\begin{proposition}
  \label{prop:convol_sha}
  For $\sha$-modules $K_1 \in \Mod(\sha_{X\times Y})$, $K_2 \in
  \Mod(\sha_{Y\times Z})$ and $K_3 \in \Mod(\sha_{X\times Z})$ with a
  morphism $q_{12}^* K_1 \otimes q_{23}^* K_2 \to q_{13}^* K_3 \otimes
  \omega'_Y$ there exists a natural morphism
\begin{equation}
\label{eq:micro_convol_sha}
    \mu^\sha_{X\times Y} K_1 
\overset{a \sha}{\circ} \mu^\sha_{Y\times Z} K_2
\to \mu^\sha_{X\times Z} K_3,
\end{equation}
with the following property. Setting $U = \Tnz^*X \times \Tnz^* Y$, $V
= \Tnz^*Y \times \Tnz^*Z$, the restrictions of
morphisms~\eqref{eq:micro_convol} and~\eqref{eq:micro_convol_sha}
outside the zero section make a commutative diagram in
$\DC^+(\I(\C_{\Tnz^*X \times \Tnz^*Z)}))$:
$$
\xymatrix@R=5mm{
 (\mu_{X\times Y} K_1)_U \overset{a}{\circ} (\mu_{Y\times Z} K_2)_V
\ar[r] \ar[d]
& \mu_{X\times Z} K_3  \ar[d] \\
\mu^\sha_{X\times Y} K_1  
\overset{a \sha}{\circ} \mu^\sha_{Y\times Z} K_2
\ar[r] 
&\mu^\sha_{X\times Z} K_3 .
}
$$
\end{proposition}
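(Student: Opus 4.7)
The plan is to mirror the proof of Proposition~\ref{prop:convol} one step at a time, substituting each ingredient with its $\sha$-module counterpart built in the preceding sections. Because every auxiliary morphism has already been constructed together with a commutative rectangle linking it to the ind-sheaf version, the whole argument reduces to assembling these pieces and checking that the (somewhat restrictive) geometric hypotheses of the $\sha$-module lemmas are satisfied by the configuration of diagram~\eqref{eq:diag_noyaux}.

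Concretely, I would first use the K\"unneth-type identity $\me{F}\overset{a\sha}{\circ}\me{G}\simeq p_{13!!}\, p^*(\me{F}\underline{\boxtimes}\me{G})$ recorded just above the statement to rewrite the source, then apply the external product morphism of Proposition~\ref{prop:prod_ext_mu_sha} to land in $p_{13!!}\, p^*\, \mu^\sha_M(K_1\boxtimes K_2)$. With $p = j_\pi\circ k$ and $p_{13}=q_{13\pi}\circ\tau$ one has $p_{13!!}\, p^* = q_{13\pi!!}\,\tau_{!!}\, k^*\, j_\pi^*$, and the Cartesian square in~\eqref{eq:diag_noyaux} satisfies the hypotheses of Lemma~\ref{lem:base_change}: $q_{13d}$ is an immersion since $q_{13}\cl N\to X\times Z$ is a smooth projection, and $j_d$ is smooth since $j$ is a closed embedding, so that $j_d$ is a surjective vector-bundle projection $T^*M|_N\to T^*N$. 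Base change then gives $q_{13d}^*\, j_{d!!}\isoto \tau_{!!}\, k^*$, hence $p_{13!!}\, p^*\, \mu^\sha_M(K_1\boxtimes K_2)\simeq q_{13\pi!!}\, q_{13d}^*\, j_{d!!}\, j_\pi^*\, \mu^\sha_M(K_1\boxtimes K_2)$.

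Next I would apply Proposition~\ref{prop:micro_im_inv_sha} to the embedding $j$, combine with the identification $j^*(K_1\boxtimes K_2)\simeq q_{12}^* K_1 \otimes_{\sha_N} q_{23}^* K_2$ (which comes from $p_1\circ j = q_{12}$, $p_2\circ j = q_{23}$ and the compatibility of $j^*$ with tensor product), and then use the given morphism $q_{12}^* K_1 \otimes q_{23}^* K_2 \to q_{13}^* K_3 \otimes \omega'_Y$ to reach $q_{13\pi!!}\, q_{13d}^*\, \mu^\sha_N(q_{13}^* K_3 \otimes \omega'_Y)$. Finally Proposition~\ref{prop:micro_im_dir_sha} applied to the submersion $q_{13}$, together with $\omega'_{N\vert X\times Z}\simeq \omega'_Y$, lands in $\mu^\sha_{X\times Z} K_3$, and composing all arrows produces the morphism~\eqref{eq:micro_convol_sha}.

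For the compatibility diagram on the $\Tnz^*$-loci, each step above is performed in exact parallel with the corresponding step in the proof of Proposition~\ref{prop:convol}, and each of the invoked propositions (\ref{prop:prod_ext_mu_sha}, \ref{prop:micro_im_inv_sha}, \ref{prop:micro_im_dir_sha}) already comes equipped with a commutative rectangle linking the $\sha$-module version to its ind-sheaf analog. Stacking these rectangles vertically, together with the obvious compatibility of base change with its derived ind-sheaf counterpart, yields the required square. The main obstacle is not in producing any single morphism, but in the bookkeeping: one has to check at each step that the stringent hypotheses of the $\sha$-module lemmas (embedding of $j$, smoothness of $q_{13}$, and the immersion/smooth combination for Lemma~\ref{lem:base_change}) are indeed satisfied by the geometric data of~\eqref{eq:diag_noyaux}. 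These are all easy consequences of the structure of the diagram, so no step is truly hard.
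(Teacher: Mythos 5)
Your proposal is correct and follows essentially the same route as the paper: mirror the proof of Proposition~\ref{prop:convol} step by step, using Proposition~\ref{prop:prod_ext_mu_sha}, the base change of Lemma~\ref{lem:base_change} (an isomorphism precisely because $q_{13d}$ is an embedding and $j_d$ is smooth), and Propositions~\ref{prop:micro_im_inv_sha} and~\ref{prop:micro_im_dir_sha}, then stack the compatibility rectangles. The only detail the paper makes explicit that you leave implicit is that the hypothesis $q_{12}^* K_1 \otimes q_{23}^* K_2 \to q_{13}^* K_3 \otimes \omega'_Y$ induces, via Proposition~\ref{prop:im_inv_sha_subm} (the $q_{ij}$ being smooth), the ind-sheaf morphism $q_{12}^{-1} K_1 \otimes q_{23}^{-1} K_2 \to q_{13}^{-1} K_3 \otimes \omega_Y$ needed to define the top row of the diagram via Proposition~\ref{prop:convol}.
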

\begin{proof}
  The proof is similar to the proof of Proposition~\ref{prop:convol},
  replacing operations in $\DC^+(\I(\C_\cdot))$ by the same operations
  in $\Mod(\sha_\cdot)$.  In particular
  Proposition~\ref{lem:base_change} gives the base change $p_{13!!}
  p^* = q_{13\pi !!}  \tau_{!!} k^* \, j_\pi^* \isofrom q_{13\pi !!}
  q_{13d}^* \, j_{d!!} j_\pi^*$, which is an isomorphism because
  $q_{13d}$ is an embedding and $j_d$ is smooth.  Then we use
  Propositions~\ref{prop:micro_im_inv_sha}
  and~\ref{prop:micro_im_dir_sha} instead of
  Propositions~\ref{prop:micro_im_inv} and~\ref{prop:micro_im_dir}.
  
  By Proposition~\ref{prop:im_inv_sha_subm} we have $q_{12}^* K_1
  \otimes q_{23}^* K_2 \simeq q_{12}^{-1} K_1 \otimes q_{23}^{-1} K_2$
  and $q_{13}^* K_3 \simeq q_{13}^{-1} K_3$ in $\DC^+(\I(\C_{X\times
    Y\times Z}))$. Hence the morphism in the hypothesis of the
  proposition yields a morphism $q_{12}^{-1} K_1 \otimes q_{23}^{-1}
  K_2 \to q_{13}^{-1} K_3 \otimes \omega_Y$ in $\DC^+(\I(\C_{X\times
    Y\times Z}))$ and we may apply Proposition~\ref{prop:convol}.  The
  vertical arrows in the diagram are given by the morphisms of
  functors $\mu \to \mu^\sha$ and~\eqref{eq:circa_circasha}.
\end{proof}
We are in fact only interested in the following example.  We assume
now that $X,Y,Z$ are complex analytic manifolds.  We use the
$\sha$-module $\oot_X$ and its variants introduced in
Definition~\ref{def:resolution_Ot}.  We set $K_1 = \oot_{X\times
  Y}^{(0,d^c_Y)}[d^c_Y]$, which gives a resolution of $\O_{X\times
  Y}^{t(0,d^c_Y)}[d^c_Y]$, $K_2 = \oot_{Y\times
  Z}^{(0,d^c_Z)}[d^c_Z]$, $K_3 = \oot_{X\times Z}^{(0,d^c_Z)}[d^c_Z]$.
With these notations morphism~\eqref{eq:convol_shc} yields a morphism
$q_{12}^* K_1 \otimes q_{23}^* K_2 \to q_{13}^* K_3 \otimes \omega'_Y$
and Proposition~\ref{prop:convol_sha} gives the microlocal
convolution:
\begin{equation}
  \label{eq:produit_micro_shc}
\mu^\sha_{X\times Y} \oot_{X\times Y}^{(0,d^c_Y)}[d^c_Y]
\overset{a \sha}{\circ}
\mu^\sha_{Y\times Z} \oot_{Y\times Z}^{(0,d^c_Z)}[d^c_Z]
\to
\mu^\sha_{X\times Z} \oot_{X\times Z}^{(0,d^c_Z)}[d^c_Z].
\end{equation}
This convolution product is associative, because the composition of
kernels $\overset{a \sha}{\circ}$ is associative, as well as the
integration morphism, by Fubini.

\section{Sheaves of morphisms}
We will in fact use the morphisms of the previous section in a
slightly more general situation, namely for complexes of the type
$\hom(\pi^{-1}F, \mu G)$, rather than $\mu G$.  For this we use the
following proposition.  Once again we recall the convolution for
sheaves and then build it for $\sha$-modules. To compare them we use
the convolution products for complexes $F,G$, $\me{F}, \me{G}$:
$$
F \overset{0}{\circ} G
 = q_{13!!} (q_{12}^{-1} F \otimes q_{23}^{-1} G),
\qquad
\me{F}\overset{a0}{\circ} \me{G}= p_{13!!} (p_{12}^{a-1}\me{F}
\otimes p_{23}^{-1}\me{G} ).
$$
\begin{proposition}
\label{prop:conv_morph}
We consider $F \in \CC^+(\I(\C_{X\times Y}))$, $G \in
\CC^+(\I(\C_{Y\times Z}))$, $\me{F} \in \CC^+(\I(\C_{T^*(X\times
  Y)}))$ and $\me{G} \in \CC^+(\I(\C_{T^*(Y\times Z)}))$, there exists
natural morphisms, respectively in $ \DC^+(\I(\C_{T^*(X\times Z)}))$
and $ \CC^+(\I(\C_{T^*(X\times Z)}))$:
\begin{align}
  \label{eq:micro_convol_hom1}
    \Rhom(\pi^{-1}_{X\times Y} F, \me{F})
\overset{a}{\circ}
 \Rhom(\pi^{-1}_{Y\times Z} G, \me{G})  
&\to 
\Rhom(\pi^{-1}_{X\times Z}(F \circ G) , 
\me{F} \overset{a}{\circ} \me{G}),
 \\
\label{eq:micro_convol_hom2}
  \hom(\pi^{-1}_{X\times Y} F, \me{F})
\overset{a0}{\circ}
 \hom(\pi^{-1}_{Y\times Z} G, \me{G})  
&\to 
\hom(\pi^{-1}_{X\times Z}(F \overset{0}{\circ} G) , 
\me{F} \overset{a0}{\circ} \me{G}).
\end{align}

For $F \in \CC^+(\C_{(X\times Y)_{sa}})$, $G \in \CC^+(\C_{(Y\times
  Z)_{sa}})$, $\me{F} \in \Mod(\sha_{T^*(X\times Y)})$ and $\me{G} \in
\Mod(\sha_{T^*(Y\times Z)})$, we also have the natural morphism
  \begin{equation}
    \label{eq:micro_convol_hom_sha}
    \hom(\pi^{-1}_{X\times Y} F, \me{F})
\overset{a \sha}{\circ}
 \hom(\pi^{-1}_{Y\times Z} G, \me{G})  
\to 
\hom(\pi^{-1}_{X\times Z}(F \overset{0}{\circ} G) , 
\me{F} \overset{a \sha}{\circ} \me{G}).
  \end{equation}
  These morphisms fit into the commutative diagram:
\begin{equation}
  \label{eq:diag_conv_morph}
\vcenter{
\xymatrix@R=5mm{
    \Rhom(\pi^{-1}_{X\times Y} F, \me{F})
\overset{a}{\circ}
 \Rhom(\pi^{-1}_{Y\times Z} G, \me{G})  
\ar[r] 
& \Rhom(\pi^{-1}_{X\times Z}(F \circ G) , 
\me{F} \overset{a}{\circ} \me{G})   \ar[d] \\
& \hom(\pi^{-1}_{X\times Z}(F \overset{0}{\circ} G) , 
\me{F} \overset{a}{\circ} \me{G})   \\
\hom(\pi^{-1}_{X\times Y} F, \me{F})
\overset{a 0}{\circ}
 \hom(\pi^{-1}_{Y\times Z} G, \me{G})  
\ar[uu]\ar[d] \ar[r] 
& \hom(\pi^{-1}_{X\times Z}(F \overset{0}{\circ} G) , 
\me{F} \overset{a0}{\circ} \me{G})   \ar[u] \ar[d]  \\
\hom(\pi^{-1}_{X\times Y} F, \me{F})
\overset{a \sha}{\circ}
 \hom(\pi^{-1}_{Y\times Z} G, \me{G})  
\ar[r]
& \hom(\pi^{-1}_{X\times Z}(F \overset{0}{\circ} G) , 
\me{F} \overset{a \sha}{\circ} \me{G})   .
} }
\end{equation}
\end{proposition}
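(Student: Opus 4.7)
I would build the three morphisms (10.3), (10.4), (10.5) by a single recipe and then verify the diagram (10.6) face by face. The recipe combines three standard functorial morphisms: (i) $f^{-1}\hom(A,B)\to\hom(f^{-1}A,f^{-1}B)$; (ii) the Hom-pairing $\hom(A_1,B_1)\otimes\hom(A_2,B_2)\to\hom(A_1\otimes A_2,B_1\otimes B_2)$; and (iii) the proper-direct-image compatibility $f_{!!}\hom(f^{-1}A,M)\to\hom(A,f_{!!}M)$. All three exist in derived form and in $\sha$-module form, so the same recipe simultaneously produces (10.3) derivedly and (10.5) in the category of dg-$\sha$-modules.

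Concretely for (10.4), starting with the LHS I apply (i) to $f=p_{12}^a$ and $f=p_{23}$, then (ii), and use the identities $\pi_{X\times Y}\circ p_{12}^a = q_{12}\circ \pi_N$ and $\pi_{Y\times Z}\circ p_{23} = q_{23}\circ \pi_N$ (with $N=X\times Y\times Z$ and $\pi_N\cl T^*N\to N$) to land in
$$
p_{13!!}\hom\bigl(\pi_N^{-1}H,\; p_{12}^{a-1}\me{F}\otimes p_{23}^{-1}\me{G}\bigr),
\qquad H = q_{12}^{-1}F\otimes q_{23}^{-1}G.
$$
To pass $p_{13!!}$ through the $\hom$, I factor $p_{13}=t\circ s$, where $s\cl T^*N\to T^*X\times Y\times T^*Z$ collapses the $T^*Y$-factor to $Y$ and $t\cl T^*X\times Y\times T^*Z\to T^*X\times T^*Z$ forgets $Y$. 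The square
$$
\xymatrix@R=4mm{T^*X\times Y\times T^*Z \ar[d]_t \ar[r]^-{t'} & N \ar[d]^{q_{13}} \\ T^*X\times T^*Z \ar[r]_-{\pi_{X\times Z}} & X\times Z}
$$
is Cartesian, giving the base change isomorphism $\pi^{-1}_{X\times Z}q_{13!!}H\simeq t_{!!}t'^{-1}H$. Writing $\pi_N^{-1}H = s^{-1}t'^{-1}H$ and applying (iii) to $s$ yields a morphism into $t_{!!}\hom(t'^{-1}H, s_{!!}M)$; combining this with the base change and a further $t$-compatibility argument produces the target.

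The construction of (10.3) is the same recipe in $\DC^+(\I(\C_{T^*(X\times Z)}))$, with $\Rhom$, $Rf_{!!}$, and the derived versions of (i)--(iii). For (10.5) the recipe is carried out inside the category of dg-$\sha$-modules: Proposition~\ref{prop:im_inv_sha_subm} supplies the inverse image $f^*$, Lemma~\ref{lem:proj_form} the projection formula, Lemma~\ref{lem:base_change} the base change, and $\hom(\pi^*F,\me{F})$ inherits its dg-$\sha$-module structure from $\me{F}$. No derivation is required on the $\sha$-side because $\sha_X^0$-modules are soft, hence $f_{!!}$-acyclic (Proposition~\ref{prop:Amod_f_*acyclique}). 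For the diagram (10.6), each face is natural in its input data, so commutativity reduces to coherence of the underlying canonical transformations: $\hom\to\Rhom$, $f_{!!}\to Rf_{!!}$, $\overset{a0}{\circ}\to\overset{a}{\circ}$, and the forgetful functor from $\sha$-modules to complexes of ind-sheaves, under which $\otimes_\sha\to\otimes$ and $f^*\to f^{-1}$ are the canonical morphisms of Section~6.

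The main obstacle is step~(iii). A direct attempt to push $p_{13!!}$ through $\hom$ fails because the inner argument $\pi_N^{-1}H$ is not of the form $p_{13}^{-1}(\cdot)$ with something natural on the base; indeed, there is no natural morphism $q_{13}^{-1}q_{13!!}H\to H$, since $q_{13}$ has non-compact fiber $Y$. The factorization $p_{13}=t\circ s$ circumvents this by introducing the intermediate space $T^*X\times Y\times T^*Z$ on which $t'^{-1}H$ is defined, so that the base change isomorphism $t_{!!}t'^{-1}H \simeq \pi^{-1}q_{13!!}H$ can be used to bridge to the target. Verifying that the composite morphism is well-defined and natural, and that on the $\sha$-module side each step respects the dg-$\sha$-module structure, is the bulk of the technical work.
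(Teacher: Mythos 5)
Your construction has the same overall architecture as the paper's: steps (i) and (ii) of your recipe land you in $p_{13!!}\hom(\pi_N^{-1}H,\,p_{12}^{a-1}\me{F}\otimes p_{23}^{-1}\me{G})$ exactly as in the paper (which gets there via the box product on $M=X\times Y\times Y\times Z$ and $j^{-1}$), the target is reached through the same base change, the $\sha$-version is the same recipe run with $f^*$, $\otimes_\sha$ and the underived $f_{!!}$ justified by softness, and the diagram follows from naturality in both treatments. Where you genuinely diverge is the direct-image step. The paper applies $f_{!!}\hom(\cdot,\cdot)\to\hom(f_*\cdot,f_{!!}\cdot)$ once, for $f=p_{13}$, and then supplies the contravariant map $\pi^{-1}_{X\times Z}Rq_{13!!}H\to Rp_{13*}\pi_N^{-1}H$ by a separate conicity argument: writing $p_{13}=q_{13\pi}\circ\tau$ with $\tau$ your $s$, it uses $R\tau_*\simeq\sigma^{-1}$ on conic sheaves ($\sigma$ the zero section of the collapsed $T^*Y$-direction) together with $Rq_{13\pi!!}\to Rq_{13\pi*}$. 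You instead factor the push-forward itself, $p_{13!!}=t_{!!}s_{!!}$, use the adjunction morphism $s_{!!}\hom(s^{-1}A,M)\to\hom(A,s_{!!}M)$ applied to $\pi_N^{-1}H=s^{-1}t'^{-1}H$ (this pullback structure is what replaces the conicity), and then need the compatibility $t_{!!}\hom(A,B)\to\hom(t_{!!}A,t_{!!}B)$ with $t_{!!}$ on \emph{both} arguments rather than $t_*$ on the first. That variant does exist — on sections in the underived and $\sha$-settings, and in derived form via $Rt_{!!}\to Rt_*$ and the $(Rt_{!!},t^!)$ adjunction — but it is not among the formulas the paper records, so you must construct it; the trade-off is that you avoid any explicit appeal to conicity at the price of this extra hom-pushforward compatibility. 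One incidental slip in your motivation: the claim that there is no natural morphism $q_{13}^{-1}q_{13!!}H\to H$ is false — compose $q_{13!!}\to q_{13*}$ with the counit of $(q_{13}^{-1},q_{13*})$; its adjoint is essentially the map the paper builds by the conicity argument, so the factorization is a convenience, not a necessity. This does not affect the validity of your construction.
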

\begin{proof}
  We first build morphism~\eqref{eq:micro_convol_hom1}, in the derived
  category.  We keep the notations of section~\ref{sec:con_kern}, in
  particular diagram~\eqref{eq:diag_noyaux}.  To simplify the
  notations we suppress some subscripts on $\pi^{-1}$. Let us denote
  by $LHS$ the left hand side of~\eqref{eq:micro_convol_hom1}. We have
  \begin{align*}
LHS &=      Rp_{13!!} p^{-1} (
  \Rhom(\pi^{-1} F, \me{F}) \boxtimes \Rhom(\pi^{-1} G, \me{G})
  )  \\
&\simeq
Rp_{13!!} p^{-1} 
\Rhom(\pi^{-1} (F\boxtimes G), \me{F}\boxtimes \me{G}) .
  \end{align*}
  We can enter the functor $p^{-1}$ inside the $\Rhom$, and use the
  morphism of functors $Rp_{13!!} \Rhom(\cdot,\cdot) \to
  \Rhom(Rp_{13*}(\cdot),Rp_{13!!}(\cdot))$.  Thus we obtain a morphism:
  \begin{equation}
    \label{eq:conv_morph1}
  LHS \to \Rhom(Rp_{13*}p^{-1}\pi^{-1} ( F \boxtimes G), 
\me{F} \overset{a}{\circ} \me{G}).
  \end{equation}
  We let $\sigma\cl N \times_{X\times Z}T^*(X\times Z) \to T^*N$ be
  induced by the inclusion of the zero section of $Y$ and we let
  $\pi'_N \cl N \times_{X\times Z}T^*(X\times Z) \to N$ be the
  projection.  Then $\pi_M \circ p \circ \sigma = j \circ \pi'_N$.
  Moreover, since we deal with conic sheaves, we have the isomorphism
  of functors $R\tau_* \simeq \sigma^{-1}$.  We also have a morphism
  $Rq_{13\pi !!} \to Rq_{13\pi *}$. We deduce the sequence of
  morphisms:
  \begin{equation}
    \label{eq:conv_morph2}
  \begin{split}
    Rp_{13*} p^{-1} \pi^{-1}_M
&\simeq
Rq_{13\pi *} R\tau_* p^{-1} \pi^{-1}_M \\
& \from
Rq_{13\pi !!} \sigma^{-1} p^{-1} \pi^{-1}_M \\
& \simeq
Rq_{13\pi !!} {\pi'}_N^{-1} j^{-1}  \\
& \simeq 
\pi^{-1}_{X\times Z} Rq_{13!!} j^{-1},
  \end{split}
  \end{equation}
  where the last isomorphism is a base change. So we obtain
  $\pi^{-1}_{X\times Z} Rq_{13!!} j^{-1} \to Rp_{13*} p^{-1}
  \pi^{-1}_M$ and composing this morphism with~\eqref{eq:conv_morph1}
  we deduce~\eqref{eq:micro_convol_hom1}.

\medskip

Morphism~\eqref{eq:micro_convol_hom2} in the category of complexes is
obtained in the same way.  In particular the analog of
morphism~\eqref{eq:conv_morph1} is obtained from the morphism of
complexes $p_{13!!} \hom(\cdot,\cdot) \to
\hom(p_{13*}(\cdot),p_{13!!}(\cdot))$.  Moreover, $\tau_*$ is exact on
conic sheaves, so that $\tau_* \simeq R\tau_* \simeq \sigma^{-1}$ and
we have a sequence of morphisms in the category of complexes analog
to~\eqref{eq:conv_morph2} (note that the base change formula is true
for complexes).

The top part of diagram~\eqref{eq:diag_conv_morph} is given by the
natural morphisms between functors and their derived functors.

\medskip

The difference between morphism~\eqref{eq:micro_convol_hom_sha} and
morphism~\eqref{eq:micro_convol_hom2} only concerns the right hand
side of the $\hom$ functors.  Namely we replace the functor
$\overset{a0}{\circ}$ by $\overset{a \sha}{\circ}$ and obtain the same
proof. The bottom part of diagram~\eqref{eq:diag_conv_morph} is then
given by the morphism of functors~\eqref{eq:circa_circasha}.
\end{proof}

\section{$\E$-modules}
In this section $X$ is a complex analytic manifold of complex
dimension $n = d^c_X$ and $\Delta$ denotes the diagonal of $X\times
X$.  We identify $T^*X$ and $T^*_\Delta (X\times X)$ by the first
projection.  We denote by $\E_X$ the sheaf of microdifferential
operators of finite order. This is a sheaf on $T^*X$ and its
restriction to $\Tnz^*X$ was interpreted using the tempered
microlocalization in~\cite{A94} (see also~\cite{P07b}), as follows. We
let $\gamma \cl \Tnz^*X \to \CP(X)$ be the projection to the complex
projective bundle associated to $T^*X$.  Then $\E_X \simeq \gamma^{-1}
\gamma_*(\ERf_X)$, where $\ERf_X$ is the sheaf on $T^*X \simeq
T^*_\Delta (X\times X)$:
$$
\ERf_X = \tmu hom(\C_\Delta, \O_{X\times X}^{(0,n)}[n]) .
$$
The product of $\ERf_X$ is defined in~\cite{A94} by the convolution
product for tempered microlocalization.  This can be defined also in
the language of ind-sheaves, following~\cite{KSIW06}.  We first define
$\Eindd_X \in \BDC(\I(\C_{T^*(X\times X)}))$ by
$$
\Eindd_X = \Rihom( \pi^{-1}\C_\Delta,
\mu_{X\times X} \O_{X\times X}^{t(0,n)}[n]),
$$
where $\O_{X\times X}^{t(0,n)}$, defined in~\eqref{eq:def_Ot} as an
object of $\BDC(\C_{(X\times X)_{sa}})$, is now considered in
$\BDC(\I(\C_{X\times X})$ using the functor $I_\tau$.  Thus $\Eindd_X$
has support on $T^*X \simeq T^*_\Delta (X\times X)$ but this doesn't
imply that it is the image of an ind-sheaf on $T^*X$.  We recall the
notations $p_1,p_2 \cl T^*(X\times X) \to T^*X$ for the projections,
$a\cl T^*X \to T^*X$ for the antipodal map and we define the embedding
$$
\delta' \cl T^*X \simeq T^*_\Delta (X\times X) \to
T^*(X\times X),
\quad
(x,\xi) \mapsto (x,x,\xi,-\xi).
$$
Since $\supp \Eindd_X = T^*_\Delta (X\times X)$ the morphisms of
functors $p_{1*} \to p_{1*} \delta'_* \delta'^{-1} = \delta'^{-1}$ and
$p_{2*} \to a^{-1} \delta'^{-1}$ induce isomorphisms:
\begin{equation}
  \label{invdiag=dirproj}
\delta'^{-1} \Eindd_X \simeq  p_{1*} \Eindd_X 
\simeq  a^{-1} p_{2*} \Eindd_X .
\end{equation}
We could write the same isomorphisms with $p_{i!!}$ instead of
$p_{i*}$ or their derived functors.
\begin{definition}
  We let $\Eind_X \in \BDC(\I(\C_{T^*X}))$ be the ind-sheaf on $T^*X$
  defined by~\eqref{invdiag=dirproj}.
\end{definition}
Since the functor $\alpha$ from ind-sheaves to sheaves commute with
direct image (or inverse image) we have $\ERf_X \simeq \alpha_{T^*X}
(\Eind_X)$.

The complex $\Eind_X$ comes with a product in the sense of
Definition~\ref{def:anneau_module}, defined as follows:
\begin{itemize}
\item [(i)] Using~\eqref{invdiag=dirproj} we see that $\Eind_X \otimes
  \Eind_X \simeq \delta'^{-1} (\Eindd_X \overset{a}{\circ} \Eindd_X)$.
  
\item [(ii)] We have $\C_\Delta \circ \C_\Delta = \C_\Delta$ and
  morphism~\eqref{eq:micro_convol_hom1}, with $X=Y=Z$, gives a
  morphism $\Eind_X \otimes \Eind_X \to \delta'^{-1} \Rihom(
  \pi^{-1}\C_\Delta, \mu \O_{X\times X}^{t(0,n)}\overset{a}{\circ} \mu
  \O_{X\times X}^{t(0,n)}[2n] )$.
  
\item [(iii)] The convolution product $Rq_{13!!} ( q_{12}^{-1}\O_{X
    \times X}^{t(0,n)}[n] \otimes q_{23}^{-1}\O_{X \times
    X}^{t(0,n)}[n] ) \to \O_{X \times X}^{t(0,n)}[n]$ together with
  Proposition~\ref{prop:convol} gives a morphism $\mu \O_{X\times
    X}^{t(0,n)}\overset{a}{\circ} \mu \O_{X\times X}^{t(0,n)}[2n] \to
  \mu \O_{X\times X}^{t(0,n)}[n]$.
\end{itemize}
The composition of (i)--(iii) defines the product $\Eind_X \otimes
\Eind_X \to \Eind_X$.  In the same way
Propositions~\ref{prop:conv_morph} and~\ref{prop:convol}, applied to
$X=Y$ and $Z$ a point, give an action of $\Eind_X$ on $\mu \O^t_X$, in
the sense of Definition~\ref{def:anneau_module}.  We deduce an action
of $\Eind_X$ on $\Rihom( \pi^{-1}F,\mu \O^t_X)$, for any $F\in
\BDC(\I(\C_X))$.

This product and this action are just morphisms in the derived
category and do not endow the complex $\Eind_X$ with a structure of
algebra. However, when we go back to the derived category of sheaves
with the functor $\alpha_{T^*X}$, the product gives a morphism $\ERf_X
\otimes \ERf_X \to \ERf_X$.  But $\ERf_X$ is a sheaf (i.e. concentrated
in degree $0$) and this morphism really endows $\ERf_X$ with a
structure of sheaf of algebras.  But this is not enough to define a
structure of $\ERf_X$-module on $\tmu hom(F, \O_X) \simeq \alpha_{T^*X}
\Rihom( \pi^{-1}F,\mu \O^t_X)$, which is in general not concentrated in
degree.

\medskip

To solve this problem we define a dg-algebra $\Esa_X$ on the site
$X_{sa}$ (and not merely an object in the derived category) such that
$\Eind_X \simeq I_\tau (\Esa_X)$.  We also define in the same way a
dg-module over $\Esa_X$ representing $\mu \O^t_X$.  In fact our
definition is exactly the previous one, but in the categories of
$\sha$-modules instead of the derived categories.
\begin{definition}
  We define a complex of sheaves on $T^*X_{sa} = T^*_\Delta (X\times
  X)_{sa}$
$$
\Esa_X = \delta'^{-1} \hom( \pi^{-1}\C_\Delta,
\mu^\sha_{X\times X} \oot_{X\times X}^{(0,n)}[n]) ,
$$
with a product defined as follows:
\begin{itemize}
\item [(i)] as in the case of $\Eind_X$,
  morphism~\eqref{eq:micro_convol_hom_sha} gives a morphism
$$
\Esa_X \otimes \Esa_X
\to \delta'^{-1} \hom( \pi^{-1}\C_\Delta,
\mu^\sha \oot_{X\times X}^{(0,n)}
\overset{a\sha}{\circ} \mu^\sha \oot_{X\times X}^{(0,n)}[2n] ),
$$
\item [(ii)] the convolution product~\eqref{eq:convol_shc} together
  with Proposition~\ref{prop:convol_sha} gives a morphism
$$
\mu^\sha \oot_{X\times X}^{(0,n)}
\overset{a\sha}{\circ} 
\mu^\sha \oot_{X\times  X}^{(0,n)}[2n]
\to \mu^\sha \oot_{X\times X}^{(0,n)}[n].
$$
\end{itemize}
The composition of (i) and (ii) defines the product $\Esa_X \otimes
\Esa_X \to \Esa_X$.

In the same way, Propositions~\ref{prop:conv_morph}
and~\ref{prop:convol_sha}, applied to $X=Y$ and $Z$ a point, give a
morphism $\Esa_X \otimes \mu^\sha_{X} \oot_{X} \to \mu^\sha_{X}
\oot_{X}$. 
\end{definition}
\begin{proposition}
  The morphisms introduced in the previous definition give $\Esa_X$ a
  structure of dg-algebra and give $\mu^\sha_{X} \oot_{X}$ a structure
  of dg-$\Esa_X$-module.
  
  Over $\Tnz^*X$, we have isomorphisms $\Eind_X \simeq I_\tau (\Esa_X)$
  and $\mu \O^t_X \simeq I_\tau (\mu^\sha_{X} \oot_{X})$. Through these
  isomorphisms the product of $\Esa_X$ and its action on $\mu^\sha_{X}
  \oot_{X}$ coincide with the product of $\Eind_X$ and its action on
  $\mu \O^t_X$ defined above.
\end{proposition}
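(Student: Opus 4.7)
The plan is to organize the proof in three parts: (a) verifying that the multiplication makes $\Esa_X$ a sheaf of dg-algebras, (b) verifying that the action makes $\mu^\sha_X \oot_X$ a dg-$\Esa_X$-module, and (c) comparing $\Esa_X$ (resp.\ $\mu^\sha_X \oot_X$) with $\Eind_X$ (resp.\ $\mu \O_X^t$) over $\Tnz^*X$ together with their products/actions.

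For (a), associativity reduces to the commutativity of a diagram expressing the triple product $\Esa_X^{\otimes 3} \to \Esa_X$. One first applies the iterated version of morphism~\eqref{eq:micro_convol_hom_sha} from Proposition~\ref{prop:conv_morph} to get a morphism to $\delta'^{-1} \hom(\pi^{-1}\C_\Delta, \mu^\sha \oot_{X \times X}^{(0,n)} \overset{a\sha}{\circ} \mu^\sha \oot_{X \times X}^{(0,n)} \overset{a\sha}{\circ} \mu^\sha \oot_{X \times X}^{(0,n)}[3n])$; the two ways of bracketing agree because $\overset{a\sha}{\circ}$ is an actual associative operation on complexes (this comes from the associativity of the convolution $p_{13!!}\bigl(p_{12}^{a*}(\cdot) \otimes_\sha p_{23}^*(\cdot)\bigr)$ and the base-change/projection formulas from Lemmas~\ref{lem:proj_form} and~\ref{lem:base_change}). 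Next one applies the convolution~\eqref{eq:convol_shc} twice; associativity of the resulting product on $\mu^\sha \oot_{X \times X}^{(0,n)}[n]$ is precisely the associativity stated at the end of Section~\ref{sec:con_kern} (composition of kernels $\overset{a\sha}{\circ}$ associative, together with Fubini for the integration morphism on $\oot$). The unit $\underline \C_{T^*X} \to \Esa_X$ is induced by the identity $\C_\Delta \circ \C_\Delta \to \C_\Delta$ and the unit $1 \in \oot_X^0$; the unit axioms are checked in the same style.

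For (b), exactly the same argument applies with the second or third factor of $\Esa_X$ replaced by $\mu^\sha_X \oot_X$: the analogue of~\eqref{eq:micro_convol_hom_sha} in Proposition~\ref{prop:conv_morph} with $Z$ a point lands in $\hom(\pi^{-1}\C_X, \mu^\sha_{X \times X} \oot_{X\times X}^{(0,n)} \overset{a\sha}{\circ} \mu^\sha_X \oot_X[n])$, and the convolution product in Section~\ref{sec:con_kern} (applied to $Z = \pt$) gives a morphism $\mu^\sha_{X \times X} \oot_{X \times X}^{(0,n)} \overset{a\sha}{\circ} \mu^\sha_X \oot_X[n] \to \mu^\sha_X \oot_X$, so that composing gives the action. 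Associativity of the action and compatibility with the unit follow, once again, from associativity of $\overset{a\sha}{\circ}$, Fubini, and the identity $\C_\Delta \circ \C_X = \C_X$.

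For (c), the sheaf $\oot_X$ (resp.\ $\oot_{X\times X}^{(0,n)}$) is locally free over $\sha_X^0$ (resp.\ $\sha_{X \times X}^0$) by Proposition~\ref{prop:resolution_Ot}(ii) together with the decomposition $\sha \simeq \oot^{(d^c)} \oplus M$. Thus Proposition~\ref{prop:mu_sha_qinj} yields $\mu_X \O_X^t \isoto I_\tau(\mu^\sha_X \oot_X)$ and $\mu_{X \times X} \O_{X \times X}^{t(0,n)} \isoto I_\tau(\mu^\sha_{X \times X} \oot_{X \times X}^{(0,n)})$ in $\DC^+(\I(\C_{\Tnz^*X}))$ (resp.\ on $\Tnz^*(X \times X)$), and moreover the right-hand sides are quasi-injective, so $\hom$ and $\Rihom$ coincide on them; this gives $\Eind_X \simeq I_\tau(\Esa_X)$ on $\Tnz^*X$. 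Compatibility of products and actions under these isomorphisms is a direct application of the commutative diagram~\eqref{eq:diag_conv_morph} of Proposition~\ref{prop:conv_morph} together with the commutative diagram of Proposition~\ref{prop:convol_sha}: each building block (kernel composition, convolution, $\hom$-vs-$\Rihom$) is related to its $\sha$-module counterpart by an explicit natural morphism, and the hexagons obtained by juxtaposing these diagrams are all commutative by construction.

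The main obstacle will be bookkeeping the compatibility in (c): one must patiently check that the sequence of morphisms defining the product of $\Eind_X$ coincides, after applying $I_\tau$, with the image of the sequence defining the product of $\Esa_X$, using at each stage the two commutative diagrams above together with naturality of $\mu \to \mu^\sha$ and of~\eqref{eq:circa_circasha}. No new geometric input is required, but the diagram has several cells and must be assembled carefully.
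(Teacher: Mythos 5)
Your proposal is correct and follows essentially the same route as the paper: the dg-structures hold because all products and actions are genuine morphisms of complexes (with associativity coming from the associativity of $\overset{a\sha}{\circ}$ and Fubini), and the comparison over $\Tnz^*X$ rests on diagram~\eqref{eq:diag_conv_morph}, Proposition~\ref{prop:convol_sha}, and the fact that the quasi-injectivity of $\mu^\sha$ of locally free $\sha^0$-modules (Proposition~\ref{prop:mu_sha_qinj}) together with Proposition~\ref{prop:Amod_f_*acyclique} makes the relevant vertical arrows isomorphisms. The only detail you gloss over is that the commutative diagram of Proposition~\ref{prop:convol_sha} is stated only after restriction to $U=\Tnz^*X\times\Tnz^*X$, so to apply it to the unrestricted product of $\Eind_X$ one must invoke the support condition $\supp\Eindd_X\subset T^*_\Delta(X\times X)$, which gives $(\Eindd_X)_U\isoto\Eindd_X$ on $\Tnz^*X\times T^*X$ --- a small but necessary step that the paper makes explicit.
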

\begin{proof}
  The complex $\Esa_X$ is a dg-algebra and $\mu^\sha_{X} \oot_{X}$ is
  a dg-$\Esa_X$-module because the product and the action are defined
  in categories of complexes, and not merely up to homotopy.
  
  Let us check that the product of $\Esa_X$ represents the product of
  $\Eind_X$ and that their action on $\mu^\sha_{X} \oot_{X}$ and $\mu
  \O^t_X$ are the same.  This is a consequence of
  diagram~\eqref{eq:diag_conv_morph} and
  Proposition~\ref{prop:convol_sha}; but in
  diagram~\eqref{eq:diag_conv_morph} some vertical arrows go in the
  wrong direction and the commutative diagram in
  Proposition~\ref{prop:convol_sha} requires a restriction outside the
  zero section. These problems are solved as follows.
  
  In diagram~\eqref{eq:diag_conv_morph} the vertical arrows are
  isomorphisms. Indeed, we consider the cases $\me{F} = \mu^\sha_{X}
  \oot_{X\times X}^{(0,n)}[n]$ and $\me{G} = \me{F}$ or $\me{G} =
  \mu^\sha_{X} \oot_X$.  Hence, by Proposition~\ref{prop:mu_sha_qinj},
  $\me{F}$ and $\me{G}$ consist of quasi-injective sheaves (on the
  site $T^*X_{sa}$), and so are acyclic for the functors
  $\hom(H,\cdot)$, when $H$ is constructible. In our cases the
  complexes $F$, $G$ in the diagram are $\C_\Delta$ or $\C_X$, so that
  the $\hom$ sheaves are isomorphic to the $\Rhom$.  For the
  composition of kernels $\circ$ we also have to compute a direct
  image. Since we deal with $\sha$-modules,
  Proposition~\ref{prop:Amod_f_*acyclique} implies that direct images
  and derived direct images coincide.  This proves that the vertical
  arrows are isomorphisms.
  
  This diagram can be extended to the right, using
  Proposition~\ref{prop:convol_sha}. We can use the commutative
  diagram of Proposition~\ref{prop:convol_sha} because of the
  following remark: setting $U = \Tnz^*X \times \Tnz^*X$, we have, on
  $\Tnz^*X \times T^*X$, $(\Eindd_X)_U \isoto \Eindd_X$.  Then, for
  the same reason as above, the right vertical arrows in this extended
  diagram are isomorphisms.
\end{proof}
We still have to make the link between $\Esa_X$ and $\ERf_X$.  We note
that $\rho^{-1}\Esa_X$ is quasi-isomorphic to $\ERf_X$. In particular
$\rho^{-1}\Esa_X$ has its cohomology concentrated in degree $0$ and we
have isomorphisms of sheaves:
$$
\ERf_X \simeq H^0(\rho^{-1}\Esa_X)
\simeq H^0( \alpha I_\tau (\Esa_X)).
$$
Moreover the structure of dg-algebra on $\Esa_X$ gives a structure
of dg-algebra on $\rho^{-1}\Esa_X$ and a structure of algebra on
$H^0(\rho^{-1}\Esa_X)$.  The above proposition implies that this
product induced on $\ERf_X$ coincides with the one defined previously.

We also have a structure of dg-$\Esa_X$-module on $\mu^\sha_{X}
\oot_{X}$; in particular it defines an object $I_\tau(\mu^\sha_{X}
\oot_{X}) \in \DC(I_\tau(\Esa_X))$.  For any $G \in
\DC^-(\I(\C_{T^*X}))$ the complex $\Rihom(G, I_\tau(\mu^\sha_{X}
\oot_{X}) )$ is thus also naturally defined as an object of
$\DC(I_\tau(\Esa_X))$.  For $G = \pi^{-1}F$, $F\in \DC^-(\I(\C_X))$,
we deduce that
$$
\tmu hom(F, \O_X) =
\alpha \Rihom(\pi^{-1} F , I_\tau(\mu^\sha_{X} \oot_{X}) )
\quad
\in \DC(\rho^{-1} \Esa_X),
$$
and, by construction, the corresponding action in $\DC(\C_{T^*X})$
$$
\rho^{-1} \Esa_X \otimes  \tmu hom(F, \O_X)
\to \tmu hom(F, \O_X)
$$
coincides with the action of $\ERf_X$ on $\tmu hom(F, \O_X)$ defined
above. Thus we are almost done, except that $\tmu hom(F, \O_X)$ is
defined as an object of $\DC(\rho^{-1} \Esa_X)$ rather than
$\DC(\ERf_X)$. But the dg-algebra $\rho^{-1} \Esa_X$ is
quasi-isomorphic to $\ERf_X$ and it just remains to apply
Corollary~\ref{cor:chgt_dg-alg}, as follows.

We have the quasi-isomorphisms of dg-algebras on $\Tnz^*X$
$$
\rho^{-1}  \Esa_X \xfrom{\phi_{\leq 0}}
\tau_{\leq 0} \rho^{-1}  \Esa_X \xto{\phi_0} \ERf_X,
$$
and the equivalence of categories $\phi_0^* \circ \phi_{\leq 0 *}
\cl \DC(\rho^{-1} \Esa_X) \isoto \DC(\ERf_X)$. We set $\E'_X =
\beta_{T^*X} (\rho^{-1} \Esa_X)$ so that we have an adjunction
morphism $\E'_X \to I_\tau(\Esa_X)$.  This morphism induces a functor
of restriction of scalars, and $\phi_0^* \circ \phi_{\leq 0 *}$
induces an equivalence of categories:
$$
r\cl \DC(I_\tau(\Esa_X)) \to \DC(\E'_X),
\qquad
\Phi\cl \DC(\E'_X) \isoto \DC(\beta_{T^*X} (\ERf_X)).
$$
Hence we obtain an object $\O_X^\mu = \Phi(r(I_\tau(\mu^\sha_{X}
\oot_{X}))) \in \DC(\beta_{T^*X} (\ERf_X)) )$, representing $\mu
\O^t_X$ and we can state the final result:
\begin{theorem}
\label{thm:tmuhom=Emod}
The object $\O_X^\mu \in \DC(\beta_{T^*X} (\ERf_X)) )$ defined above,
over $\Tnz^*X$, is send to $\mu_X \O^t_X$ in $\DC(\I(\C_{\Tnz^*X}))$ by
the forgetful functor. It satisfies moreover: for $F\in
\DC^-(\I(\C_X))$ the complex
$$
\alpha_{T^*X} \Rihom(\pi^{-1}F, \O_X^\mu )
$$
which is naturally defined in $\DC(\ERf_X)$, over $\Tnz^*X$, is
isomorphic in $\DC(\C_{\Tnz^*X})$ to $\tmu hom(F, \O_X)$ endowed with
its action of $\ERf_X$.
\end{theorem}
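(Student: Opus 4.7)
The proof plan is to assemble the constructions already carried out and invoke Corollary~\ref{cor:chgt_dg-alg}. By what precedes the theorem, $\Esa_X$ is a sheaf of dg-algebras on $T^*X_{sa}$ and $\mu^\sha_X \oot_X$ is a dg-$\Esa_X$-module, so $I_\tau(\mu^\sha_X \oot_X)$ naturally lies in $\DC(I_\tau(\Esa_X))$. Since $\rho^{-1}\Esa_X$ has cohomology concentrated in degree zero with $H^0 \simeq \ERf_X$ on $\Tnz^*X$, we are in the precise setting of Corollary~\ref{cor:chgt_dg-alg} via the quasi-isomorphisms of dg-algebras
$$
\rho^{-1}\Esa_X \xfrom{\phi_{\leq 0}} \tau_{\leq 0}\rho^{-1}\Esa_X \xto{\phi_0} \ERf_X.
$$

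The first step is to define $\O_X^\mu$. Applying $\beta_{T^*X}$ to these quasi-isomorphisms and composing the equivalences $\phi_0^* \circ \phi_{\leq 0\,*}$ produces an equivalence $\Phi\cl \DC(\E'_X) \isoto \DC(\beta_{T^*X}\ERf_X)$ with $\E'_X = \beta_{T^*X}(\rho^{-1}\Esa_X)$. Combined with the restriction-of-scalars functor $r\cl \DC(I_\tau(\Esa_X)) \to \DC(\E'_X)$ coming from the adjunction morphism $\E'_X \to I_\tau(\Esa_X)$, I would set $\O_X^\mu := \Phi(r(I_\tau(\mu^\sha_X \oot_X)))$. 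The fact that its image under the forgetful functor to $\DC(\I(\C_{\Tnz^*X}))$ is $\mu_X \O_X^t$ follows from Proposition~\ref{prop:mu_sha_qinj}, which identifies $I_\tau(\mu^\sha_X \oot_X)$ with $\mu_X \O_X^t$ outside the zero section, together with the triangle in Corollary~\ref{cor:chgt_dg-alg} which says that transport through $\Phi \circ r$ commutes with the forgetful functors.

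For the final assertion, I would use that $I_\tau(\mu^\sha_X \oot_X)$ lives in $\DC(I_\tau(\Esa_X))$, so for $F \in \DC^-(\I(\C_X))$ the complex $\Rihom(\pi^{-1}F, I_\tau(\mu^\sha_X \oot_X))$ inherits an $I_\tau(\Esa_X)$-module structure and, after $\alpha_{T^*X}$, a $\rho^{-1}\Esa_X$-module structure. Applying the equivalence $\phi_0^* \circ \phi_{\leq 0\,*}$ transports this to $\DC(\ERf_X)$, and its image under the forgetful functor to $\DC(\C_{\Tnz^*X})$ is $\tmu hom(F, \O_X)$ via the identification $\mu_X\O_X^t \simeq I_\tau(\mu^\sha_X \oot_X)$ on $\Tnz^*X$. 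The main step, and the one I expect to be most delicate, is to check that the resulting $\ERf_X$-action on $\tmu hom(F,\O_X)$ coincides with the usual one constructed at the level of $\Eind_X$. This is the bookkeeping performed by the proposition immediately preceding the theorem statement (which asserts that the dg-product on $\Esa_X$ and its action on $\mu^\sha_X \oot_X$ represent the product of $\Eind_X$ and its action on $\mu_X\O_X^t$), combined with Corollary~\ref{cor:chgt_dg-alg} applied to $\Rihom(\pi^{-1}F,\cdot)$; no new construction is needed, only the careful tracking of compatibilities through the chain of functors $\rho^{-1}$, $\tau_{\leq 0}$, $I_\tau$, $\beta_{T^*X}$, $\alpha_{T^*X}$ and the adjunctions between them.
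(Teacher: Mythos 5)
Your proposal is correct and follows essentially the same route as the paper: the theorem is proved there by exactly the assembly you describe — defining $\O_X^\mu = \Phi(r(I_\tau(\mu^\sha_X\oot_X)))$ via the quasi-isomorphisms $\rho^{-1}\Esa_X \from \tau_{\leq 0}\rho^{-1}\Esa_X \to \ERf_X$ and Corollary~\ref{cor:chgt_dg-alg}, and deducing the compatibility of the $\ERf_X$-action on $\tmu hom(F,\O_X)$ from the proposition immediately preceding the theorem. No substantive differences.
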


\end{document}